\documentclass{article}
\usepackage[english]{babel}
\usepackage[T1]{fontenc}
\usepackage[utf8]{inputenc}
\usepackage{amsthm}
\usepackage{amsmath}
\usepackage{amssymb}
\usepackage{palatino}
\usepackage{graphicx}
\usepackage[colorlinks=true, citecolor =blue]{hyperref}
\usepackage{dsfont}
\usepackage[all]{xy}
\usepackage[top=2.5cm,bottom=2.5cm,left=1cm,right=1cm]{geometry}
\usepackage{enumitem}
\usepackage{mathtools}
\usepackage{subcaption}
\usepackage{authblk}
\usepackage{bbm}
\usepackage{upgreek}
\usepackage{mathrsfs}

\graphicspath{{Counting/}{Intro/}}
\newcommand{\red}{\textcolor{red}}
\newcommand{\blue}{\textcolor{blue}}
\usepackage{makecell}
\usepackage{float}
\newcommand{\p}{\partial}
\newcommand{\eps}{\varepsilon}
\newcommand{\pl}{\mathtt{p}_l}
\newcommand{\pr}{\mathtt{p}_r}
\newcommand{\plq}{Q_{\mathtt{p}_l}^{\ell}}
\newcommand{\prq}{Q_{\mathtt{p}_r}^{\ell}}
\newtheorem{theorem}{Theorem}[section]
\newtheorem{lemma}[theorem]{Lemma}
\newtheorem{definition}[theorem]{Definition}
\newtheorem{remark}[theorem]{Remark}
\newtheorem{proposition}[theorem]{Proposition}

\newtheorem{conditionbase}{Condition}

\newenvironment{condition}[1]{%
  \renewcommand{\theconditionbase}{(#1)}%
  \begin{conditionbase}%
}{%
  \end{conditionbase}%
}

\usepackage{tikz}

\newcommand{\dist}{\textup{dist}}

\numberwithin{equation}{section}

\renewcommand{\l}{\mathfrak{l}}

\newcommand{\new}{\textup{new}}

\usepackage[linesnumbered, ruled, vlined]{algorithm2e}

\renewcommand{\o}{\mathfrak{o}}
\newcommand{\T}{\mathcal{T}}
\newcommand{\n}{\mathfrak{n}}
\newcommand{\m}{\mathfrak{m}}

\newcommand{\chard}{\mathtt{C}_{\textup{hard}}}

\newcommand{\fl}{f}

\newcommand{\xp}{\mathbf{x}^{\mathtt{p}}}
\newcommand{\kplq}{Q_{\mathtt{p}_l}^{\mathtt{K}\ell}}
\newcommand{\kprq}{Q_{\mathtt{p}_r}^{\mathtt{K}\ell}}

\newcommand{\vepst}{\boldsymbol{\mu}_{\eps,T}}
\newcommand{\indicator}{\mathds{1}_{\lambda\textup{ is $\eps$-cluster on $[0,T]$}}}
\newcommand{\indicatork}[1]{\mathds{1}_{\lambda_{#1}\textup{ is $\eps$-cluster on $[0,T]$}}}
\newcommand{\indicatormink}[1]{\mathds{1}_{\lambda_{#1}\textup{ is minimal $\eps$-cluster on $[0,T]$}}}
\newcommand{\ldif}{\mathcal{L}_{\textup{dif}}}

\makeatletter

\newcommand{\binarytree@draw}[3]{%
  \vcenter{\hbox{%
    \begin{tikzpicture}[
      x=1em,
      y=1em,
      scale=#1,
      transform shape,
      line cap=round,
      line join=round,
      tree edge/.style={
        draw=#3,
        line width=.055em
      },
      tree node/.style={
        circle,
        draw=#3,
        fill=white,
        line width=.055em,
        minimum size=.23em,
        inner sep=0pt,
        outer sep=0pt
      }
    ]
      \node[tree node,#2] (root)  at (0,.33) {};
      \node[tree node]    (left)  at (-.36,-.33) {};
      \node[tree node]    (right) at ( .36,-.33) {};

      \draw[tree edge]
        (root) -- (left)
        (root) -- (right);
    \end{tikzpicture}%
  }}%
}

\newcommand{\binarytree@choice}[2]{%
  \mathchoice
    {\binarytree@draw{1}{#1}{#2}}%
    {\binarytree@draw{1}{#1}{#2}}%
    {\binarytree@draw{.75}{#1}{#2}}%
    {\binarytree@draw{.60}{#1}{#2}}%
}

\newcommand{\OverlapFork}[1][black]{%
  \mathord{\binarytree@choice{fill=#1}{#1}}%
}

\newcommand{\CollisionFork}[1][black]{%
  \mathord{\binarytree@choice{fill=white}{#1}}%
}

\makeatother

\newcommand{\nnew}{\mathcal{N}_{\textup{new}}}

\newcommand{\timeldelta}{L^{\infty}\mathbf{L}_{\gamma,\delta}^1}
\newcommand{\timeldeltastar}{L^{\infty}\mathbf{L}_{\gamma_*,\delta_*}^1}
\newcommand{\ldelta}{\mathbf{L}_{\gamma,\delta}^1}
\newcommand{\ldeltastar}{\mathbf{L}_{\gamma_*,\delta_*}^1}

\newcommand{\tprime}{\mathcal{T}_0^+}
\newcommand{\x}{\mathbf{x}}

\usepackage{multirow}
\usepackage{arydshln}
\newcommand{\onew}{\mathfrak{o}[\textup{new}]}
\newcommand{\oold}{\mathfrak{o}[\textup{old}]}

\newcommand{\revcar}[1]{d^{\uparrow}[v]_{\mathcal{N}^+(#1)}}

\makeatletter

\newcommand{\binarytree@double@draw}[2]{%
  \vcenter{\hbox{%
    \begin{tikzpicture}[
      x=1em,
      y=1em,
      scale=#1,
      transform shape,
      line cap=round,
      line join=round,
      tree edge/.style={
        draw=#2,
        line width=.055em
      },
      tree node/.style={
        circle,
        draw=#2,
        fill=white,
        line width=.055em,
        minimum size=.23em,
        inner sep=0pt,
        outer sep=0pt
      }
    ]
      \node[tree node] (root) at (0,.95) {};

      \node[tree node] (L) at (-.65,.20) {};
      \node[tree node] (R) at ( .65,.20) {};

      \node[tree node] (LL) at (-1.00,-.55) {};
      \node[tree node] (LR) at (-.30,-.55) {};
      \node[tree node] (RL) at ( .30,-.55) {};
      \node[tree node] (RR) at (1.00,-.55) {};

      \draw[tree edge]
        (root) -- (L)
        (root) -- (R)
        (L) -- (LL)
        (L) -- (LR)
        (R) -- (RL)
        (R) -- (RR);
    \end{tikzpicture}%
  }}%
}

\newcommand{\binarytree@double@choice}[1]{%
  \mathchoice
    {\binarytree@double@draw{1}{#1}}%
    {\binarytree@double@draw{1}{#1}}%
    {\binarytree@double@draw{.75}{#1}}%
    {\binarytree@double@draw{.60}{#1}}%
}

\newcommand{\DoubleBinaryTree}[1][black]{%
  \mathord{\binarytree@double@choice{#1}}%
}

\makeatother

\makeatletter

\newcommand{\lefttree@draw}[2]{%
  \vcenter{\hbox{%
    \begin{tikzpicture}[
      x=1em,
      y=1em,
      scale=#1,
      transform shape,
      line cap=round,
      line join=round,
      tree edge/.style={
        draw=#2,
        line width=.055em
      },
      tree node/.style={
        circle,
        draw=#2,
        fill=white,
        line width=.055em,
        minimum size=.23em,
        inner sep=0pt,
        outer sep=0pt
      }
    ]
      \node[tree node] (root) at (0,.65) {};

      \node[tree node] (L) at (-.50,-.05) {};
      \node[tree node] (R) at ( .50,-.05) {};

      \node[tree node] (LL) at (-.86,-.75) {};
      \node[tree node] (LR) at (-.14,-.75) {};

      \draw[tree edge]
        (root) -- (L)
        (root) -- (R)
        (L) -- (LL)
        (L) -- (LR);
    \end{tikzpicture}%
  }}%
}

\newcommand{\lefttree@choice}[1]{%
  \mathchoice
    {\lefttree@draw{1}{#1}}%
    {\lefttree@draw{1}{#1}}%
    {\lefttree@draw{.75}{#1}}%
    {\lefttree@draw{.60}{#1}}%
}

\newcommand{\LeftBranchTree}[1][black]{%
  \mathord{\lefttree@choice{#1}}%
}

\makeatother

\makeatletter

\newcommand{\leftoverlap@draw}[2]{%
  \vcenter{\hbox{%
    \begin{tikzpicture}[
      x=1em,
      y=1em,
      scale=#1,
      transform shape,
      line cap=round,
      line join=round,
      tree edge/.style={
        draw=#2,
        line width=.055em
      },
      tree node/.style={
        circle,
        draw=#2,
        fill=white,
        line width=.055em,
        minimum size=.23em,
        inner sep=0pt,
        outer sep=0pt
      }
    ]
      \node[tree node,fill=#2] (root) at (0,.65) {};

      \node[tree node] (L) at (-.50,-.05) {};
      \node[tree node] (R) at ( .50,-.05) {};

      \node[tree node] (LL) at (-.86,-.75) {};
      \node[tree node] (LR) at (-.14,-.75) {};

      \draw[tree edge]
        (root) -- (L)
        (root) -- (R)
        (L) -- (LL)
        (L) -- (LR);
    \end{tikzpicture}%
  }}%
}

\newcommand{\leftoverlap@choice}[1]{%
  \mathchoice
    {\leftoverlap@draw{1}{#1}}%
    {\leftoverlap@draw{1}{#1}}%
    {\leftoverlap@draw{.75}{#1}}%
    {\leftoverlap@draw{.60}{#1}}%
}

\newcommand{\LeftOverlapTree}[1][black]{%
  \mathord{\leftoverlap@choice{#1}}%
}

\makeatother

\author{Chenjiayue Qi\thanks{chenjiayue@ihes.fr}}
\affil{Institut des Hautes Études Scientifiques (IHES), Université Paris-Saclay, 91440, Bures-sur-Yvette, France}

\date{\today}

\begin{document}

\title{Boltzmann Equation and Hard Sphere Gas with Singular Initial Data: Well-posedness, Kinetic Limit, and Weak Stability}

\maketitle
\begin{abstract}
We develop a local well-posedness theory for the spatially inhomogeneous 3-d Boltzmann equation with hard sphere cross-section, accommodating a class of large initial data. This class is an interpolation between the space $L_{x}^{\infty}L_v^{1}$ and $L_{v}^1L_{x}^{\infty}$ (with weight in $v$). It is notable for including data that can have infinite $L_{x,v}^\infty$-norm, entropy density. The core idea of the proof is a homogenization method based on collision geometries. As a key application, we prove the kinetic limit of hard sphere gases with initial data in this space, having certain extra conditions. This is the first proof on the kinetic limit of hard sphere gases with singular initial data. We also derive weak stability estimates of Boltzmann equation and hard sphere gas within the same functional spaces.
\end{abstract}

\paragraph{Ackownledgement:} The author is grateful to Thierry Bodineau, Yu Deng, Laure Saint-Raymond, and Sergio Simonella for enlightening discussions. Part of this work was conducted during a visit to Sapienza University of Rome in November 2025, and the author is grateful to Sergio Simonella for the invitation and the warm hospitality.

\tableofcontents

\section{Introduction}\label{sec:intro}
Ever since its introduction, the Boltzmann equation describing the evolution of dilute gases
\begin{equation*}
\p_tf+v\cdot\nabla_xf=\mathbf{Q}(f,f),\quad \textup{where $\mathbf{Q}(\cdot,\cdot)$ is a bilinear collision operator,}
\end{equation*}
has stood as a cornerstone of statistical physics. The study of its well-posedness and its rigorous derivation from hard sphere gas dynamics have been central problems in the field.

For the Boltzmann equation, DiPerna-Lions in their seminal work \cite{diperna1989cauchy} established the existence of global renormalized solutions for the inhomogeneous Boltzmann equation with general $L^1$ initial data, having \textit{finite mass, energy, and entropy}. However, the \textit{uniqueness} and \textit{energy conservation} of these solutions remain major open problems. Although a unified resolution of these open problems is still absent today, substantial progress has been achieved along three major lines of development: Perturbative theory around homogeneous solutions \cite{gualdani2010factorization}, \cite{duan2017global}, \cite{gressman2011global}; Near-vacuum theory \cite{illner1984boltzmann}, \cite{chaturvedi2021stability}; Local-in-time theory \cite{kaniel1978boltzmann}, \cite{chen2019local}, \cite[Section 5]{gallagher2014newton}.

For the hard sphere gas, the foundational framework of Lanford \cite{lanford2005time} was continued by \cite{cercignani2013mathematical,spohn2006boltzmann,uchiyama1988derivation}. Then Gallagher-Saint-Raymond-Texier \cite{gallagher2014newton} established the rigorous derivation of Boltzmann equation from hard sphere gas. In the following years, there have been various works on the short-time non-equilibrium fluctuations \cite{bodineau2022cluster,Bodineau_Laure_Gallagher_Simonella_2023}, the long-time equilibrium fluctuations \cite{bodineau2023long,bodineau2024long,bodineau2024dynamics}, and the long-time behavior of the related Hamilton-Jacobi structure \cite{qi2024global}. A major breakthrough on the long-time derivation of Boltzmann equation \cite{Deng_Hani_Ma_2024,deng2025hilbert} is established recently. A common feature of these works is that the initial data $f^0$ is always smooth in $x$ and bounded from above by Gaussian distributions: there exists $C>0$ and $\beta>0$ such that $f^0(x,v)\leq Ce^{-\beta|v|^2}$.

In this paper, we go beyond the assumption of point-wise Gaussian upper bound. We establish the local well-posedness of Boltzmann equation and the kinetic limit of hard sphere gas with very singular initial data $f^0$, allowing \textit{infinite $L^{\infty}$-norm and entropy}. We also prove the weak stability of Boltzmann equation and hard sphere gas within the class of singular initial data. 

The kinetic limit of hard sphere gas with singular and unbounded initial data carries a physical significance. As formulated in \cite{hilbert2019mathematical}, hard sphere gas and other Newtonian models are considered as the atomistic description of gases and fluids. The gas and fluid dyanmics are described by Boltzmann/Euler/Navier-Stokes equations, whose behavior with singular initial data have been extensively studied. This work is a first step towards the study of hard sphere gas with singular initial data, providing a microscopic perspective of PDEs describing gases and fluids. In this paper, we expand the solution of Boltzmann equations into pseudo-trajectories (see Remark \ref{rmk:pseudo-trajectory_and_xfunction}) and prove the singularity of initial data can be \textit{smoothed by collisions} (Proposition \ref{prop:toy_example} and Proposition \ref{prop:point_homogenization}), which in principle also applies to the estimates of particle trajectories of hard sphere gas.

More precisely \textit{at the level of partial differential equations},  we study the Boltzmann equation with hard sphere cross-section. Namely for $(x,v)\in \mathds{R}^3\times \mathds{R}^3$ if we study Boltzmann equation on the full space, or for $(x,v)\in \mathds{T}^3\times \mathds{R}^3$ if we study the Boltzmann equation on the torus $\mathds{T}^3:=[0,1]^3$
\begin{equation}\label{eq:Boltzmann}
\p_tf+v\cdot\nabla_xf=\mathbf{Q}(f,f),\quad \mathbf{Q}(f_1,f_2):=\int \big((v_*-v)\cdot\omega\big)_+ \Big(f_1(v')f_2(v_*')-f_1(v)f_2(v_*)\Big)d\omega dv_*,
\end{equation}
where $(v',v_*')$ is the pre-collisional configuration
\begin{equation}\label{eq:pre_collision}
v'=v-(v-v_*)\cdot\omega\omega,\quad v_*'=v_*+(v-v_*)\cdot\omega\omega.
\end{equation}
\begin{figure}[h]
    \centering
    \includegraphics[width=0.3\linewidth]{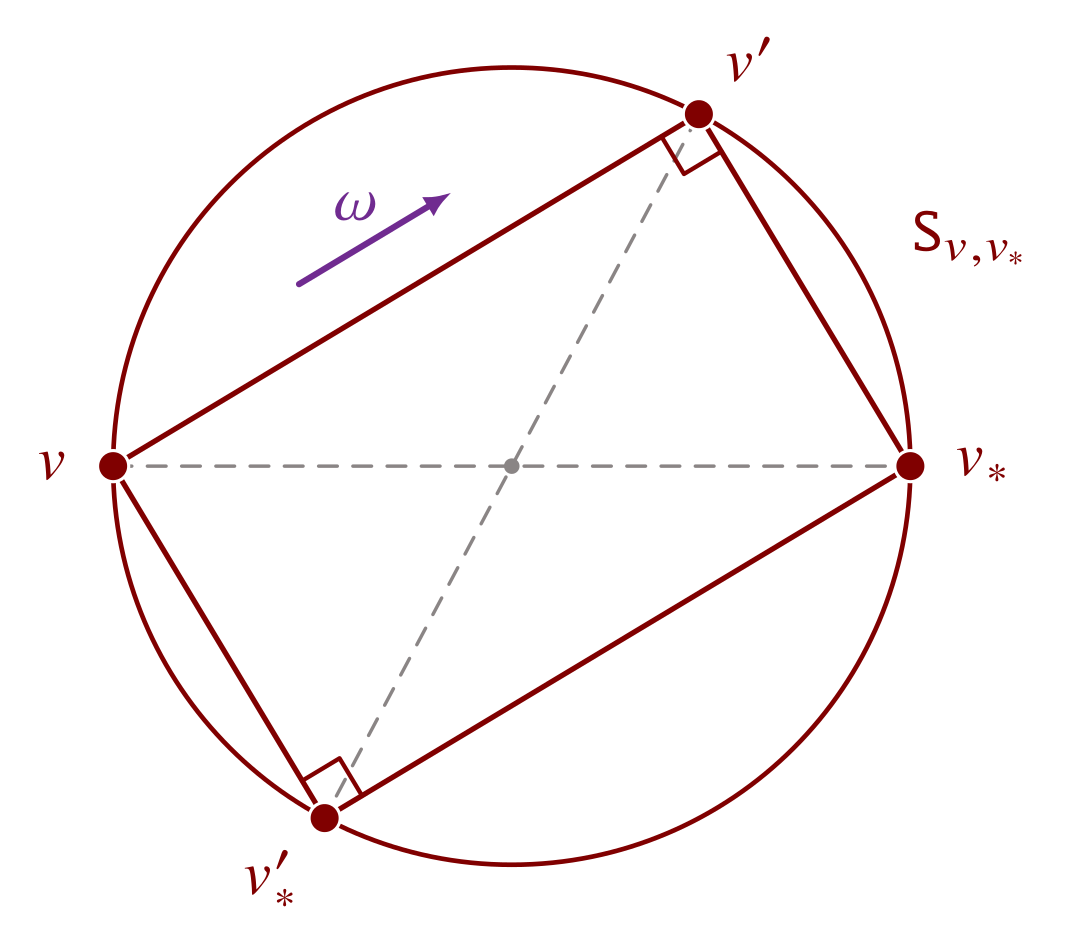}
    \caption{An illustration of collision sphere $\mathtt{S}_{v,v_*}$ defined in \eqref{eq:collision_sphere}. The variables $(v,v_*)$ exchange their velocity in the $\omega$-direction. Notice $(v-v')\perp (v-v_*')$.}
    \label{fig:collision_sphere}
\end{figure}
Notice that given $(v,v_*)$, the pre-collisional configuration $v'$ and $v_*'$ in \eqref{eq:pre_collision} always belong to the collision sphere $\mathtt{S}_{v,v_*}$ (Figure \ref{fig:collision_sphere}) defined as
\begin{equation}\label{eq:collision_sphere}
\mathtt{S}_{v,v_*}:=\Big\{v^{\dagger}\in\mathds{R}^3\Big|\ |v^{\dagger}-\frac{v+v_*}{2}|=|\frac{v-v_*}{2}|\Big\}.
\end{equation}
For the operator $\mathbf{Q}$, we frequently use the decomposition of $\mathbf{Q}$ into gain part and loss part
\begin{equation}\label{eq:gain_and_loss_decomposition}
\begin{split}
&\mathbf{Q}(f_1,f_2)=\mathbf{Q}^+(f_1,f_2)-\mathbf{Q}^-(f_1,f_2),\\
&\mathbf{Q}^+(f_1,f_2):=\int \big((v_*-v)\cdot\omega\big)_+ f_1(v')f_2(v_*')d\omega dv_*,\quad \mathbf{Q}^-(f_1,f_2):=\int \big((v_*-v)\cdot\omega\big)_+f_1(v)f_2(v_*)d\omega dv_*
\end{split}
\end{equation}

\textit{At the level of microscopic dynamics,} in this paper we work with the hard sphere gas on the torus $\mathds{T}^3=[0,1]^3$. Hard sphere gas is the microscopic particle system corresponding to the Boltzmann equation \eqref{eq:Boltzmann} with hard-sphere cross-section. Fix the total number $N$ of hard sphere particles, as well as the diameter $\eps>0$ of these hard spheres. The evolution for the positions $(\mathbf{x}_1^\eps,...,\mathbf{x}_N^\eps)\in \mathds{T}^{3N}$ and velocities $(\mathbf{v}_1^\eps,...,\mathbf{v}_N^\eps)\in \mathds{R}^{3N}$ of the $N$ particles satisfies a system of ordinary differential equations (Newton's laws)
\begin{equation}
\frac{d\mathbf{x}_i^\eps}{dt}=\mathbf{v}_i^{\eps},\quad \frac{d\mathbf{v}_i^\eps}{dt}=0\quad \textup{as long as $|\mathbf{x}_i^\eps-\mathbf{x}_j^\eps|>\eps$ for arbitrary $j$ with $1\leq i\not=j\leq N$},
\label{eq:hard_sphere_law_1}
\end{equation}
with specular reflection at collisions if $|\mathbf{x}_i^{\eps}-\mathbf{x}_j^{\eps}|=\eps$
\begin{equation}\label{eq:hard_sphere_law_2}
(\mathbf{v}_i^{\eps})':=\mathbf{v}_i^{\eps}-(\mathbf{v}_i^{\eps}-\mathbf{v}_j^{\eps})\cdot\omega\omega,\quad (\mathbf{v}_j^{\eps})':=\mathbf{v}_j^{\eps}+(\mathbf{v}_i^{\eps}-\mathbf{v}_j^{\eps})\cdot\omega\omega,\quad \omega:=\frac{\mathbf{x}_i^{\eps}-\mathbf{x}_j^{\eps}}{\eps}.
\end{equation}
This means that after a collision with $|\mathbf{x}_i^{\eps}-\mathbf{x}_j^{\eps}|=\eps$, the velocities $(\mathbf{v}_i^{\eps},\mathbf{v}_j^{\eps})$ of the two particles will be changed into $\big((\mathbf{v}_i^{\eps})',(\mathbf{v}_j^{\eps})'\big)$. This induces a well-defined dynamics for initial conditions of full Lebesgue measure in the canonical phase space $\mathcal{D}_N^{\eps}$
\begin{equation}\label{eq:exclusion_domain}
\mathcal{D}_{N}^{\eps}:=\Big\{(\mathbf{x}_1^\eps,...,\mathbf{x}_N^\eps,\mathbf{v}_1^\eps,...,\mathbf{v}_N^\eps)\in\mathds{T}^{3N}\times \mathds{R}^{3N}:\ \forall i\not=j,\ |\mathbf{x}_i^\eps-\mathbf{x}_j^\eps|>\eps\Big\},
\end{equation}
excluding multiple collisions and accumulation of collision times \cite{alexander1975infinite}. This microscopic dynamics \eqref{eq:hard_sphere_law_1}-\eqref{eq:hard_sphere_law_2} induces a Liouville equation for the probability density $W_N^{\eps}$ of the $N$ particles, where $W_{N}^{\eps}(t,X_N,V_N)$ refers to the probability density of finding $N$ hard spheres with configuration $(X_N,V_N)$ at time $t$. Here the variable $X_N$ refers to the positions of the $N$ particles $X_N:=(x_1,...,x_N)$, and the variable $V_N$ refers to the velocities of the $N$ particles $V_N:=(v_1,...,v_N)$. The Liouville equation for $W_{N}^{\eps}$ is
\begin{equation*}
\p_t W_{N}^\eps+V_N\cdot\nabla_{X_N}W_{N}^\eps=0,
\end{equation*}
with boundary condition corresponding to specular reflection \eqref{eq:hard_sphere_law_2}. 

One can further consider the grand canonical formulation of a hard sphere gas: instead of fixing the total number of particles, we assume the total number $\mathbf{N}$ of particles to be random with a modified Poisson distribution law. For each diameter $\eps$, we fix $\mu_\eps$ as the parameter for the modified Poisson distribution of the total number of particles. We assume that at initial time $t=0$ the probability density of having $N$ particles and configuration $(X_N,V_N)$ is the following with $z_i:=(x_i,v_i)$
\begin{equation}\label{eq:grand_canonical_measure}
\frac{1}{\mathcal{Z}^{\eps}}\frac{\mu_{\eps}^N}{N!}\prod_{i=1}^{N}f^0(z_i)\mathbbm{1}_{\mathcal{D}_{N}^{\eps}},\quad \mathcal{Z}^{\eps}:=1+\sum_{N\geq 1}\frac{\mu_{\eps}^N}{N!}\int_{\mathds{T}^{3N}\times \mathds{R}^{3N}}\prod_{i=1}^{N}f^0(z_i)\mathbbm{1}_{\mathcal{D}_{N}^{\eps}}dV_NdX_N.
\end{equation}
It is clear that we have two sources of randomness: the number $\mathbf{N}$ of particles is random, while given the total number $N$, the configuration $(X_{N},V_{N})$ is also random. Each sample $(X_{N},V_{N})$ follows the evolution law given by equations \eqref{eq:hard_sphere_law_1} and \eqref{eq:hard_sphere_law_2}. Thus at each time $t\geq 0$, this defines the distribution law for $(X_{\mathbf{N}},V_{\mathbf{N}})$. 

In the Boltzmann-Grad scaling, we impose $\mu_{\eps}=\eps^{-2}$ to ensure the number of collisions per particle is of order $1$ per unit time \cite{Grad_1949}. This scaling implies the asymptotic \eqref{eq:Bol_Grad} for $\mathds{E}_{\eps}(\mathbf{N})$, where $\mathds{E}_\eps$ is the expectation upon the probability measure given by \eqref{eq:grand_canonical_measure}
\begin{equation}\label{eq:Bol_Grad}
\lim_{\eps\rightarrow 0}\mathds{E}_{\eps}(\mathbf{N})\eps^2=O(1).
\end{equation}
Given the probability measure $\mathds{P}_{\eps}$ of the hard sphere gas $(X_{\mathbf{N}},V_{\mathbf{N}})$ at different times, we can define the empirical measure $\pi_{t}^{\eps}$ as a random measure
\begin{equation}\label{eq:empirical_measure}
\pi_t^{\eps}:=\frac{1}{\mu_{\eps}}\sum_{1\leq i\leq \mathbf{N}}\delta_{(\x_i^{\eps}(t),\mathbf{v}_i^{\eps}(t))},\quad \pi_t^{\eps}(h):=\frac{1}{\mu_{\eps}}\sum_{1\leq i\leq \mathbf{N}}h\big(\x_i^{\eps}(t),\mathbf{v}_i^{\eps}(t)\big),\ \textup{for any function $h$ defined on $\mathds{T}^3\times \mathds{R}^3$}.
\end{equation}

Given initial data $f^0$ and the hard sphere gas with initial data \eqref{eq:grand_canonical_measure}, the kinetic limit of the hard sphere gas during time $[0,T]$ refers to the convergence in probability of the empirical measure $\pi_t^{\eps}$ towards $f(t)$, which is the unique mild solution of Boltzmann equation \eqref{eq:Boltzmann} with initial data $f^0$
\begin{equation}
\lim_{\eps\rightarrow 0}\mathds{P}_{\eps}\Big[\ \Big|\pi_t^{\eps}(h)-\int_{\mathds{T}^3\times \mathds{R}^3}f(t,x,v)h(x,v)dvdx\Big|>\delta\ \Big]=0,\quad \textup{for any $t\in[0,T]$, any test function $h$, and any $\delta>0$}.
\end{equation}

\subsection{Statement of Well-posedness Result, and Examples of Singular Initial Data}

Given arbitrary functions $f(x,v)$ on $(x,v)\in\mathds{R}^3\times \mathds{R}^3$, and an arbitrary parameter $0\leq \delta\leq +\infty$, define $[f]_{\delta}(x,v)$ as
\begin{equation}\label{eq:delta_sup_def}
[f]_{\delta}(x,v):=\sup_{x':|x'-x|\leq \delta}|f(x',v)|.
\end{equation}
Notice that $[f]_{\delta}(x,v)=|f(x,v)|$ when $\delta=0$ and $[f]_{\delta}(x,v)=\sup_{x\in\mathds{R}^3}|f(x,v)|$ when $\delta=+\infty$. Now we define the function space of initial data for the Boltzmann equation \eqref{eq:Boltzmann} used in this paper.
\begin{definition}\label{def:Ldeltasup}
For arbitrary parameters $\gamma>0$ and $0\leq \delta\leq +\infty$, define $\mathbf{L}_{\gamma,\delta}^1(\mathds{R}^3\times \mathds{R}^3)$ as the following space of functions with variables $(x,v)$
\begin{equation}\label{eq:Ldeltasup}
\ldelta:=\Big\{f:\mathds{R}^3\times \mathds{R}^3\rightarrow \mathds{R}\Big|\ \lVert f\rVert_{\ldelta(\mathds{R}^3\times \mathds{R}^3)}<+\infty\Big\},\quad  \lVert f\rVert_{\ldelta(\mathds{R}^3\times \mathds{R}^3)}:=\sup_{x\in\mathds{R}^3}\Bigg(\int [f]_{\delta}(x,v)e^{\gamma|v|^2}dv\Bigg).
\end{equation}

\end{definition}

For the study of homogeneous Boltzmann equation, the well-posedness theory of initial data $f^0(v)$ in certain weighted $L^1$ spaces has been successfully established, for example see \cite{mischler1999spatially,mouhot2004regularity}, where $f^0$ is allowed to have infinite $L^{\infty}$-norm. For inhomogeneous Boltzmann equation having initial data with infinite $L_{x,v}^{\infty}$-norm, the study of the well-posedness is more limited. The landmark paper \cite{gualdani_mischler_mouhot} proved the global-in-time well-posedness of inhomogeneous Boltzmann equations on $\mathds{T}^3$, allowing initial data as a \textit{small perturbation} around \textit{spatially homogeneous} distributions. The perturbation in \cite{gualdani2010factorization} belongs to the space $L_v^1L_{x}^{\infty}(1+|v|^k)$
\begin{equation*}
\lVert f\rVert_{L_v^1L_{x}^{\infty}(1+|v|^k)}:=\int (1+|v|^k)\Big(\sup_{x\in\mathds{T}^3}|f(x,v)|\Big)dv.
\end{equation*}
If we wish to establish a well-posedness result in a larger functional space, a natural first thought could be the space $L_x^{\infty}L_v^{1}(1+|v|^k)$. However, this space is \textit{not even invariant} under the transport semigroup $\mathbf{S}_{t}$
\begin{equation}\label{eq:transport_semigroup}
(\mathbf{S}_tf)(x,v):=f(x-tv,v).
\end{equation}
For example take function $f(x,v)=\exp\big(-|x+v|^2\big)(1+|v|^k)^{-1}$, we have $\mathbf{S}_{t=1}f(x,v)=e^{-|x|^2}(1+|v|^k)^{-1}$.

The choice of the space $\ldelta$ in Definition \ref{def:Ldeltasup} is exactly due to this observation, although we work in a setting different from \cite{gualdani2010factorization}: we will establish the \textit{local well-posedness} of Boltzmann equation in \textit{full space} $\mathds{R}^3$, allowing \textit{non-perturbative} initial data $f^0$ which can have arbitrarily large $\lVert f^0\rVert_{\ldelta}$. The time length of the local well-posedness depends on $(\gamma,\delta)$ and $\lVert f^0\rVert_{\ldelta}$. As we will see, the space $\ldelta$ serves as an interpolation between the Gaussian-weighted spaces similar to $L_{x}^{\infty}L_v^1\big(1+|v|^k\big)$ and $L_{v}^1L_{x}^{\infty}(1+|v|^k)$. For functions $f:\mathds{R}^3\rightarrow \mathds{R}$, define the Gaussian-weighted norm $\lVert f\rVert_{L_{\gamma}^1}$
\begin{equation}\label{eq:Lgamma1_def}
\lVert f\rVert_{L_{\gamma}^1}:=\int_{\mathds{R}^3}|f(v)|e^{\gamma|v|^2}dv.
\end{equation}
Given functions $f:\mathds{R}^3\times \mathds{R}^3\rightarrow \mathds{R}$, define spaces 
\begin{equation}\label{eq:function_space_compare} 
\lVert f\rVert_{L_{x}^{\infty}L_{\gamma,v}^1}:=\sup_{x\in\mathds{R}^3}\lVert f(x,\cdot)\rVert_{L_{\gamma}^1},\quad \lVert f\rVert_{L_{\gamma,v}^1L_{x}^{\infty}}:=\Big\lVert \sup_{x\in\mathds{R}^3}|f(x,\cdot)|\Big\rVert_{L_{\gamma}^1}
\end{equation}
The space $\ldelta$ can be seen as an interpolation between $L_{x}^{\infty}L_{\gamma,v}^{1}$ and $L_{\gamma,v}^{1}L_{x}^{\infty}$
\begin{equation}
L_{\gamma,v}^{1}L_{x}^{\infty}=\mathbf{L}_{\gamma,+\infty}^1\subsetneq \ldelta \subsetneq \mathbf{L}_{\gamma,0}^1=L_{x}^{\infty}L_{\gamma,v}^1,\quad \delta\in (0,+\infty).
\end{equation}

We say $f(t)$ is a mild solution of the Boltzmann equation \eqref{eq:Boltzmann} with initial data $f^0$ during $[0,T]$ if and only if for any $0\leq t\leq T$
\begin{equation}\label{eq:mild_formulation}
f(t)=\mathbf{S}_{t}f^0+\int_0^t\mathbf{S}_{t-s}\mathbf{Q}\big(f(s),f(s)\big)ds.
\end{equation}

\begin{theorem}\label{th:well-posedness}
\textbf{\textup{[Well-posedness]}} Consider any parameters $(\gamma>0,\delta>0)$ and any non-negative initial data $f^0\in \ldelta(\mathds{R}^3\times \mathds{R}^3)$. There exists a time threshold $T_*>0$ and constants $(\gamma_*>0,\delta_*>0)$ determined only by $(\gamma,\delta,\lVert f^0\rVert_{\ldelta(\mathds{R}^3\times \mathds{R}^3)})$ such that: there exists a unique non-negative mild solution of the Boltzmann equation with the initial data $f^0$ on the time interval $[0,T_*]$, within the space $L^{\infty}\ldeltastar([0,T_*]\times \mathds{R}^3\times \mathds{R}^3)$ in the sense of \eqref{eq:mild_formulation}. This unique mild solution satisfies the local conservation laws of density, momentum, and energy.

\textup{\textbf{[Weak Stability]}} Given a sequence of non-negative initial data $\{f_n^0\}_{n<+\infty}\subset \ldelta(\mathds{R}^3\times \mathds{R}^3)$ with uniformly bounded $\ldelta(\mathds{R}^3\times \mathds{R}^3)$-norm, according to the well-posedness result, there exists $(\gamma_*,\delta_*,T_*)$ such that for each $n<+\infty$ we have a unique non-negative mild solution $f_n(t)$ in space $L^{\infty}\ldeltastar([0,T_*]\times \mathds{R}^3\times \mathds{R}^3)$ with initial data $f_n^0$. 

Assume there exists non-negative $f^0\in \ldelta$ such that for any $x\in\mathds{R}^3$ we have
\begin{equation}\label{assump:initial_weak_convergence}
f_n^0(x,\cdot)\rightarrow_{\textup{weak}} f^0(x,\cdot),\quad n\rightarrow +\infty.
\end{equation}
Further assume some spatial regularity, namely that there exists a continuous function $\mathfrak{C}(x,v,\eta):\mathds{R}^3\times \mathds{R}^3\times \mathds{R}^3\rightarrow \mathds{R}_{\geq 0}$ such that
\begin{equation}\label{assump:equi_continuity}
\begin{split}
&\lim_{|\eta|\rightarrow 0}\mathfrak{C}(x,v,\eta)=0,\ \textup{for $(x,v)\in\mathds{R}^3\times \mathds{R}^3$}\\
&\sup_{n<+\infty}\Big|f_n^0(x+\eta,v)-f_n^0(x,v)\Big|\leq \mathfrak{C}(x,v,\eta),\quad \Big|f^0(x+\eta,v)-f^0(x,v)\Big|\leq \mathfrak{C}(x,v,\eta) \quad \textup{for $(x,v,\boldsymbol{\eta})\in\mathds{R}^3\times \mathds{R}^3\times \mathds{R}^3$ with small enough $|\boldsymbol{\eta}|$ }.
\end{split}
\end{equation}
Then the unique solution $f_n(t)$ with initial data $f_n^0$ weakly converges to the unique solution $f(t)$ with initial data $f^0$ in the following sense
\begin{equation}\label{eq:mixing_PDE_statement}
f_n(t,x,\cdot)\rightarrow_{\textup{weak}}f(t,x,\cdot),\quad n\rightarrow +\infty,\quad\textup{ for arbitrary $(t,x)\in [0,T_*]\times \mathds{R}^3$}.
\end{equation}
\end{theorem}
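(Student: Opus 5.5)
We plan to construct the solution through the Duhamel (pseudo-trajectory) series and to close the estimates with a collision-based homogenization, in place of a plain fixed-point argument in $\ldelta$. The reason is that $\mathbf{Q}^-$ and $\mathbf{Q}^+$ are not bounded on $\ldelta$ by themselves. The loss term $\mathbf{Q}^-(f,f)(x,v)=f(x,v)\int|v-v_*|f(x,v_*)dv_*\,d\omega$ is fine, since $\int |v-v_*|[f]_\delta(x,v_*)dv_*\lesssim (1+|v|)\|f\|_{\ldelta}$, and the factor $(1+|v|)$ can be absorbed by lowering $\gamma$ to $\gamma_*<\gamma$. The gain term is different: it evaluates $f$ at $(x,v')$ and $(x,v_*')$ on the collision sphere $\mathtt{S}_{v,v_*}$, so a concentration of $f(x,\cdot)$ near a single velocity is not controlled by an $L^1_v$ bound at fixed $x$.

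Transport resolves this. After time $t-s$ the argument becomes $x-(t-s)v$, and integrating over the time variable turns a $v$-concentration into an $x$-spread. This is where the sup over the $\delta$-ball in $[f]_\delta$ enters. We would establish a homogenization estimate of the form
\begin{equation*}
\sup_x\int e^{\gamma_*|v|^2}\Big[\int_0^t \mathbf{S}_{t-s}\mathbf{Q}^+(g,h)(s)\,ds\Big]_{\delta_*}dv\lesssim t^{\alpha}\,\|g\|_{\timeldelta}\|h\|_{\timeldelta},
\end{equation*}
with $\alpha>0$ and $\delta_*<\delta$. To prove it we change variables $(s,v_*,\omega)\mapsto$ a spatial displacement, using that $v'$ ranges over the sphere $\mathtt{S}_{v,v_*}$. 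This is the toy homogenization mechanism (Proposition \ref{prop:toy_example} / Proposition \ref{prop:point_homogenization}), which we take as given. It yields a bound in terms of $\sup_x\int [g]_\delta$ rather than pointwise values. The Gaussian weight is handled by energy conservation $|v'|^2+|v_*'|^2=|v|^2+|v_*|^2$, so $e^{\gamma_*|v|^2}\le e^{\gamma_*(|v'|^2+|v_*'|^2)}$. The ball radius must shrink, $\delta_*<\delta$, because $|x-(t-s)v|$-shifts mix nearby points.

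With this bilinear estimate, the Duhamel series (equivalently the Picard iteration of \eqref{eq:mild_formulation}) converges in $\timeldeltastar$ for $T_*$ small, depending only on $(\gamma,\delta,\|f^0\|_{\ldelta})$. For non-negativity, we would use the standard gain/loss splitting $\partial_t f+v\cdot\nabla f+f\,\mathcal{L}(f)=\mathbf{Q}^+(f,f)$ with monotone Kaniel–Shinbrot type iterates, or rewrite the equation in exponential form along characteristics, and check positivity at each step. Uniqueness comes from the same bilinear estimate applied to the difference of two solutions. We expect that iterating the homogenization twice will be needed, because a single step may lose too much in $\delta$. Local conservation laws follow from the usual symmetries of $\mathbf{Q}$, once the Gaussian weight justifies integrability of $(1,v,|v|^2)\mathbf{Q}$.

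For weak stability, we expand $f_n(t)$ and $f(t)$ in Duhamel series truncated at $k$ collisions. Uniform convergence of the series lets us fix $k$. Each finite term is a multilinear integral of $f_n^0$ over pseudo-trajectories, with spatial arguments $x-\sum(\cdot)$ depending on the integration variables. The equicontinuity assumption \eqref{assump:equi_continuity} lets us approximate each term by a Riemann sum in time and in the collision parameters, so that only finitely many spatial points are involved. The pointwise weak convergence \eqref{assump:initial_weak_convergence} then passes to the limit in the velocity integrals. Dominated convergence, via the $\ldelta$ bounds, controls the remainder.

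The main obstacle is the homogenization estimate for $\mathbf{Q}^+$: it requires a careful Jacobian computation of the map from $(s,v_*,\omega)$ to the spatial point. Near-grazing collisions, where $(v-v_*)\cdot\omega\approx0$, and small relative velocities degenerate this map. We would first try to excise these sets with small measure, using the factor $((v_*-v)\cdot\omega)_+$ to absorb the degeneracy.
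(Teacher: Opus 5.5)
\textbf{Gap: the existence and uniqueness arguments iterate a lossy one-step estimate.} The bilinear bound you propose maps $\timeldelta\times\timeldelta$ into $\timeldeltastar$, so it loses both parameters. At the next level of the Duhamel series (or Picard iteration), its inputs are previous outputs, which lie only in the weaker space. A tree term of depth $D$ therefore needs $D$ nested applications with parameters $(\gamma_k,\delta_k)$ decreasing along each branch. The per-step constants blow up as the increments shrink: Lemma \ref{lem:transport_loss} gives $1+(t/(\delta_k-\delta_{k+1}))^3(\gamma_k-\gamma_{k+1})^{-3/2}$, and the cross-section gives $(\gamma_k-\gamma_{k+1})^{-1/2}$. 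Nothing in your plan makes their product summable against the time integrals, and iterating the homogenization ``twice'' does not address unboundedly many levels. The same defect breaks uniqueness via a difference estimate, since a Gronwall argument cannot close across a loss of parameters, and it breaks any monotone-iteration proof of positivity. The estimate you call the main obstacle is in fact elementary. Since $\mathbf{Q}^+$ is positive and local in $x$, we have $[\mathbf{Q}^+(g,h)]_{\delta}(x,\cdot)\le\mathbf{Q}^+\big([g]_\delta(x,\cdot),[h]_\delta(x,\cdot)\big)$. The weak form of $\mathbf{Q}^+$ with $|v'|^2\le|v|^2+|v_*|^2$ gives the weighted $L^1_v$ bound. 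Lemma \ref{lem:transport_loss} then yields your inequality with $\alpha=1$, with no Jacobian or grazing analysis. The real difficulty is the accumulation of losses, not the one-step bound.

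\textbf{How the paper avoids this.} It bounds each complete tree term at once. Proposition \ref{prop:uniform_L1_estimate} gives $\|\mathbf{C}_h[\T]\|_{\timeldeltastar}\le(C_{\gamma,\delta}\|h\|_{\timeldelta})^{|\mathcal{L}(\T)|}\|\mathbf{C}_{M_{\gamma(0)}}[\T]\|_{\timeldeltastar}$, so the loss $(\gamma,\delta)\to(\gamma_*,\delta_*)$ is paid once per tree. This comes from the homogenization procedure, which replaces leaves bottom-up by $M_{\gamma(0)}$ using collision geometry in velocity space. Its ingredients are:
\begin{itemize}
\item the Carleman representation;
\item the changes of variables of Lemmas \ref{lem:geometric_change_of_variable} and \ref{lem:geometric_change_of_variable_2}, which make sibling leaf velocities free;
\item velocity cubes on which the positions $\mathbf{x}_{\mathcal{P}\l}$ are frozen inside $[h]_{\delta\Delta}$;
\item a not-too-small set $B$ onto which concentrated mass is moved at the cost of Gaussian weight.
\end{itemize}
Overlaps are factored into overlap functions (Lemma \ref{lem:full_to_prime}), whose regularity (Lemma \ref{lem:overlap_regularity}) costs at most $10^{|\mathcal{O}(\T)|}$. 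The Maxwellian trees are spatially homogeneous and satisfy the classical bound $C^{|\mathcal{L}|}T^{|\mathcal{L}|-1}$; with the $O(8^N)$ count of trees, the series converges. Uniqueness follows from the expansion of Lemma \ref{lem:iterated_duhamel} and the same proposition with $h=G$. The regularizing mechanism is not time-averaging by transport, as you describe: Proposition \ref{prop:toy_example} involves no transport at all.

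\textbf{Weak stability.} Your outline is close to the paper's, with two corrections. You must also discretize the leaf velocities, since the positions $\mathbf{x}_{\l}$ depend on them. You also need a parametrization in which the leaf velocities are free with a continuous Jacobian (the reversed Carleman measure) before \eqref{assump:initial_weak_convergence} can be applied factor by factor.
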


Now we construct two examples of singular initial data $f^0\in \ldelta(\mathds{R}^3\times \mathds{R}^3)$. We denote $\varphi:\mathds{R}^3\mapsto\mathds{R}_{\geq 0}$ as an arbitrary smooth and compactly supported function.

\textit{Two clouds of gases:}

For the first example of singular initial data $f^0\in \ldelta(\mathds{R}^3\times \mathds{R}^3)$, let us consider two clouds of gases with opposite velocities, moving towards each other. Take arbitrary $\zeta>0$ and construct the initial data
\begin{equation}\label{eq:two_cloud}
f^0(x,v)=\underbrace{\mathds{1}_{|x-(-1,0,0)|\leq \frac{1}{2}}\frac{\mathds{1}_{|v-(1,0,0)|\leq \zeta}}{\zeta^3}}_{\textup{\blue{cloud of gas at left going right}}}+\underbrace{\mathds{1}_{|x-(1,0,0)|\leq \frac{1}{2}}\frac{\mathds{1}_{|v-(-1,0,0)|\leq \zeta}}{\zeta^3}}_{\textup{\blue{cloud of gas at right going left}}}.
\end{equation}
Here the parameter $\zeta>0$ can be arbitrarily small, describing the degree of concentration of the velocity distribution. For small $\zeta>0$, the velocities are more concentrated around $(-1,0,0)$ and $(1,0,0)$. Theorem \ref{th:well-posedness} gives the local well-posedness of Boltzmann equation \eqref{eq:Boltzmann} with initial data $f^0$ in \eqref{eq:two_cloud} on a time interval \textit{independent} from the concentration parameter $\zeta>0$. A simulation of a hard sphere gas with initial data similar to \eqref{eq:two_cloud} is given in Figure \ref{fig:eight_figures}. 

\begin{figure}[h]
    \centering
    \begin{tabular}{cccc}
        \includegraphics[width=0.23\textwidth]{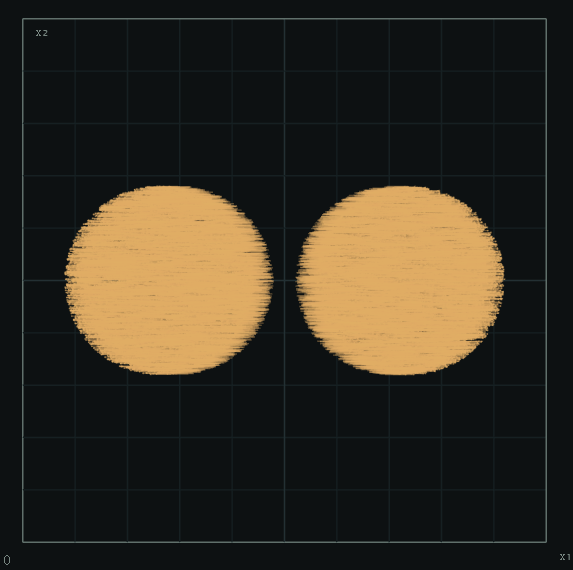}
        &
        \includegraphics[width=0.23\textwidth]{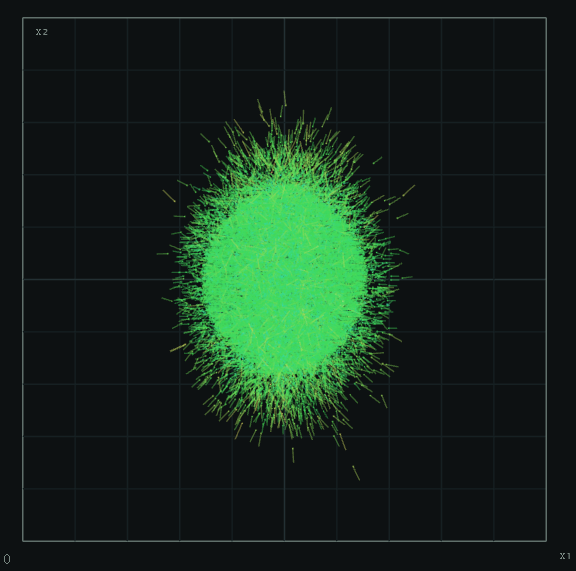}
        &
        \includegraphics[width=0.23\textwidth]{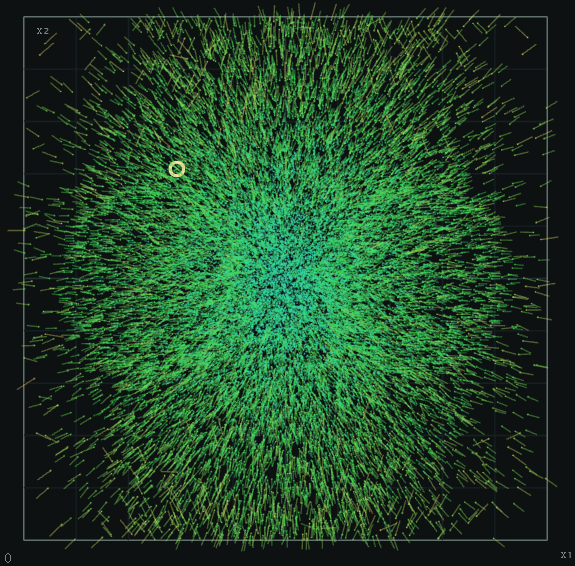}
        &
        \includegraphics[width=0.23\textwidth]{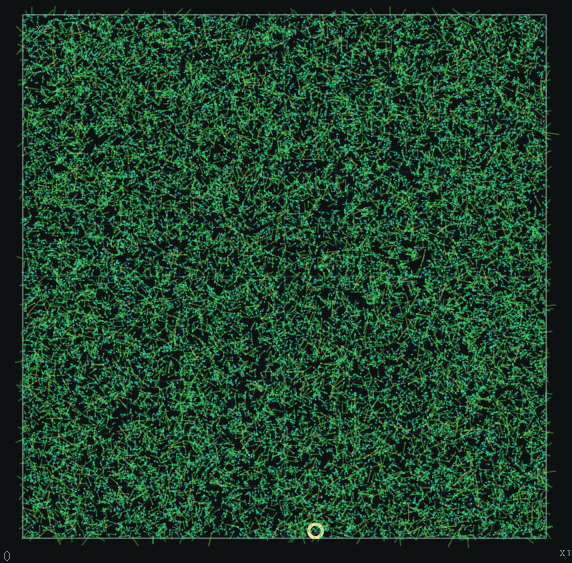}
        \\[2mm]

        \includegraphics[width=0.23\textwidth]{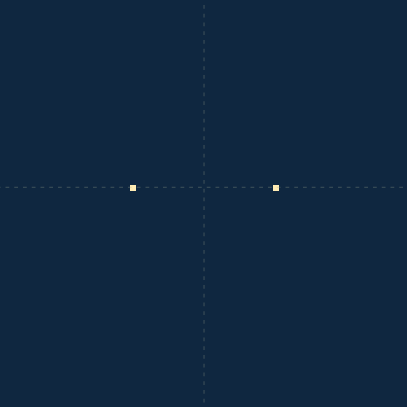}
        &
        \includegraphics[width=0.23\textwidth]{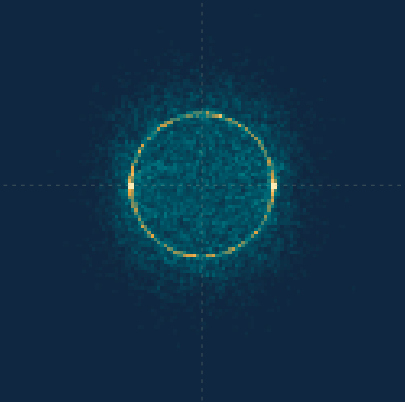}
        &
        \includegraphics[width=0.23\textwidth]{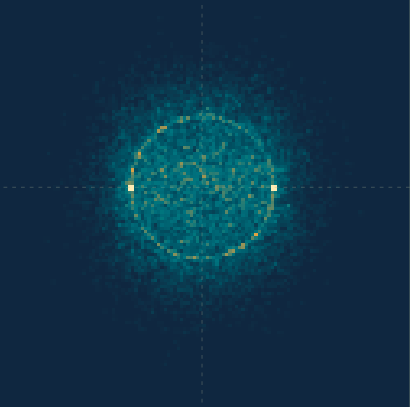}
        &
        \includegraphics[width=0.23\textwidth]{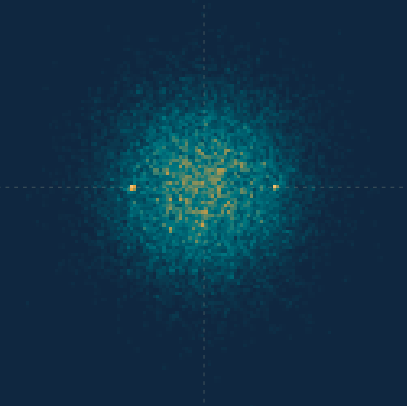}
        \\[1mm]

        \footnotesize Column 1
        &
        \footnotesize Column 2
        &
        \footnotesize Column 3
        &
        \footnotesize Column 4
    \end{tabular}

    \caption{This group of pictures is from a numerical simulation of $20000$ hard sphere particles on a two-dimensional torus. The initial data of the hard sphere gas is the two-dimensional analogue of \eqref{eq:two_cloud}, with two clouds of gases having the opposite velocities. The first row is the visualization of spatial configurations of particles, and the second row is the visualization of velocity configurations of particles. Time increases from column 1 to column 4. As time increases, the velocity distribution is spread from two singularities $(-1,0)$ and $(1,0)$ onto the collision sphere $\mathtt{S}_{(-1,0),(1,0)}$, then to the full space $\mathds{R}^2$.}
    \label{fig:eight_figures}
\end{figure}

\textit{Infinite clouds of gases:}

The example in \eqref{eq:two_cloud} can be further generalized. Suppose we have a sequence of positions $\{p_n\}_{n<+\infty}\subset\mathds{R}^3$, a sequence of velocities $\{u_n\}_{n<+\infty}\subset \mathds{R}^3$, and a sequence of positive numbers $\{\zeta_n\}_{n<+\infty}$ with $\lim_{n\rightarrow +\infty}\zeta_n=0$. We can construct the initial data $f^0$ as an infinite superposition of clouds of gases, with the $n$-th cloud supported on $|x-p_n|\leq 1$ and its velocity concentrated around $u_n$
\begin{equation}\label{eq:infinite_cloud}
f^0(x,v)=\sum_{n<+\infty}\mathds{1}_{|x-p_n|\leq 1}\frac{1}{\zeta_n^3}\varphi\Big(\frac{v-u_n}{\zeta_n}\Big)e^{-\gamma|v|^2}.
\end{equation}
The sequence $\{p_n\}_{n<+\infty}$ can be chosen such that $\inf_{n\not=n'<+\infty}|p_{n}-p_{n'}|>2$, to make sure different clouds are spatially separated at initial time. The other parameters and function $\varphi$ can also be properly chosen such that different clouds have different velocities. We first notice that $f^0$ in \eqref{eq:infinite_cloud} does not belong to the space $L_{\gamma,v}^{1}L_{x}^{\infty}$ in \eqref{eq:function_space_compare}
\begin{equation}
\sup_{x\in\mathds{R}^3}|f^0(x,v)|=\sum_{n<+\infty}\frac{1}{\zeta_n^3}\varphi\Big(\frac{v-u_n}{\zeta_n}\Big)e^{-\gamma|v|^2},\quad \int_{\mathds{R}^3}\sup_{x\in\mathds{R}^3}|f^0(x,v)|e^{\gamma|v|^2}dv=\int_{\mathds{R}^3}\sum_{n<+\infty}\frac{1}{\zeta_{n}^3}\varphi\Big(\frac{v-u_n}{\zeta_n}\Big)dv=+\infty.
\end{equation}
While we can verify $f^0$ in \eqref{eq:infinite_cloud} belongs to the space $\mathbf{L}_{\gamma,\delta}^1(\mathds{R}^3\times \mathds{R}^3)$ in Definition \ref{def:Ldeltasup} with $\delta<+\infty$, it is an example showing $\mathbf{L}_{\gamma,+\infty}^1(\mathds{R}^3\times \mathds{R}^3)\subsetneq \mathbf{L}_{\gamma,\delta}^1(\mathds{R}^3\times \mathds{R}^3)$.

\subsection{Statement of Kinetic Limit Result}

Theorem \ref{th:well-posedness} establishes the local well-posedness result of Boltzmann equation \eqref{eq:Boltzmann} with initial data $f^0\in \ldelta$, together with the weak stability estimate \eqref{eq:mixing_PDE_statement} at PDE level. In this subsection, we state the kinetic limit from hard sphere gas to Boltzmann equation, for the same class of singular initial data $f^0$ with certain natural additional conditions. In order to have finite mass, we work with initial data $f^0$ on the torus $\mathds{T}^3$. The space $\ldelta$ can be also defined on the torus $\mathds{T}^3$ with the norm $\lVert \cdot\rVert_{\ldelta(\mathds{T}^3\times \mathds{R}^3)}$
\begin{equation}\label{eq:L1_torus}
\lVert f\rVert_{\ldelta(\mathds{T}^3\times \mathds{R}^3)}:=\sup_{x\in\mathds{T}^3}\int_{\mathds{R}^3}[f]_{\delta}(x,v)e^{\gamma|v|^2}dv,\quad [f]_{\delta}(x,v):=\sup_{x'\in\mathds{T}^3:|x'-x|\leq \delta}|f(x',v)|.
\end{equation}
All the results in Theorem \ref{th:well-posedness} are still true if we change from full space $x\in\mathds{R}^3$ to torus $x\in\mathds{T}^3$, because a function on $\mathds{T}^3$ can be seen as a periodic function on $\mathds{R}^3$. Notice that on the torus, the space $\mathbf{L}_{\gamma,\delta}^1(\mathds{T}^3\times \mathds{R}^3)$ is equivalent to $\mathbf{L}_{\gamma,+\infty}^1(\mathds{T}^3\times \mathds{R}^3)$ when $\delta\not=0$. Therefore in the study of hard sphere gas, we assume $f^0\in \mathbf{L}_{\gamma,+\infty}^{1}(\mathds{T}^3\times \mathds{R}^3)$.

\begin{theorem}\label{th:kinetic_limit}
\textbf{\textup{[Kinetic Limit]}} Consider any parameter $\gamma>0$ and non-negative initial data $f^0\in \mathbf{L}_{\gamma,+\infty}^1(\mathds{T}^3\times \mathds{R}^3)$, and the hard sphere gas with initial data in \eqref{eq:grand_canonical_measure}. Under the Boltzmann-Grad scaling $\mu_{\eps}\eps^{2}\equiv 1$, the empirical measure $\pi_t^{\eps}$ converges in probability to the unique solution $f(t)$ with initial data $f^0$ of Boltzmann equation \eqref{eq:Boltzmann} derived in Theorem \ref{th:well-posedness}.

\textup{\textbf{[Weak Stability]}} Take any sequence $\{f_n^0\}_{n<+\infty}\subset \mathbf{L}_{\gamma,+\infty}^1(\mathds{T}^3\times \mathds{R}^3)$ of continuous-in-space non-negative initial data with uniformly bounded $\mathbf{L}_{\gamma,+\infty}^1(\mathds{T}^3\times \mathds{R}^3)$-norm, and any single continuous-in-space non-negative function $f^0\in \ldelta$. Assume $\{f_n^0\}$ and $f^0$ satisfy the initial weak convergence condition \eqref{assump:initial_weak_convergence} and the equi-continuity condition \eqref{assump:equi_continuity}. Then there exists a function $n(\eps):\mathds{R}_{\geq 0}\rightarrow \mathds{N}$ such that for any test function $\varphi(x,v)$, any $t\in [0,T_*]$
\begin{equation}
\lim_{\eps\rightarrow 0}\mathds{P}_{\eps,f_{n(\eps)}^0}\Big[\ \Big|\pi_t^{\eps}(\varphi)-\int f(t,x,v)\varphi(x,v)dvdx\Big|>\delta\ \Big]=0,
\end{equation}
where $\mathds{P}_{\eps,f_{n(\eps)}^{0}}$ is the probability measure of hard sphere gas with diameter $\eps$ and initial data $f_{n(\eps)}^0$.
\end{theorem}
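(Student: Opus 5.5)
We follow the Lanford strategy of \cite{gallagher2014newton} through the BBGKY hierarchy, adapted to the singular data by the homogenization of collision geometries that underlies Theorem \ref{th:well-posedness}. The first step is to write the rescaled correlation functions $f_s^{\eps}(t)$ of the grand canonical hard sphere gas as a Duhamel series over collision trees. Each term is an integral over backward pseudo-trajectories with $\eps$-adjustments, and it is evaluated on the initial correlation functions $f_s^{\eps,0}$. The Boltzmann solution $f(t)$ is expanded in the same way: iterating \eqref{eq:mild_formulation} gives a series of tensorized terms $f^{\otimes s}$ indexed by trees, built from the same Boltzmann pseudo-trajectories evaluated on $(f^0)^{\otimes s}$. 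Convergence in probability of $\pi_t^\eps(h)$ then reduces to two statements. The first is convergence of the one-particle marginal $f_1^\eps(t)\to f(t)$ tested against $h$. The second is asymptotic factorization $f_2^\eps(t)\approx f_1^\eps(t)^{\otimes 2}$, which controls the variance of $\pi^\eps_t(h)$ and gives a Chebyshev bound. For the grand canonical measure \eqref{eq:grand_canonical_measure}, the exclusion $\mathbbm{1}_{\mathcal{D}_N^\eps}$ gives $f_s^{\eps,0}=(f^0)^{\otimes s}\mathbbm{1}_{\mathcal{D}_s^\eps}+O(\eps)$ corrections, using a cluster expansion of $\mathcal{Z}^\eps$.

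The second step is a uniform bound on the series. In the classical proof the bound $f^0\le Ce^{-\beta|v|^2}$ gives terms of size $(Ct)^k$. Here only $\sup_x f^0(x,\cdot)\in L^1_\gamma$ is available, so at each collision we integrate the velocity variable against $[f^0]_{+\infty}(v)e^{\gamma |v|^2}$, which is integrable, rather than taking a sup. The convention is that the loss or gain factor $|(v_*-v)\cdot\omega|$ is absorbed into the Gaussian weight, with $\gamma$ lowered slightly at each generation to $\gamma_*$, and that the integration over $v_{k}$ is performed against the initial datum of particle $k$. On $\mathds{T}^3$ we have $\mathbf{L}^1_{\gamma,+\infty}=\ldelta$, so the estimates behind Theorem \ref{th:well-posedness} carry over and give a bound of the form $\sum_k (C t\lVert f^0\rVert)^k$ for $t\le T_*$. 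The order of integration is the delicate point, because the parent velocities depend on the children through the collision rule \eqref{eq:pre_collision}. For this I would invoke the homogenization results (Proposition \ref{prop:toy_example}, Proposition \ref{prop:point_homogenization}): after one collision the singular velocity law is spread over the collision sphere $\mathtt{S}_{v,v_*}$, and after integrating over $\omega$ it becomes integrable against the singular data of the next particle. The same estimate applies verbatim to the $\eps$-hierarchy, since the pseudo-trajectories differ only by $O(\eps)$ shifts in position.

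The third step is to truncate the series at $k\le K$ collisions, with energy bounded by $R$, and to remove the small-measure sets where recollisions occur or where collision times are closer than a cutoff. On the good set, the BBGKY and Boltzmann pseudo-trajectories coincide up to $O(\eps)$ positional shifts with identical velocities. The main obstacle is here. The classical argument compares $f^0(x^\eps_i,v_i)$ with $f^0(x_i,v_i)$ using continuity of $f^0$ in $x$, which we do not have for the first statement. It also bounds the bad sets using $L^\infty$ bounds in $v$, which fail. I would first restrict the velocity integrals against $[f^0]_{+\infty}e^{\gamma|v|^2}$ and use the homogenization bound to show that the measure of the recollision sets, weighted by this singular density, still tends to zero. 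For the $x$-shifts, I would approximate $f^0$ in the $\mathbf{L}^1_{\gamma,+\infty}$ sense by functions continuous in $x$. I would then use the Lipschitz-in-data character of the tree expansion, which follows from multilinearity together with the uniform bound of step two, to control the approximation error uniformly in $\eps$.

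The weak stability part follows by a diagonal argument. For each $n$, the previous steps give $\eps_n$ such that the BBGKY marginals started from $f_n^0$ are within $1/n$ of $f_n(t)$ for $\eps<\eps_n$. By the weak stability in Theorem \ref{th:well-posedness}, $f_n(t)$ converges weakly to $f(t)$. We then choose $n(\eps)$ nondecreasing to infinity slowly enough that $\eps<\eps_{n(\eps)}$. Uniformity of the constants in $n$ follows from the uniform norm bound, and the equi-continuity assumption \eqref{assump:equi_continuity} makes the $O(\eps)$ shift errors uniform in $n$.
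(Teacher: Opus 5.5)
Your proposal has a genuine gap at its core, in the second step. The homogenization estimates you invoke (Lemmas \ref{lem:double_collision}--\ref{lem:indi_homo}, Proposition \ref{prop:uniform_L1_estimate}) are proved for Boltzmann pseudo-trajectories. There particles move freely between creation times, every velocity change is a creation collision, and the Carleman representation (Lemma \ref{lem:carleman_representation}) with its collision-sphere geometry is available. In the $\eps$-BBGKY Duhamel series the particles are transported between creation times by the interacting $s$-particle flow, so recollisions change velocities. Your BBGKY pseudo-trajectories are therefore not the Boltzmann ones up to $O(\eps)$ positional shifts, and the claim that the estimate ``applies verbatim'' is false. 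Bounding $f^0$ by $[f^0](v)=\sup_x f^0(x,v)$ removes the dependence on positions, but not on the scattered velocities at which $f^0$ is evaluated.

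In the argument of \cite{gallagher2014newton}, recollisions are discarded only after a weighted $L^\infty$ a priori bound on the BBGKY series is available, and that bound is exactly what fails when only $[f^0]\in L^1_\gamma$. With singular data, discarding the recollision set would need an integrable majorant of the BBGKY integrand including recollisions. Homogenization cannot supply one, since it presupposes recollision-free trajectories, so your argument is circular. Your instinct to integrate against $[f^0]e^{\gamma|v|^2}$ is right, but it works cleanly only when the initial velocities are themselves the integration variables.

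That is what the paper exploits, avoiding the backward hierarchy altogether. It expands the cumulant generating functional forward in time as an integral over clusters (Lemma \ref{lem:modified_partition_function}), with the initial configurations $Z_{n,0}$ as integration variables. Then $f^0(z_{i,0})\le[f^0](v_{i,0})$ holds directly. The initial positions are integrated at fixed initial velocities via the geometric estimate \eqref{eq:integrate_relative_positions}, and the velocity integral only needs $\int[f^0]e^{\gamma|v|^2}dv<\infty$ (Lemma \ref{lem:cluster_bound}). No homogenization is needed at the particle level; it enters only through Theorem \ref{th:well-posedness}, to identify the limit. Recollisions are then removed at fixed velocities by dominated convergence (Lemma \ref{lem:eliminate_recollision}), with the majorant supplied by Lemma \ref{lem:cluster_bound}.

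Your treatment of the position shifts also fails as stated. Functions continuous in $x$ are not dense in $\mathbf{L}^1_{\gamma,+\infty}$, because the norm contains $\sup_x$. Take $f^0=\mathds{1}_B(x)\phi(v)$ with $B$ a ball, as in \eqref{eq:two_cloud}. Every $g$ continuous in $x$ satisfies $\sup_x|f^0(x,v)-g(x,v)|\ge\phi(v)/2$, so its distance to $f^0$ is at least $\frac12\lVert\phi\rVert_{L^1_\gamma}$. Hence a Lipschitz-in-data bound in that norm cannot control $f^0(x_i^\eps,v_i)-f^0(x_i,v_i)$. You would need an error estimate in an $L^1_x$-type norm, which your backward scheme does not provide.

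In the paper's forward parametrization by $(y,V_{n,0},\Omega_{n-1},S_{n-1})$, the $\eps$-dependence of the initial positions is an $O(\eps)$ relative shift, while the center of mass $y$ is integrated freely. Passing to $\eps\to0$ then only needs continuity of translations in $L^1(\mathds{T}^3)$ for each fixed $f^0(\cdot,v)\in L^\infty(\mathds{T}^3)$, plus dominated convergence in the remaining variables. No continuity of $f^0$ in $x$ is required.

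Your reduction to $f_1^\eps$ and $f_2^\eps$, the analogue of \eqref{eq:CGF_goal}, is fine. Your diagonal argument for the weak stability part is also fine and matches the paper.
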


\subsection{Idea of the Proof of Main Results}

Given initial data $f^0\in \ldelta$, the unique mild solution of Boltzmann equation \eqref{eq:Boltzmann} in Theorem \ref{th:well-posedness} will be constructed as the summation of a series-expansion. Each term in this series-expansion is indexed by a \textit{decorated binary tree}. To this end, we define several necessary combinatorial notions.

\begin{definition}\label{def:dec_binary_tree}
\textup{\textbf{[Decorated binary tree]}} Let $\T$ be a proper binary tree (each node has either $0$ or $2$ child nodes).
\vspace{-0.3cm}
\begin{itemize}
    \item Define $\mathcal{N}(\T)$ as the set of nodes of $\T$, and $\mathfrak{r}(\T)$ as the root, and $\mathcal{N}_{\new}(\T)$ as $\mathcal{N}(\T)\setminus\{\mathfrak{r}(\T)\}$. Define $\mathcal{B}(\T)$ as the set of branching nodes, and $\mathcal{L}(\T)$ as the set of leaves. For any non-root node $\n\in \nnew(\T)$, define $\mathcal{P}\mathfrak{n}$ as the parent node of $\n$.
    \vspace{-0.2cm}
    \item Furthermore, each branching node $\n\in\mathcal{B}(\T)$ is decorated as either "collision" or "overlap". Define $\mathcal{O}(\T)$ as the set of branching nodes with decoration "overlap". Define $\mathcal{B}^+(\T)$ as the set of branching nodes with decoration "collision", and $\mathcal{N}^+(\T)$ as $\mathcal{N}^+(\T):=\mathcal{B}^+(\T)\cup\mathcal{L}(\T)$. A collisional decorated tree is a decorated proper binary tree without any overlap nodes.
\end{itemize}
From now on, we may refer to decorated proper binary trees simply by 'decorated trees'.
\end{definition}

Now we can define the terms in the series expansion indexed by decorated trees, and the terms will be called as Boltzmann integrals. 

\begin{definition}\label{def:C_expansion}
\textbf{\textup{[Boltzmann Integral]}} Consider any function $h(t,x,v)$. The Boltzmann integrals $\mathbf{C}_{h}[\T]$ are defined recursively for all decorated trees $\T$. If $\T$ is just a single node $\mathcal{T}=\circ$, then $\mathbf{C}_{h}[\T](t)=h(t)$. If $\T$ has at least one branching node, then denote the left sub-tree of the root $\mathfrak{r}(\T)$ as $\T_l$, and the right sub-tree as $\T_r$
\begin{equation}\label{eq:expansion_term}
\mathbf{C}_{h}[\T](t)=
\left\{
\begin{array}{lr}
\int_0^t\mathbf{S}_{t-s}\mathbf{Q}^+\Big(\mathbf{C}_{h}[\T_l](s),\mathbf{C}_{h}[\T_r](s)\Big)ds     & \textup{ if the root of $\T$ is 'collision',} \\
\int_0^t\mathbf{S}_{t-s}\mathbf{Q}^-\Big(\mathbf{C}_{h}[\T_l](s),\mathbf{C}_{h}[\T_r](s)\Big)ds     & \textup{ if the root of $\T$ is 'overlap'.}
\end{array}
\right.
\end{equation}
Throughout the paper if we take $h(t,x,v)$ as $\big(\mathbf{S}_tf^0\big)(x,v)$ with $f^0$ as a time-independent function, we simply abbreviate $\mathbf{C}_{\mathbf{S}_{\cdot}f^0}[\mathcal{T}]$ as $\mathbf{C}_{f^0}[\T]$.
\end{definition}

To further motivate the definition of Boltzmann integrals, recall the mild formulation of Boltzmann equation in \eqref{eq:mild_formulation}. We can insert the formulation \eqref{eq:mild_formulation} into itself
\begin{equation}\label{eq:duhamel_itself_intro}
\begin{split}
f(t)=\mathbf{S}_tf^0+\int_0^t\mathbf{S}_{t-s}\mathbf{Q}\big(f(s),f(s)\big)ds&=\mathbf{S}_tf^0+\int_0^t\mathbf{S}_{t-s}\mathbf{Q}\big(\mathbf{S}_sf^0,f(s)\big)ds+\int_0^t\mathbf{S}_{t-s}\mathbf{Q}\Big(\int_0^s\mathbf{S}_{s-s_1}\mathbf{Q}(f(s_1),f(s_1))ds_1,f(s)\Big)ds\\
&=\mathbf{S}_tf^0+\int_0^t\mathbf{S}_{t-s}\mathbf{Q}\big(\mathbf{S}_sf^0,S_{s}f^0\big)ds+\int_0^t\mathbf{S}_{t-s}\mathbf{Q}\Big(\int_0^s\mathbf{S}_{s-s_1}\mathbf{Q}(f(s_1),f(s_1))ds_1,f(s)\Big)ds+...\\
&=\mathbf{C}_{f^0}[\circ](t)+\mathbf{C}_{f^0}[\CollisionFork](t)-\mathbf{C}_{f^0}[\OverlapFork](t)+\mathbf{C}_{f(\cdot)}\Big[\LeftBranchTree\Big](t)-\mathbf{C}_{f(\cdot)}\Big[\LeftOverlapTree\Big](t)+...
\end{split}
\end{equation}
Continuing to insert the mild formulation \eqref{eq:mild_formulation}, the solution $f(t)$ can be expressed as a summation of Boltzmann integrals $\mathbf{C}_{f^0}[\T]$, with some other Boltzmann integrals $\mathbf{C}_{f(\cdot)}[\T]$ involving the time-dependent function $f(\cdot)$ itself (and there are some other mixed terms). The Boltzmann integral in Definition \ref{def:C_expansion} is constructed to allow time-dependent functions $h$ allow us to control these 'error' terms and prove them converging to $0$ as we iterate more and more.

Under the notation of decorated trees and given $f^0\in \ldelta$, we aim to prove there exists $(\gamma_*>0,\delta_*>0,T_*>0)$ such that the following series expansion absolutely converges under the norm $L^{\infty}\ldeltastar([0,T_*]\times \mathds{R}^3\times \mathds{R}^3)$. Then we show it is the unique mild solution of the Boltzmann equation \eqref{eq:Boltzmann} with initial data $f^0$
\begin{equation}\label{eq:formal_summation}
f(t):=\sum_{\T:\textup{ decorated tree}}(-1)^{|\mathcal{O}(\T)|}\mathbf{C}_{f^0}[\T](t).
\end{equation}
The proof of Theorem \ref{th:well-posedness} is across Sections \ref{sec:combinatorial_objects}-\ref{sec:proof_of_main}. In this section we explain the core idea of the proof by a toy example. Suppose $f^0\in \ldelta$ is a space-independent function, which can have infinite $L^{\infty}$-norm according to the Definition \ref{def:Ldeltasup} of $\ldelta$. We can show after several collisions, the distribution becomes uniformly bounded. The following proposition only deals with space-independent initial data $f^0$ for the convenience of exposition, and the same result is true for any $f^0\in \ldelta$.

Across the paper, we use $C_{\gamma,\delta}$ as generic constants depending on $(\gamma,\delta)$, which may change term by term. We use $\mathtt{C}$ as generic constants that do not depend on $\gamma$ or $\delta$.

\begin{proposition}\label{prop:toy_example}
For any parameters $(\gamma>0,\delta>0)$, and any space-independent initial data $f^0\in \ldelta$ with $\lVert f^0\rVert_{\ldelta}\leq 1$, for any $(t,x,v)\in [0,+\infty)\times \mathds{R}^3\times \mathds{R}^3$ we have
\begin{equation}
\sup_{(x,v)\in \mathds{R}^3\times \mathds{R}^3}\Big|\mathbf{C}_{f^0}\Big[\DoubleBinaryTree\Big](t,x,v)\Big|<+\infty.
\end{equation}
\end{proposition}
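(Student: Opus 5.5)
Since $f^0$ is space-independent, $\mathbf{S}_tf^0=f^0$. Every Boltzmann integral is then space-independent, and the transport semigroup acts trivially. The tree $\DoubleBinaryTree$ is the full binary tree of depth two with all three branching nodes decorated "collision". Hence
\begin{equation*}
\mathbf{C}_{f^0}\Big[\DoubleBinaryTree\Big](t,v)=\int_0^t\mathbf{Q}^+\big(g(s),g(s)\big)ds,\qquad g(s):=\mathbf{C}_{f^0}[\CollisionFork](s)=s\,\mathbf{Q}^+(f^0,f^0).
\end{equation*}
Because $f^0$ is space-independent, $[f^0]_\delta=f^0$ and the hypothesis reads $\int f^0(v)e^{\gamma|v|^2}dv\le 1$. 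Thus $f^0\in L^1_\gamma$ only, and it may be unbounded. It therefore suffices to show that $\mathbf{Q}^+(g,g)\in L^\infty_v$ whenever $f^0\in L^1_\gamma$. The time integral then contributes only the factor $\int_0^t s^2ds=t^3/3$, which is finite for each fixed $t$ (the statement is pointwise in $t$). The plan is to gain regularity in two steps: one gain operator takes $L^1_\gamma\times L^1_\gamma$ into an $L^p$ space with $p>1$, and a second gain operator takes $L^p\times L^p$ into $L^\infty$.

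\textbf{Step 1: $\mathbf{Q}^+(f^0,f^0)\in L^p$ with weight.} I would use the Carleman representation. For fixed $v$, the map $(v_*,\omega)\mapsto(v',v_*')$ is reparametrized by $v'$ ranging over $\mathds{R}^3$ and $v_*'$ ranging over the plane through $v$ orthogonal to $v'-v$ (Figure \ref{fig:collision_sphere}: $(v-v')\perp(v-v_*')$). This gives, for hard spheres,
\begin{equation*}
\mathbf{Q}^+(f,g)(v)=\mathtt{C}\int_{\mathds{R}^3}\frac{f(v')}{|v-v'|}\int_{E_{v,v'}}g(v_*')\,d\pi(v_*')\,dv',
\end{equation*}
where $E_{v,v'}$ is that plane. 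Energy conservation, $|v'|^2+|v_*'|^2=|v|^2+|v_*|^2\ge|v|^2$, transfers the weight: $e^{\gamma|v|^2}\mathbf{Q}^+(f,g)\le\mathbf{Q}^+(fe^{\gamma|\cdot|^2},ge^{\gamma|\cdot|^2})$, while the kernel $|v-v_*|$ is absorbed by slightly decreasing $\gamma$. I then need a bound of the form $\|\mathbf{Q}^+(f,g)\|_{L^p}\lesssim\|f\|_{L^1}\|g\|_{L^1}$ (weighted), valid for some $p>1$. This is the classical fact that $\mathbf{Q}^+$ is regularizing even for measure data. For $f=\delta_{u_1}$ and $g=\delta_{u_2}$, $\mathbf{Q}^+$ is (up to bounded factors) the surface measure on the sphere $\mathtt{S}_{u_1,u_2}$ of \eqref{eq:collision_sphere} divided by $|u_1-u_2|^{2}\cdot|u_1-u_2|^{-1}$. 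That is still singular, so $L^1\times L^1\to L^p$ fails pointwise. Instead I would prove $L^1\times L^1\to L^1_\gamma$ (with smaller $\gamma$), then $L^1\times L^p\to L^{q}$ with $q>p$ by the Bouchut--Desvillettes/Lions gain-of-integrability estimates, applied along the tree. Concretely, I would place the $L^1$ function in the $v'$-slot and the other in the plane-integral slot. The one-slot convolution $\int f(v')|v-v'|^{-1}(\cdot)dv'$ maps $L^1\to L^{3/2-}$ by the Hardy--Littlewood--Sobolev/weak-Young inequality.

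\textbf{Step 2: the second level to $L^\infty$.} At the root, both arguments are $g\in L^1_\gamma\cap L^{p}$ for some $p>1$ (or $g\in L^p$ after Step 1). Using Carleman again, the pointwise bound is $|\mathbf{Q}^+(g,g)(v)|\le\int|v-v'|^{-1}|g(v')|\,\Pi g(v,v')\,dv'$, where $\Pi g$ is the plane integral. The plane integral of $g$ is not bounded for $g\in L^p$ alone, which is why Step 1 must really produce regularity on planes. I would instead exploit the $\mathtt{S}_{u_1,u_2}$-structure: $g=\mathbf{Q}^+(f^0,f^0)$ is an average of sphere measures, and integrating an average of spheres over a plane yields an average of circle measures with density controlled by the angle. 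Combined with the $|v-v'|^{-1}$ weight, the worst case is a pair of nearly tangent configurations. I expect this to be the \textbf{main obstacle}: obtaining a uniform-in-$v$ bound using only the $L^1_\gamma$ norm of $f^0$, through the four-fold composition of Carleman changes of variables. The singular sets (tangency of planes with collision spheres, $v'\to v$) must be shown to be integrable in $3$ dimensions, and the Gaussian weight must handle large velocities. The homogenization idea of the paper suggests doing the full change of variables from $(v_*,\omega)$ at the root and from $(v_{*,l},\omega_l,v_{*,r},\omega_r)$ at the children. One would then bound the Jacobian so that the integrand becomes $\prod_{j=1}^4 f^0(w_j)$ integrated against a kernel bounded uniformly in $v$ by $C\prod_j e^{\gamma|w_j|^2/2}$. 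This yields $\sup_v|\mathbf{C}|\le C_{\gamma}t^3\|f^0\|_{L^1_\gamma}^4<+\infty$.
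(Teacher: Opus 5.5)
Your reduction to $\frac{t^3}{3}\mathbf{Q}^+(g,g)$ with $g=\mathbf{Q}^+(f^0,f^0)$ is correct. But the proposal never proves the one estimate that matters, and the integrability route of Steps 1--2 cannot supply it.

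For general $f^0\in L^1_\gamma$, the function $g$ need not lie in any $L^p$ with $p>1$. Take $f^0=\sum_n 2^{-n}\big(\phi_{\zeta_n}(\cdot-u)+\phi_{\zeta_n}(\cdot+u)\big)$, with bumps of width $\zeta_n\to0$ fast enough. Then $g$ contains pieces of mass $\sim 4^{-n}$ supported in shells of thickness $\sim\zeta_n$ around $\mathtt{S}_{u,-u}$, and H\"older gives $\lVert g\rVert_{L^p}=+\infty$. The Bouchut--Desvillettes/Lions gain-of-integrability bounds need one argument already in $L^p$. That never happens in a depth-two tree whose leaves are only in $L^1$. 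So your claim that at the root both arguments lie in $L^1_\gamma\cap L^p$ is unfounded.

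You do correctly observe that what the root actually consumes is control of $g$ on planes. But you leave this as the ``main obstacle'' and expect a singularity from near-tangent sphere/plane configurations. Your final ``bound the Jacobian'' step is only asserted.

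The missing idea is an exact identity: the paper's Lemma~\ref{lem:geometric_change_of_variable} and inequality \eqref{eq:plane_homogenization_toy_example}. Use $|v'-v|^{-1}d\pi_{v+(v'-v)^{\perp}}(v_*')=\delta\big((v'-v)\cdot(v_*'-v)\big)dv_*'$ together with $(v'-v)\cdot(v_*'-v)=|v-\frac{v'+v_*'}{2}|^2-\frac{|v'-v_*'|^2}{4}$. Integrating $v$ over any plane $\mathtt{PL}$ then produces $\int_{\mathds{R}^2}\delta(|y|^2-\rho^2)dy=\pi$, whatever the radius $\rho$ of the circle $\mathtt{S}_{v',v_*'}\cap\mathtt{PL}$. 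Hence
\[
\int\mathbf{Q}^+(f,g)\,d\pi_{\mathtt{PL}}=\pi\int f(v')g(v_*')\mathds{1}_{\mathtt{S}_{v',v_*'}\cap\mathtt{PL}\neq\varnothing}\,dv'dv_*'\leq\pi\lVert f\rVert_{L^1}\lVert g\rVert_{L^1}
\]
uniformly in $\mathtt{PL}$. Near tangency the circle shrinks exactly as fast as its density grows, so there is nothing to control.

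Applied to the plane integral over $v_*'$ at the root, this disposes of one inner factor. The paper handles the other factor with the symmetric form of the Carleman measure in Lemma~\ref{lem:Carleman_change_of_variables}, which puts it on a plane as well. The paper phrases both steps as replacing $\mathbf{Q}^+(f^0,f^0)$ by $C_\gamma\mathbf{Q}^+(M_{\gamma(0)},M_{\gamma(0)})$.

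Alternatively, $\mathbf{Q}^+(\delta_{w_1},\delta_{w_2})=|w_1-w_2|^{-1}d\sigma_{\mathtt{S}_{w_1,w_2}}$ has Newtonian potential at most $2\pi$. This gives $\int g(v')|v'-v|^{-1}dv'\leq 2\pi\lVert f^0\rVert_{L^1}^2$ directly. Altogether $\sup_v\mathbf{Q}^+(g,g)(v)\leq 2\pi^2\lVert f^0\rVert_{L^1}^4$, which is exactly the uniform kernel bound you asserted but did not derive.
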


\begin{proof}
Since $f^0$ is space-independent, the Boltzmann integral $\mathbf{C}_{f^0}[\DoubleBinaryTree]$ is also space-independent, and there exists a function $H:\mathds{R}\mapsto\mathds{R}_{\geq 0}$ such that
\begin{equation}
\mathbf{C}_{f^0}\Big[\DoubleBinaryTree\Big](t,x,v)=H(t)\mathbf{Q}^+\Big(\mathbf{Q}^+(f^0,f^0),\mathbf{Q}^+(f^0,f^0)\Big)(v).
\end{equation}
Consequently it is sufficient to prove
\begin{equation}\label{eq:QQQ_Linfty}
\sup_{v\in\mathds{R}^3}\Big|\mathbf{Q}^+\Big(\mathbf{Q}^+(f^0,f^0),\mathbf{Q}^+(f^0,f^0)\Big)(v)\Big|<+\infty.
\end{equation}
First expand the collision integral according to its definition in \eqref{eq:gain_and_loss_decomposition}
\begin{equation}
\begin{split}
\mathbf{Q}^+\Big(\mathbf{Q}^+\big(f^0,f^0\big),\mathbf{Q}^+\big(f^0,f^0\big)\Big)(v)&=\int \mathbf{Q}^+\big(f^0,f^0\big)(v')\mathbf{Q}^+\big(f^0,f^0\big)(v_*')\big((v_*-v)\cdot\omega\big)_+d\omega dv_*
\end{split}
\end{equation}
The Carleman change of variables in Lemma \ref{lem:Carleman_change_of_variables} allows us to change $(\omega,v_*)$ into $(v',v_*')$
\begin{equation}\label{eq:QQQ_1}
\mathbf{Q}^+\Big(\mathbf{Q}^+\big(f^0,f^0\big),\mathbf{Q}^+\big(f^0,f^0\big)\Big)(v)=\int \mathbf{Q}^+\big(f^0,f^0\big)(v')\mathbf{Q}^+\big(f^0,f^0\big)(v_*')|v'-v|^{-1}d\pi_{v+(v'-v)^{\perp}}(v_*')dv',
\end{equation}
where $v'$ is integrated as a free variable, and $v_*'$ falls on the orthogonal plane $v+(v'-v)^{\perp}$ determined by $v$ and $v'$
\begin{equation}\label{eq:orthogonal_plane_def}
v+(v'-v)^{\perp}:=\Big\{v_*'\in\mathds{R}^3\Big|\ (v'-v)\perp(v_*'-v)\Big\},\quad \textup{$d\pi_{\mathtt{PL}}$ is the uniform measure on the plane $\mathtt{PL}$.}
\end{equation}
A crucial observation is that there exists a parameter $\gamma(0)>0$ and a constant $C_{\gamma}$ such that
\begin{equation}\label{eq:plane_homogenization_toy_example}
\int \mathbf{Q}^+(f^0,f^0)(v)d\pi_{\mathtt{PL}}(v)\leq C_{\gamma}\int \mathbf{Q}^+(M_{\gamma(0)},M_{\gamma(0)})(v)d\pi_{\mathtt{PL}}(v),\quad \textup{for any plane $\mathtt{PL}\subset \mathds{R}^3$}.\tag{PL}
\end{equation}
Since \eqref{eq:plane_homogenization_toy_example} is true for any plane $\mathtt{PL}\subset \mathds{R}^3$, it is of course also true when $\mathtt{PL}=v+(v'-v)^{\perp}$. Equation \eqref{eq:plane_homogenization_toy_example} will be proved later, and now we show \eqref{eq:plane_homogenization_toy_example} actually proves \eqref{eq:QQQ_Linfty}, and therefore concludes the proof of Proposition \ref{prop:toy_example}. Apply \eqref{eq:plane_homogenization_toy_example} to \eqref{eq:QQQ_1}
\begin{equation}
\begin{split}
\mathbf{Q}^+\Big(\mathbf{Q}^+\big(f^0,f^0\big),\mathbf{Q}^+\big(f^0,f^0\big)\Big)(v)&\leq C_{\gamma}\int \mathbf{Q}^+\big(f^0,f^0\big)(v')\mathbf{Q}^+\big(M_{\gamma(0)},M_{\gamma(0)}\big)(v_*')|v'-v|^{-1}d\pi_{v+(v'-v)^{\perp}}(v_*')dv'\\
&=C_{\gamma}\mathbf{Q}^+\Big(\mathbf{Q}^+\big(f^0,f^0\big),\mathbf{Q}^+\big(M_{\gamma(0)},M_{\gamma(0)}\big)\Big)(v).
\end{split}
\end{equation}
Until now two singular initial data in \eqref{eq:QQQ_Linfty} have been \textit{homogenized} into Maxwellians $M_{\gamma(0)}$, and the same argument applies to the two remaining $f^0$
\begin{equation}\label{eq:toy_replacement}
\mathbf{Q}^+\Big(\mathbf{Q}^+\big(f^0,f^0\big),\mathbf{Q}^+\big(f^0,f^0\big)\Big)(v)\leq C_{\gamma}\mathbf{Q}^+\Big(\mathbf{Q}^+\big(M_{\gamma(0)},M_{\gamma(0)}\big),\mathbf{Q}^+\big(M_{\gamma(0)},M_{\gamma(0)}\big)\Big)(v).
\end{equation}
This proves \eqref{eq:QQQ_Linfty} and therefore Proposition \ref{prop:toy_example}.

Now we prove \eqref{eq:plane_homogenization_toy_example}. Expand the LHS of \eqref{eq:plane_homogenization_toy_example}, and apply the change of variables in Lemma \ref{lem:Carleman_change_of_variables}
\begin{equation}\label{eq:plane_toy_2}
\begin{split}
\textup{LHS of \eqref{eq:plane_homogenization_toy_example}}&=\int \big((v-v_*)\cdot\omega\big)_+f^0(v')f^0(v_*')d\omega dv_*d\pi_{\mathtt{PL}}(v)\\
&=\int f^0(v')f^0(v_*')|v'-v|^{-1}d\pi_{v+(v'-v)^{\perp}}(v_*')dv'd\pi_{\mathtt{PL}}(v).
\end{split}
\end{equation}
The core idea is that we can transfer the degrees of freedom of $v\in \mathtt{PL}$ onto the variables $(v',v_*')$ by Lemma \ref{lem:geometric_change_of_variable}, recovering $(v',v_*')$ as free variables
\begin{equation}
|v'-v|^{-1}d\pi_{v+(v'-v)^{\perp}}(v_*')dv'd\pi_{\mathtt{PL}}(v)=\mathtt{C}\overline{d\pi}_{\mathtt{S}_{v',v_*'}\cap\mathtt{PL}}(v)dv_*'dv'
\end{equation}
See also Figure \ref{fig:intersection}. Here $\overline{d\pi}_{\mathtt{S}_{v',v_*'}\cap\mathtt{PL}}$ is the normalized measure on the circle (or point) $\mathtt{S}_{v',v_*'}\cap\mathtt{PL}$. 
\begin{figure}[h]
    \centering
    \includegraphics[width=0.4\linewidth]{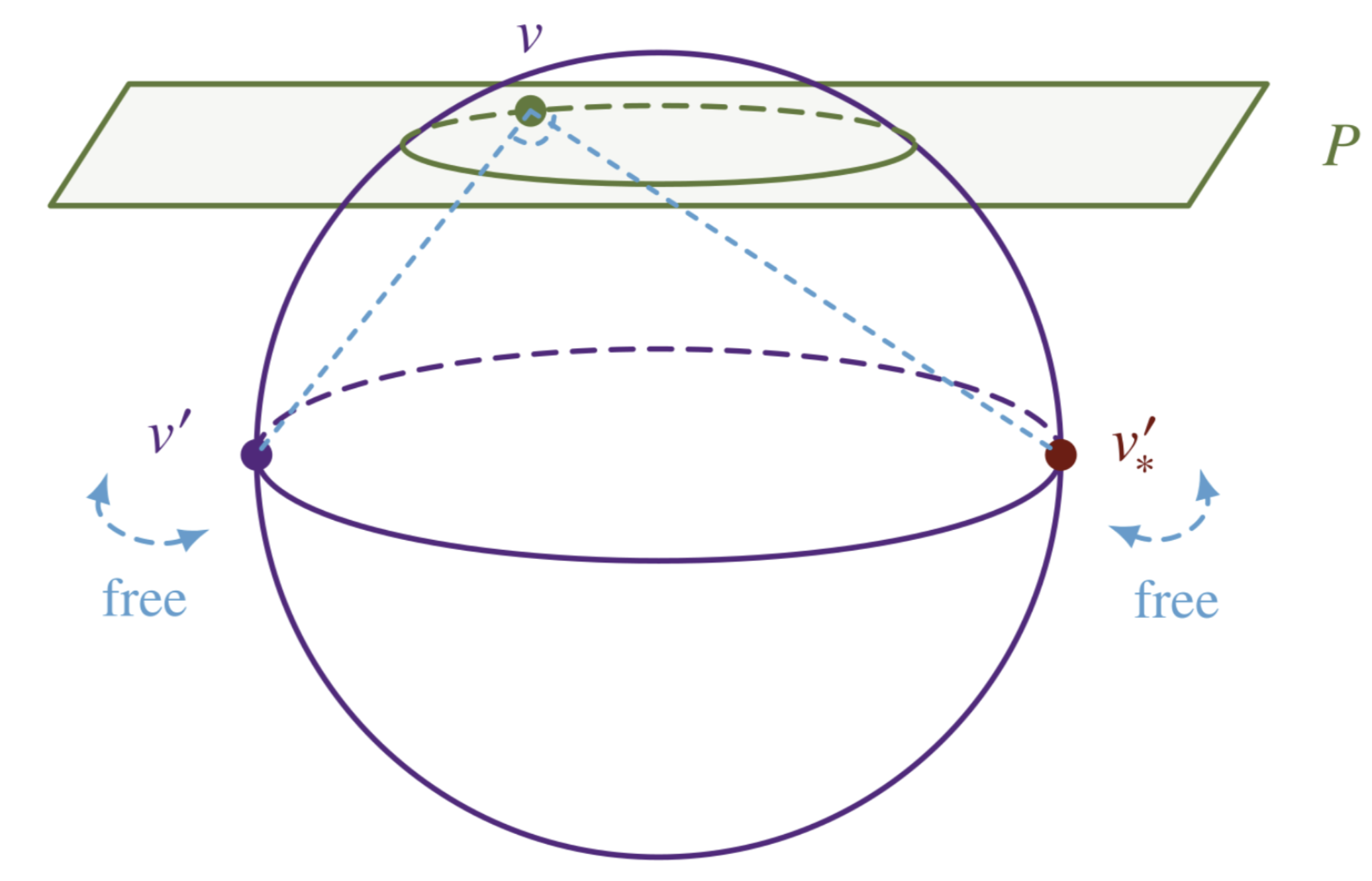}
    \caption{This figure is an illustration of Lemma \ref{lem:geometric_change_of_variable}. Originally we first integrate $v$ over a plane $\mathtt{PL}$, then integrate $v'$ freely in $\mathds{R}^3$, and finally integrate $v_*'$ on the orthogonal plane $v+(v'-v)^{\perp}$. This orthogonality implies $(v,v',v_*')$ lie on the same sphere, actually the collision sphere $\mathtt{S}_{v',v_*'}$ in \eqref{eq:collision_sphere}. Therefore we can change the order of integrations: we first integrate $v'$ and $v_*'$ freely in $\mathds{R}^3$, then integrate $v$ over the intersection $\mathtt{S}_{v',v_*'}\cap\mathtt{PL}$.}
    \label{fig:intersection}
\end{figure}

Apply the change of variables in Lemma \ref{lem:geometric_change_of_variable} to \eqref{eq:plane_toy_2}
\begin{equation}
\begin{split}
\textup{LHS of \eqref{eq:plane_homogenization_toy_example}}&=\mathtt{C}\int f^0(v')f^0(v_*')\mathds{1}_{\mathtt{S}_{v',v_*'}\cap \mathtt{PL}\not=\varnothing}\overline{d\pi}_{\mathtt{S}_{v',v_*'}\cap\mathtt{PL}}(v)dv_*'dv'\\
&=\mathtt{C}\int f^0(v')f^0(v_*')\mathds{1}_{\mathtt{S}_{v',v_*'}\cap\mathtt{PL}\not=\varnothing}dv_*'dv'\\
&\leq C_{\gamma}\int M_{\gamma(0)}(v')M_{\gamma(0)}(v_*')\mathds{1}_{\mathtt{S}_{v',v_*'}\cap\mathtt{PL}\not=\varnothing}dv_*'dv'=\textup{RHS of \eqref{eq:plane_homogenization_toy_example}}.
\end{split}
\end{equation}
This proves \eqref{eq:plane_homogenization_toy_example}, and therefore proves Proposition \ref{prop:toy_example}.
\end{proof}

Proposition \ref{prop:toy_example} is also heuristically explained by Figure \ref{fig:eight_figures}: the regularity of velocity distribution is improved by collision, and enough collision makes the distribution uniformly bounded, even if the initial data $f^0$ can be arbitrarily close to Dirac masses in velocity. In sections \ref{sec:combinatorial_objects}-\ref{sec:proof_of_main}, we develop a \textit{homogenization procedure} which allows us to establish inequalities like \eqref{eq:toy_replacement} for all decorated trees $\T$ and their Boltzmann integrals $\mathbf{C}_{f^0}[\mathcal{T}]$.

\paragraph{Constants.} Across the paper, $\gamma(0),\gamma_*,\delta_*,T_*,\ell$ are small enough constants determined by $\gamma$ and $\delta$, and $\mathtt{K}$ is a large enough constant determined by $(\gamma,\delta)$.

\section{Series Expansion and Homogenization Operations}\label{sec:combinatorial_objects}

Our goal is to establish a proper $L^{\infty}\ldelta$-estimate for the Boltzmann integrals $\mathbf{C}_{f^0}[\mathcal{T}]$ in Definition \ref{def:C_expansion}, such that the series expansion in \eqref{eq:formal_summation} absolutely converges in $\timeldelta$. Lemma \ref{lem:transport_loss} shows that the transport semi-group map $f^0\in \ldelta$ into space $\ldeltastar$ with degeneracies in parameters.

Across the paper, we frequently need the notion of small cube in the configuration space $\mathds{R}^3$ of velocities. Given any $\ell>0$ and any $\mathtt{p}\in \mathds{R}^3$, define the cube $Q_{\mathtt{p}}^{\ell}$ as
\begin{equation}\label{eq:cube_definition}
Q_{\mathtt{p}}^{\ell}:=\Big\{v\in\mathds{R}^3\Big|\ |v-\mathtt{p}|_{\infty}\leq \frac{\ell}{2}\Big\}.
\end{equation}

\begin{lemma}\label{lem:transport_loss}
Consider arbitrary $(\gamma>0,\delta>0)$ and any initial data $f^0\in \ldelta$. For any parameters $(T,\gamma_*,\delta_*)$ with 
\begin{equation}
T>0,\quad 0<\gamma_*<\gamma,\quad 0<\delta_*<\delta,
\end{equation}
we have the following inequality
\begin{equation}
\lVert \mathbf{S}_tf^0\rVert_{L^{\infty}\ldeltastar([0,T]\times \mathds{R}^3\times \mathds{R}^3)}\leq  C_{\gamma}\Big[1+\Big(\frac{T}{\delta-\delta_*}\Big)^3\Big(\frac{1}{\gamma-\gamma_*}\Big)^{\frac{3}{2}}\Big]\lVert f^0\rVert_{\ldelta}.
\end{equation}
\end{lemma}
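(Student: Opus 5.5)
We fix $t\in[0,T]$ and $x\in\mathds{R}^3$ and bound $\int [\mathbf{S}_tf^0]_{\delta_*}(x,v)e^{\gamma_*|v|^2}dv$. Since
\[
[\mathbf{S}_tf^0]_{\delta_*}(x,v)=\sup_{|x'-x|\leq\delta_*}|f^0(x'-tv,v)| ,
\]
the relevant spatial points are $x'-tv$ with $|x'-x|\le \delta_*$. These lie in a ball of radius $\delta_*$ centred at $x-tv$, and that centre moves with $v$. The idea is to partition velocity space into small cubes on which the centre $x-tv$ moves by less than $\delta-\delta_*$. On each such cube, $[\mathbf{S}_tf^0]_{\delta_*}(x,v)$ is then bounded by the single function $v\mapsto[f^0]_\delta(y,v)$, with $y$ fixed on the cube. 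The gain in Gaussian weight, $\gamma_*<\gamma$, is what pays for summing over the number of cubes.

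\textbf{Step 1 (cube decomposition).} Choose $\ell=\frac{\delta-\delta_*}{\sqrt3\,t}$, or $\ell=\infty$ when $t=0$ or $t\ell$ is harmless. Cover $\mathds{R}^3$ by the cubes $Q_{\mathtt{p}}^{\ell}$ with $\mathtt{p}\in\ell\mathds{Z}^3$. For $v\in Q_{\mathtt{p}}^\ell$ we have $|tv-t\mathtt{p}|\leq t\frac{\sqrt3}{2}\ell\le\delta-\delta_*$. Hence every point $x'-tv$ with $|x'-x|\le\delta_*$ satisfies $|(x'-tv)-(x-t\mathtt{p})|\le\delta$, and therefore
\[
[\mathbf{S}_tf^0]_{\delta_*}(x,v)\le [f^0]_\delta(x-t\mathtt{p},v)\quad\text{for } v\in Q_{\mathtt{p}}^\ell .
\]

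\textbf{Step 2 (summation).} On each cube we bound the weight by
\[
e^{\gamma_*|v|^2}\le e^{\gamma|v|^2}\sup_{v\in Q^\ell_{\mathtt{p}}}e^{-(\gamma-\gamma_*)|v|^2}.
\]
This gives
\[
\int[\mathbf{S}_tf^0]_{\delta_*}(x,v)e^{\gamma_*|v|^2}dv\le \sum_{\mathtt{p}}\sup_{Q^\ell_{\mathtt{p}}}e^{-(\gamma-\gamma_*)|v|^2}\int[f^0]_\delta(x-t\mathtt{p},v)e^{\gamma|v|^2}dv\le \lVert f^0\rVert_{\ldelta}\sum_{\mathtt{p}\in\ell\mathds{Z}^3}\sup_{Q^\ell_{\mathtt{p}}}e^{-(\gamma-\gamma_*)|v|^2}.
\]

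\textbf{Step 3 (the lattice sum, the only real computation).} Write $a=\gamma-\gamma_*$. The cubes meeting the ball $|v|\le\ell$ contribute at most $C$ terms, each bounded by $1$. For every other cube, $\inf_{Q_{\mathtt{p}}^\ell}|v|\ge c|\mathtt{p}|$, so its sup is at most $e^{-ac|\mathtt{p}|^2}$. Comparing the sum with an integral, it is at most
\[
\ell^{-3}\int e^{-ac'|w|^2}dw\lesssim \ell^{-3}a^{-3/2}=C\Big(\frac{t}{\delta-\delta_*}\Big)^3\Big(\frac{1}{\gamma-\gamma_*}\Big)^{3/2}.
\]
Altogether the sum is $\le \mathtt{C}\big[1+(T/(\delta-\delta_*))^3(\gamma-\gamma_*)^{-3/2}\big]$ uniformly in $t\le T$. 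The comparison with the integral needs a little care when $\ell$ is large relative to $a^{-1/2}$; in that regime only the $O(1)$ cubes near the origin matter, which is exactly the first term. This gives the bound with a constant independent of $x$ and $t$, and taking the supremum over $x$ and $t\in[0,T]$ proves the lemma. The constant $C_\gamma$ in the statement comfortably absorbs $\mathtt{C}$.
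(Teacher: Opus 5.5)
Your proof is correct and follows the paper's argument: you split velocity space into cubes of side $\ell\sim(\delta-\delta_*)/T$, so that the backward position $x-tv$ moves by at most $\delta-\delta_*$ on each cube. This lets you bound the integrand by $[f^0]_\delta(x-t\mathtt{p},v)$, and the Gaussian gap $\gamma-\gamma_*$ then pays for the lattice sum $\sum_{\mathtt{p}}\sup_{Q^\ell_{\mathtt{p}}}e^{-(\gamma-\gamma_*)|v|^2}\lesssim 1+\ell^{-3}(\gamma-\gamma_*)^{-3/2}$. Your $t$-dependent choice of $\ell$ and your more careful handling of the cubes near the origin in the lattice sum are only cosmetic differences from the paper's version.
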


\begin{proof}
For arbitrary $t\in [0,T]$, compute the norm $\lVert \mathbf{S}_tf^0\rVert_{\ldeltastar}$
\begin{equation}\label{eq:transport_norm_1}
\lVert \mathbf{S}_tf^0\rVert_{\ldeltastar}=\lVert f^0(x-tv,v)\rVert_{\ldeltastar}=\sup_{x\in\mathds{R}^3}\Big(\int_{\mathds{R}^3} \sup_{x':|x'-x|\leq \delta_*}\Big|f^0(x'-tv,v)\Big|e^{\gamma_*|v|^2}dv\Big)
\end{equation}
Choose parameter $\ell$ as $\ell=\frac{\delta-\delta_*}{2T}$. Decompose the integral in the RHS of \eqref{eq:transport_norm_1} into small cubes $Q_{\mathtt{p}}^{\ell}$
\begin{equation}\label{eq:transport_norm_2}
\int_{\mathds{R}^3} \sup_{x':|x'-x|\leq \delta_*}\Big|f^0(x'-tv,v)\Big|e^{\gamma_*|v|^2}dv=\sum_{\mathtt{p}\in \ell\mathds{Z}^3}\int_{Q_{\mathtt{p}}^{\ell}} \sup_{x':|x'-x|\leq \delta_*}\Big|f^0(x'-tv,v)\Big|e^{\gamma_*|v|^2}dv
\end{equation}
Since the velocity $v$ is only perturbed within a small cube $Q_{\mathtt{p}}^{\ell}$, the perturbation of initial position $x'-tv$ is also small enough
\begin{equation}
\sup_{x':|x'-x|\leq \delta_*}\sup_{v\in Q_{\mathtt{p}}^{\ell}}\Big|\Big(x'-tv\Big)-\Big(x-t\mathtt{p}\Big)\Big|\leq \sup_{x':|x'-x|\leq \delta_*}|x'-x|+2\sup_{v\in Q_{\mathtt{p}}^{\ell}}t|v-\mathtt{p}|_{\infty}\leq \delta_*+2T\ell= \delta
\end{equation}
Therefore the RHS of \eqref{eq:transport_norm_2} can be bounded from above by
\begin{equation}
\begin{split}
\sum_{\mathtt{p}\in \ell\mathds{Z}^3}\int_{Q_{\mathtt{p}}^{\ell}} \sup_{x':|x'-x|\leq \delta_*}\Big|f^0(x'-tv,v)\Big|e^{\gamma_*|v|^2}dv&\leq \sum_{\mathtt{p}\in \ell\mathds{Z}^3}\int_{Q_{\mathtt{p}}^{\ell}} [f^0]_{\delta}(x-t\mathtt{p},v)e^{\gamma_*|v|^2}dv\\
&=\sum_{\mathtt{p}\in \ell\mathds{Z}^3}\int_{Q_{\mathtt{p}}^{\ell}} [f^0]_{\delta}(x-t\mathtt{p},v)e^{\gamma|v|^2}e^{-(\gamma-\gamma_*)|v|^2}dv\\
&\leq C_{\gamma}\sum_{\mathtt{p}\in \ell\mathds{Z}^3}e^{-\frac{\gamma-\gamma_*}{4}|\mathtt{p}|^2}\int_{Q_{\mathtt{p}}^{\ell}}[f^0]_{\delta}(x-t\mathtt{p},v)e^{\gamma|v|^2}dv.
\end{split}
\end{equation}
The condition of $f^0\in\ldelta$ implies the following inequality uniform for parameters $(x,t,\mathtt{p})$
\begin{equation}
\int_{Q_{\mathtt{p}}^{\ell}}[f^0]_{\delta}(x-t\mathtt{p},v)e^{\gamma|v|^2}dv\leq \lVert f^0\rVert_{\ldelta}.
\end{equation}
Consequently
\begin{equation}
\begin{split}
\lVert \mathbf{S}_{t}f^0\rVert_{\timeldeltastar}\leq C_{\gamma}\lVert f^0\rVert_{\ldelta}\sum_{\mathtt{p}\in\ell\mathds{Z}^3}e^{-\frac{\gamma-\gamma_*}{4}|\mathtt{p}|^2}&\leq C_{\gamma}\lVert f^0\rVert_{\ldelta}\Big[1+\ell^{-3}\Big(\frac{1}{\gamma-\gamma_*}\Big)^{\frac{3}{2}}\Big]\\
&=C_{\gamma}\lVert f^0\rVert_{\ldelta}\Big[1+\Big(\frac{T}{\delta-\delta_*}\Big)^3\Big(\frac{1}{\gamma-\gamma_*}\Big)^{\frac{3}{2}}\Big].
\end{split}
\end{equation}
\end{proof}

Lemma \ref{lem:transport_loss} estimates the Boltzmann integral $\mathbf{C}_{f^0}[\circ]$ for the trivial tree $\T=\circ$.

\subsection{Carleman Representations}

In this subsection, we define several combinatorial objects necessary for the study of the Boltzmann integrals for general decorated tree $\T$. Then in the spirit of Carleman's change of variables (see Lemma \ref{lem:Carleman_change_of_variables}), we develop a representation of $\mathbf{C}_{h}[\mathcal{T}]$ that is convenient for the proof of the main results.

\begin{figure}[h]
    \centering
    \includegraphics[width=0.7\linewidth]{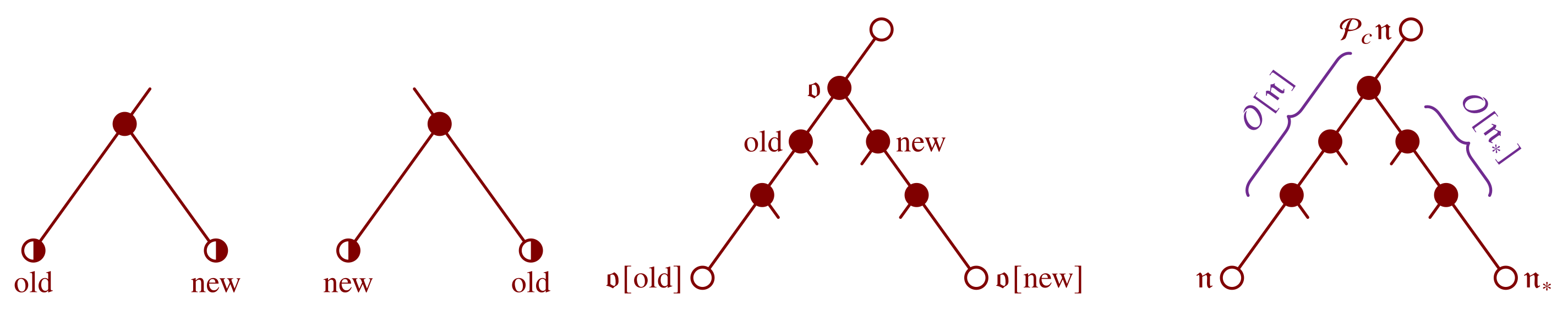}
    \caption{This is an illustration of several concepts in Definition \ref{def:deleting}. The left-most and middle-left parts classify the two child nodes of an overlap node $\o$ into 'old' and 'new'. The middle-right part illustrates the notion of $\oold$, $\onew$. The right-most part illustrates the notion of $\mathcal{O}[\n]$ and $\mathcal{P}_c\n$.}
    \label{fig:old_and_new}
\end{figure}

\begin{definition}\label{def:deleting}
(i) Given a decorated tree $\T$, for each overlap node $\o\in\mathcal{O}(\mathcal{T})$, the two child nodes of $\o$ are separately \underline{labeled as 'old' and 'new'}. The labeling rule is given in Figure \ref{fig:old_and_new}. If the 'old' child node of $\o$ is also an overlap node, then we further consider its 'old' child node. Iterate this process until we get a collision node or a leaf, and we denote this node as $\oold$. If the 'new' child node of $\o$ is also an overlap node, then we further consider its 'old' child node. Iterate this process until we get a collision node or a leaf, and we denote this node as $\onew$.

(ii) For an overlap node $\o$ within the decorated tree $\T$, the operation of \underline{'deleting overlap node $\o$'} is defined as the following (see also Figure \ref{fig:delete_overlap_nodes}): If we delete this node $\o$, then we get two decorated trees. The first decorated tree is denoted as $\mathscr{T}_{\o}$, which is the sub-tree with the 'new' child node of $\o$ as the root. The second decorated tree  is $\T\setminus \mathscr{T}_{\o}$ without node $\o$ (in this sense $\o$ is deleted). Sometimes we call $\mathscr{T}_{\o}$ as the sub-tree branched from $\o$. 

\begin{figure}[h]
    \centering
    \includegraphics[width=1\linewidth]{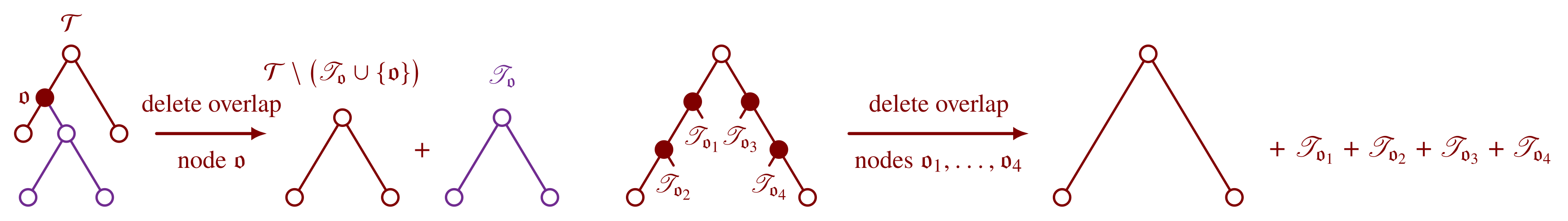}
    \caption{}
    \label{fig:delete_overlap_nodes}
\end{figure}

(iii) Given a decorated tree $\T$, delete all the overlap nodes and we get a unique (up to permutations of indexes) sequence $\{\T_i^+\}_{0\leq i\leq m}$ of collisional decorated trees (see Figure \ref{fig:GT}). For two nodes (either leaves or collision nodes) $\n$ and $\m$ within $\T$, if they belong to the same $\T_i^+$ and $\m$ is the parent node of $\n$ in $\T_i^+$, then we say $\m$ is the \underline{collision-parent} node of $\n$ in $\T$, and $\n$ is a collision-child node of $\m$. We use $\mathcal{P}_c\n$ to denote the collision-parent of $\mathfrak{n}$. Define $\mathcal{O}[\n]$ as (see also the right-most part of Figure \ref{fig:old_and_new})
\begin{equation}\label{eq:def_of_On}
\mathcal{O}[\mathfrak{n}]:=\Big\{\o\in \mathcal{O}(\T)\Big|\ \oold=\mathfrak{n}\Big\}.
\end{equation}
Notice that when $\mathfrak{n}$ has a collision-parent node $\mathcal{P}_c\n$, the set $\mathcal{O}[\n]$ is just the set of overlap nodes on the path from $\n$ to $\mathcal{P}_c\n$, and in this case we call $\mathcal{P}_c\n$ as the \underline{collision-parent} node of any $\o\in \mathcal{O}[\n]$.

\end{definition}

With the notion of $\mathcal{O}[\n]$, we can decompose the set $\mathcal{O}(\T)$ of overlap nodes
\begin{equation}
\mathcal{O}(\T)=\bigcup_{\n\in \mathcal{N}^+(\mathcal{T})}\mathcal{O}[\n],\quad \mathcal{O}[\n]\cap\mathcal{O}[\n']=\varnothing\textup{ if $\n\not=\n'$}.
\end{equation}
Given an arbitrary set of nodes $\{\n_1,...,\n_k\}\subset \mathcal{N}^+(\T)$, define $\mathcal{O}[\n_1,...,\n_k]$ as
\begin{equation}\label{eq:O_sequence_def}
\mathcal{O}[\n_1,...,\n_k]:=\bigcup_{1\leq i\leq k}\mathcal{O}[\n_i].
\end{equation}

\begin{definition}\label{def:tree_decompose}
Given a decorated tree $\T$, recall the unique sequence $\{\T_i^+\}_{0\leq i\leq m}$ of collisional decorated trees derived in Definition \ref{def:deleting} by deleting all overlap nodes of $\T$.  We \underline{define a directed graph $\mathcal{G}_{\T}$} related to the decorated tree $\T$:
\begin{itemize}
    \item The nodes of $\mathcal{G}_{\T}$ are $\{0,1,...,m\}$.
    \item For two different decorated trees $\T_{i_1}^+$ and $\T_{i_2}^+$, if in the original decorated tree $\T$ they are connected by an overlap node $\o$ with the root $\mathfrak{r}(\T_{i_2}^+)$ as $\mathfrak{r}(\T_{i_2}^+)=\onew$, then we draw a directed edge from $i_1$ to $i_2$ in graph $\mathcal{G_T}$.
\end{itemize}
The collisional decorated tree represented by the root of $\mathcal{G}_{\mathcal{T}}$ is called the primary sub-tree of $\mathcal{T}$, and denoted as $\tprime$. An illustration of this definition is given in Figure \ref{fig:GT}.
\end{definition}

\begin{figure}[h]
    \centering
    \includegraphics[width=0.8\linewidth]{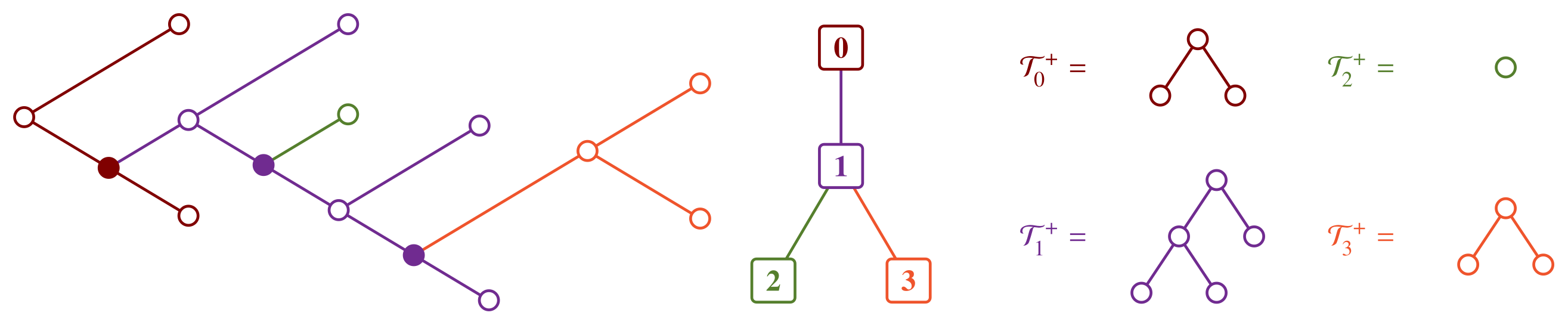}
    \caption{This is an example of the directed graph $\mathcal{G_T}$ constructed in Definition \ref{def:tree_decompose}. The left part is the original decorated tree $\T$, the middle part is the directed graph $\mathcal{G_T}$, and the right part shows the collisional decorated trees $\{\T_i^+\}_{0\leq i\leq 3}$.}
    \label{fig:GT}
\end{figure}

Now we explain several notions that will be used frequently. Given a variable $a$ and a set $\mathcal{S}$, we use $[a]_{\mathcal{S}}$ to refer to a sequence of variables indexed by $\mathcal{S}$
\begin{equation}\label{eq:bracket_notion}
[a]_{\mathcal{S}}:=\{a_{\mathtt{s}}\}_{\mathtt{s}\in \mathcal{S}}.
\end{equation}
Based on the notion in \eqref{eq:bracket_notion}, given an arbitrary decorated tree $\mathcal{T}$, we will define $d[v]_{\nnew^+(\T)}$, $d[v]_{\mathcal{N}^+(\T)}$, and $d[s]_{\mathcal{B}(\T)}$ as measures of variables $[v]_{\nnew^+(\T)}$, $[v]_{\mathcal{N}^+(\T)}$, and $[s]_{\mathcal{B}(\T)}$. Notice that we define the measures $d[v]_{\nnew^+(\T)}$ and $d[v]_{\mathcal{N}^+(\T)}$ because in the collision integral $\mathbf{C}_{h}[\mathcal{T}]$, the overlap node itself does not have a velocity variable associated with it. Only collision nodes $\mathcal{B}^+(\T)$ and leaves $\mathcal{L}(\T)$ have their own velocity variables. 

For a decorated tree $\T$, the measure $d[v]_{\mathcal{N}^+(\T)}$ is defined as
\begin{equation}
d[v]_{\mathcal{N}^+(\T)}:=d[v]_{\nnew^+(\T)}dv_{\mathfrak{r}(\T)},\quad \textup{ $v_{\mathfrak{r}(\T)}$ is a free variable, and $d[v]_{\nnew^+(\T)}$ is a measure determined by $\T$ and $v_{\mathfrak{r}(\T)}$.}
\end{equation}
Now given a decorated tree $\T$ and $v_{\mathfrak{r}(\T)}$, we define the measure $d[v]_{\nnew^+(\T)}$. Recall the sequence $\{\T_i^+\}_{0\leq i\leq m}$ of collisional decorated trees derived in Definition \ref{def:deleting} by deleting all overlap nodes $\mathcal{O}(\T)$, we will first define $d[v]_{\nnew^+(\T_i^+)}$, and then use them to construct $d[v]_{\nnew^+(\T)}$.

\begin{figure}[h]
    \centering
    \includegraphics[width=0.7\linewidth]{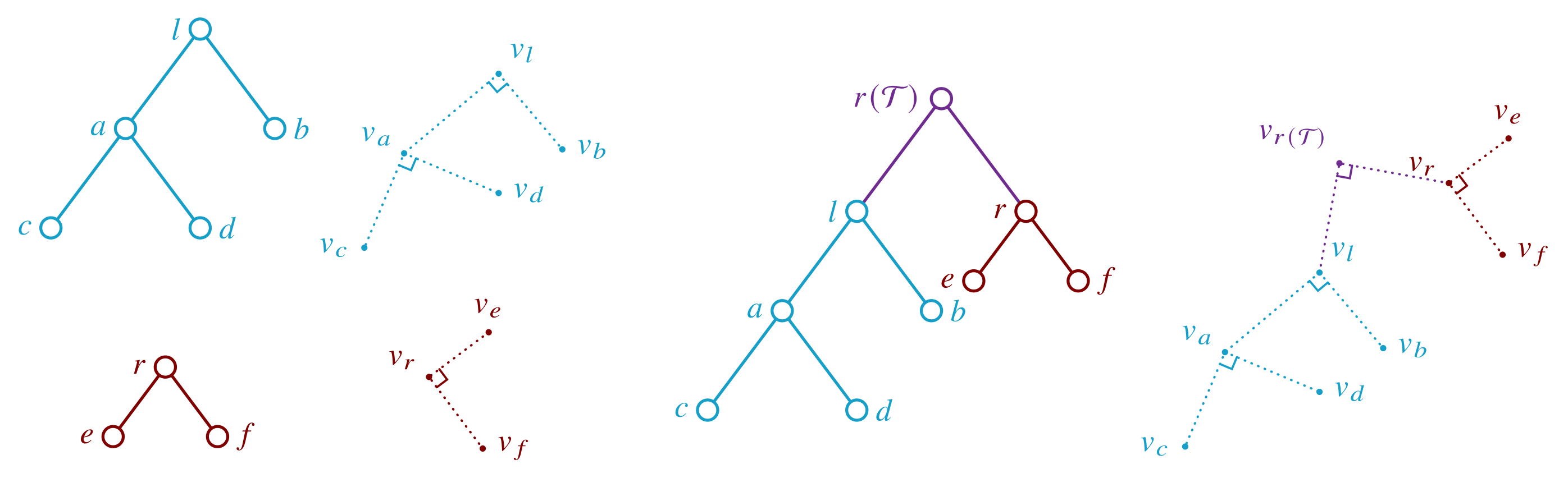}
    \caption{This is an illustration of the recursive construction of $d[v]_{\nnew^+(\T)}$ in Definition \ref{def:carleman_measure}. The trees with solid line are the decorated trees, and the trees with dotted line represent the geometry between different velocity variables. The decorated tree in the middle is $\T$, with two sub-trees $\T_l$ (the blue one) and $\T_r$ (the red one). For arbitrary $\n\in \mathcal{N}(\T)$ and its two collision-child nodes $\n_1$ and $\n_2$, we always have $(v_{\n}-v_{\n_1})\perp (v_{\n}-v_{\n_2})$.}
    \label{fig:velocity_tree}
\end{figure}

\begin{definition}\label{def:carleman_measure}
\textbf{\textup{[Carleman measure]}} Take a collisional decorated tree $\T$, and we define the measure $d[v]_{\nnew^+(\T)}$ recursively. If $\T$ is a single node $\T=\circ$, then $\nnew^+(\T)=\varnothing$ and there is nothing to integrate. If the root $\mathfrak{r}(\T)$ is a branching node, then denote $\T_l$ as the left sub-tree of $\mathfrak{r}(\T)$ and $\T_r$ as the right sub-tree of $\mathfrak{r}(\mathcal{T})$
\begin{equation}
d[v]_{\nnew^+(\T)}:=d[v]_{\nnew^+(\T_l)}d[v]_{\nnew^+(\T_r)}|v_l-v|^{-1}d\pi_{v+(v_l-v)^{\perp}}(v_r)dv_l,\quad v_l:=v_{\mathfrak{r}(\T_l)},\quad v_r:=v_{\mathfrak{r}(\T_r)}.
\end{equation}
Take an arbitrary decorated tree $\T$, with the sequence of collisional decorated trees $\{\T_{i}^+\}_{0\leq i\leq m}$ in Definition \ref{def:tree_decompose}. Define $d[v]_{\mathcal{N}_{\new}^+(\T)}$ as
\begin{equation}\label{eq:carleman_measure_of_velocities}
d[v]_{\mathcal{N}_{\new}^+(\T)}:=\Big(\prod_{\o\in \mathcal{O}(\T)}\pi|v_{\oold}-v_{\onew}|\Big)d[v]_{\mathcal{N}_{\new}(\T_{0}^+)}\prod_{1\leq i\leq m}d[v]_{\mathcal{N}(\T_i^+)}.
\end{equation}
Given any decorated tree $\T$, we define the measure $d[s]_{\mathcal{B}(\T)}$ recursively. If $\T$ is a single node $\T=\circ$, then $\mathcal{B}(\T)=\varnothing$ and there is nothing to integrate. If the root $\mathfrak{r}(\T)$ is a branching node, then denote $\T_l$ as the left sub-tree of $\mathfrak{r}(\mathcal{T})$ and $\T_r$ as the right sub-tree
\begin{equation}\label{eq:carleman_time_measure_def}
d[s]_{\mathcal{B}(\T)}:=\mathds{1}_{s_l\leq s_{\mathfrak{r}(\T)}}\mathds{1}_{s_r\leq s_{\mathfrak{r}(\T)}}d[s]_{\mathcal{B}(\T_l)}d[s]_{\mathcal{B}(\T_r)}ds_{\mathfrak{r}(\T)},\quad s_{l}:=s_{\mathfrak{r}(\T_l)},\quad s_r:=s_{\mathfrak{r}(\T_r)}.
\end{equation}
\end{definition}

\begin{definition}\label{def:overlap_parent}
The notion of collision-parent node has been defined in Definition \ref{def:deleting} (iii). We can also define the notion of \underline{overlap-parent node}: for a node $\n\in \mathcal{N}(\T)$ and an overlap node $\o\in \mathcal{O}(\T)$, we say $\o$ is the overlap-parent node of $\n$ if $\o[\textup{new}]=\n$. The notions of $\onew$ and $\oold$ are defined in Definition \ref{def:deleting} (i). Given two nodes $\n_1$ and $\n_2$, if $\n_1$ is the collision-parent node or the overlap-parent node of $\n_2$, we say $\n_1$ is the \underline{CO-parent node} of $\n_2$.

Given a node $\n$, if it has a collision-parent then we denote it as $\mathcal{P}_{c}\n$, if it has an overlap-parent node then we denote it as $\mathcal{P}_{o}\n$. Notice that a node can not have simultaneously a collision-parent node and an overlap-parent node.  Given a node $\n$, if it has a collision-parent or an overlap-parent then we denote it as $\mathcal{P}_{co}\n$.
\end{definition}

\begin{remark}\label{rmk:pseudo-trajectory_and_xfunction}
\textup{With the notions of combinatorial objects in Definition \ref{def:deleting}-\ref{def:overlap_parent}, we can give them an intuitional interpretation using the notion of \underline{pseudo-trajectory}. For each collision node $\n$, we use $\n[\textup{new}]$ and $\n[\textup{old}]$ to denote its two collision-child nodes (the order does not matter here). Given a decorated tree $\T$, a configuration $(t,x)$, velocities $[v]_{\mathcal{N}^+(\T)}$, and $[s]_{\mathcal{B}(\T)}$, we can construct the corresponding pseudo-trajectory and the \underline{position functions} $\{\mathbf{x}_{\n}\}_{\n\in \mathcal{N}(\T)}$. }

\begin{itemize}
    \item \textup{First we create a particle at time $t$ and position $x$ with velocity $v_{\mathfrak{r}(\T)}$.}
    \item \textup{If the root $\mathfrak{r}(\T)$ is not a branching node, then the particle travels backwards in time from $x$ to $\x_{\mathfrak{r}(\T)}:=x-tv_{\mathfrak{r}(\T)}$ in straight line with velocity $v_{\mathfrak{r}(\T)}$. If the root $\mathfrak{r}(\T)$ is a collision/overlap node then we label the particle as $\mathfrak{r}(\T)$/$\mathfrak{r}(\T)[\textup{old}]$, we run the next step iteratively.} 
    \item \textup{Let the set of particle(s) travel backwards in time until we arrive at another interaction time $s_{\n}$. If $\n$ is a collision node, then the velocity of particle with label $\n$ change from $v_{\n}$ to $v_{\n[\textup{old}]}$, and we relabel this particle by $\n[\textup{old}]$. We define $\x_{\n}$ as the current position of this node We also create a new particle at position $\x_{\n}$ with velocity $v_{\n[\textup{new}]}$, and label this particle by $\n[\textup{new}]$. If $\n$ is an overlap node, then we define $\x_{\n}$ as the current position of particle $\n[\textup{old}]$, and we create a new particle at position $\x_{\n}$ with velocity $v_{\n[\textup{new}]}$, and label it as $\n[\textup{new}]$.}
    \item \textup{At time $0$, for each $\l\in \mathcal{L}(\T)$, we define $\mathbf{x}_{\l}$ as the current position of the particle labelled by $\l$.}
\end{itemize}

\end{remark}
Under the notion of pseudo-trajectory, each branching node $\n\in \mathcal{B}(\T)$ represents an interaction with its time and space. Going backwards in time, in a collision the old particle changes its velocity, and we create a new particle; in an overlap the old particle does not change its velocity, and we create a new particle. This notion is helpful for understanding the proof in this paper, since many computations of Boltzmann integrals can be understood as modifying the corresponding pseudo-trajectories.

\begin{figure}[h]
    \centering
    \includegraphics[width=0.6\linewidth]{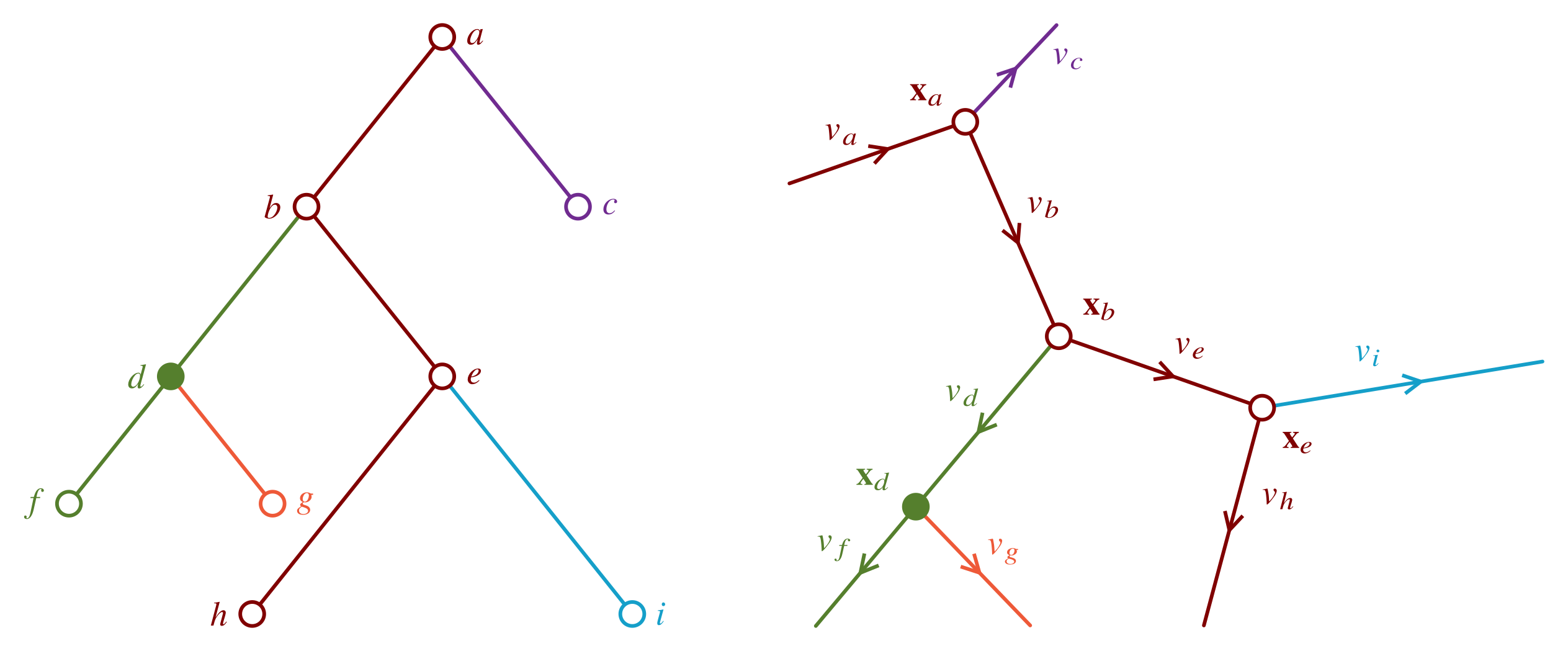}
    \caption{An example of decorated and its pseudo-trajectory.}
    \label{fig:pseudo-trajectory}
\end{figure}

Sometimes when we study the value of a Boltzmann integral at $(t,x)$, we use the following notions for convenience: Write $s_{\l}=0$ for $\l\in \mathcal{L}(\T)$. Define $\diamond$ as a virtual node, in the sense that if a node $\n$ does not have a (collision-)parent, then we write $\mathcal{P}\n=\diamond$ ($\mathcal{P}_c\n=\diamond$) with
\begin{equation*}
s_{\diamond}=t\quad \mathbf{x}_{\diamond}=x\  \textup{if $\n$ does not have a parent}.
\end{equation*}

\begin{lemma}\label{lem:carleman_representation}
\textbf{\textup{[Carleman representation]}} For any function $h(t,x,v)$, any decorated tree $\T$, and any $(t,x,v)\in [0,+\infty)\times \mathds{R}^3\times \mathds{R}^3$, we have
\begin{equation}\label{eq:carleman_representation}
\mathbf{C}_{h}[\T](t,x,v)=\int \Bigg(\prod_{\l\in \mathcal{L}(\T)}h\big(s_{\mathcal{P}\mathfrak{l}},\mathbf{x}_{\mathcal{P}\l},v_{\l}\big)\Bigg)\mathds{1}_{0\leq s_{\mathfrak{r}(\T)}\leq t}d[v]_{\mathcal{N}_{\new}^+(\T)}d[s]_{\mathcal{B}(\T)}.
\end{equation}
For any function $f^0(x,v)$, we have
\begin{equation}\label{eq:carleman_representation_xv}
\mathbf{C}_{f^0}[\T](t,x,v)= \int \Bigg(\prod_{\l\in \mathcal{L}(\T)}f^0\big(\mathbf{x}_{\l},v_{\l}\big)\Bigg)\mathds{1}_{0\leq s_{\mathfrak{r}(\T)}\leq t}d[v]_{\mathcal{N}_{\new}^+(\T)}d[s]_{\mathcal{B}(\T)}.
\end{equation}
\end{lemma}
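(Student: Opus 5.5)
I will prove \eqref{eq:carleman_representation} by induction on the number of branching nodes of $\T$, following the recursive structure of Definition \ref{def:C_expansion}. Then I will deduce \eqref{eq:carleman_representation_xv} by specializing $h(s,x,v)=(\mathbf{S}_sf^0)(x,v)=f^0(x-sv,v)$.

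\textbf{Base case and unfolding.} For $\T=\circ$ both sides equal $h(t,x,v)$, since there are no integration variables and $\mathcal{P}\l=\diamond$ with $(s_\diamond,\x_\diamond)=(t,x)$. For the inductive step, let the root $\mathfrak{r}=\mathfrak{r}(\T)$ have sub-trees $\T_l,\T_r$. Definition \ref{def:C_expansion} gives
\[
\mathbf{C}_h[\T](t,x,v)=\int_0^t \mathbf{Q}^{\pm}\big(\mathbf{C}_h[\T_l](s),\mathbf{C}_h[\T_r](s)\big)(x-(t-s)v,v)\,ds .
\]
Set $s_{\mathfrak{r}}=s$ and $\x_{\mathfrak{r}}=x-(t-s)v$, which is exactly the position of the root interaction in the pseudo-trajectory of Remark \ref{rmk:pseudo-trajectory_and_xfunction}.

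\textbf{Collision root.} I apply the Carleman change of variables (Lemma \ref{lem:Carleman_change_of_variables}) as in \eqref{eq:QQQ_1}:
\[
\mathbf{Q}^+(g_1,g_2)(v)=\int g_1(v_l)\,g_2(v_r)\,|v_l-v|^{-1}\,d\pi_{v+(v_l-v)^\perp}(v_r)\,dv_l .
\]
Here $v_l,v_r$ play the roles of $v',v_*'$, and they become the root velocities of $\T_l,\T_r$. I then insert the induction hypothesis for $\mathbf{C}_h[\T_l](s,\x_{\mathfrak{r}},v_l)$ and $\mathbf{C}_h[\T_r](s,\x_{\mathfrak{r}},v_r)$, with $(s,\x_{\mathfrak{r}})$ in place of $(t,x)$. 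The sub-tree pseudo-trajectories started from $(s,\x_{\mathfrak{r}})$ coincide with the restriction of the full pseudo-trajectory, so leaf evaluations at $(s_{\mathcal{P}\l},\x_{\mathcal{P}\l})$ match. The sub-tree conditions $0\le s_{\mathfrak{r}(\T_{l/r})}\le s$ combine with $ds\,\mathds{1}_{0\le s\le t}$ to give exactly \eqref{eq:carleman_time_measure_def}. Since the root is a collision node, $\T_0^+$ is obtained by joining the primary sub-trees of $\T_l$ and $\T_r$ at the root. The overlap nodes of $\T$ are those of $\T_l\cup\T_r$, with unchanged $\oold,\onew$. Hence the velocity measures multiply into \eqref{eq:carleman_measure_of_velocities} by the recursive definition of the Carleman measure.

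\textbf{Overlap root.} Here $\mathbf{Q}^-(g_1,g_2)(v)=g_1(v)\int g_2(v_*)\,\pi|v-v_*|\,dv_*$, using $\int_{\mathbb{S}^2}((v_*-v)\cdot\omega)_+d\omega=\pi|v_*-v|$. The old sub-tree keeps the root velocity $v$, so $v_{\oold}=v$ is not a new variable. The new sub-tree has free root velocity $v_*=v_{\onew}$, and the weight is $\pi|v_{\oold}-v_{\onew}|$, exactly the factor in \eqref{eq:carleman_measure_of_velocities}. One point needs checking. If the old child is itself an overlap node, the root velocity of the old sub-tree still equals $v_{\oold}$ by iterating this rule. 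The new sub-tree becomes a new component $\T_i^+$ whose full measure $d[v]_{\mathcal{N}(\T_i^+)}$ includes its root velocity; this matches the free $dv_*$.

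\textbf{Main obstacle and the second formula.} I expect the main obstacle to be this combinatorial bookkeeping: verifying that the decomposition into $\{\T_i^+\}$, the definitions of $\oold/\onew$, and the sets $\mathcal{O}[\n]$ are compatible with splitting at the root. Concretely, deleting the root overlap node produces $\mathscr{T}_{\mathfrak{r}}$ as one more component, and the remaining collisional trees are the union of those of $\T_l$ and $\T_r$. Once this is checked, the leaf factors match term by term. For \eqref{eq:carleman_representation_xv}, substitute $h(s,y,v)=f^0(y-sv,v)$. Then $h(s_{\mathcal{P}\l},\x_{\mathcal{P}\l},v_\l)=f^0(\x_{\mathcal{P}\l}-s_{\mathcal{P}\l}v_\l,v_\l)=f^0(\x_\l,v_\l)$, because by the last step of Remark \ref{rmk:pseudo-trajectory_and_xfunction} $\x_\l$ is the position at time $0$ of the particle leaving $\x_{\mathcal{P}\l}$ backwards with velocity $v_\l$.
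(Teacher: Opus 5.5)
Your proposal is correct and follows essentially the same route as the paper: induction on the size of $\T$, the Carleman change of variables (Lemma \ref{lem:Carleman_change_of_variables}) at a collision root, insertion of the induction hypothesis at $(s,\x_{\mathfrak{r}})$, and identification of the recursive structure of $d[v]_{\nnew^+(\T)}$ and $d[s]_{\mathcal{B}(\T)}$. You are more explicit than the paper, which omits both the overlap-root case and the specialization $h=\mathbf{S}_{\cdot}f^0$ that yields \eqref{eq:carleman_representation_xv}; your handling of both is correct, using $\int_{\mathds{S}^2}((v_*-v)\cdot\omega)_+d\omega=\pi|v_*-v|$ for the first and $\x_{\l}=\x_{\mathcal{P}\l}-s_{\mathcal{P}\l}v_{\l}$ for the second.
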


\begin{proof}
\textit{Induction base:} We prove this Lemma by induction on size $|\mathcal{L}(\T)|$. When $\T$ is a single node $\T=\circ$, we have $\mathbf{C}_{h}[\T](t,x,v)=h(t,x,v)$. When $\T=\CollisionFork$, according to the Definition \ref{def:C_expansion} of $\mathbf{C}_{h}[\mathcal{T}]$
\begin{equation}
\begin{split}
\mathbf{C}_{h}[\T](t,x,v)=\int_0^t\mathbf{S}_{t-s}\mathbf{Q}^+\big(h(s),h(s)\big)ds&=\int_0^t \mathbf{Q}^+\big(h(s),h(s)\big)(x-(t-s)v,v)ds\\
&=\int_0^t \big((v_*-v)\cdot\omega\big)_+h\big(s,x-(t-s)v,v'\big)h\big(s,x-(t-s)v,v_*'\big)d\omega dv_*ds.
\end{split}
\end{equation}
Apply the change of variables in Lemma \ref{lem:Carleman_change_of_variables}
\begin{equation}
\begin{split}
\mathbf{C}_{h}[\T](t,x,v)&=\int_0^t h\big(s,x-(t-s)v,v'\big)h\big(s,x-(t-s)v,v_*'\big) |v'-v|^{-1}d\pi_{v+(v'-v)^{\perp}}(v_*')dv'ds.
\end{split}
\end{equation}
When $\T=\CollisionFork$, according to the Definition \ref{def:carleman_measure} of $d[v]_{\nnew^+(\T)}$ and $d[s]_{\mathcal{B}(\T)}$
\begin{equation}
d[v]_{\nnew^+(\T)}=|v'-v|^{-1}d\pi_{v+(v'-v)^{\perp}}(v_*')dv',\quad d[s]_{\mathcal{B}(\T)}=ds.
\end{equation}
Then
\begin{equation}
\mathbf{C}_{h}[\T](t,x,v):=\int \int \Bigg(\prod_{\l\in \mathcal{L}(\T)}h\big(s_{\mathcal{P}\mathfrak{l}},x_{\mathcal{P}\l},v_{\l}\big)\Bigg)\mathds{1}_{0\leq s_{\mathfrak{r}}\leq t}d[v]_{\mathcal{N}_{\new}^+(\T)}d[s]_{\mathcal{B}(\T)}.
\end{equation}
The same argument also applies to $\T=\OverlapFork$.

\textit{Induction hypothesis:} Assume for any decorated tree $\T$ with $|\mathcal{L}(\T)|=n$ and $n<N$, the $\mathbf{C}_h[\T]$ satisfies \eqref{eq:carleman_representation}.

\textit{Induction step:} Take a decorated tree $\T$ with $|\mathcal{L}(\T)|=N$, where the left sub-tree of the root is $\T_l$, the right sub-tree of the root is $\T_r$. We have $|\mathcal{L}(\T_l)|<N$ and $|\mathcal{L}(\T_r)|<N$. The induction hypothesis gives
\begin{equation}
\left\{
\begin{split}
& \mathbf{C}_{h}[\T_l](t,x,v):= \int \Bigg(\prod_{\l\in \mathcal{L}(\T_l)}h\big(s_{\mathcal{P}\mathfrak{l}},\mathbf{x}_{\mathcal{P}\mathfrak{l}},v_{\l}\big)\Bigg)\mathds{1}_{0\leq s_{\mathfrak{r}(\T_l)}\leq t}d[v]_{\mathcal{N}_{\new}^+(\T_l)}d[s]_{\mathcal{B}(\T_l)}\\
&\mathbf{C}_{h}[\T_r](t,x,v):= \int \Bigg(\prod_{\l\in \mathcal{L}(\T_r)}h\big(s_{\mathcal{P}\mathfrak{l}},\mathbf{x}_{\mathcal{P}\l},v_{\l}\big)\Bigg)\mathds{1}_{0\leq s_{\mathfrak{r}(\T_r)}\leq t}d[v]_{\mathcal{N}_{\new}^+(\T_r)}d[s]_{\mathcal{B}(\T_r)}
\end{split}
\right.
\end{equation}

Suppose the root $\mathfrak{r}(\T)$ is a collision node. Use the inductive Definition \ref{def:C_expansion} of $\mathbf{C}_h[\mathcal{T}]$, with the abbreviations $v_l:=v_{\mathfrak{r}(\T_l)},v_r:=v_{\mathfrak{r}(\T_r)},v:=v_{\mathfrak{r}(\T)},s:=s_{\mathfrak{r}(\T)}$
\begin{equation}\label{eq:carleman_representation_first_compute}
\begin{split}
\mathbf{C}_h[\T](t,x,v)&=\int \mathbf{S}_{t-s}\mathbf{Q}^+\Big(\mathbf{C}_{h}[\T_l](s),\mathbf{C}_{h}[\T_r](s)\Big)\mathds{1}_{0\leq s\leq t}ds\\
&=\int \mathbf{C}_h[\T_l]\Big(s,x-(t-s)v,v_l\Big)\mathbf{C}_{h}[\T_r]\Big(s,x-(t-s)v,v_r\Big)|v_l-v|^{-1}\mathds{1}_{0\leq s\leq t}d\pi_{v+(v_l-v)^{\perp}}(v_r)dv_lds
\end{split}
\end{equation}
Insert the inductive hypothesis into \eqref{eq:carleman_representation_first_compute}, and $\mathbf{C}_h[\T](t,x,v)$ becomes
\begin{equation}
\begin{split}
&\int \Bigg(\prod_{\l\in \mathcal{L}(\T_l)}h\big(s_{\mathcal{P}\mathfrak{l}},\mathbf{x}_{\mathcal{P}\mathfrak{l}},v_{\l}\big)\Bigg)\Bigg(\prod_{\l\in \mathcal{L}(\T_r)}h\big(s_{\mathcal{P}\mathfrak{l}},\mathbf{x}_{\mathcal{P}\mathfrak{l}},v_{\l}\big)\Bigg)\\
&\quad \times \underbrace{d[v]_{\mathcal{N}_{\new}^+(\T_l)}d[v]_{\mathcal{N}_{\new}^+(\T_r)}|v_l-v|^{-1}d\pi_{v+(v_l-v)^{\perp}}(v_r)dv_l}_{\textup{\blue{the recursive definition of $d[v]_{\nnew^+(\T)}$}}}\underbrace{\mathds{1}_{0\leq s_{\mathfrak{r}(\T_l)}\leq s}\mathds{1}_{0\leq s_{\mathfrak{r}(\T_r)}\leq s}d[s]_{\mathcal{B}(\T_l)}d[s]_{\mathcal{B}(\T_r)}\mathds{1}_{0\leq s\leq t}ds}_{\textup{\blue{the recursive definition of $d[s]_{\mathcal{B}(\T)}$}}}
\end{split}
\end{equation}
The definitions of $d[v]_{\nnew^+(\T)}$ and $d[s]_{\mathcal{B}(\T)}$ in Definition \ref{def:carleman_measure} and the fact that $\mathcal{L}(\T_l)\cup\mathcal{L}(\T_r)=\mathcal{L}(\T)$ conclude the proof. The case of $\mathfrak{r}(\T)$ as an overlap node is the same, and we omit the proof.

\end{proof}

\subsection{Homogenization Operations and Homogenization Procedure}

Lemma \ref{lem:carleman_representation} shows that the term $\mathbf{C}_{h}[\T](t)$ can be written as the multiplication of functions $h$ corresponding to all the leaves in $\mathcal{L}(\T)$, with certain couplings induced by the structure of $\T$
\begin{equation}\label{eq:product_of_initials}
\mathbf{C}_{h}[\T](t,x,v)=\int \Bigg(\prod_{\l\in\mathcal{L}(\T)}h\big(s_{\mathcal{P}\l},\x_{\mathcal{P}\mathfrak{l}},v_{\mathfrak{l}}\big)\Bigg)\mathds{1}_{0\leq s_{\mathfrak{r}(\T)}\leq t}d[v]_{\mathcal{N}_{\new}^+(\T)}d[s]_{\mathcal{B}(\T)}.
\end{equation}

A crucial idea of establishing the main results of the paper is the homogenization operations, which allow us to replace these highly singular functions $h$ in \eqref{eq:product_of_initials} by regular enough functions. To formalize this heuristic, we introduce the notion of Boltzmann integral with replacement.

\begin{definition}\label{def:C_expansion_replacement}
Consider any functions $h(t,x,v)$ and $h^{\dagger}(t,x,v)$. For any decorated tree $\T$, any set $\mathcal{L}^\dagger\subset \mathcal{L}(\T)$, the function $\mathbf{C}_{h}[\T,\mathcal{L}^{\dagger}=h^{\dagger}]$ is defined recursively. When $\T$ is a single node $\mathcal{T}=\circ$
\begin{equation}
\mathbf{C}_{h}[\T,\mathcal{L}^{\dagger}=h^{\dagger}](t,x,v):=
\left\{
\begin{array}{lr}
h(t,x,v)     &\textup{ if $\mathcal{L}^{\dagger}=\varnothing$}  \\
h^{\dagger}(t,x,v)     & \textup{ if $\mathcal{L}^{\dagger}=\mathcal{L}(\T)$}
\end{array}
\right.
\end{equation}
When $\T$ has at least one branching node, denote $\T_l$ as the left sub-tree and $\T_r$ as the right sub-tree of the root $\mathfrak{r}(\T)$
\begin{equation*}
\mathbf{C}_{h}[\T,\mathcal{L}^\dagger=h^{\dagger}](t):=
\left\{
\begin{array}{lr}
\int_0^t\mathbf{S}_{t-s}\mathbf{Q}^+\Big(\mathbf{C}_{h}[\T_l,\mathcal{L}^\dagger\cap \mathcal{L}(\T_l)=h^{\dagger}](s),\mathbf{C}_{h}[\T_r,\mathcal{L}^\dagger\cap\mathcal{L}(\T_r)=h^{\dagger}](s)\Big)ds     & \textup{ if $\mathfrak{r}(\T)$ is 'collision'} \\
 \int_0^t\mathbf{S}_{t-s}\mathbf{Q}^-\Big(\mathbf{C}_{h}[\T_l,\mathcal{L}^\dagger\cap \mathcal{L}(\T_l)=h^{\dagger}](s),\mathbf{C}_{h}[\T_r,\mathcal{L}^\dagger\cap\mathcal{L}(\T_r)=h^{\dagger}](s)\Big)ds        & \textup{ if $\mathfrak{r}(\T)$ is 'overlap'} 
\end{array}
\right.
\end{equation*}

\end{definition}

The same strategy of proving Lemma \ref{lem:carleman_representation} can derive the following lemma.

\begin{lemma}
Consider any functions $h(t,x,v)$ and $h^{\dagger}(t,x,v)$. For any decorated tree $\T$, any set $\mathcal{L}^\dagger\subset \mathcal{L}(\T)$, the function $\mathbf{C}_{h}[\T,\mathcal{L}^\dagger=h^{\dagger}]$ equals
\begin{equation}\label{eq:carleman_representation_replaced}
\mathbf{C}_{h}\Big[\T,\mathcal{L}^{\dagger}=h^{\dagger}\Big](t,x,v)=\int \Bigg(\prod_{\l\in \mathcal{L}(\T)\setminus\mathcal{L}^\dagger}h\big(s_{\mathcal{P}\mathfrak{l}},\x_{\mathcal{P}\l},v_{\l}\big)\Bigg)\Bigg(\prod_{\mathfrak{l}\in \mathcal{L}^{\dagger}}h^{\dagger}(s_{\mathcal{P}\mathfrak{l}},\x_{\mathcal{P}\mathfrak{l}},v_{\mathfrak{l}})\Bigg)\mathds{1}_{0\leq s_{\mathfrak{r}(\T)}\leq t}d[v]_{\mathcal{N}_{\new}^+(\T)}d[s]_{\mathcal{B}(\T)}.
\end{equation}
Consider any functions $f(x,v)$ and $f^{\dagger}(x,v)$
\begin{equation}\label{eq:carleman_representation_replaced_xv}
\mathbf{C}_{f}\Big[\T,\mathcal{L}^{\dagger}=f^{\dagger}\Big](t,x,v)=\int \Bigg(\prod_{\l\in \mathcal{L}(\T)\setminus\mathcal{L}^{\dagger}}f\big(\x_{\l},v_{\l}\big)\Bigg)\Bigg(\prod_{\mathfrak{l}\in \mathcal{L}^{\dagger}}f^{\dagger}(\x_{\mathfrak{l}},v_{\mathfrak{l}})\Bigg)\mathds{1}_{0\leq s_{\mathfrak{r}(\T)}\leq t}d[v]_{\mathcal{N}_{\new}^+(\T)}d[s]_{\mathcal{B}(\T)}.
\end{equation}
\end{lemma}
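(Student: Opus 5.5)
We prove \eqref{eq:carleman_representation_replaced} by induction on $|\mathcal{L}(\T)|$, copying the proof of Lemma \ref{lem:carleman_representation}. The only difference is that each leaf carries a label: it lies in $\mathcal{L}^\dagger$ or it does not. This label is inherited by the sub-trees through $\mathcal{L}^\dagger\cap\mathcal{L}(\T_l)$ and $\mathcal{L}^\dagger\cap\mathcal{L}(\T_r)$.

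\emph{Base case.} For $\T=\circ$ we have $\mathcal{L}(\T)=\{\mathfrak{r}(\T)\}$, and the virtual parent $\diamond$ satisfies $s_\diamond=t$, $\x_\diamond=x$. So the right-hand side of \eqref{eq:carleman_representation_replaced} is $h(t,x,v)$ when $\mathcal{L}^\dagger=\varnothing$ and $h^\dagger(t,x,v)$ when $\mathcal{L}^\dagger=\mathcal{L}(\T)$. This matches Definition \ref{def:C_expansion_replacement}.

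\emph{Induction step.} Let $\T$ have root sub-trees $\T_l$ and $\T_r$, and assume first that the root is a collision. Unfold $\mathbf{S}_{t-s}\mathbf{Q}^+$ at the point $(t,x,v)$ and apply the Carleman change of variables (Lemma \ref{lem:Carleman_change_of_variables}), exactly as in \eqref{eq:carleman_representation_first_compute}. This gives
\[
\int \mathbf{C}_{h}[\T_l,\mathcal{L}^\dagger\cap\mathcal{L}(\T_l)=h^\dagger]\big(s,x-(t-s)v,v_l\big)\,\mathbf{C}_{h}[\T_r,\mathcal{L}^\dagger\cap\mathcal{L}(\T_r)=h^\dagger]\big(s,x-(t-s)v,v_r\big)\,|v_l-v|^{-1}\mathds{1}_{0\leq s\leq t}\,d\pi_{v+(v_l-v)^{\perp}}(v_r)\,dv_l\,ds .
\]
Now insert the induction hypothesis for each sub-tree, evaluated at the base point $(s,x-(t-s)v)$. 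Two facts are needed.

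First, the position functions must be compatible. In the pseudo-trajectory of Remark \ref{rmk:pseudo-trajectory_and_xfunction}, the root of $\T$ sits at $\x_{\mathfrak{r}(\T)}=x-(t-s)v$ at time $s$. The sub-tree pseudo-trajectories started from $(s,x-(t-s)v)$ coincide with the restrictions of the pseudo-trajectory of $\T$. Hence the pairs $(s_{\mathcal{P}\l},\x_{\mathcal{P}\l})$ computed in $\T_l$ or $\T_r$ agree with those computed in $\T$, including the leaves whose parent is the root, which were attached to the virtual node $\diamond$ inside the sub-tree.

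Second, the measures recombine. By Definition \ref{def:carleman_measure}, the product of the velocity and time measures above is exactly $d[v]_{\nnew^+(\T)}\,d[s]_{\mathcal{B}(\T)}\,\mathds{1}_{0\leq s_{\mathfrak{r}(\T)}\leq t}$.

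Since $\mathcal{L}^\dagger$ splits as the disjoint union $(\mathcal{L}^\dagger\cap\mathcal{L}(\T_l))\cup(\mathcal{L}^\dagger\cap\mathcal{L}(\T_r))$, and likewise for the complement in $\mathcal{L}(\T)$, the two products over leaves merge into the claimed product. The overlap case is the same computation with $\mathbf{Q}^-$ in place of $\mathbf{Q}^+$. There is no change of variables here: $v_{\mathfrak{r}(\T_l)}=v$, the velocity $v_{\mathfrak{r}(\T_r)}$ is integrated freely, and the factor $\pi|v_{\oold}-v_{\onew}|$ comes from $\int((v_*-v)\cdot\omega)_+\,d\omega$. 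These match the factors prescribed in \eqref{eq:carleman_measure_of_velocities}.

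\emph{Main obstacle.} The only delicate point is the bookkeeping in \eqref{eq:carleman_measure_of_velocities} for overlap roots. One must check that the root of the right (new) sub-tree starts a new collisional component $\T_i^+$ with a free velocity variable. Similarly, one must check that the $\oold$ and $\onew$ used in the overlap weight agree with those of the sub-tree decompositions. This is the same check already carried out, and omitted, in Lemma \ref{lem:carleman_representation}, so we rely on it verbatim.

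\emph{Time-independent case.} For \eqref{eq:carleman_representation_replaced_xv}, take $h=\mathbf{S}_\cdot f$ and $h^\dagger=\mathbf{S}_\cdot f^\dagger$. Then $h(s_{\mathcal{P}\l},\x_{\mathcal{P}\l},v_\l)=f(\x_{\mathcal{P}\l}-s_{\mathcal{P}\l}v_\l,v_\l)=f(\x_\l,v_\l)$, because the leaf particle travels in a straight line with velocity $v_\l$ back to time $0$. The same identity holds for $f^\dagger$.
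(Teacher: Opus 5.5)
Your proposal is correct and follows the paper's own route. The paper proves this lemma by the same induction on $|\mathcal{L}(\T)|$ as Lemma \ref{lem:carleman_representation}, and you do the same, splitting $\mathcal{L}^\dagger$ across $\T_l$ and $\T_r$ and leaving the overlap-root bookkeeping at the level of detail the paper itself omits. Your reduction of the $f(x,v)$ version via $h=\mathbf{S}_\cdot f$ and $\x_\l=\x_{\mathcal{P}\l}-s_{\mathcal{P}\l}v_\l$ correctly makes explicit a step the paper leaves implicit.
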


Suppose we have a singular initial data $f^0\in \ldelta$ and a decorated tree $\T$. If we want to estimate the Boltzmann integral $\mathbf{C}_{f^0}[\T]$ in Definition \ref{def:C_expansion}, a natural strategy is to replace the singular initial data $f^0$ in the original Boltzmann integral attached to different leaves (see Lemma \ref{lem:carleman_representation}) by Maxwellians step by step
\begin{equation*}
\mathbf{C}_{f^0}[\T]\mapsto \mathbf{C}_{f^0}\Big[\T,\{\l_1\}=M_{\gamma(0)}\Big]\mapsto \mathbf{C}_{f^0}\Big[\T,\{\l_1,\l_2\}=M_{\gamma(0)}\Big]\mapsto ......\mapsto \mathbf{C}_{f^0}\Big[\T,\mathcal{L}(\T)=M_{\gamma(0)}\Big]
\end{equation*}
In Section \ref{sec:homogenization_operation} we will prove a series of Lemmas estimating the errors of all the possible replacements. We call the different ways of replacing $f^0$ by $M_{\gamma(0)}$ as \textit{homogenization operations}.

\begin{definition}\label{def:homogenization_operation}
\textbf{\textup{[Homogenization Operation]}} Given any decorated tree $\T$ and any set $\mathcal{L}_M$ of Maxwellian leaves, a homogenization operation is a specific way of choosing one non-Maxwellian leaf $\l\in \mathcal{L}(\T)\setminus\mathcal{L}_M$ and update $\mathcal{L}_M$ by $\mathcal{L}_M\cup\{\l\}$, or a specific way of choosing two non-Maxwellian leaves $\{\l_l,\l_r\}\subset \mathcal{L}(\T)\setminus\mathcal{L}_M$ and updating $\mathcal{L}_M$ by $\mathcal{L}_M\cup\{\l_l,\l_r\}$.
\begin{itemize}
    \item If we homogenize two non-Maxwellian leaves $\{\l_l,\l_r\}\subset \mathcal{L}(\T)\setminus\mathcal{L}_M$, and they have the same collision-parent node, then this is called \textup{\textbf{double homogenization}}.
    \item If we homogenize one non-Maxwellian leaf $\l\in \mathcal{L}(\T)\setminus\mathcal{L}_M$, and it has a collision-parent node $\n$, and the other collision-child node of $\n$ is a branching node (thus not a leaf), then this is called \textbf{\textup{single homogenization}}.
    \item If we homogenize one non-Maxwellian leaf $\l\in \mathcal{L}(\T)\setminus\mathcal{L}_M$, and it does not have a collision-parent node, then this is called \textup{\textbf{individual homogenization}}.
\end{itemize}
\end{definition}

\begin{figure}[h]
    \centering
    \includegraphics[width=0.8\linewidth]{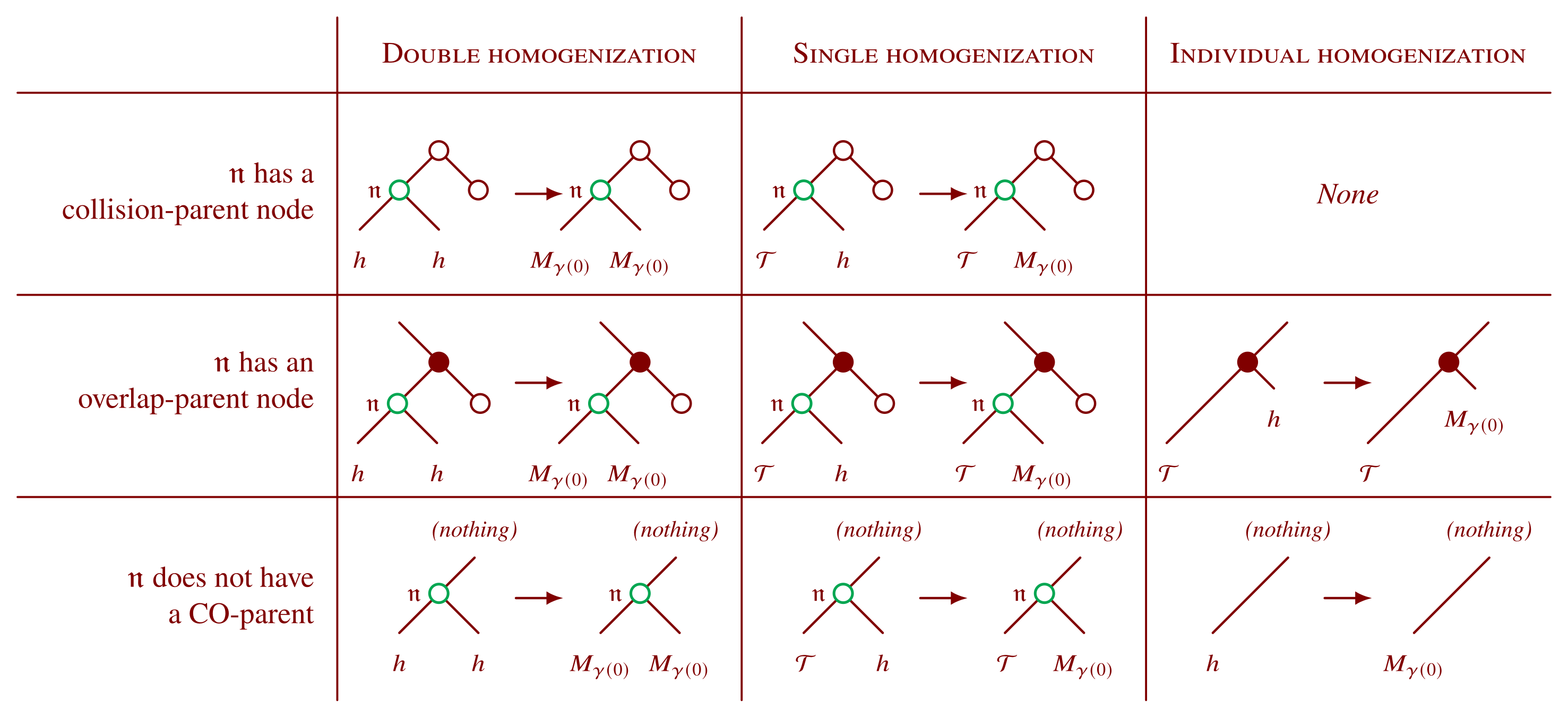}
    \caption{This is a more detailed classification of all the homogenization operations we use in this paper.}
    \label{fig:summary_of_operations}
\end{figure}

In Section \ref{sec:homogenization_operation}, we prove a series of Lemmas estimating the errors of each homogenization operation. Heuristically, the estimates are of the following forms
\begin{equation*}
\begin{split}
&\mathbf{C}_{f^0}\Big[\mathcal{T},\mathcal{L}_M=M_{\gamma(0)}\Big](t,x,v)\leq \textup{Error term}\times \mathbf{C}_{f^0}\Big[\mathcal{T},\mathcal{L}_M\cup\{\textup{newly homogenized leaves}\}=M_{\gamma(0)}\Big](t,x,v)\\
&\Big\lVert\mathbf{C}_{f^0}\Big[\mathcal{T},\mathcal{L}_M=M_{\gamma(0)}\Big](t)\Big\rVert_{\ldeltastar}\leq \textup{Error term}\times \Big\lVert\mathbf{C}_{f^0}\Big[\mathcal{T},\mathcal{L}_M\cup\{\textup{newly homogenized leaves}\}=M_{\gamma(0)}\Big](t)\Big\rVert_{\ldeltastar}
\end{split}
\end{equation*}
According to the list of homogenization operations in Definition \ref{def:homogenization_operation}, we can not arbitrarily choose two non-Maxwellian leaves and homogenize them. Therefore we need a specific way of putting different homogenization operations together, in order to homogenize all the leaves of $\T$, and this is called \textit{homogenization procedure}.

\begin{definition}\label{def:homogenization_procedure}
\textbf{\textup{[Homogenization Procedure]}}

Given a decorated tree $\T$, and the sequence $\{\T_i^+\}_{0\leq i\leq |\mathcal{G_T}|}$, we set $\mathcal{L}_M(\T_i^+)=\varnothing$ at the beginning for every $0\leq i\leq |\mathcal{G_T}|$.

\textup{Step 1:} In the graph $\mathcal{G}_{\mathcal{T}}$, choose a lowest node $i$ with $\mathcal{L}_M(\T_i^+)=\varnothing$.

\textup{Step 2:} In the collisional decorated sub-tree $\T_{i}^+$, choose a non-Maxwellian leaf $\l\in \mathcal{L}(\T_i^+)\setminus\mathcal{L}_M(\T_i^+)$ with the largest depth among the set $\mathcal{L}(\T_i^+)\setminus\mathcal{L}_M(\T_i^+)$ of non-Maxwellian leaves.

\textup{Step 3, Case 1:} If $\l$ has a parent node $\n$ in $\T_i^+$, and the other collision-child node of $\n$ is also a non-Maxwellian leaf denoted as $\mathfrak{l}_*$, then apply \textit{double homogenization} and update $\mathcal{L}_M(\T_i^+)$ by adding two new leaves $\{\mathfrak{l},\mathfrak{l}_*\}$ into it;\\
\textup{Step 3, Case 2:} If $\l$ has a parent node $\n$ in $\T_i^+$, and the other child node of $\n$ is not a leaf, then apply \textit{single homogenization} and update $\mathcal{L}_M(\T_i^+)$ by adding a new leaf $\{\mathfrak{l}\}$ into it\\
\textup{Step 3, Case 3:} If $\l$ has no parent node in $\T_i^+$, then apply \textit{individual homogenization} and update $\mathcal{L}_M(\T_i^+)$ by adding a new leaf $\{\mathfrak{l}\}$ into it.

\textup{Step 4:} Repeat step 2 until $\mathcal{L}_M(\T_i^+)=\mathcal{L}(\T_i^+)$.

\textup{Step 5:} Repeat step 1 until there is no node $i$ left in $\mathcal{G}_{\T}$ with $\mathcal{L}_M(\T_i^+)=\varnothing$.
\end{definition}

\begin{figure}
    \centering
    \includegraphics[width=0.7\linewidth]{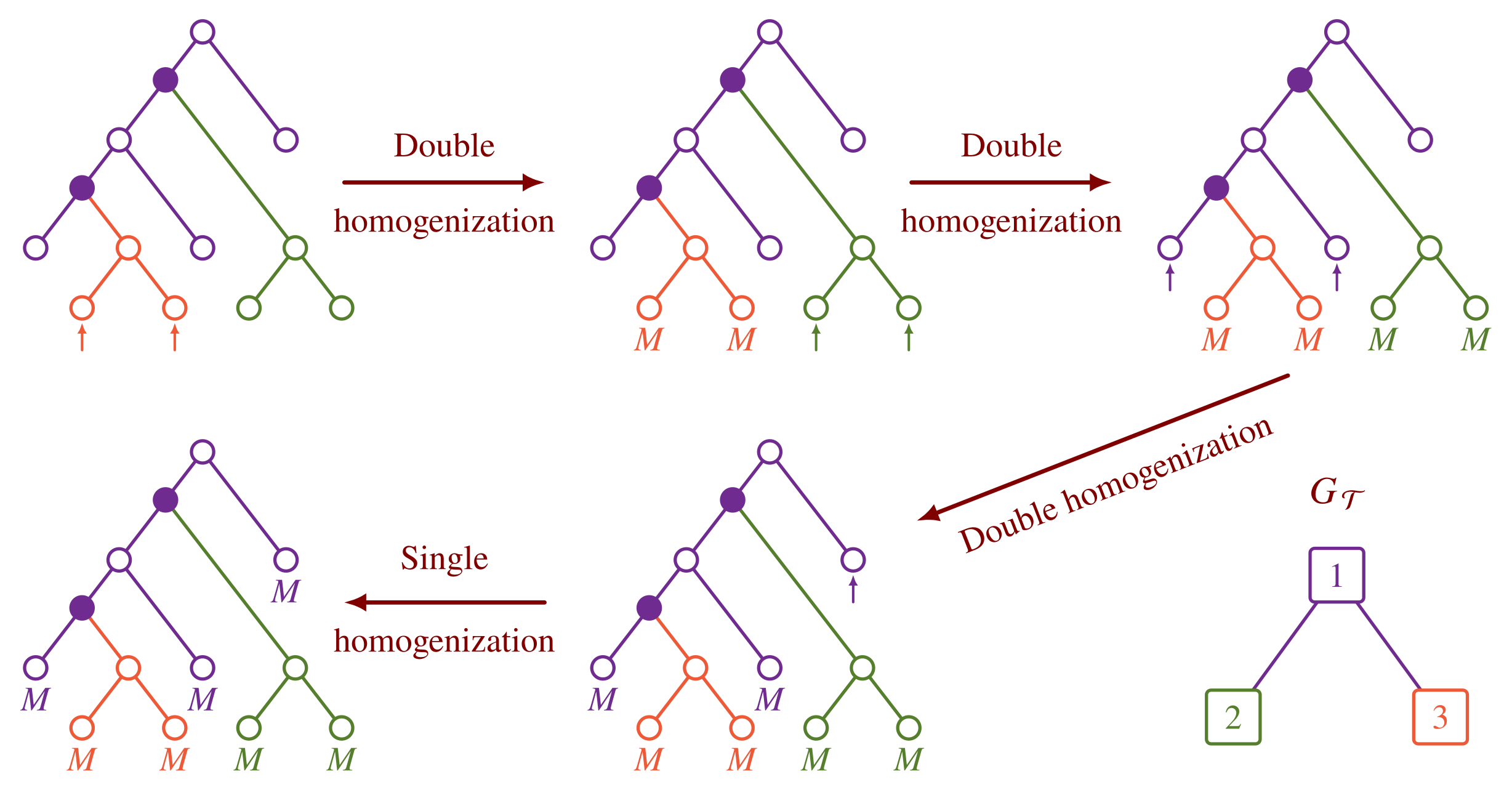}
    \caption{This is an example of the homogenization procedure in Definition \ref{def:homogenization_procedure}. When a leaf is noted as 'M', it means this leaf has been homogenized.}
    \label{fig:homogenization_operation}
\end{figure}

\section{Analysis of Overlap Functions and Degeneracies}\label{sec:overlap_and_degeneracy}

As we can see in Remark \ref{rmk:pseudo-trajectory_and_xfunction} on the pseudo-trajectory, the presence of overlaps does not change the geometry of trajectories. And as shown in Figure \ref{fig:intersection} and the proof of Proposition \ref{prop:toy_example}, we gain regularity from the structure of collisions. In this section, we prove several results filtering the effects of overlaps, which will later allow us to gain regularity from collisions (see Section \ref{sec:homogenization_operation}). 

\begin{definition}\label{def:overlap_function}
\textup{\textbf{[Overlap function] }}For any decorated tree $\mathcal{T}$, any $0\leq t\leq T_*$, and any $v\in\mathds{R}^3$, define $\mathbf{O}_{M_{\gamma(0)}}[\mathcal{T}](t,v)$ as the function representing the contribution of an overlap
\begin{equation}\label{eq:overlap_function}
\mathbf{O}_{M_{\gamma(0)}}[\mathcal{T}](t,v):=\pi\int_{\mathds{R}^3}|v-v_*|\mathbf{C}_{M_{\gamma(0)}}[\mathcal{T}](t,v_*)dv_*.
\end{equation}
\end{definition}

\subsection{Reduction of Overlaps}

\begin{lemma}\label{lem:full_to_prime}
Consider any decorated tree $\T$, any set $\mathcal{L}_M\subset\mathcal{L}(\T)$ of Maxwellian leaves, and any $h\in L^{\infty}\ldelta([0,T]\times \mathds{R}^3\times \mathds{R}^3)$. Suppose for any $\n\in\mathcal{N}^+(\tprime)$ and $\o\in \mathcal{O}[\n]$, all the leaves of $\mathscr{T}_{\o}$ are Maxwellian $\mathcal{L}(\mathscr{T}_{\o})\subset \mathcal{L}_M$. Then we have for any $(t,x,v)\in [0,T]\times \mathds{R}^3\times \mathds{R}^3$
\begin{equation}\label{eq:full_to_prime}
\begin{split}
&\mathbf{C}_{h}[\T,\mathcal{L}_M=M_{\gamma(0)}](t,x,v)=\int \Bigg(\prod_{\mathfrak{l}\in \mathcal{L}(\T_{0}^+)\setminus\mathcal{L}_M}h\big(s_{\mathcal{P}\l},\x_{\mathcal{P}\mathfrak{l}},v_{\mathfrak{l}}\big)\Bigg)\Bigg(\prod_{\mathfrak{l}\in \mathcal{L}(\T_{0}^+)\cap \mathcal{L}_M}M_{\gamma(0)}(v_{\l})\Bigg)\\
&\hspace{5cm}\Bigg(\prod_{\mathfrak{n}\in \mathcal{N}(\tprime)}\widetilde{\mathbf{O}}_{\n}\Big(s_{\n},[s]_{\mathcal{O}[\n]},s_{\mathcal{P}_c\n},v_{\n}\Big)\Bigg)\mathds{1}_{0\leq s_{\mathfrak{r}(\T)}\leq t}d[v]_{\nnew^+(\tprime)}d[s]_{\mathcal{B}(\T_0^+)},
\end{split}
\end{equation}
with
\begin{equation}\label{eq:O_tilde}
\widetilde{\mathbf{O}}_{\n}\Big(s_{\n},[s]_{\mathcal{O}[\n]},s_{\mathcal{P}_c\n},v_{\n}\Big):=\mathds{1}_{s_{\n}\leq s_{\mathcal{P}\n}}\prod_{\o\in \mathcal{O}[\n]}\Bigg(\mathds{1}_{s_{\o}\leq s_{\mathcal{P}\o}}\mathbf{O}_{M_{\gamma(0)}}[\mathscr{T}_{\o}](s_{\o},v_{\mathfrak{n}})ds_{\o}\Bigg)
\end{equation}

\end{lemma}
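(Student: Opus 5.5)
The identity follows by integrating out, inside the Carleman representation \eqref{eq:carleman_representation_replaced}, every variable attached to the sub-trees $\mathscr{T}_{\o}$ branched from the overlap nodes hanging on $\tprime$. By hypothesis these sub-trees carry only Maxwellian leaves.

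\textbf{Step 1: factorize the measure.} Start from \eqref{eq:carleman_representation_replaced} with $h^{\dagger}=M_{\gamma(0)}$. Use the product structure \eqref{eq:carleman_measure_of_velocities} of $d[v]_{\nnew^+(\T)}$, which splits into
\begin{itemize}
\item the factor $d[v]_{\nnew(\T_0^+)}$ for the primary sub-tree;
\item the factors $d[v]_{\mathcal{N}(\T_i^+)}$ for the other collisional pieces;
\item the weights $\pi|v_{\oold}-v_{\onew}|$.
\end{itemize}
For $\o\in\mathcal{O}[\n]$ with $\n\in\mathcal{N}^+(\tprime)$ we have $\oold=\n$. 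The node $\onew$ is the root of the collisional piece obtained from $\mathscr{T}_{\o}$, since the new child is followed through its old children. So the weight is $\pi|v_{\n}-v_{\mathfrak{r}(\mathscr{T}_{\o})}|$.

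Split the time measure $d[s]_{\mathcal{B}(\T)}$ the same way: the times of $\mathcal{B}(\T_0^+)$, the overlap times $s_{\o}$ with constraint $\mathds{1}_{s_{\o}\leq s_{\mathcal{P}\o}}$, and the branching times inside each $\mathscr{T}_{\o}$, which are constrained only by $s_{\mathfrak{r}}\le s_{\o}$.

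\textbf{Step 2: integrate out each $\mathscr{T}_{\o}$.} All leaves of $\mathscr{T}_{\o}$ are Maxwellian, and $M_{\gamma(0)}$ is space-independent. Hence the integrand restricted to $\mathscr{T}_{\o}$ does not depend on the positions $\x$. These positions are the only coupling to the rest of the tree other than through $v_{\n}$ and $s_{\o}$.

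Fix $v_{\n}$ and $s_{\o}$ and integrate all velocities and times of $\mathscr{T}_{\o}$. The inner integral over $\mathscr{T}_{\o}$ minus its root velocity is $\mathbf{C}_{M_{\gamma(0)}}[\mathscr{T}_{\o}](s_{\o},v_*)$, by Lemma \ref{lem:carleman_representation} applied to $\mathscr{T}_{\o}$. Here the measure for $\mathscr{T}_{\o}$ is again of the form \eqref{eq:carleman_measure_of_velocities} by recursion. Multiplying by $\pi|v_{\n}-v_*|$ and integrating in $v_*$ gives exactly $\mathbf{O}_{M_{\gamma(0)}}[\mathscr{T}_{\o}](s_{\o},v_{\n})$ from Definition \ref{def:overlap_function}.

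Nested overlaps inside $\mathscr{T}_{\o}$ need no separate treatment: they are already absorbed into $\mathbf{C}_{M_{\gamma(0)}}[\mathscr{T}_{\o}]$.

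\textbf{Step 3: collect the factors.} Doing this for every $\o\in\mathcal{O}[\n]$ and every $\n$ produces the product $\widetilde{\mathbf{O}}_{\n}$ in \eqref{eq:O_tilde}. The time indicators come from \eqref{eq:carleman_time_measure_def}: $\mathds{1}_{s_{\o}\le s_{\mathcal{P}\o}}$ gives the ordering along the overlap chain from $\n$ up to $\mathcal{P}_c\n$, and $\mathds{1}_{s_\n\le s_{\mathcal{P}\n}}$ links $\n$ to the bottom of that chain.

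What remains is the integral over $\tprime$. Its leaf factors are $h(s_{\mathcal{P}\l},\x_{\mathcal{P}\l},v_\l)$ or $M_{\gamma(0)}(v_\l)$. The positions $\x_{\mathcal{P}\l}$ are unchanged, because overlaps do not alter the pseudo-trajectory (Remark \ref{rmk:pseudo-trajectory_and_xfunction}), so the positions in $\tprime$ are the same functions of the remaining variables.

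\textbf{Main obstacle.} The delicate point is the bookkeeping in Step 2. One must check two things:
\begin{itemize}
\item Fubini is legitimate, since the integrand is nonnegative for nonnegative $h$; for general $h$, apply it to $|h|$ first.
\item The recursive time constraints of \eqref{eq:carleman_time_measure_def} decouple exactly into the constraints internal to $\mathscr{T}_{\o}$ (giving $\mathbf{C}$) and the chain indicators in $\widetilde{\mathbf{O}}_{\n}$, with no residual cross-constraints.
\end{itemize}
I would verify the second point by induction on $|\mathcal{O}(\T)|$, removing one lowest overlap node of $\tprime$ at a time.
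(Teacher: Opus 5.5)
Your proposal is correct and follows the paper's proof essentially step for step. Like the paper, you split the time measure, the velocity measure \eqref{eq:carleman_measure_of_velocities} and the leaf product along the branched sub-trees $\mathscr{T}_{\o}$, $\o\in\mathcal{O}[\n]$, and identify each all-Maxwellian block weighted by $\pi|v_{\n}-v_{\onew}|$ as $\mathbf{O}_{M_{\gamma(0)}}[\mathscr{T}_{\o}](s_{\o},v_{\n})$; the only difference is that you spell out the Fubini justification and propose an induction to check the time decoupling, which the paper simply asserts in \eqref{eq:prime_decompositioni_carleman_time}.
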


\begin{proof}
Let us expand the LHS of \eqref{eq:full_to_prime} and compare the LHS and RHS of \eqref{eq:full_to_prime}
\begin{equation}\label{eq:full_to_prime_expanded}
\begin{split}
&\int \Bigg(\prod_{\l\in \mathcal{L}(\T)\setminus\mathcal{L}_M}h\big(s_{\mathcal{P}\mathfrak{l}},\x_{\mathcal{P}\l},v_{\l}\big)\Bigg)\Bigg(\prod_{\mathfrak{l}\in \mathcal{L}_M}M_{\gamma(0)}(v_{\mathfrak{l}})\Bigg)\mathds{1}_{0\leq s_{\mathfrak{r}(\T)}\leq t}d[v]_{\mathcal{N}_{\new}^+(\T)}d[s]_{\mathcal{B}(\T)}\\
&=\int \Bigg(\prod_{\mathfrak{l}\in \mathcal{L}(\T_{0}^+)\setminus \mathcal{L}_M}h\big(s_{\mathcal{P}\l},\x_{\mathcal{P}\mathfrak{l}},v_{\mathfrak{l}}\big)\Bigg)\Bigg(\prod_{\l\in \mathcal{L}(\tprime)\cap \mathcal{L}_M}M_{\gamma(0)}(v_\l)\Bigg)\Bigg(\prod_{\mathfrak{n}\in \mathcal{N}(\tprime)}\widetilde{\mathbf{O}}_{\n}\Big(s_{\n},[s]_{\mathcal{O}[\n]},s_{\mathcal{P}_c\n},v_{\n}\Big)\Bigg)\mathds{1}_{0\leq s_{\mathfrak{r}(\T)}\leq t}d[v]_{\nnew^+(\tprime)}d[s]_{\mathcal{B}(\T_0^+)},
\end{split}
\end{equation}
We first decompose the integral measure $d[s]_{\mathcal{B}(\T)}$ of time in \eqref{eq:carleman_time_measure_def}. We integrate over the time of collisions in $\tprime$ by $d[s]_{\mathcal{B}(\tprime)}$. For each collision node or leaf $\n\in \mathcal{N}^+(\tprime)$, there may be overlap nodes $\o\in \mathcal{O}[\mathfrak{n}]$ between $\n$ and its collision-parent node $\mathcal{P}_c\n$, see the middle part of Figure \ref{fig:full_to_prime}. For each overlap node $\o\in \mathcal{O}[\n]$, the time $s_{\o}$ of this overlap must be between the previous collision time and the next collision time, thus $\mathds{1}_{s_{\n}\leq s_{\o}\leq s_{\mathcal{P}_c\n}}$.
\begin{equation}\label{eq:prime_decompositioni_carleman_time}
\mathds{1}_{0\leq s_{\mathfrak{
r}(\T)}\leq t}d[s]_{\mathcal{B}(\T)}=\prod_{\n\in \mathcal{N}^+(\T_0^+)}\mathds{1}_{s_{\n}\leq s_{\mathcal{P}\n}}\Bigg(\prod_{\o\in \mathcal{O}[\mathfrak{n}]}\Big(\underbrace{\red{\mathds{1}_{0\leq s_{\mathfrak{r}(\mathscr{T}_{\o})}\leq s_{\o}}d[s]_{\mathcal{B}(\mathscr{T}_{\o})}}}_{\textup{\blue{integral of times for $\mathscr{T}_{\o}$}}}\underbrace{\mathds{1}_{s_{\o}\leq s_{\mathcal{P}\o}}ds_{\o}}_{\textup{\blue{time of overlap $\o$}}}\Big)\Bigg)d[s]_{\mathcal{B}(\tprime)},
\end{equation}
where for each $\o\in \mathcal{O}(\T)$ we also integrate over the time $[s]_{\mathcal{B}(\mathscr{T}_{\o})}$ of the branched sub-tree $\mathscr{T}_{\o}$ of $\o$, for example see the right part of Figure \ref{fig:full_to_prime}.
\begin{figure}[h]
    \centering
    \includegraphics[width=0.7\linewidth]{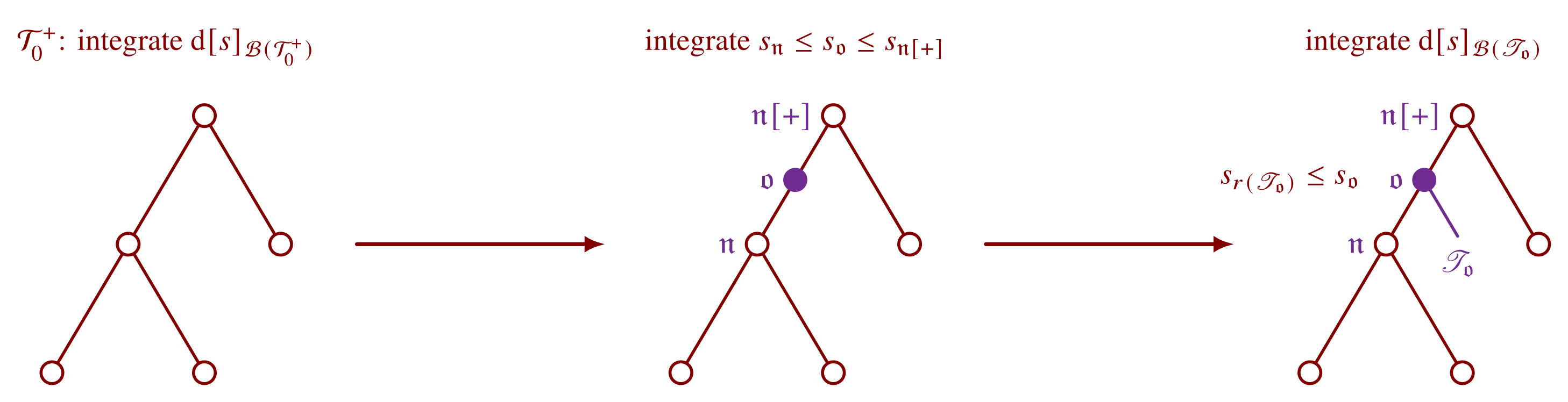}
    \caption{}
    \label{fig:full_to_prime}
\end{figure}

Next we decompose the integral measure $d[v]_{\nnew^+(\T)}$ of velocities in \eqref{eq:carleman_measure_of_velocities}. First we integrate over the velocities of collisions in $\tprime$ by $d[v]_{\nnew^+(\tprime)}$. For each collision node or leaf $\n\in \mathcal{N}^+(\tprime)$, there may be overlap nodes $\o\in \mathcal{O}[\mathfrak{n}]$ between $\n$ and its collision-parent node $\mathcal{P}_c\n$. 
\begin{equation}\label{eq:prime_decompositioni_carleman_velocity}
d[v]_{\nnew^+(\T)}=\prod_{\n\in \mathcal{N}^+(\tprime)}\Bigg(\prod_{\o\in \mathcal{O}[\mathfrak{n}]}\red{\Big(\pi|v_{\oold}-v_{\onew}|\ d[v]_{\mathcal{N}^+(\mathscr{T}_{\o})}\Big)}\Bigg)d[v]_{\nnew^+(\tprime)}
\end{equation}
Then we decompose the set $\mathcal{L}(\T)$ of leaves
\begin{equation}\label{eq:initial_data_decomposition}
\begin{split}
&\Bigg(\prod_{\l\in \mathcal{L}(\T)\setminus\mathcal{L}_M}h\big(s_{\mathcal{P}\mathfrak{l}},\x_{\mathcal{P}\l},v_{\l}\big)\Bigg)\Bigg(\prod_{\mathfrak{l}\in \mathcal{L}_M}M_{\gamma(0)}(v_{\mathfrak{l}})\Bigg)\\
&=\Bigg(\prod_{\n\in \mathcal{N}^+(\tprime)}\prod_{\o\in \mathcal{O}[\n]}\red{\Big(\prod_{\l\in \mathcal{L}(\mathscr{T}_{\o})}M_{\gamma(0)}(v_{\l})\Big)}\Bigg)\Bigg(\prod_{\l\in \mathcal{L}(\tprime)\setminus \mathcal{L}_M}h\big(s_{\mathcal{P}\l},\x_{\mathcal{P}\l},v_{\l}\big)\Bigg)\Bigg(\prod_{\l\in \mathcal{L}(\tprime)\cap \mathcal{L}_M}M_{\gamma(0)}(v_{\l})\Bigg)
\end{split}
\end{equation}
The structures of the three decompositions \eqref{eq:prime_decompositioni_carleman_time}-\eqref{eq:initial_data_decomposition} are very similar: given each $\n\in \mathcal{N}^+(\tprime)$ and $\o\in \mathcal{O}[\n]$, we have terms related to the branched sub-tree $\mathscr{T}_{\o}$. Notice
\begin{equation}
\begin{split}
&\int \Big(\prod_{\mathfrak{l}\in \mathcal{L}(\mathscr{T}_{\o})}M_{\gamma(0)}(s_{\mathcal{P}\l},\x_{\mathcal{P}\l},v_{\l})\Big)\Big(\pi|v_{\oold}-v_{\onew}|\ d[v]_{\mathcal{N}^+(\mathscr{T}_{\o})}\Big)\mathds{1}_{s_{\mathfrak{r}(\mathscr{T}_{\o})}\leq s_{\o}}d[s]_{\mathcal{B}(\mathscr{T}_{\o})}\\
&=\int  |v_{\o}-v| \mathbf{C}_{M_{\gamma(0)}}[\mathscr{T}_{\o}](s_{\o},v_{\o})dv_{\o}=\mathbf{O}_{M_{\gamma(0)}}[\mathscr{T}_{\o}](s_{\o},v).
\end{split}
\end{equation}
Combining the decompositions \eqref{eq:prime_decompositioni_carleman_time}-\eqref{eq:initial_data_decomposition}, the LHS of \eqref{eq:full_to_prime} equals
\begin{equation}
\begin{split}
&\int \Bigg(\prod_{\l\in \mathcal{L}(\tprime)\setminus \mathcal{L}_M}h\big(s_{\mathcal{P}\l},\x_{\mathcal{P}\l},v_{\l}\big)\Bigg)\Bigg(\prod_{\l\in \mathcal{L}(\tprime)\cap \mathcal{L}_M}M_{\gamma(0)}(v_{\l})\Bigg)\\
&\hspace{3cm}\times \Bigg(\prod_{\n\in \mathcal{N}^+(\tprime)}\mathds{1}_{s_{\n}\leq s_{\mathcal{P}\n}}\prod_{\o\in\mathcal{O}[\n]}\mathbf{O}_{M_{\gamma(0)}}[\mathscr{T}_{\o}](s_{\o},v_{\n})\mathds{1}_{s_{\o}\leq s_{\mathcal{P}\o}}ds_{\o}\Bigg)\mathds{1}_{0\leq s_{\mathfrak{r}(\T)}\leq t}d[v]_{\nnew^+(\tprime)}d[s]_{\mathcal{B}(\tprime)},
\end{split}
\end{equation}
where the time indicator functions in the third bracket recovers the time constrain in $\widetilde{O}_{\n}$ (see \eqref{eq:O_tilde}).
This concludes the proof of Lemma \ref{lem:full_to_prime}.
\end{proof}

\subsection{Log-Lipschitz Regularity of Overlap Functions}

\begin{lemma}\label{lem:non_decreasing_form}
For any decorated tree $\T$, any $t\in[0,+\infty)$, and any $\gamma(0)\in (0,+\infty)$, we have
\begin{equation}\label{eq:non_decreasing_form}
\mathbf{C}_{M_{\gamma(0)}}[\T](t,v)=c_{\T}(t)M_{\gamma(0)}(v)\Lambda_{\gamma(0),\T}(v).
\end{equation}
Here $c_{\T}:[0,+\infty)\rightarrow [0,+\infty)$ is a function of $t$, and $\Lambda_{\gamma(0),\T}:\mathds{R}^3\rightarrow [0,+\infty)$ is a radially symmetric function non-decreasing in $|v|$. The function $c_{\T}(t)$ is only determined by $\T$, and $\Lambda_{\gamma(0),\T}(v)$ is only determined by $\gamma(0)$ and $\T$.
\end{lemma}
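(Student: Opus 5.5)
The plan is an induction on the number of nodes of $\T$. The first step separates the time dependence; the second shows that the bilinear operators $\mathbf{Q}^{\pm}$ preserve the class of functions $M_{\gamma(0)}(v)\Lambda(v)$ with $\Lambda\geq 0$ radial and non-decreasing in $|v|$.

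\emph{Separation of time.} Since $M_{\gamma(0)}$ is space-independent and $\mathbf{S}_{t}$ acts trivially on space-independent functions, Definition \ref{def:C_expansion} gives, for $\T$ with sub-trees $\T_l,\T_r$,
\[
\mathbf{C}_{M_{\gamma(0)}}[\T](t,v)=\int_0^t\mathbf{Q}^{\pm}\Big(\mathbf{C}_{M_{\gamma(0)}}[\T_l](s),\mathbf{C}_{M_{\gamma(0)}}[\T_r](s)\Big)(v)\,ds .
\]
By the induction hypothesis and bilinearity this equals $c_{\T}(t)\,G_{\T}(v)$. Here $c_{\T}(t):=\int_0^t c_{\T_l}(s)c_{\T_r}(s)ds$, which is determined by $\T$ alone, and $G_{\T}:=\mathbf{Q}^{\pm}(M_{\gamma(0)}\Lambda_{\gamma(0),\T_l},M_{\gamma(0)}\Lambda_{\gamma(0),\T_r})$. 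The base case is $c_{\circ}\equiv 1$, $\Lambda_{\gamma(0),\circ}\equiv 1$. It remains to show $G_{\T}=M_{\gamma(0)}\Lambda_{\gamma(0),\T}$ with $\Lambda_{\gamma(0),\T}$ radial and non-decreasing. Write $\Lambda_1:=\Lambda_{\gamma(0),\T_l}$ and $\Lambda_2:=\Lambda_{\gamma(0),\T_r}$.

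\emph{Overlap root.} Since $\int((v_*-v)\cdot\omega)_+d\omega=\pi|v-v_*|$,
\[
\mathbf{Q}^-(M_{\gamma(0)}\Lambda_1,M_{\gamma(0)}\Lambda_2)(v)=M_{\gamma(0)}(v)\,\Lambda_1(v)\,\pi\int|v-v_*|M_{\gamma(0)}(v_*)\Lambda_2(v_*)dv_* .
\]
The integral is a convolution of the convex function $|\cdot|$ with a non-negative radial weight. It is therefore convex and radial, and a convex radial function is non-decreasing in $|v|$. A product of non-negative, radial, non-decreasing functions has the same properties, which settles this case.

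\emph{Collision root.} Energy conservation $|v'|^2+|v_*'|^2=|v|^2+|v_*|^2$ gives $M_{\gamma(0)}(v')M_{\gamma(0)}(v_*')=M_{\gamma(0)}(v)M_{\gamma(0)}(v_*)$. Hence $G_{\T}=M_{\gamma(0)}\Lambda$ with
\[
\Lambda(v)=\int M_{\gamma(0)}(v_*)\Lambda_1(v')\Lambda_2(v_*')\big((v_*-v)\cdot\omega\big)_+d\omega dv_* .
\]
Radial symmetry follows from rotation invariance: apply a rotation $R$ to $(v_*,\omega)$, and use that $M_{\gamma(0)},\Lambda_1,\Lambda_2$ are radial and the collision rule commutes with rotations.

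\emph{Monotonicity for the collision root (main obstacle).} I would first reduce to indicators. A non-negative, radial, non-decreasing $\Lambda_i$ is a positive superposition of $1$ and $\mathds{1}_{|\cdot|\geq a}$ (layer-cake representation), and $\Lambda$ is bilinear in $(\Lambda_1,\Lambda_2)$. So it suffices to treat $\Lambda_1=\mathds{1}_{|\cdot|\geq a}$, $\Lambda_2=\mathds{1}_{|\cdot|\geq b}$ with $a,b\in[0,\infty)$. Next, change variables $v_*=v+u$, so that $v'=v+(u\cdot\omega)\omega$ and $v_*'=v+u-(u\cdot\omega)\omega$. For $|w|\geq|v|$, after a rotation we may take $w=\lambda v$ with $\lambda\geq 1$. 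The claim is then a comparison of the Gaussian measure $M_{\gamma(0)}(v+u)|u\cdot\omega|du\,d\omega$ of the set $\{|v'|\geq a,\ |v_*'|\geq b\}$ at $v$ versus at $\lambda v$.

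I would try to prove this comparison through the energy identity. Each post-collisional velocity has squared norm $\tfrac12(|v|^2+|v_*|^2)\pm(\text{cross term})$, with the cross term linear in $v$. For fixed relative geometry, increasing $|v|$ raises the total energy available to $(v',v_*')$ and shifts the conditional distribution of $(|v'|,|v_*'|)$ stochastically upward. Making this rigorous, for example via a coupling or an explicit computation in the $\sigma$-parametrization around the centre of mass $(v+v_*)/2$, is the step I expect to be hardest. If a clean coupling fails, the fallback is to differentiate $\Lambda$ in $|v|$ along the ray $\lambda v$ and show the derivative is non-negative by symmetrizing the integrand over $\omega\mapsto-\omega$ and over $u$ reflected across $v^{\perp}$.
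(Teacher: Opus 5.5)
\textbf{Gap: monotonicity at a collision root.} Most of your proposal is fine and matches the paper. The time separation with $c_{\T}(t)=\int_0^t c_{\T_l}(s)c_{\T_r}(s)\,ds$ is correct. So is the overlap case, via convexity and evenness of $v\mapsto\pi\int|v-v_*|M_{\gamma(0)}(v_*)\Lambda_2(v_*)\,dv_*$, and so is radial symmetry by rotation invariance.

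However, monotonicity of $\Lambda$ at a collision root is the real content of the lemma, and there you only offer heuristics: a ``stochastically upward'' shift, a coupling still to be found, or a derivative still to be signed. That is a genuine gap. The energy identity alone cannot close it, since it only controls $|v'|^2+|v_*'|^2$, while you need $\Lambda_1(v')$ and $\Lambda_2(v_*')$ to increase separately.

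Your substitution $v_*=v+u$ makes things worse. It moves the $v$-dependence into the weight $M_{\gamma(0)}(v+u)$, and for fixed $(u,\omega)$ the quantity $|v'|=|\lambda v+(u\cdot\omega)\omega|$ is not monotone in $\lambda$. So no pointwise comparison is available in those variables. The layer-cake reduction to indicators does not touch this difficulty. Your fallback symmetrization $\omega\mapsto-\omega$ also does not help: it leaves $v',v_*'$ unchanged and only turns the kernel into $\tfrac12|(v_*-v)\cdot\omega|$, which is still not monotone in $v$.

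\textbf{The missing idea.} Keep $(v_*,\omega)$ as the integration variables, so that $M_{\gamma(0)}(v_*)$ does not depend on $v$. Then split along $\omega$: write $v=v^{\perp}+v^{\parallel}$ and $v_*=v_*^{\perp}+v_*^{\parallel}$ relative to $\omega$. This gives $v'=v^{\perp}+v_*^{\parallel}$ and $v_*'=v_*^{\perp}+v^{\parallel}$. With $a=v_*\cdot\omega$ and $b=v\cdot\omega$,
\[
|v'|^2=|v^{\perp}|^2+a^2,\qquad |v_*'|^2=|v_*^{\perp}|^2+b^2 .
\]
Under $v\mapsto\lambda v$ with $\lambda\geq 1$, both factors $\Lambda_1(v')$ and $\Lambda_2(v_*')$ therefore increase pointwise. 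The only non-monotone factor left is the kernel $(a-b)_+$.

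Since $\Lambda$ is radial, $2\Lambda(v)=\Lambda(v)+\Lambda(-v)$. The map $v\mapsto -v$ leaves $|v'|$ and $|v_*'|$ unchanged, and it turns the kernel into $(a-b)_++(a+b)_+=\max\{2a,a+|b|,0\}$, which is non-decreasing in $|b|$. Hence
\[
2\Lambda(v)=\int M_{\gamma(0)}(v_*)\,\Lambda_1\big(\sqrt{|v^{\perp}|^2+a^2}\big)\,\Lambda_2\big(\sqrt{|v_*^{\perp}|^2+b^2}\big)\,\max\{2a,a+|b|,0\}\,d\omega\,dv_*
\]
has an integrand that is pointwise non-decreasing along rays $v\mapsto\lambda v$. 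This proves the monotonicity.

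This is the paper's argument. The paper additionally factorizes the $v_*$-integral into its $\omega$-parallel and $\omega$-orthogonal parts, but that is only presentation. The essential points are that the useful symmetry is $v\mapsto -v$, not a symmetry in $\omega$ or $u$, and that the Gaussian weight must stay attached to $v_*$.
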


\begin{proof}

This lemma will be proved by induction on the size $|\mathcal{L}(\T)|$ of $\mathcal{T}$. For $\mathcal{T}$ having a single node $\mathcal{T}=\circ$
\begin{equation}
\mathbf{C}_{M_{\gamma(0)}}[\circ](t,v)=M_{\gamma(0)}(v),\quad \textup{ therefore $c_{\circ}(t)\equiv 1$ and $\Lambda_{\gamma(0),\circ}\equiv 1$.}
\end{equation}
Suppose for a positive integer $M$, any decorated tree $\T'$ with $|\mathcal{L}(\T')|\leq M$ satisfies the condition \eqref{eq:non_decreasing_form}. For a decorated tree $\T$ with $|\mathcal{L}(\T)|=M+1$, denote its left sub-tree as $\T_l$ and right sub-tree as $\T_r$. Consequently we have $|\mathcal{L}(\T_l)|\leq M$ and $|\mathcal{L}(\T_r)|\leq M$. Therefore the induction hypothesis implies
\begin{equation}\label{eq:induction_hypothesis_Lambda}
\mathbf{C}_{M_{\gamma(0)}}[\T_l](t,v)=c_{\T_l}(t)M_{\gamma(0)}(v)\Lambda_{\gamma(0),\mathcal{T}_l}(v),\quad \mathbf{C}_{M_{\gamma(0)}}[\T_r](t,v)=c_{\T_r}(t)M_{\gamma(0)}(v)\Lambda_{\gamma(0),\mathcal{T}_r}(v).
\end{equation}
During the proof we use $\Lambda_l$ and $\Lambda_r$ as the abbreviations of $\Lambda_{\gamma(0),\mathcal{T}_l}$ and $\Lambda_{\gamma(0),\mathcal{T}_r}$.

\textit{If the root of $\T$ is an overlap node:}

By the recursive definition of the Boltzmann integral in Definition \ref{def:C_expansion} and the spatial independence of the Maxwellian $M_{\gamma(0)}$
\begin{equation}
\mathbf C_{M_{\gamma(0)}}[\T](t,v)
=
\int_0^t
\mathbf Q^-\Big(
        \mathbf C_{M_{\gamma(0)}}[\T_l](s),
        \mathbf C_{M_{\gamma(0)}}[\T_r](s)
\Big)(v)\,ds .
\end{equation}
Using the induction hypothesis \eqref{eq:induction_hypothesis_Lambda}
\begin{equation}\label{eq:LEM4.1_overlap}
\begin{split}
\mathbf C_{M_{\gamma(0)}}[\T](t,v)&=\int_0^t
c_{\T_l}(s)c_{\T_r}(s)\mathbf Q^-\big(M_{\gamma(0)}\Lambda_l,M_{\gamma(0)}\Lambda_r\big)(v)\,ds=c_{\mathcal{T}}(t)\mathbf Q^-\big(M_{\gamma(0)}\Lambda_l,M_{\gamma(0)}\Lambda_r\big)(v),\ c_{\T}(t):=\int_0^t c_{\T_l}(s)c_{\T_r}(s)ds
\end{split}
\end{equation}
It remains to prove that $\mathbf{Q}^-\big(M_{\gamma(0)}\Lambda_l,M_{\gamma(0)}\Lambda_r\big)$ can be expressed as $M_{\gamma(0)}(v)\Lambda(v)$, where $\Lambda$ is a nonnegative radial function that is non-decreasing in $|v|$. By the definition of $\mathbf{Q}^-$ in \eqref{eq:gain_and_loss_decomposition}
\begin{equation}
\mathbf Q^-(M_{\gamma(0)}\Lambda_l,M_{\gamma(0)}\Lambda_r)(v)
=
M_{\gamma(0)}(v)\Lambda_l(v)\mathcal{O}_r(v),\quad \mathcal{O}_r(v):=\pi\int_{\mathbb R^3}
|v-v_*|M_{\gamma(0)}(v_*)\Lambda_r(v_*)\,dv_*.
\end{equation}
Therefore it is enough to show that $\mathcal O_r$ is radial,
nonnegative, and nondecreasing in $|v|$. Nonnegativity of $\mathcal{O}_r$ is immediate. Since $M_{\gamma(0)}\Lambda_r$ is a radially symmetric function, the function $\mathcal{O}_r(v)$ is also radially symmetric. It remains to prove the monotonicity of $\mathcal{O}_r$ in $|v|$. Since $\mathcal{O}_r$ is radially symmetric, it is enough to prove the monotonicity in one direction, for example $\mathtt{e}_1=(1,0,0)$. Re-write the function $\mathcal{O}_r$ using the reflection symmetry of $M_{\gamma(0)}\Lambda_{r}$
\begin{equation}\label{eq:symmetrized_O2}
\mathcal{O}_r(r\mathtt{e}_1)=\pi\int_{\mathds{R}^2\times \mathds{R}_{\geq 0}}\Big(|r\mathtt{e}_1-v_*|+|r\mathtt{e}_1+v_*|\Big)M_{\gamma(0)}(v_*)\Lambda_r(v_*)dv_*.
\end{equation}
We first notice this function $\mathcal{O}_r(r\mathtt{e}_1)$ is convex for $r\in\mathds{R}$, because the function $|r\mathtt{e}_1-v_*|+|r\mathtt{e}_1+v_*|$ is convex by the triangle inequality
\begin{equation}
\big|\big(\lambda r_{a}+(1-\lambda)r_b\big)\mathtt{e}_1\pm v_*\big|=\big|\lambda \big(r_a\mathtt{e}_1\pm v_*\big)+(1-\lambda)\big(r_b\mathtt{e}_1\pm v_*\big) \big|\leq \lambda \big|r_a\mathtt{e}_1\pm v_*\big|+(1-\lambda)\big|r_b\mathtt{e}_1\pm v_*\big|.
\end{equation}
The function $\mathcal{O}_r$ is even in the sense of $\mathcal{O}_r(r\mathtt{e}_1)=\mathcal{O}_r(-r\mathtt{e}_1)$, by its form in \eqref{eq:symmetrized_O2}. An even and convex function is non-decreasing in $r\geq 0$.

\textit{If the root of $\T$ is a collision node:}

Expand the Boltzmann integral $\mathbf{C}_{M_{\gamma(0)}}[\T](t,v)$ using the induction hypothesis \eqref{eq:induction_hypothesis_Lambda}, similarly to \eqref{eq:LEM4.1_overlap}
\begin{equation}
\mathbf C_{M_{\gamma(0)}}[\T](t,v)=c_{\mathcal{T}}(t)\mathbf Q^+\big(M_{\gamma(0)}\Lambda_l,M_{\gamma(0)}\Lambda_r\big)(v),\quad c_{\T}(t):=\int_0^tc_{\T_l}(s)c_{\T_r}(s)ds.
\end{equation}
The energy conservation of collision gives $M_{\gamma(0)}(v')M_{\gamma(0)}(v_*')=M_{\gamma(0)}(v)M_{\gamma(0)}(v_*)$
\begin{equation}\label{eq:I_omega_def}
\begin{split}
\mathbf{Q}^+\big(M_{\gamma(0)}\Lambda_l,M_{\gamma(0)}\Lambda_r\big)(v)&=\int \big((v_*-v)\cdot\omega \big)_+M_{\gamma(0)}(v')M_{\gamma(0)}(v_*')\Lambda_l(v')\Lambda_r(v_*')dv_*d\omega\\
&=\int \big((v_*-v)\cdot\omega \big)_+M_{\gamma(0)}(v)M_{\gamma(0)}(v_*)\Lambda_l(v')\Lambda_r(v_*')dv_*d\omega\\
&=M_{\gamma(0)}(v)\int_{\mathds{S}^2}\mathcal{I}_{\omega}(v)d\omega,\quad \mathcal{I}_{\omega}(v):=\int \big((v_*-v)\cdot\omega\big)_+M_{\gamma(0)}(v_*)\Lambda_l(v')\Lambda_r(v_*')dv_*.
\end{split}
\end{equation}
Consequently 
\begin{equation}
\mathbf{C}_{M_{\gamma(0)}}[\T](t,v)=c_{\T}(t)M_{\gamma(0)}(v)\Lambda_{\gamma(0),\T}(v),\quad \Lambda_{\gamma(0),\T}(v):=\int_{\mathds{S}^2}\mathcal{I}_{\omega}(v)d\omega.
\end{equation}
The non-negativity and radial symmetry of $\Lambda_{\gamma(0),\T}$ are direct. Thus it suffices to prove $\Lambda_{\gamma(0),\mathcal{T}}(v)$ is non-decreasing in $|v|$. Given any $\omega\in\mathds{S}^2$, we can decompose the variables $(v,v_*)$ according to the directions along or perpendicular to $\omega$, with $v^{\parallel}$ and $v_*^{\parallel}$ parallel to $\omega$, and $v^{\perp}$ and $v_*^{\perp}$ orthogonal to $\omega$. In addition, $(v',v_*')$ can be decomposed similarly according to \eqref{eq:pre_collision}
\begin{equation}\label{eq:paralell_perp_decomposition}
v=v^{\perp}+v^{\parallel},\quad v_*=v_*^{\perp}+v_*^{\parallel},\qquad v'=v^{\perp}+v_*^{\parallel},\quad v_*'=v_*^{\perp}+v^{\parallel}.
\end{equation}
Apply the decomposition \eqref{eq:paralell_perp_decomposition} to the definition of $\mathcal{I}_{\omega}(v)$ in \eqref{eq:I_omega_def}
\begin{equation}\label{eq:first_compute_Iomega}
\begin{split}
&\mathcal{I}_{\omega}(v)=\int \big(v_{*}^{\parallel}\cdot\omega -v^{\parallel}\cdot\omega \big)_+\exp\Big(-\gamma(0)|v_*^{\perp}|^2-\gamma(0)|v_*^{\parallel}|^2\Big)\Lambda_l\Big(\big(|v^{\perp}|^2+|v_*^{\parallel}|^2\big)^{\frac{1}{2}}\Big)\Lambda_r\Big(\big(|v_*^{\perp}|^2+|v^{\parallel}|^2\big)^{\frac{1}{2}}\Big)d\pi_{P[\omega^{\perp}]}(v_*^{\perp})d\pi_{P[\omega]}(v_*^{\parallel})
\end{split}
\end{equation}
Here $d\pi_{P[\omega]}$ is the uniform measure on the straight line spanned by $\omega$, and $d\pi_{P[\omega^{\perp}]}$ is the uniform measure on the plane orthogonal to $\omega$.

The integral in equation \eqref{eq:first_compute_Iomega} can be written as the multiplication of two integrals. One of them is the integral over the parallel component $v_*^{\parallel}$, and the other one is the integral over the orthogonal component $v_*^{\perp}$
\begin{equation}\label{eq:parallel_orthogonal_decomposition}
\mathcal{I}_{\omega}(v)=\underbrace{\Bigg[\int\big((v_*^{\parallel}-v^{\parallel})\cdot\omega \big)_+e^{-\gamma(0)|v_*^{\parallel}|^2}\Lambda_l\Big(\big(|v^{\perp}|^2+|v_*^{\parallel}|^2\big)^{\frac{1}{2}}\Big)d\pi_{P[\omega]}(v_*^{\parallel})\Bigg]}_{\textup{\blue{integral over parallel component}}}\ \underbrace{\Bigg[\int e^{-\gamma(0)|v_*^{\perp}|^2}\Lambda_r\Big(\big(|v_*^{\perp}|^2+|v^{\parallel}|^2\big)^{\frac{1}{2}}\Big)d\pi_{P[\omega^{\perp}]}(v_*^{\perp})\Bigg]}_{\textup{\blue{integral over orthogonal component}}}
\end{equation}
Since $\Lambda_r$ is radially symmetric and non-decreasing, we can see the integral over the orthogonal component in \eqref{eq:first_compute_Iomega} is non-decreasing in $|v|$. For the integral over parallel component, the cross-section $\big((v_*^{\parallel}-v^{\parallel})\cdot\omega\big)_+$ is not non-decreasing in $|v|$. To handle this, we notice that the radial symmetry of $\Lambda_{\gamma(0),\mathcal{T}}(v)$ makes it sufficient to prove the monotonicity of $\Lambda_{\gamma(0),\mathcal{T}}(v)+\Lambda_{\gamma(0),\mathcal{T}}(-v)$
\begin{equation}
\begin{split}
\Lambda_{\gamma(0),\mathcal{T}}(v)+\Lambda_{\gamma(0),\mathcal{T}}(-v)&=\int _{\mathds{S}^2}\underbrace{\Bigg[\int e^{-\gamma(0)|v_*^{\perp}|^2} \Lambda_r\Big(\big(|v_*^{\perp}|^2+|v^{\parallel}|^2\big)^{\frac{1}{2}}\Big)d\pi_{P[\omega^{\perp}]}(v_*^{\perp})\Bigg]}_{\textup{\blue{integral over orthogonal component}}}\\
&\times \underbrace{\Bigg[\int \Big(\big(v_*^{\parallel}\cdot\omega -v^{\parallel}\cdot\omega\big)_++\big(v_*^{\parallel}\cdot\omega +v^{\parallel}\cdot\omega\big)_+\Big)e^{-\gamma(0) |v_*^{\parallel}|^2}\Lambda_l\Big(\big(|v^{\perp}|^2+|v_*^{\parallel}|^2\big)^{\frac{1}{2}}\Big)d\pi_{P[\omega]}(v_*^{\parallel})\Bigg]}_{\textup{\blue{integral over parallel component}}}d\omega
\end{split}
\end{equation}
The sum of cross-sections is monotone in $|v^{\parallel}|$ because
\begin{equation}
\big(v_*^{\parallel}\cdot\omega -v^{\parallel}\cdot\omega\big)_++\big(v_*^{\parallel}\cdot\omega +v^{\parallel}\cdot\omega\big)_+=\max\{2v_*^{\parallel}\cdot\omega,v^{\parallel}_*\cdot\omega+|v^{\parallel}|,0\}
\end{equation}
Consequently
\begin{equation}\label{eq:I-omega}
\begin{split}
&\Lambda_{\gamma(0),\mathcal{T}}(v)+\Lambda_{\gamma(0),\mathcal{T}}(-v)=\int_{\mathds{S}^2} \underbrace{\Bigg[\int \max\{2v_*^{\parallel}\cdot\omega,v_*^{\parallel}\cdot\omega+|v^{\parallel}|,0\} e^{-\gamma(0)|v_*^{\parallel}|^2}\Lambda_l\Big(\big(|v^{\perp}|^2+|v_*^{\parallel}|^2\big)^{\frac{1}{2}}\Big)d\pi_{P[\omega]}(v_*^{\parallel})\Bigg]}_{\textup{\blue{integral over parallel component}}}\\
&\hspace{8cm}\times \underbrace{\Bigg[\int e^{-\gamma(0)|v_*^{\perp}|^2}\Lambda_r\Big(\big(|v_*^{\perp}|^2+|v^{\parallel}|^2\big)^{\frac{1}{2}}\Big)d\pi_{P[\omega^{\perp}]}(v_*^{\perp})\Bigg]}_{\textup{\blue{integral over orthogonal component}}}d\omega
\end{split}
\end{equation}
Both integrals are non-decreasing in $|v|$, and this concludes the proof.
\end{proof}

\begin{lemma}\label{lem:overlap_regularity}
For any parameters $(\gamma(0)>0,\delta>0)$, there exists a constant $\overline{\ell}$ such that the following statement is true. For every decorated tree $\mathcal T$,
every $t\in[0,+\infty)$, every $v\in\mathbb R^3$, and every $\boldsymbol{\eta}\in \mathds{R}^3$ with $|\boldsymbol{\eta}|\leq \overline{\ell}$
\begin{equation}
\mathbf{O}_{M_{\gamma(0)}}[\mathcal T](t,v+\boldsymbol{\eta})
\leq 
3
\mathbf{O}_{M_{\gamma(0)}}[\mathcal T](t,v).
\end{equation}
\end{lemma}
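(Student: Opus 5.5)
The plan is to reduce the statement to an estimate on the single radial weight $g:=\mathbf{C}_{M_{\gamma(0)}}[\mathcal T](t,\cdot)$. By Lemma \ref{lem:non_decreasing_form}, $g(v_*)=c_{\T}(t)M_{\gamma(0)}(v_*)\Lambda_{\gamma(0),\T}(v_*)$ with $\Lambda$ radial and non-decreasing. The factor $c_{\T}(t)$ cancels on both sides, so it suffices to show
\begin{equation*}
\int |v+\boldsymbol{\eta}-v_*|\,\rho(v_*)dv_*\leq 3\int |v-v_*|\,\rho(v_*)dv_*,\qquad \rho:=M_{\gamma(0)}\Lambda_{\gamma(0),\T}\geq 0 .
\end{equation*}
This bound has to hold uniformly over all trees $\T$ and all $t$, hence uniformly over all non-negative radial $\rho$ of the form $M_{\gamma(0)}\times$(radial non-decreasing). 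The whole proof therefore rests on exploiting this structural class.

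The first step is the triangle inequality $|v+\boldsymbol{\eta}-v_*|\leq |v-v_*|+|\boldsymbol{\eta}|$, which gives
\begin{equation*}
\mathbf{O}_{M_{\gamma(0)}}[\T](t,v+\boldsymbol{\eta})\leq \mathbf{O}_{M_{\gamma(0)}}[\T](t,v)+\pi\overline{\ell}\,\lVert g\rVert_{L^1}.
\end{equation*}
It then suffices to prove a lower bound $\mathbf{O}_{M_{\gamma(0)}}[\T](t,v)\geq c\,\pi\lVert g\rVert_{L^1}$ with $c>0$ depending only on $\gamma(0)$, and to choose $\overline{\ell}\leq 2c$. Note that the factor $3$ is very generous: in fact $1+\overline{\ell}/c$ would suffice.

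The key step, and the main obstacle, is this lower bound uniform in $\rho$ and $v$. By the monotonicity argument in the proof of Lemma \ref{lem:non_decreasing_form}, $v\mapsto \int|v-v_*|\rho(v_*)dv_*$ is radial, even, convex, hence minimized at $v=0$. So we only need
\begin{equation*}
\int |v_*|\rho(v_*)dv_*\geq c\int\rho(v_*)dv_*.
\end{equation*}
The danger is mass concentrating near $v_*=0$, and this is exactly where monotonicity of $\Lambda$ is used. I plan to show that, since $\Lambda$ is non-decreasing in $|v_*|$, the probability measure $\rho/\lVert\rho\rVert_{L^1}$ stochastically dominates (in the radial variable) the normalized Maxwellian $M_{\gamma(0)}/\lVert M_{\gamma(0)}\rVert_{L^1}$. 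Writing radial distributions, the density ratio of $\rho\,r^2dr$ to $M\,r^2dr$ is $\Lambda(r)$, which is non-decreasing. This is a monotone likelihood ratio, which implies stochastic dominance, so $\mathbb{E}_\rho|v_*|\geq \mathbb{E}_M|v_*|=:c(\gamma(0))>0$. A quick way to check the monotone likelihood ratio step is Chebyshev's sum (association) inequality: $\int |v_*|\Lambda\, dM\cdot\int dM\geq \int|v_*|dM\int\Lambda\, dM$, since $|v_*|$ and $\Lambda$ are both non-decreasing functions of $|v_*|$.

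Combining these, $\mathbf{O}(v)\geq \pi c\lVert g\rVert_{L^1}$ for all $v$. Hence $\mathbf{O}(v+\boldsymbol{\eta})\leq (1+\overline{\ell}/c)\mathbf{O}(v)\leq 3\mathbf{O}(v)$ once we set $\overline{\ell}=2c(\gamma(0))$. This choice depends only on $\gamma(0)$, and trivially it can also be taken compatible with $\delta$.
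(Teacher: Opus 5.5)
Your proof is correct, but it follows a different route from the paper's.

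The paper argues locally around $v$. It splits $\int|v+\boldsymbol{\eta}-u|\,\mathbf{C}_{M_{\gamma(0)}}[\T](t,u)\,du$ into two regions:
\begin{itemize}
\item on $|u-v|\geq 10\overline{\ell}$ it uses the pointwise bound $|v+\boldsymbol{\eta}-u|\leq\frac{6}{5}|v-u|$;
\item on $|u-v|<10\overline{\ell}$ it bounds the weight by $11\overline{\ell}$ and then asserts that, for $\overline{\ell}$ small depending on $\gamma(0)$, the ball $\{|u-v|\leq 10\overline{\ell}\}$ carries no more mass than its complement.
\end{itemize}
Combining the two gives the factor $\frac{6}{5}+\frac{6}{5}\leq 3$.

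That small-ball claim is a non-concentration property that must hold uniformly in $\T$, $t$ and $v$. The paper states it without proof, and it is false for general densities, so it genuinely needs the structure given by Lemma \ref{lem:non_decreasing_form}.

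You instead apply the triangle inequality globally and replace the small-ball claim by an averaged lower bound, $\mathbf{O}_{M_{\gamma(0)}}[\T](t,v)\geq \pi c\,\lVert g\rVert_{L^1}$. You prove it in two steps:
\begin{itemize}
\item evenness and convexity put the minimum of $v\mapsto\int|v-v_*|g(v_*)\,dv_*$ at $v=0$;
\item Chebyshev's association inequality for the non-decreasing functions $|v_*|$ and $\Lambda_{\gamma(0),\T}$ under the normalized Maxwellian gives $\int|v_*|g\geq c\lVert g\rVert_{L^1}$.
\end{itemize}

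The two inputs are related: the paper's small-ball property implies your lower bound with $c=5\overline{\ell}$. Your averaged version is therefore strictly weaker, exactly what the triangle inequality needs, and actually derived rather than asserted. This makes the uniformity over trees and times transparent and yields the explicit constant $c=\mathbb{E}_{M_{\gamma(0)}}|v_*|=2/\sqrt{\pi\gamma(0)}$. It also gives the sharper bound $\mathbf{O}_{M_{\gamma(0)}}[\T](t,v+\boldsymbol{\eta})\leq(1+|\boldsymbol{\eta}|/c)\,\mathbf{O}_{M_{\gamma(0)}}[\T](t,v)$. Consequently $\overline{\ell}$ need not be small: any $\overline{\ell}\leq 4/\sqrt{\pi\gamma(0)}$ works.

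Two points are left implicit but are harmless. The first is finiteness of $\int|v_*|g$, which is needed both for $\mathbf{O}$ to be finite and for the covariance inequality. The second is the degenerate case $c_{\T}(t)=0$, where both sides vanish.
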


\begin{proof}

Recall the definition of $\mathbf{O}_{M_{\gamma(0)}}[\T]$
\begin{equation}
\mathbf{O}_{M_{\gamma(0)}}[\T](t,v+\boldsymbol{\eta})=\int_{\mathds{R}^3}|v+\boldsymbol{\eta}-u|\mathbf{C}_{M_{\gamma(0)}}[\T](t,u)du.
\end{equation}
In the domain of $|u-v|\geq 10\overline{\ell}$, the perturbation $\boldsymbol{\eta}$ is just a small perturbation to the cross-section because the original cross-section $|u-v|$ is large enough compared to $\boldsymbol{\eta}$
\begin{equation}
|v+\boldsymbol{\eta}-u|\leq \frac{6}{5}|v-u|,
\end{equation}
So we can decompose the overlap function 
\begin{equation}\label{eq:decompose_perturbed_overlap}
\mathbf{O}_{M_{\gamma(0)}}[\mathcal{T}](t,v+\boldsymbol{\eta})=\underbrace{\int_{|u-v|\geq 10\overline{\ell}}|v+\boldsymbol{\eta}-u|\mathbf{C}_{M_{\gamma(0)}}[\T](t,u)du}_{\textup{\blue{away from $v$}}}+\underbrace{\int_{|u-v|< 10\overline{\ell}}|v+\boldsymbol{\eta}-u|\mathbf{C}_{M_{\gamma(0)}}[\T](t,u)du}_{\textup{\blue{close to $v$}}}.
\end{equation}
The first term on the RHS of \eqref{eq:decompose_perturbed_overlap} satisfies
\begin{equation}
\int_{|u-v|\geq 10\overline{\ell}}|v+\boldsymbol{\eta}-u|\mathbf{C}_{M_{\gamma(0)}}[\T](t,u)du\leq \frac{6}{5}\int_{|u-v|\geq 10\overline{\ell}}|v-u|\mathbf{C}_{M_{\gamma(0)}}[\T](t,u)du\leq \frac{6}{5}\mathbf{O}_{M_{\gamma(0)}}[\mathcal{T}](t,v).
\end{equation}
In the region $|u-v|<10\overline{\ell}$, we can not directly compare the cross-sections before and after the perturbation $\boldsymbol{\eta}$, because the original cross-section $|u-v|$ can be arbitrarily small. Instead, we control the contribution of this domain by the total mass
\begin{equation}
\int_{|u-v|< 10\overline{\ell}}|v+\boldsymbol{\eta}-u|\mathbf{C}_{M_{\gamma(0)}}[\T](t,u)du\leq 11\overline{\ell}\underbrace{\int_{|u-v|\leq 10\overline{\ell}}\mathbf{C}_{M_{\gamma(0)}}[\mathcal{T}](t,u)du}_{\textup{\blue{total mass within a small disk}}}.
\end{equation}
When $\overline{\ell}$ is chosen small enough determined by $\gamma(0)$, we have
\begin{equation}
\int_{|u-v|\leq 10\overline{\ell}}\mathbf{C}_{M_{\gamma(0)}}[\mathcal{T}](t,u)du\leq \int_{|u-v|> 10\overline{\ell}}\mathbf{C}_{M_{\gamma(0)}}[\mathcal{T}](t,u)du
\end{equation}
and consequently
\begin{equation}
11\overline{\ell}\int_{|u-v|< 10\overline{\ell}}\mathbf{C}_{M_{\gamma(0)}}[\mathcal{T}](t,u)du\leq \frac{6}{5}\int_{|u-v|>10\overline{\ell}}|v-u|\mathbf{C}_{M_{\gamma(0)}}[\mathcal{T}](t,u)du\leq \frac{6}{5}\mathbf{O}_{M_{\gamma(0)}}[\mathcal{T}](t,v).
\end{equation}
Now the two components of the RHS of \eqref{eq:decompose_perturbed_overlap} have all been bounded from above, and we conclude the proof of Lemma \ref{lem:overlap_regularity}.

\end{proof}

\section{Homogenization Operations and Homogenization Procedure}\label{sec:homogenization_operation}

In this section, we prove several estimates on homogenization operations given in Definition \ref{def:homogenization_operation}.

\begin{condition}{DUO}\label{cond:DUO}
Given a decorated tree $\T$ and a subset $\mathcal{L}_M\subset \mathcal{L}(\T)$ of Maxwellian leaves, we say a collision node $\n$ satisfies the condition \ref{cond:DUO} if
\begin{itemize}
    \vspace{-0.2cm}\item the two collision-child nodes of $\n$ are two non-Maxwellian leaves $\{\l_l,\l_r\}\subset \mathcal{L}(\T)\setminus\mathcal{L}_M$
    \item for any overlap node $\mathfrak{o}\in \mathcal{O}[\l_l,\l_r,\n]$, all of the leaves of the branched sub-tree $\mathscr{T}_{\o}$ are Maxwellian $\mathcal{L}(\mathscr{T}_{\o})\subset \mathcal{L}_M$
\end{itemize}
\end{condition}

\begin{condition}{SIN}\label{cond:SIN}
Consider any decorated tree $\T$, any set $\mathcal{L}_M$ of Maxwellian leaves. We say a collision node $\n\in \mathcal{B}^+(\T)$ satisfies condition \ref{cond:SIN} if
\begin{itemize}
    \vspace{-0.2cm}\item  among the two collision-child nodes of $\n$, one of them is a non-Maxwellian leaf $\mathfrak{l}\in \mathcal{L}(\T)\setminus\mathcal{L}_{M}$. The other one is a branching collision node $\mathfrak{c}\in\mathcal{B}^+(\T)$, and all leaves of the sub-tree $\mathcal{T}_{\mathfrak{c}}$ are Maxwellian $\mathcal{L}(\mathcal{T}_{\mathfrak{c}})\subset \mathcal{L}_M$,
    \item for any overlap node $\mathfrak{o}\in \mathcal{O}[\l,\mathfrak{c},\n]$, all of the leaves of the branched sub-tree $\mathscr{T}_{\o}$ are Maxwellian $\mathcal{L}(\mathscr{T}_{\o})\subset \mathcal{L}_M$.
\end{itemize}
\end{condition}

\begin{condition}{IND}\label{cond:IND}
Consider any decorated tree $\T$ and a subset $\mathcal{L}_M\subset \mathcal{L}(\T)$ of Maxwellian leaves, we say a non-Maxwellian leaf $\l\in \mathcal{L}(\T)\setminus \mathcal{L}_M$ satisfies condition \ref{cond:IND} if for any overlap node $\o\in \mathcal{O}[\l]$, all of the leaves of the branched sub-tree $\mathscr{T}_{\o}$ are Maxwellian $\mathcal{L}(\mathscr{T}_{\o})\subset \mathcal{L}_M$.
\end{condition}

In this section we frequently involve overlap functions introduced in Definition \ref{def:overlap_function}. Their regularity are estimated by Lemma \ref{lem:overlap_regularity}, contributing the error terms of the form $10^{|\mathcal{O}[...]|}$ in this section's Lemmas.

\subsection{Double Homogenization}\label{sec:double_homo}

\begin{lemma}\label{lem:double_collision}
Consider any constants $(\gamma>0,\delta>0,T>0)$, and any non-negative function $h(t,x,v)$ with
\begin{equation}
h\in L^{\infty}\ldelta\big([0,T]\times \mathds{R}^3\times \mathds{R}^3\big).
\end{equation}
There exists a constant $0<\gamma(0)<\gamma$ such that the following statement is true. Consider any decorated tree $\T$, any set $\mathcal{L}_M\subset \mathcal{L}(\T)$ of Maxwellian leaves. Assume a collision node $\n$ satisfies condition \ref{cond:DUO} and has a collision-parent node $\m$. Then for any $(t,x,v)\in [0,T]\times \mathds{R}^3\times \mathds{R}^3$
\begin{equation}\label{eq:double_collision}
\begin{split}
&\mathbf{C}_{h}\Big[\T,\mathcal{L}_M=M_{\gamma(0)}\Big](t,x,v)\leq 10^{|\mathcal{O}[\l_l,\l_r,\n]|}\Big(C_{\gamma,\delta}\lVert h\rVert_{\timeldelta}\Big)^2\mathbf{C}_{h}\Big[\T,\mathcal{L}_M\cup\{\l_l,\l_r\}=M_{\gamma(0)}\Big](t,x,v)
\end{split}
\end{equation}

\end{lemma}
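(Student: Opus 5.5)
We plan to mimic the toy argument of Proposition \ref{prop:toy_example}, now localized at the node $\n$ and made compatible with the spatial dependence, the overlaps, and the norm $\ldelta$.

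\emph{Step 1: isolate the relevant variables.} Using the representation \eqref{eq:carleman_representation_replaced}, write $\mathbf{C}_h[\T,\mathcal{L}_M=M_{\gamma(0)}]$ as an integral in which only $h(s_{\n},\x_{\n},v_{\l_l})h(s_{\n},\x_{\n},v_{\l_r})$ involves the two leaves. Because of \ref{cond:DUO}, every overlap in $\mathcal{O}[\l_l,\l_r,\n]$ branches off an all-Maxwellian subtree. As in Lemma \ref{lem:full_to_prime}, we integrate these out and obtain the factors $\mathbf{O}_{M_{\gamma(0)}}[\mathscr{T}_{\o}](s_{\o},v_{\l_l})$, $\mathbf{O}_{M_{\gamma(0)}}[\mathscr{T}_{\o}](s_{\o},v_{\l_r})$ and $\mathbf{O}_{M_{\gamma(0)}}[\mathscr{T}_{\o}](s_{\o},v_{\n})$. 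We then freeze all other variables: the collision-parent $\m$, with its time $s_{\m}$, position $\x_{\m}$ and velocity $v_{\m}$, and the sibling branch of $\n$ at $\m$. Since $\x_{\n}=\x_{\m}-(s_{\m}-s_{\n})v_{\n}$, the quantity to be bounded is an inner integral over $(s_{\n},[s]_{\mathcal{O}[\l_l,\l_r,\n]},v_{\n},v_{\l_l},v_{\l_r})$. In it, $v_{\n}$ ranges over the plane $v_{\m}+(v_{\m[\mathrm{other}]}-v_{\m})^{\perp}$ (or is the free variable in the sibling role), and $(v_{\l_l},v_{\l_r})$ range with the Carleman measure $|v_{\l_l}-v_{\n}|^{-1}d\pi_{v_{\n}+(v_{\l_l}-v_{\n})^{\perp}}(v_{\l_r})dv_{\l_l}$. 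It therefore suffices to prove a pointwise inequality for this inner integral, with $h$ replaced by $M_{\gamma(0)}$ on the right-hand side, uniformly in the frozen variables. This is exactly the existence of a parent $\m$: it supplies the plane integration of $v_{\n}$, as in \eqref{eq:plane_homogenization_toy_example}.

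\emph{Step 2: plane homogenization with space dependence.} We apply Lemma \ref{lem:geometric_change_of_variable} to exchange $d\pi_{\mathtt{PL}}(v_{\n})\,|v_{\l_l}-v_{\n}|^{-1}d\pi(v_{\l_r})dv_{\l_l}$ for $\mathtt{C}\,\overline{d\pi}_{\mathtt{S}_{v_{\l_l},v_{\l_r}}\cap\mathtt{PL}}(v_{\n})\,dv_{\l_r}dv_{\l_l}$. The overlap factors depend on $v_{\n}$, which now varies on a circle. We handle this with the Carleman weight $|v_{\m}-v_{\m[\textup{other}]}|$ and with Lemma \ref{lem:overlap_regularity}, after localizing: decompose $v_{\l_l},v_{\l_r}$ into cubes $Q^{\ell}_{\pl},Q^{\ell}_{\pr}$ with $\ell\le\overline{\ell}$ and $\ell\le(\delta-\delta_*)/T$ as in Lemma \ref{lem:transport_loss}. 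Within a pair of cubes, the overlap factors at $v_{\l_l},v_{\l_r}$ change by at most a factor $3$ each by Lemma \ref{lem:overlap_regularity}, which gives the $10^{|\mathcal{O}[\l_l,\l_r,\n]|}$. The position $\x_{\n}$ moves by at most $\delta$ in the relevant range once we integrate $s_{\n}$, so $h(s_{\n},\x_{\n},v)\le[h(s_{\n})]_\delta(\tilde x_{\pl,\pr},v)$ for a reference point $\tilde x$ depending only on the cubes and the frozen data. The integral of $[h]_\delta(\tilde x,v_{\l_l})[h]_\delta(\tilde x,v_{\l_r})e^{\gamma(|v_{\l_l}|^2+|v_{\l_r}|^2)}$ over $Q_{\pl}\times Q_{\pr}$ is then at most $\lVert h\rVert^2_{\timeldelta}$. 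Each cube pair costs at most a sup over the cube of the remaining geometric weight (the measure of $v_{\n}$ on $\mathtt{S}\cap\mathtt{PL}$, the overlap at $v_{\n}$, and the remaining tree as a function of $v_{\n}$) times $e^{-\gamma(|\pl|^2+|\pr|^2)}$. That sup is in turn comparable to the same weight averaged against $M_{\gamma(0)}(v_{\l_l})M_{\gamma(0)}(v_{\l_r})$ over the cubes, with a constant $C_{\gamma,\delta}$ coming from $\ell^{-6}$ and from $e^{-(\gamma-\gamma(0))|\cdot|^2}$ beating the Gaussian factor. Summing over cubes and undoing the change of variables reconstructs $\mathbf{C}_h[\T,\mathcal{L}_M\cup\{\l_l,\l_r\}=M_{\gamma(0)}]$. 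For the remaining tree as a function of $v_{\n}$, we use monotonicity in the spirit of Lemma \ref{lem:non_decreasing_form}, or simply avoid comparing it: in the transferred form, $v_{\n}$ is integrated with the same measure on both sides, so only the leaf data and the overlap factors at the leaves need comparison.

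\emph{Main obstacle.} The delicate point is Step 2's comparison of the sup over a cube pair with the Maxwellian average. The measure $\overline{d\pi}_{\mathtt{S}_{v_{\l_l},v_{\l_r}}\cap\mathtt{PL}}$ and the indicator $\mathds{1}_{\mathtt{S}\cap\mathtt{PL}\neq\varnothing}$ degenerate when the plane is tangent to the collision sphere. In addition, the position shift couples $v_{\n}$, and hence $\x_{\n}$, to the cube choice. We will first try to keep the spatial sup at the level of $\x_{\m}$ and the time $s_{\n}$, using $|s_{\m}-s_{\n}|\le T$ and $|v_{\n}|$ bounded on the circle by $|v_{\l_l}|+|v_{\l_r}|$. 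Sub-cubes of size $\ell/(1+|\pl|+|\pr|)$ handle the large-velocity part, with the resulting polynomial loss absorbed by lowering $\gamma(0)$. The tangency degeneracy we hope to avoid entirely by never fixing $(v_{\l_l},v_{\l_r})$ pointwise: the bound uses only the $L^1$-in-cube control of $h$ against the $L^\infty$-in-cube control of the smooth weight.
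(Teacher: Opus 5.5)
Your set-up matches the paper's: you freeze $\m$, its sibling branch and all times so that $v_{\n}$ runs over a plane $\mathtt{PL}$, integrate out the Maxwellian overlap subtrees, apply Lemma~\ref{lem:geometric_change_of_variable}, and cut $(v_{\l_l},v_{\l_r})$ into cubes. Your spatial localization also works: taking the supremum of $h$ around the frozen point $\x_{\m}$ with radius of order $T\Delta$, $\Delta=\max\{1,|\pl|,|\pr|\}$, costs a covering factor $(1+T\Delta/\delta)^3$, which the Gaussian gap absorbs just as it absorbs the paper's $\Delta^3$. This also repairs a slip in your Step 1: when $\mathcal{O}[\l_l]\neq\varnothing$ the leaf factor is $h(s_{\mathcal{P}\l_l},\x_{\mathcal{P}\l_l},v_{\l_l})$, not $h(s_{\n},\x_{\n},\cdot)$.

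The gap is the sentence ``that sup is in turn comparable to the same weight averaged against $M_{\gamma(0)}(v_{\l_l})M_{\gamma(0)}(v_{\l_r})$ over the cubes''. After the change of variables, the weight multiplying the two $h$-factors is $w=\pi\mathds{1}_{E_{\mathtt{PL}}}(v_{\l_l},v_{\l_r})\,\frac{1}{2\pi}\int_0^{2\pi}\mathbf{OV}\,d\uptheta$, where $E_{\mathtt{PL}}=\{\mathtt{S}_{v_{\l_l},v_{\l_r}}\cap\mathtt{PL}\neq\varnothing\}$. This weight is not smooth: it jumps across $\partial E_{\mathtt{PL}}$, which is exactly the tangency locus. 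Suppose a cube pair meets $E_{\mathtt{PL}}$ only in a thin sliver. Since $h$ is controlled only in $L^1$, it can put all its mass on that sliver, so the left side is of order $\lVert h\rVert^2e^{-\gamma(|\pl|^2+|\pr|^2)}\sup w$. The Maxwellian integral over the same cube pair, however, is only as large as the sliver. So pairing $L^1$ control of $h$ with $L^\infty$ control of the weight does not avoid the tangency degeneracy; that is precisely where it bites, and your estimate does not close.

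The missing ingredient is a uniform lower density of $E_{\mathtt{PL}}$ at its own points, together with enlarging the comparison region. The paper proceeds as follows.
\begin{itemize}
\item It takes $(u_l,u_r)\in(Q^{\ell}_{\pl}\times Q^{\ell}_{\pr})\cap E_{\mathtt{PL}}$ maximizing $\mathbf{OV}$ and sets $B=E_{\mathtt{PL}}\cap(Q^{\ell\Delta^{-1}}_{u_l}\times Q^{\ell\Delta^{-1}}_{u_r})$.
\item It shows $|B|\geq C(\ell/\Delta)^6$. The reason is that, for fixed $v_r$, $E^c_{\mathtt{PL},v_r}$ is the convex interior of the paraboloid with focus $v_r$ and directrix $2\mathtt{PL}-v_r$, so $E_{\mathtt{PL}}$ fills a fixed fraction of every small cube centered at one of its points.
\item On $B$ the point $v_{\n}(\uptheta,\cdot,\cdot)$ on the circle moves only by $O(\sqrt{\ell})$. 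The circle's radius $\sqrt{R^2-d^2}$ is only $\tfrac12$-H\"older, which is why the scale $\ell/\Delta$ is needed. Lemma~\ref{lem:overlap_regularity} then gives $\mathbf{OV}\geq 3^{-|\mathcal{O}[\l_l,\l_r,\n]|}\sup\mathbf{OV}$ on $B$.
\item Hence the $h$-integral over the cube pair is bounded by $C_{\gamma,\delta}\lVert h\rVert^2 3^{|\mathcal{O}[\l_l,\l_r,\n]|}\int_B\mathbf{OV}\,M_{\gamma(0)}M_{\gamma(0)}$. The losses $|B|^{-1}$ and $\Delta^{O(1)}$ are absorbed by $e^{-(\gamma-\gamma(0))|\cdot|^2}$.
\item The right-hand side is integrated over the enlarged region $(Q^{2\ell}_{\pl}\times Q^{2\ell}_{\pr})\cap E_{\mathtt{PL}}$, which has bounded overlap when summed over cube pairs.
\end{itemize}
Your sub-cubes of size $\ell/\Delta$ are the right scale for the H\"older step. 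Without the lower bound on $|B|$ and the move to an enlarged region of $E_{\mathtt{PL}}$, however, the homogenization inequality is not established.
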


\begin{figure}[h]
    \centering
    \includegraphics[width=0.85\linewidth]{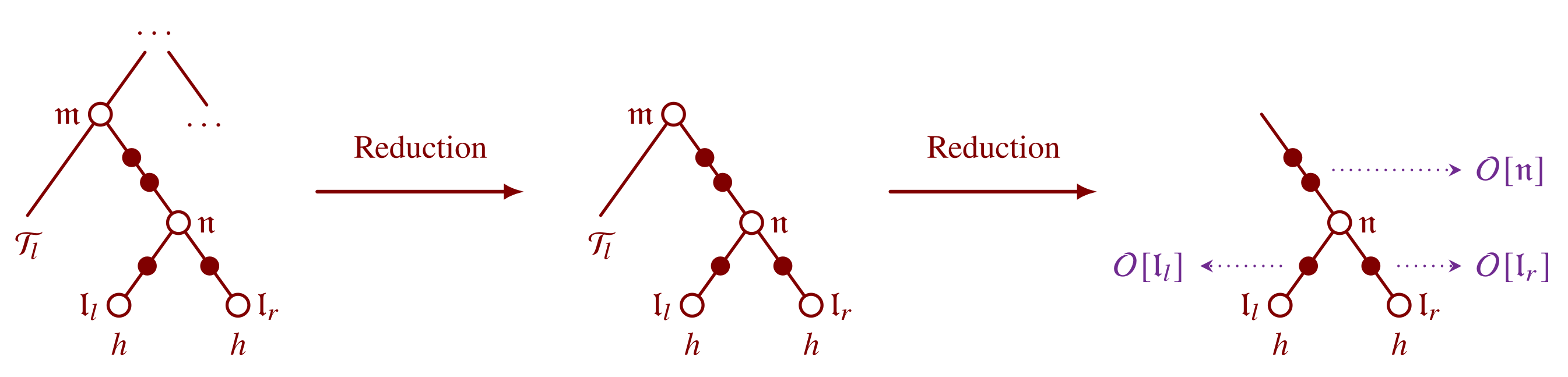}
    \caption{ The inductive definition of Boltzmann integrals in Definition \ref{def:C_expansion_replacement} allows us to reduce the estimate of $\mathbf{C}_{h}[\mathcal{T},\mathcal{L}_M=M_{\gamma(0)}]$ for $\mathcal{T}$ (left), to the estimate of $\mathbf{C}_{h}[\mathcal{T}_r,\mathcal{L}_M=M_{\gamma(0)}]$ (right). Here $\mathcal{T}_r$ is a sub-tree containing the collision node $\mathfrak{n}$.}
    \label{fig:double_collision}
\end{figure}

\begin{proof}

\underline{\textit{Step 1: Reduction of estimate.}}

According to the inductive definition of $\mathbf{C}_{h}[\mathcal{T},\mathcal{L}_M=M_{\gamma(0)}]$ in Definition \ref{def:C_expansion_replacement}, the point-wise estimate of $\mathbf{C}_h[\mathcal{T},\mathcal{L}_M=M_{\gamma(0)}]$ can be reduced to the point-wise estimate of $\mathbf{C}_h[\T_{\mathfrak{m}},\mathcal{L}_M=M_{\gamma(0)}]$. Here $\mathcal{T}_{\m}$ is the sub-tree of $\T$ with root $\m$. Consequently it is sufficient to prove the following inequality for any $(t,x,v)\in [0,T]\times \mathds{R}_x^3\times\mathds{R}_v^3$.
\begin{equation}\label{eq:Tm_before_expansion}
\mathbf{C}_{h}\big[\mathcal{T}_{\mathfrak{m}},\blue{\mathcal{L}_M=M_{\gamma(0)}}\big](t,x,v)\leq 10^{|\mathcal{O}[\mathfrak{l}_r,\l_l,\mathfrak{n}]|}\Big(C_{\gamma,\delta}\lVert h\rVert_{\timeldelta}\Big)^2\mathbf{C}_{h}\big[\mathcal{T}_{\mathfrak{m}},\red{\mathcal{L}_M\cup\{\mathfrak{l}_r,\mathfrak{l}_l\}=M_{\gamma(0)}}\big](t,x,v).
\end{equation}
Equation \eqref{eq:Tm_before_expansion} reduces the estimate of $\T$ into the estimate of a sub-tree $\mathcal{T}_{\mathfrak{m}}$, which corresponds to the first reduction in Figure \ref{fig:double_collision}. Denote $\T_l$ as the left sub-tree of $\T_{\mathfrak{m}}$, and $\T_r$ as the right sub-tree of $\T_{\mathfrak{m}}$. It is convenient to assume $\mathfrak{n}\in \mathcal{T}_{r}$, and therefore $\{\mathfrak{l}_l,\mathfrak{l}_r\}\subset \mathcal{L}(\mathcal{T}_r)$. Expand the LHS of \eqref{eq:Tm_before_expansion}
\begin{equation*}
\begin{split}
&\mathbf{C}_{h}[\T_{\m},\blue{\mathcal{L}_M=M_{\gamma(0)}}](t,x,v)=\int_{0}^{t}\mathbf{S}_{t-s}\mathbf{Q}^+\Big(\underbrace{\mathbf{C}_{h}\Big[\T_l,\mathcal{L}_M=M_{\gamma(0)}\Big](s)}_{\textup{\blue{left sub-tree}}},\underbrace{\mathbf{C}_{h}\Big[\T_r,\blue{\mathcal{L}_M=M_{\gamma(0)}}\Big](s)}_{\textup{\blue{right sub-tree}}}\Big)ds,
\end{split}
\end{equation*}
and the RHS of \eqref{eq:Tm_before_expansion}
\begin{equation}
\begin{split}
&\mathbf{C}_{h}[\T_{\m},\red{\mathcal{L}_M\cup\{\mathfrak{l}_r,\mathfrak{l}_l\}=M_{\gamma(0)}}](t,x,v)=\int_{0}^{t}\mathbf{S}_{t-s}\mathbf{Q}^+\Big(\underbrace{\mathbf{C}_{h}\Big[\T_l,\mathcal{L}_M=M_{\gamma(0)}\Big](s)}_{\textup{\blue{left sub-tree}}},\underbrace{\mathbf{C}_{h}\Big[\T_r,\red{\mathcal{L}_M\cup\{\mathfrak{l}_r,\mathfrak{l}_l\}=M_{\gamma(0)}}\Big](s)}_{\textup{\blue{right sub-tree}}}\Big)ds
\end{split}
\end{equation}
Consequently to prove the estimate \eqref{eq:Tm_before_expansion} of $\mathcal{T}_{\m}$, it suffices to prove for any $(t,x,v)\in [0,T]\times \mathds{R}^3\times\mathds{R}^3$
\begin{equation}\label{eq:Q+_before_expansion}
\begin{split}
&\mathbf{Q}^+\Big(\mathbf{C}_{h}[\T_l,\mathcal{L}_M=M_{\gamma(0)}](t),\mathbf{C}_{h}[\T_r,\blue{\mathcal{L}_M=M_{\gamma(0)}}](t)\Big)(x,v)\\
&\leq 10^{|\mathcal{O}[\mathfrak{l}_r,\l_l,\mathfrak{n}]|}\Big(C_{\gamma,\delta}\lVert h\rVert_{\timeldelta}\Big)^2 \mathbf{Q}^+\Big(\mathbf{C}_{h}[\T_l,\mathcal{L}_M=M_{\gamma(0)}](t),\mathbf{C}_{h}[\T_r,\red{\mathcal{L}_M\cup\{\mathfrak{l}_r,\l_l\}=M_{\gamma(0)}}](t)\Big)(x,v)
\end{split}
\end{equation}
Expand the operator $\mathbf{Q}^+$ in the LHS of \eqref{eq:Q+_before_expansion}
\begin{equation*}
\begin{split}
&\mathbf{Q}^+\Big(\mathbf{C}_{h}[\T_l,\mathcal{L}_M=M_{\gamma(0)}](t),\mathbf{C}_{h}[\T_r,\blue{\mathcal{L}_M=M_{\gamma(0)}}](t)\Big)(x,v)\\
=&\int \underbrace{\mathbf{C}_{h}\Big[\T_l,\mathcal{L}_M=M_{\gamma(0)}\Big](t,x,v_{1})}_{\textup{\blue{left sub-tree: free integral}}}\underbrace{\mathbf{C}_{h}\Big[\T_r,\blue{\mathcal{L}_M=M_{\gamma(0)}}\Big](t,x,v_{2})}_{\textup{\blue{right sub-tree: integral on a plane}}}|v_1-v|^{-1}d\pi_{v+(v_{1}-v)^{\perp}}(v_{2})dv_{1},
\end{split}
\end{equation*}
and the operator $\mathbf{Q}^+$ in the RHS of \eqref{eq:Q+_before_expansion}
\begin{equation*}
\begin{split}
&\mathbf{Q}^+\Big(\mathbf{C}_{h}[\T_l,\mathcal{L}_M=M_{\gamma(0)}](t),\mathbf{C}_{h}[\T_r,\red{\mathcal{L}_M\cup\{\mathfrak{l}_r,\mathfrak{l}_l\}=M_{\gamma(0)}}](t)\Big)(x,v)\\
=&\int \underbrace{\mathbf{C}_{h}\Big[\T_l,\mathcal{L}_M=M_{\gamma(0)}\Big](t,x,v_{1})}_{\textup{\blue{left sub-tree: free integral}}}\underbrace{\mathbf{C}_{h}\Big[\T_r,\red{\mathcal{L}_M\cup\{\mathfrak{l}_r,\mathfrak{l}_l\}=M_{\gamma(0)}}\Big](t,x,v_{2})}_{\textup{\blue{right sub-tree: integral on a plane}}}|v_1-v|^{-1}d\pi_{v+(v_{1}-v)^{\perp}}(v_{2})dv_{1},
\end{split}
\end{equation*}
Notice that the singular function $|v_1-v|^{-1}$ does not involve $v_2$. We can fix the variables $(v,v_1)$ and estimate the integral against $v_2$. Consequently to prove \eqref{eq:Tm_before_expansion}, it suffices to prove for any $(t,x)\in [0,T]\times \mathds{R}^3$ and any plane $\mathtt{PL}\subset \mathds{R}^3$
\begin{equation}\label{eq:PL_integration}
\int \mathbf{C}_{h}\Big[\T_r,\blue{\mathcal{L}_{M}=M_{\gamma(0)}}\Big](t,x,v)d\pi_{\mathtt{PL}}(v)\leq \int 10^{|\mathcal{O}[\mathfrak{l}_r,\mathfrak{l}_l,\mathfrak{n}]|}\Big(C_{\gamma,\delta}\lVert h\rVert_{\timeldelta}\Big)^2\mathbf{C}_{h}\Big[\T_r,\red{\mathcal{L}_{M}\cup\{\mathfrak{l}_r,\mathfrak{l}_l\}=M_{\gamma(0)}}\Big](t,x,v)d\pi_{\mathtt{PL}}(v)
\end{equation}
Equation \eqref{eq:PL_integration} corresponds to the second reduction in Figure \ref{fig:double_collision}.

\underline{\textit{Step 2: Geometric change of variables.}}

Here in Section \ref{sec:double_homo}, we denote $\mathbf{OV}(v_{l},v_r,v_{\n})$ as
\begin{equation}
\mathbf{OV}(v_{l},v_r,v_{\n}):=\prod_{\o\in\mathcal{O}[\l_l]}\mathbf{O}_{M_{\gamma(0)}}[\mathscr{T}_{\o}](s_{\o},v_{l})\prod_{\o\in\mathcal{O}[\l_r]}\mathbf{O}_{M_{\gamma(0)}}[\mathscr{T}_{\o}](s_{\o},v_{r})\prod_{\o\in\mathcal{O}[\n]}\mathbf{O}_{M_{\gamma(0)}}[\mathscr{T}_{\o}](s_{\o},v_{\n})
\end{equation}
The regularity of $\mathbf{OV}(v_l,v_r,v_\n)$ can be estimated by Lemma \ref{lem:overlap_regularity}. Using Lemma \ref{lem:full_to_prime}, we expand the LHS of \eqref{eq:PL_integration} as the following, to filter the effects of overlap functions. \blue{We omit the integral of time variables because the estimates are uniform for all time variables, and the time variables can be taken as any fixed value }
\begin{equation}\label{eq:first_expansion}
\begin{split}
\int &\mathbf{OV}(v_l,v_r,v_\n)h(s_{\mathcal{P}l},\x_{\mathcal{P}l},v_{l})h(s_{\mathcal{P}r},\x_{\mathcal{P}r},v_{r})|v_l-v_{\n}|^{-1}d\pi_{v_\n+(v_{l}-v_\n)^{\perp}}(v_{r})dv_{l}d\pi_{\mathtt{PL}}(v_\n).
\end{split}
\end{equation}
where $\mathbf{O}_{M_{\gamma(0)}}[\mathscr{T}_{\o}](s,v)$ is the overlap function given by Definition \ref{def:overlap_function}. In \eqref{eq:first_expansion}, we first integrate $v_{\n}$ over the plane $\mathtt{PL}$, then integrate $v_l$ freely, and eventually integrate $v_{r}$ over the orthogonal plane $v_{\n}+(v_l-v_{\n})^{\perp}$. Perform the change of variables to $\{v_{r},v_{l},v_{\n}\}$ according to Lemma \ref{lem:geometric_change_of_variable}, where $v_l$ and $v_r$ become free variables
\begin{equation}\label{eq:sp_velocity}
|v_l-v_{\n}|^{-1}d\pi_{v_\n+(v_{l}-v_\n)^{\perp}}(v_{r})dv_{l}d\pi_{\mathtt{PL}}(v_\n)=\pi\mathds{1}_{\mathtt{S}_{v_l,v_r}\cap\mathtt{PL}\not=\varnothing}\overline{d\pi}_{\mathtt{S}_{v_l,v_r}\cap \mathtt{PL}}(v_\n)dv_{r}dv_{l}.
\end{equation}
Therefore now we need to estimate
\begin{equation*}
\begin{split}
\int &\mathbf{OV}(v_l,v_r,v_{\n})h(s_{\mathcal{P}l},\x_{\mathcal{P}l},v_l)h(s_{\mathcal{P}r},\x_{\mathcal{P}r},v_r)\mathds{1}_{\mathtt{S}_{v_l,v_r}\cap \mathtt{PL}\not=\varnothing}\overline{d\pi}_{\mathtt{S}_{v_l,v_r}\cap\mathtt{PL}}(v_{\n})dv_rdv_l
\end{split}
\end{equation*}
To estimate this integral, we decompose the space $\mathds{R}^3$ of velocities into small cubes $\mathds{R}^3=\bigcup_{\mathtt{p}\in \ell \mathds{Z}^3}Q_{\mathtt{p}}^{\ell}$, with $Q_{\mathtt{p}}^{\ell}$ defined in \eqref{eq:cube_definition}. We want to estimate for each $(\mathtt{p}_l,\mathtt{p}_r)\in \ell \mathds{Z}^3\times \ell\mathds{Z}^3$
\begin{equation}\label{eq:second_expansion}
\begin{split}
&\int_{\plq\times \prq}\mathbf{OV}(v_l,v_r,v_{\n})h(s_{\mathcal{P}l},\x_{\mathcal{P}l},v_l)h(s_{\mathcal{P}r},\x_{\mathcal{P}r},v_r)\mathds{1}_{\mathtt{S}_{v_l,v_r}\cap \mathtt{PL}\not=\varnothing}(v_l,v_r)\overline{d\pi}_{\mathtt{S}_{v_l,v_r}\cap\mathtt{PL}}(v_{\n})dv_rdv_l.
\end{split}
\end{equation}

\underline{\textit{Step 3: Functions with variables $v_{l}$ and $v_{r}$.}}

Denote $\mathtt{e}_1$ and $\mathtt{e}_2$ as two orthogonal unit vectors spanning the plane $\mathtt{PL}$. When $\mathtt{S}_{v_l,v_r}\cap \mathtt{PL}\not=\varnothing$, we define $\uprho_{\mathtt{PL}}(v_{l},v_{r})$ as the radius of $\mathtt{S}_{v_{l},v_{r}}\cap \mathtt{PL}$, and $\mathtt{c}_{\mathtt{PL}}(v_{l},v_{r})$ as the center of $\mathtt{S}_{v_{l},v_{r}}\cap \mathtt{PL}$. Here $\uprho_{\mathtt{PL}}(v_{l},v_{r})$ and $\mathtt{c}_{\mathtt{PL}}(v_{l},v_{r})$ are all functions of $v_{l}$ and $v_{r}$. Notice that when $\mathtt{S}_{v_l,v_r}$ is tangential to $\mathtt{PL}$, the intersection $\mathtt{S}_{v_l,v_r}\cap \mathtt{PL}$ is a single point, with $\uprho_{\mathtt{PL}}(v_l,v_r)=0$ and $\mathtt{c}_{\mathtt{PL}}(v_l,v_r)=\mathtt{S}_{v_l,v_r}\cap\mathtt{PL}$. Using these notations, the integration over the measure $\overline{d\pi}_{\mathtt{S}\cap \mathtt{PL}}$ can be parametrized by $\uptheta\in [0,2\pi]$
\begin{equation}\label{eq:parametrize_the_circle}
\int H(v_{\n})\overline{d\pi}_{\mathtt{S}_{v_l,v_r}\cap \mathtt{PL}}(v_{\n})=\frac{1}{2\pi}\int_0^{2\pi}H\big(v_{\n}(\uptheta,v_{l},v_{r})\big)d\uptheta,\quad v_{\n}(\uptheta,v_{l},v_{r}):=\mathtt{c}_{\mathtt{PL}}(v_{l},v_{r})+\uprho_{\mathtt{PL}}(v_{l},v_{r})\Big(\mathtt{e}_1\cos\uptheta+\mathtt{e}_2\sin\uptheta\Big)
\end{equation}
with $v_{\n}(\uptheta,v_{l},v_{r})$ as a function of $(\uptheta,v_{l},v_{r})$. For the integral in \eqref{eq:second_expansion}, when we fix $(v_l,v_r)$, the integrand 
\begin{equation}\label{eq:integrand_function_of_vn}
\mathds{1}_{\mathtt{S}_{v_l,v_r}\cap \mathtt{PL}\not=\varnothing}(v_l,v_r)\mathbf{OV}\Big(v_l,v_r,v_{\n}(\uptheta,v_l,v_r)\Big)h(s_{\mathcal{P}l},\x_{\mathcal{P}l},v_l)h(s_{\mathcal{P}r},\x_{\mathcal{P}r},v_r)
\end{equation}
can be seen as a function of $v_{\n}$. Specifically according to Remark \ref{rmk:pseudo-trajectory_and_xfunction}, the position functions $\x_{\mathcal{P}l}$ and $\x_{\mathcal{P}r}$ satisfy
\begin{equation*}
\x_{\mathcal{P}l}:=x-(t-s_{\n})v_{\n}-(s_{\n}-s_{\mathcal{P}l})v_l,\quad \x_{\mathcal{P}r}:=x-(t-s_{\n})v_{\n}-(s_{\n}-s_{\mathcal{P}r})v_r.
\end{equation*}
Therefore we can apply \eqref{eq:parametrize_the_circle} to \eqref{eq:integrand_function_of_vn} and parametrize the integrand in \eqref{eq:integrand_function_of_vn} by $\uptheta$. Define set $E_{\mathtt{PL}}$ as
\begin{equation}\label{eq:E_PL}
E_{\mathtt{PL}}:=\{(v_l,v_r)\in\mathbb R^3\times\mathbb R^3:\mathtt{S}_{v_l,v_r}\cap \mathtt{PL}\neq\varnothing\}.
\end{equation}
In this way, the integral in \eqref{eq:second_expansion} equals
\begin{equation}\label{eq:parametrized_integral_double_collision}
\frac{1}{2\pi}\int_0^{2\pi}\Bigg[\int_{(\plq\times \prq)\cap E_{\mathtt{PL}}} \mathbf{OV}\Big(v_l,v_r,v_{\n}(\uptheta,v_l,v_r)\Big)h\Big(s_{\mathcal{P}l},\x_{\mathcal{P}l}(\uptheta,v_l,v_r),v_l\Big)h\Big(s_{\mathcal{P}r},\x_{\mathcal{P}r}(\uptheta,v_l,v_r),v_r\Big)dv_{r}dv_{l}\Bigg]d\uptheta.
\end{equation}
with position functions as
\begin{equation}
\x_{\mathcal{P}l}(\uptheta,v_l,v_r)=x-(t-s_{\n})v_{\n}(\uptheta,v_l,v_r)-(s_{\n}-s_{\mathcal{P}l})v_{l},\quad \x_{\mathcal{P}r}(\uptheta,v_l,v_r)=x-(t-s_{\n})v_{\n}(\uptheta,v_l,v_r)-(s_{\n}-s_{\mathcal{P}r})v_{r}.
\end{equation}

\textit{Step 4: Estimate of perturbations for positions.}

When the velocities $(v_l,v_r)$ are perturbed within the cube pair $\plq\times \prq$, we can estimate the perturbation of $v_{\n}$
\begin{equation}\label{eq:perturbation_position_double_collision}
\sup_{(v_l,v_r),(v_l',v_r')\in \big(\plq\times \prq\big)\cap E_{\mathtt{PL}}}\Big|v_{\n}(\uptheta,v_l,v_r)-v_{\n}(\uptheta,v_l',v_r')\Big|\leq C\ell^{\frac{1}{2}}\Delta, \quad \Delta:=\max\{1,|\mathtt{p}_l|,|\mathtt{p}_r|\}, 
\end{equation}
and estimate the corresponding perturbations of the position $\x_{\mathcal{P}l}(\uptheta,v_l,v_r)$ (with $\ell$ chosen as small enough determined by $\gamma$ and $\delta$)
\begin{equation}
\sup_{(v_l,v_r),(v_l',v_r')\in \big(\plq\times \prq\big)\cap E_{\mathtt{PL}}}\Big|\x_{\mathcal{P}l}(\uptheta,v_l',v_r')-\x_{\mathcal{P}l}(\uptheta,v_l,v_r)\Big|\leq \delta\Delta.
\end{equation}
and the same estimate is true for $\x_{\mathcal{P}r}(\uptheta,v_l,v_r)$. If $(Q_{\mathtt{p}_l}^{\ell}\times Q_{\mathtt{p}_r}^{\ell})\cap E_{\mathtt{PL}}=\varnothing$, then the integral in \eqref{eq:parametrized_integral_double_collision} is $0$. If $(Q_{\mathtt{p}_l}^{\ell}\times Q_{\mathtt{p}_r}^{\ell})\cap E_{\mathtt{PL}}\not=\varnothing$, then we denote the point $(u_l,u_r)\in (Q_{\mathtt{p}_l}^{\ell}\times Q_{\mathtt{p}_r}^{\ell})\cap E_{\mathtt{PL}}$ as the point such that
\begin{equation}
\mathbf{OV}(u_l,u_r,v_{\n}(\uptheta,u_l,u_r))=\sup_{(v_l,v_r)\in (Q_{\mathtt{p}_l}^{\ell}\times Q_{\mathtt{p}_r}^{\ell})\cap E_{\mathtt{PL}}}\mathbf{OV}(v_l,v_r,v_{\n}(\uptheta,v_l,v_r)).
\end{equation}
The \eqref{eq:perturbation_position_double_collision} shows as we integrate $v_l$ and $v_r$, the perturbation of position functions are controlled. Thus we can replace $h$ by its local supremum $[h]_{\delta\Delta}$ around positions $\xp_l:=\mathbf{x}_{\mathcal{P}l}(\uptheta,u_l,u_r)$ and $\xp_r:=\mathbf{x}_{\mathcal{P}r}(\uptheta,u_l,u_r)$, and replace $\mathbf{OV}$ by its supremum at $(u_l,u_r,v_\n(\uptheta,u_l,u_r))$. To simplify notations, here in the proof we adopt the following abbreviations:
\begin{equation}
h_l(v_l):=[h(s_{\mathcal{P}l})]_{\delta \Delta}(\xp_l,v_l),\quad h_r(v_r):=[h(s_{\mathcal{P}r})]_{\delta \Delta}(\xp_r,v_r).
\end{equation}
Then we have
\begin{equation}\label{eq:double_collision_final}
\begin{split}
&\int_{(\plq\times \prq)\cap E_{\mathtt{PL}}} \mathbf{OV}(v_l,v_r,v_{\n}(\uptheta,v_l,v_r))h\Big(s_{\mathcal{P}l},\x_{\mathcal{P}l}(\uptheta,v_l,v_r),v_l\Big)h\Big(s_{\mathcal{P}r},\x_{\mathcal{P}r}(\uptheta,v_l,v_r),v_r\Big)dv_{r}dv_{l}\\
&\leq \int_{(\plq\times \prq)\cap E_{\mathtt{PL}}} \mathbf{OV}(v_l,v_r,v_{\n}(\uptheta,v_l,v_r))h_l(v_l)h_r(v_r)dv_{r}dv_{l}\leq \mathbf{OV}(u_l,u_r,v_{\n}(\uptheta,u_l,u_r))\Big(\int_{Q_{\mathtt{p}_l}^{\ell}}h_l(v_l)dv_l\Big)\Big(\int_{Q_{\mathtt{p}_r}^{\ell}} h_r(v_r)dv_r\Big)
\end{split}
\end{equation}

\textit{Step 5: Construction of a not-so-small set within $\big(Q_{\pl}^{2\ell}\times Q_{\pr}^{2\ell}\big)\cap E_{\mathtt{PL}}$.}

The second inequality in \eqref{eq:double_collision_final} is actually tight in the following sense: the condition of $h\in \timeldelta$ implies only $h_l,h_r\in L_{\gamma}^1$, and the functions $h_l$ and $h_r$ can be arbitrarily concentrated within $Q_{\p_l}^{\ell}$ and $Q_{\p_r}^{\ell}$. Thus even if $\big(Q_{\pl}^{\ell}\times Q_{\pr}^{\ell}\big)\cap E_{\mathtt{PL}}$ can be very small, we can not gain anything from here. 

Given any $v_r$, define $E_{\mathtt{PL},v_r}$ as
\begin{equation}
E_{\mathtt{PL},v_r}:=\Big\{v_l\in \mathds{R}^3\Big| \mathtt{S}_{v_l,v_r}\cap \mathtt{PL}\not=\varnothing\Big\}.
\end{equation}
The set $E_{\mathtt{PL},v_r}$ can be equivalently characterized by
\begin{equation}
\textup{dist}\big(\frac{v_l+v_r}{2},\mathtt{PL}\big)\leq |\frac{v_l-v_r}{2}|\quad \textup{or}\quad \dist\Big(v_l,2\mathtt{PL}-v_r\Big)\leq |v_l-v_r|.
\end{equation}
Given $v_r$, the boundary of $E_{\mathtt{PL},v_r}$ is a paraboloid where each point has equal distance to $2\mathtt{PL}-v_r$ and $v_r$. Thus the complement $E_{\mathtt{PL},v_r}^c$ is a convex set.

Now we construct a set $B$ such that
\begin{equation}
B:=E_{\mathtt{PL}}\cap\Bigg(Q_{u_l}^{\ell\Delta^{-1}}\times Q_{u_r}^{\ell\Delta^{-1}}\Bigg).
\end{equation}
The convexity of $E_{\mathtt{PL},v_r}^c$ for any $v_r$ implies $|B|\geq C\Big(\frac{\ell}{\Delta}\Big)^6$. Compute the perturbations of velocities
\begin{equation}
\sup_{(v_l,v_r)\in B}\Big(|v_l-u_l|+|v_r-u_r|+|v_{\n}(v_l,v_r)-v_{\n}(u_l,u_r)|\Big)\leq C\ell^{\frac{1}{2}}.
\end{equation}
The regularity of overlap functions in Lemma \ref{lem:overlap_regularity} implies
\begin{equation}
\inf_{(v_l,v_r)\in B}\mathbf{OV}\big(v_l,v_r,v_{\n}(\uptheta,v_l,v_r)\big)\geq 3^{-|\mathcal{O}[\l_l,\l_r,\n]|}\mathbf{OV}\big(u_l,u_r,v_\n(\uptheta,u_l,u_r)\big).
\end{equation}
Notice
\begin{equation}
\lVert h_l\rVert_{L_{\gamma}^1}\leq C\Delta^3\lVert h\rVert_{\timeldelta},\quad \lVert h_r\rVert_{L_{\gamma}^1}\leq C\Delta^3\lVert h\rVert_{\timeldelta}.
\end{equation}
The smallness of $|B|$ and the factor $\Delta^3$ can be absorbed by the Gaussian weight
\begin{equation}\label{eq:absorb_by_gaussian_weight}
\begin{split}
\Big(\int_{Q_{\mathtt{p}_l}^{\ell}}h_l(v_l)dv_l\Big)\Big(\int_{Q_{\mathtt{p}_r}^{\ell}}h_r(v_r)dv_r\Big)&\leq \Bigg[\Big(\int_{Q_{\mathtt{p}_l}^{\ell}}h_l(v_l)e^{\gamma|v_l|^2}dv_l\Big)\sup_{v_l\in Q_{\mathtt{p}_l}^{\ell}}e^{-\gamma|v_l|^2}\Bigg]\Bigg[\Big(\int_{Q_{\mathtt{p}_r}^{\ell}}h_r(v_r)e^{\gamma|v_r|^2}dv_r\Big)\sup_{v_r\in Q_{\mathtt{p}_r}^{\ell}}e^{-\gamma|v_r|^2}\Bigg]\\
&\leq \Big(C_{\gamma,\delta}\lVert h\rVert_{L^{\infty}\ldelta}\Big)^2\int_{B}M_{\gamma(0)}(v_l)M_{\gamma(0)}(v_r)dv_ldv_r
\end{split}
\end{equation}
Thus
\begin{equation}\label{eq:upper_bound_for_each_pair}
\begin{split}
&\int_{\big(\plq\times \prq\big)\cap E_{\mathtt{PL}}} \mathbf{OV}\big(v_l,v_r,v_{\n}(\uptheta,v_l,v_r)\big)h\Big(s_{\mathcal{P}l},\x_{\mathcal{P}l}(\uptheta,v_l,v_r),v_l\Big)h\Big(s_{\mathcal{P}r},\x_{\mathcal{P}r}(\uptheta,v_l,v_r),v_r\Big)dv_{r}dv_{l}\\
&\leq 10^{|\mathcal{O}[\l_l,\l_r,\n]|}\Big(C_{\gamma,\delta}\lVert h\rVert_{\timeldelta}\Big)^2\int_{(Q_{\mathtt{p}_l}^{2\ell}\times Q_{\mathtt{p}_r}^{2\ell})\cap E_{\mathtt{PL}}} \mathbf{OV}\big(v_l,v_r,v_\n(\uptheta,v_l,v_r)\big)M_{\gamma(0)}(v_l)M_{\gamma(0)}(v_r)dv_{r}dv_{l}
\end{split}
\end{equation}
Sum \eqref{eq:upper_bound_for_each_pair} over all $(\pl,\pr)\in \ell \mathds{Z}^3\times \ell \mathds{Z}^3$, and we prove the Lemma \ref{lem:double_collision}.

\end{proof}

\begin{lemma}\label{lem:double_overlap}
Consider any constants $(\gamma>0,\delta>0,T>0)$, and any non-negative function $h(t,x,v)$ within the space $L^{\infty}\ldelta\big([0,T]\times \mathds{R}^3\times \mathds{R}^3\big)$. There exists a constant $0<\gamma(0)<\gamma$ such that the following statement is true. For any decorated tree $\T$, any set $\mathcal{L}_M\subset \mathcal{L}(\T)$ of Maxwellian leaves, and any collision node $\n$, suppose $\n$ satisfies condition \ref{cond:DUO} and \underline{has an overlap-parent node} $\o_*\in \mathcal{O}(\T)$. Then for any $(t,x,v)\in [0,T]\times \mathds{R}^3\times \mathds{R}^3$
\begin{equation}\label{eq:double_overlap}
\begin{split}
&\mathbf{C}_{h}\Big[\T,\mathcal{L}_M=M_{\gamma(0)}\Big](t,x,v)\leq 10^{|\mathcal{O}[\l_l,\l_r,\n]|}\Big(C_{\gamma,\delta}\lVert h\rVert_{\timeldelta}\Big)^2\mathbf{C}_{h}\Big[\T,\mathcal{L}_M\cup\{\l_l,\l_r\}=M_{\gamma(0)}\Big](t,x,v)
\end{split}
\end{equation}
\end{lemma}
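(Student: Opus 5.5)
The plan is to follow the proof of Lemma \ref{lem:double_collision} step by step. The only change is in Step 1, the reduction. There the parent of $\n$ was a collision node $\m$, and the $\mathbf{Q}^+$ structure produced a plane integral $d\pi_{\mathtt{PL}}(v_\n)$. Now $\n=\o_*[\textup{new}]$, so $\n$ is the root of the branched sub-tree $\mathscr{T}_{\o_*}$. By the inductive Definition \ref{def:C_expansion_replacement}, it suffices to compare the two Boltzmann integrals for the sub-tree rooted at $\o_*$, which is built with $\mathbf{Q}^-$. Expanding $\mathbf{Q}^-$ at $\o_*$, the new child enters only through
\begin{equation*}
\int |v_{\oold}-v_{\n}|\,\mathbf{C}_h\big[\mathscr{T}_{\o_*},\mathcal{L}_M=M_{\gamma(0)}\big](s_{\o_*},\mathbf{x}_{\o_*},v_\n)\,dv_\n .
\end{equation*}
Here $v_{\oold}$ and $\mathbf{x}_{\o_*}$ are frozen, and the left (old) factor is untouched by the replacement. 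So the reduced claim is a full-space inequality in place of \eqref{eq:PL_integration}. For any fixed $u\in\mathds{R}^3$, I want
\begin{equation*}
\int |u-v_\n|\,\mathbf{C}_h\big[\mathscr{T}_{\o_*},\mathcal{L}_M=M_{\gamma(0)}\big](s,x,v_\n)\,dv_\n\le 10^{|\mathcal{O}[\l_l,\l_r,\n]|}\big(C_{\gamma,\delta}\lVert h\rVert_{\timeldelta}\big)^2\int |u-v_\n|\,\mathbf{C}_h\big[\mathscr{T}_{\o_*},\mathcal{L}_M\cup\{\l_l,\l_r\}=M_{\gamma(0)}\big](s,x,v_\n)\,dv_\n .
\end{equation*}

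Next, by condition \ref{cond:DUO} and Lemma \ref{lem:full_to_prime}, the overlaps in $\mathcal{O}[\l_l,\l_r,\n]$ reduce to the overlap functions $\mathbf{OV}(v_l,v_r,v_\n)$, exactly as in \eqref{eq:first_expansion}. The weight $|u-v_\n|$ is one more factor of the same type. It is $\pi^{-1}$ times the overlap function of a single Dirac mass at $u$, so I absorb it into $\mathbf{OV}$. Its regularity under perturbations of $v_\n$ is the trivial analogue of Lemma \ref{lem:overlap_regularity}: $|u-v_\n-\boldsymbol\eta|\le |u-v_\n|+|\boldsymbol\eta|$. Then I apply the Carleman measure at $\n$ with $v_\n$ now free. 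The measure $|v_l-v_\n|^{-1}d\pi_{v_\n+(v_l-v_\n)^\perp}(v_r)\,dv_l\,dv_\n$ equals, up to a constant, $dv_l\,dv_r$ times the normalized uniform measure on the whole collision sphere $\mathtt{S}_{v_l,v_r}$ in the variable $v_\n$. This is the full-space analogue of Lemma \ref{lem:geometric_change_of_variable}, or simply the classical Carleman/$\sigma$-representation. The situation is therefore easier than in the double collision case, because there is no set $E_{\mathtt{PL}}$. I parametrize $v_\n=\frac{v_l+v_r}{2}+\frac{|v_l-v_r|}{2}\sigma$ with $\sigma\in\mathbb{S}^2$ fixed, in place of $\uptheta$. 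I then decompose $(v_l,v_r)$ into cube pairs $Q^\ell_{\pl}\times Q^\ell_{\pr}$. The positions $\mathbf{x}_{\mathcal{P}l}$ and $\mathbf{x}_{\mathcal{P}r}$ vary by at most $\delta\Delta$, now with a Lipschitz rather than square-root bound in $\ell$. So I can replace $h$ by $[h]_{\delta\Delta}$ and $\mathbf{OV}$ by its supremum over the cube pair, as in \eqref{eq:double_collision_final}.

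The remaining step is to rebuild a Maxwellian lower bound. I take $B=Q^{\ell\Delta^{-1}}_{u_l}\times Q^{\ell\Delta^{-1}}_{u_r}$, which now has measure exactly $(\ell/\Delta)^6$ with no convexity argument needed. The perturbation of $v_\n$ on $B$ is $O(\ell)$, and after taking $\overline{\ell}$ small, Lemma \ref{lem:overlap_regularity} gives $\inf_B\mathbf{OV}\ge 3^{-|\mathcal{O}[\l_l,\l_r,\n]|-1}\sup\mathbf{OV}$. The extra factor $3$ comes from the weight $|u-v_\n|$. The point I expect to be the main obstacle is near $v_\n=u$, where this weight can vanish and the multiplicative comparison fails. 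I would handle it as in the proof of Lemma \ref{lem:overlap_regularity}. If $|u-v_\n|\ge 10\overline{\ell}$ on the cube, the comparison holds directly. Otherwise I bound the weight by $11\overline{\ell}$ and compare against the contribution from the region where $v_\n$ is far from $u$, which carries comparable Maxwellian mass. The factor $\Delta^3$ and the loss $|B|$ are then absorbed by the Gaussian weight as in \eqref{eq:absorb_by_gaussian_weight}, with $\gamma(0)<\gamma$. Summing over the cube pairs and integrating back over $\sigma$, the time variables and the remaining tree yields \eqref{eq:double_overlap}, with the constant $3$ absorbed into $10^{|\mathcal{O}[\l_l,\l_r,\n]|}$ after enlarging $C_{\gamma,\delta}$.
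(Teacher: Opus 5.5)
Your Step 1 reduction to $\int|u-v_\n|\,\mathbf{C}_h[\mathscr{T}_{\o_*},\mathcal{L}_M=M_{\gamma(0)}](s,x,v_\n)\,dv_\n$ for arbitrary fixed $u$ is exactly the paper's. So are the filtering of $\mathcal{O}[\l_l,\l_r,\n]$ by Lemma \ref{lem:full_to_prime}, the $\upsigma$-parametrization and the cube-pair decomposition.

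\textbf{The change of variables is misstated.} In full space it is Lemma \ref{lem:geometric_change_of_variable_2}:
\[
|v_l-v_\n|^{-1}d\pi_{v_\n+(v_l-v_\n)^{\perp}}(v_r)\,dv_l\,dv_\n=\pi|v_l-v_r|\,\overline{d\pi}_{\mathtt{S}_{v_l,v_r}}(v_\n)\,dv_r\,dv_l .
\]
So the hard-sphere kernel $|v_l-v_r|$ of the $\upsigma$-representation survives. A dimension count forces it: the left side has homogeneity $7$, not $6$. The plane version, Lemma \ref{lem:geometric_change_of_variable}, has no such factor only because there $v_\n$ ranges over a plane.

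This factor vanishes on the diagonal $v_l=v_r$, which meets every pair of equal or adjacent cubes. It is therefore a second degenerate weight besides $|u-v_\n|$. Your box $B=Q^{\ell\Delta^{-1}}_{u_l}\times Q^{\ell\Delta^{-1}}_{u_r}$ around the maximiser of $\mathbf{OV}$ gives no lower bound for it; if $u_l=u_r$, $B$ contains a piece of the diagonal. So the comparison of $\inf_B$ with the supremum over the cube pair fails there. The paper puts $|v_l-v_r|$ into the weight $\Phi_{\upsigma,\mathbf{c}}$. Then, with $v_r$ frozen, it translates $Q^\ell_{\pl}$ by $5\ell$ in the direction $(\pl-v_r)/|\pl-v_r|$, which makes $|v_l-v_r|$ on $B$ at least its supremum over $Q^\ell_{\pl}$.

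\textbf{The zero of $|u-v_\n|$ is not handled as described.} The argument of Lemma \ref{lem:overlap_regularity} does not transfer. There one only compares the mass of a single fixed function $\mathbf{C}_{M_{\gamma(0)}}[\T](t,\cdot)$ inside and outside a small ball. Here you must compare integrals of $\mathbf{OV}\,|v_l-v_r|\,M_{\gamma(0)}(v_l)M_{\gamma(0)}(v_r)$ over $\{|v_\n-u|<10\bar\ell\}$ and over $\{|v_\n-u|\geq 10\bar\ell\}$. Two things go wrong.
\begin{itemize}
\item $\mathbf{OV}$ is only controlled under displacements of size at most $\bar\ell$, but moving from the first region to the second needs a displacement of order $10\bar\ell$. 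Iterating Lemma \ref{lem:overlap_regularity} would cost far more than $10^{|\mathcal{O}[\l_l,\l_r,\n]|}$.
\item Once both sides carry the same $M_{\gamma(0)}$, the distortion $M_{\gamma(0)}(v-\boldsymbol{\eta})/M_{\gamma(0)}(v)$ is unbounded in $|v|$ unless you adapt the direction of $\boldsymbol{\eta}$ to $v_l+v_r$. The gap $\gamma-\gamma(0)$ that absorbs such losses in \eqref{eq:absorb_by_gaussian_weight} is no longer available at that stage.
\end{itemize}

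The repair is to work at the cube scale $\ell\ll\bar\ell$ and build the displacement into $B$ before passing to $M_{\gamma(0)}$, as the paper does. With $v_r$ frozen, if $|\mathbf{c}-v_\n(\upsigma,\pl,v_r)|<10\ell$, it first moves $Q^\ell_{\pl}$ by $10\ell$ orthogonally to the plane through $v_r$ spanned by $\pl-v_r$ and $\upsigma$. It then moves it by $5\ell$ radially as above. Both degenerate weights are then bounded below on $B\subset Q^{\mathtt{K}\ell}_{\pl}$, with a single use of Lemma \ref{lem:overlap_regularity}. The paper then does the same in $v_r$ with $v_l$ frozen, see \eqref{eq:double_overlap_left_side}--\eqref{eq:double_overlap_right_side}, which also avoids choosing a joint maximiser.
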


\begin{figure}[h]
    \centering
    \includegraphics[width=0.85\linewidth]{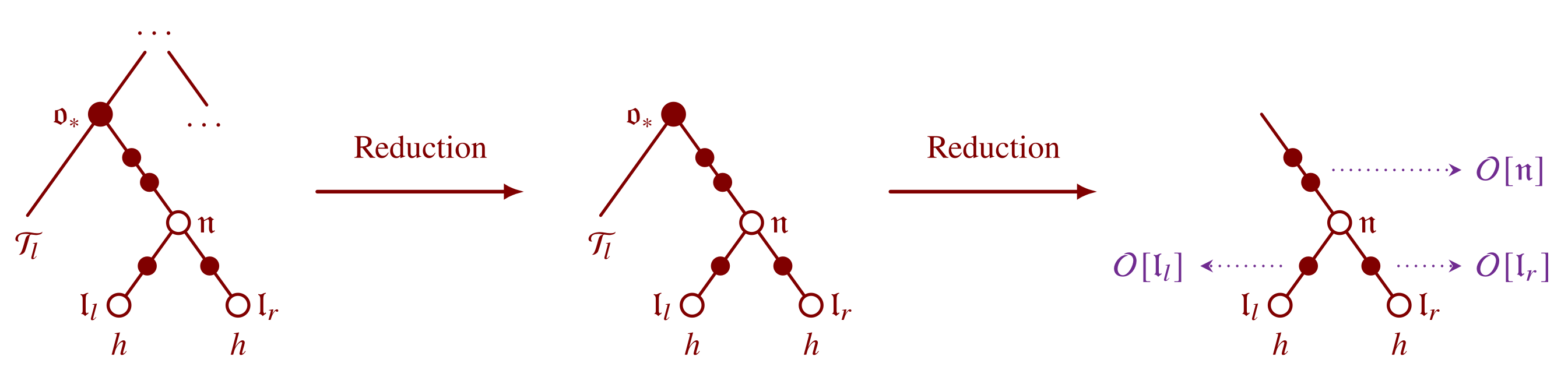}
    \caption{The estimate of $\mathbf{C}_h[\T,\mathcal{L}_M=M_{\gamma(0)}]$ (see left) can be reduced to the estimate of $\mathbf{C}_h[\T_*,\mathcal{L}_M=M_{\gamma(0)}]$, where $\T_*$ is a sub-tree containing $\n$ (see right).}
    \label{fig:double_overlap}
\end{figure}
\begin{proof}

\textit{Step 1: Reduction of estimate.}

In the \textit{step 1} of the proof of Lemma \ref{lem:double_collision}, the estimate of $\mathbf{C}_h[\T,\mathcal{L}_M=M_{\gamma(0)}]$ can be reduced to the Boltzmann integral associated with a sub-tree containing $\n$. More precisely, to prove \eqref{eq:double_collision}, it suffices to prove \eqref{eq:PL_integration}. This reduction has been illustrated by Figure \ref{fig:double_collision}.

It is the same for Lemma \ref{lem:double_overlap}. By the assumption of $\o_*$ as the overlap-parent of $\n$ in the statement of Lemma \ref{lem:double_overlap}, we know the node $\n$ is in the sub-tree $\mathscr{T}_{\o_*}$ branched from $\o_*$. For simplicity here in the proof we denote $\mathscr{T}_{\o_*}$ as $\mathcal{T}_*$. Therefore to prove Lemma \ref{lem:double_overlap}, it suffices to prove for any $(t,x)\in [0,T]\times \mathds{R}^3$ and any $\mathbf{c}\in\mathds{R}^3$
\begin{equation}\label{eq:R3c_integration}
\begin{split}
&\int |\mathbf{c}-v|\mathbf{C}_{h}\Big[\T_*,\blue{\mathcal{L}_{M}=M_{\gamma(0)}}\Big](t,x,v)dv\\
&\leq 10^{|\mathcal{O}[\mathfrak{l}_r,\mathfrak{l}_l,\mathfrak{n}]|}\Big(C_{\gamma,\delta}\lVert h\rVert_{\timeldelta}\Big)^2 \int |v-\mathbf{c}|\mathbf{C}_{h}\Big[\T_*,\red{\mathcal{L}_{M}\cup\{\mathfrak{l}_r,\mathfrak{l}_l\}=M_{\gamma(0)}}\Big](t,x,v)dv
\end{split}
\end{equation}
Using Lemma \ref{lem:full_to_prime}, we expand the LHS of \eqref{eq:R3c_integration} as the following, to filter the effects of overlaps, \blue{and again we freeze all collision times and overlap times}
\begin{equation}\label{eq:first_expansion_sigmac}
\begin{split}
\int &|v_{\n}-\mathbf{c}|\mathbf{OV}(v_l,v_r,v_\n)\underbrace{h(s_{\mathcal{P}l},\x_{\mathcal{P}l},v_{l})h(s_{\mathcal{P}r},\x_{\mathcal{P}r},v_{r})|v_l-v_{\n}|^{-1}d\pi_{v_{\n}+(v_l-v_{\n})^{\perp}}(v_r)dv_{l}dv_{\n}}_{\textup{\blue{we need to decouple the variables $v_r$ and $v_l$ in some sense}}}.
\end{split}
\end{equation}
where $\mathbf{O}_{M_{\gamma(0)}}[\mathscr{T}_{\o}](s,v)$ is the overlap function given by Definition \ref{def:overlap_function}. 

\textit{Step 2: Geometric change of variables.}

Perform the change of variables to $\{v_{r},v_{l},v_{\n}\}$ according to Lemma \ref{lem:geometric_change_of_variable_2}, where $v_l$ and $v_r$ become free variables
\begin{equation}
|v_l-v_{\n}|^{-1}d\pi_{v_\n+(v_{l}-v_\n)^{\perp}}(v_{r})dv_{l}dv_{\n}=\pi\overline{d\pi}_{\mathtt{S}_{v_l,v_r}}(v_\n)|v_l-v_r|dv_{r}dv_{l}.
\end{equation}
Therefore now we need to estimate
\begin{equation*}
\begin{split}
\int &|v_l-v_r|\ |v_{\mathfrak{n}}-\mathbf{c}|\mathbf{OV}(v_l,v_r,v_\n)h(s_{\mathcal{P}l},\x_{\mathcal{P}l},v_l)h(s_{\mathcal{P}r},\x_{\mathcal{P}r},v_r)\overline{d\pi}_{\mathtt{S}_{v_l,v_r}}(v_{\n})dv_rdv_l
\end{split}
\end{equation*}
To estimate this integral, we decompose the space $\mathds{R}^3$ of velocities into small cubes as in \eqref{eq:cube_definition}. We want to estimate for each $(\mathtt{p}_l,\mathtt{p}_r)\in \ell \mathds{Z}^3\times \ell\mathds{Z}^3$
\begin{equation}\label{eq:second_expansion_sigmac}
\begin{split}
&\int_{\plq\times \prq}|v_l-v_r|\ |v_{\n}-\mathbf{c}|\mathbf{OV}(v_l,v_r,v_\n)h(s_{\mathcal{P}l},\x_{\mathcal{P}l},v_l)h(s_{\mathcal{P}r},\x_{\mathcal{P}r},v_r)\overline{d\pi}_{\mathtt{S}_{v_l,v_r}}(v_{\n})dv_rdv_l.
\end{split}
\end{equation}
Define $\uprho_{\mathtt{S}}(v_{l},v_{r})$ as the radius of $\mathtt{S}_{v_{l},v_{r}}$, and $\mathtt{c}_{\mathtt{S}}(v_{l},v_{r})$ as the center of $\mathtt{S}_{v_{l},v_{r}}$. Using these notations, the integration over the measure $\overline{d\pi}_{\mathtt{S}}$ can be parametrized by $\upsigma\in \mathds{S}^2$
\begin{equation}\label{eq:v_n_asfunction_sigma}
\int H(v_{\n})\overline{d\pi}_{\mathtt{S}_{v_l,v_r}}(v_{\n})=\frac{1}{4\pi}\int_{\mathds{S}^2} H\Big(v_{\n}\big(\upsigma,v_l,v_r\big)\Big)d\upsigma,\quad v_{\n}\big(\upsigma,v_l,v_r\big):=\mathtt{c}_{\mathtt{S}}(v_l,v_r)+\upsigma \uprho_{\mathtt{S}}(v_l,v_r).
\end{equation}
When we fix $(v_l,v_r)$, the integrand in \eqref{eq:second_expansion_sigmac} 
\begin{equation}\label{eq:integrand_function_of_vn_sigmac}
|v_{\n}-\mathbf{c}|\ |v_l-v_r|\mathbf{OV}(v_l,v_r,v_\n)h(s_{\mathcal{P}l},\x_{\mathcal{P}l},v_l)h(s_{\mathcal{P}r},\x_{\mathcal{P}r},v_r)
\end{equation}
can be seen as a function of $v_{\n}$. Specifically according to Remark \ref{rmk:pseudo-trajectory_and_xfunction}, the position functions $\x_{\mathcal{P}l}$ and $\x_{\mathcal{P}r}$ are
\begin{equation*}
\x_{\mathcal{P}l}:=x-(t-s_{\n})v_{\n}-(s_{\n}-s_{\mathcal{P}l})v_{l},\quad \x_{\mathcal{P}r}:=x-(t-s_{\n})v_{\n}-(s_{\n}-s_{\mathcal{P}r})v_{r}. 
\end{equation*}
Therefore we can apply the parametrization \eqref{eq:v_n_asfunction_sigma} to \eqref{eq:integrand_function_of_vn_sigmac} and parametrize the integrand in \eqref{eq:integrand_function_of_vn_sigmac} by $\upsigma$. Define $\Phi_{\upsigma,\mathbf{c}}(v_l,v_r)$ as the following function 
\begin{equation}\label{eq:Phi_sigma_c}
\begin{split}
&\Phi_{\upsigma,\mathbf{c}}(v_{l},v_{r}):=|v_{\n}(\upsigma,v_l,v_r)-\mathbf{c}|\ |v_l-v_r| \mathbf{OV}\Big(v_l,v_r,v_{\n}(\upsigma,v_l,v_r)\Big)
\end{split}
\end{equation}
In this way, the integral in \eqref{eq:second_expansion_sigmac} equals
\begin{equation*}
\frac{1}{4\pi}\int_{\mathds{S}^2}\Bigg[\int_{\plq\times \prq} \Phi_{\upsigma,\mathbf{c}}(v_{l},v_{r})h\Big(s_{\mathcal{P}l},\x_{\mathcal{P}l}(\upsigma,v_l,v_r),v_l\Big)h\Big(s_{\mathcal{P}r},\x_{\mathcal{P}r}(\upsigma,v_l,v_r),v_r\Big)dv_{r}dv_{l}\Bigg]d\upsigma.
\end{equation*}
with
\begin{equation}
\x_{\mathcal{P}l}(\upsigma,v_l,v_r)=x-(t-s_{\n})v_{\n}(\upsigma,v_l,v_r)-(s_{\n}-s_{\mathcal{P}l})v_l,\quad \x_{\mathcal{P}r}(\upsigma,v_l,v_r)=x-(t-s_{\n})v_{\n}(\upsigma,v_l,v_r)-(s_{\n}-s_{\mathcal{P}r})v_r.
\end{equation}

\textit{Step 3: Estimate of perturbations for positions.}

When the velocities $(v_l,v_r)$ are perturbed within the cube pair $\plq\times \prq$, we can estimate the corresponding perturbations of the position $\x_{\mathcal{P}l}(\upsigma,v_l,v_r)$ (and the same for $\x_{\mathcal{P}r}$)
\begin{equation}
\sup_{(v_l,v_r)\in \plq\times \prq}\Big|\x_{\mathcal{P}l}(\upsigma,\pl,\pr)-\x_{\mathcal{P}l}(\upsigma,v_l,v_r)\Big|\leq C\ell^{\frac{1}{2}}\leq \delta.
\end{equation}
Therefore the definition of $\timeldelta$ in \eqref{eq:Ldeltasup} and the assumption of $h\in \timeldelta$ imply
\begin{equation}\label{eq:double_overlap_local_sup}
\begin{split}
&\int_{\plq\times \prq} \Phi_{\upsigma,\mathbf{c}}(v_{l},v_{r})h\Big(s_{\mathcal{P}l},\x_{\mathcal{P}l}(\upsigma,v_l,v_r),v_l\Big)h\Big(s_{\mathcal{P}r},\x_{\mathcal{P}r}(\upsigma,v_l,v_r),v_r\Big)dv_{r}dv_{l}\leq \int_{\plq\times \prq} \Phi_{\upsigma,\mathbf{c}}(v_{l},v_{r})h_l(v_l)h_r(v_r)dv_{r}dv_{l}.
\end{split}
\end{equation}
with 
\begin{equation}
h_l(v_l):=[h(s_{\mathcal{P}l})]_{\delta}(\xp_l,v_l),\quad h_r(v_r):=[h(s_{\mathcal{P}r})]_{\delta}(\xp_r,v_r),\quad \xp_l:=\mathbf{x}_{\mathcal{P}l}(\upsigma,\mathtt{p}_l,\mathtt{p}_r),\ \xp_r:=\mathbf{x}_{\mathcal{P}r}(\upsigma,\mathtt{p}_l,\mathtt{p}_r).
\end{equation}
Next we prove for any fixed $v_r\in\mathds{R}^3$ we have
\begin{equation}\label{eq:double_overlap_left_side}
\int_{\plq}\Phi_{\upsigma,\mathbf{c}}(v_l,v_r)h_l(v_l)dv_l\leq 3^{|\mathcal{O}[\l_l,\n]|}C_{\gamma,\delta}\lVert h\rVert_{\timeldelta}\int_{\kplq}\Phi_{\upsigma,\mathbf{c}}(v_l,v_r)M_{\gamma(0)}(v_l)dv_l,
\end{equation}
and for any fixed $v_l\in\mathds{R}^3$ we have
\begin{equation}\label{eq:double_overlap_right_side}
\int_{\prq}\Phi_{\upsigma,\mathbf{c}}(v_l,v_r)h_r(v_r)dv_r\leq 3^{|\mathcal{O}[\l_r,\n]|}C_{\gamma,\delta}\lVert h\rVert_{\timeldelta}\int_{\kprq}\Phi_{\upsigma,\mathbf{c}}(v_l,v_r)M_{\gamma(0)}(v_r)dv_r.
\end{equation}
Apply the homogenization \eqref{eq:double_overlap_left_side} of $h_l$ and the homogenization of $h_r$ \eqref{eq:double_overlap_right_side} to \eqref{eq:double_overlap_local_sup}
\begin{equation}\label{eq:double_overlap_enlarged_cube}
\begin{split}
&\int_{\plq\times \prq} \Phi_{\upsigma,\mathbf{c}}(v_{l},v_{r})h\Big(s_{\mathcal{P}l},\x_{\mathcal{P}l}(\upsigma,v_l,v_r),v_l\Big)h\Big(s_{\mathcal{P}r},\x_{\mathcal{P}r}(\upsigma,v_l,v_r),v_r\Big)dv_{r}dv_{l}\\
&\leq 10^{|\mathcal{O}[\l_l,\l_r,\n]|}\Big(C_{\gamma,\delta}\lVert h\rVert_{\ldelta}\Big)^2\int_{\kplq\times \kprq} \Phi_{\upsigma,\mathbf{c}}(v_{l},v_{r})M_{\gamma(0)}(v_l)M_{\gamma(0)}(v_r)dv_ldv_r.
\end{split}
\end{equation}
Sum \eqref{eq:double_overlap_enlarged_cube} over $(\mathtt{p}_l,\mathtt{p}_r)\in \ell\mathds{Z}^3\times \ell\mathds{Z}^3$, we conclude the proof of Lemma \ref{lem:double_overlap}.
\begin{figure}[h]
    \centering
    \includegraphics[width=0.7\linewidth]{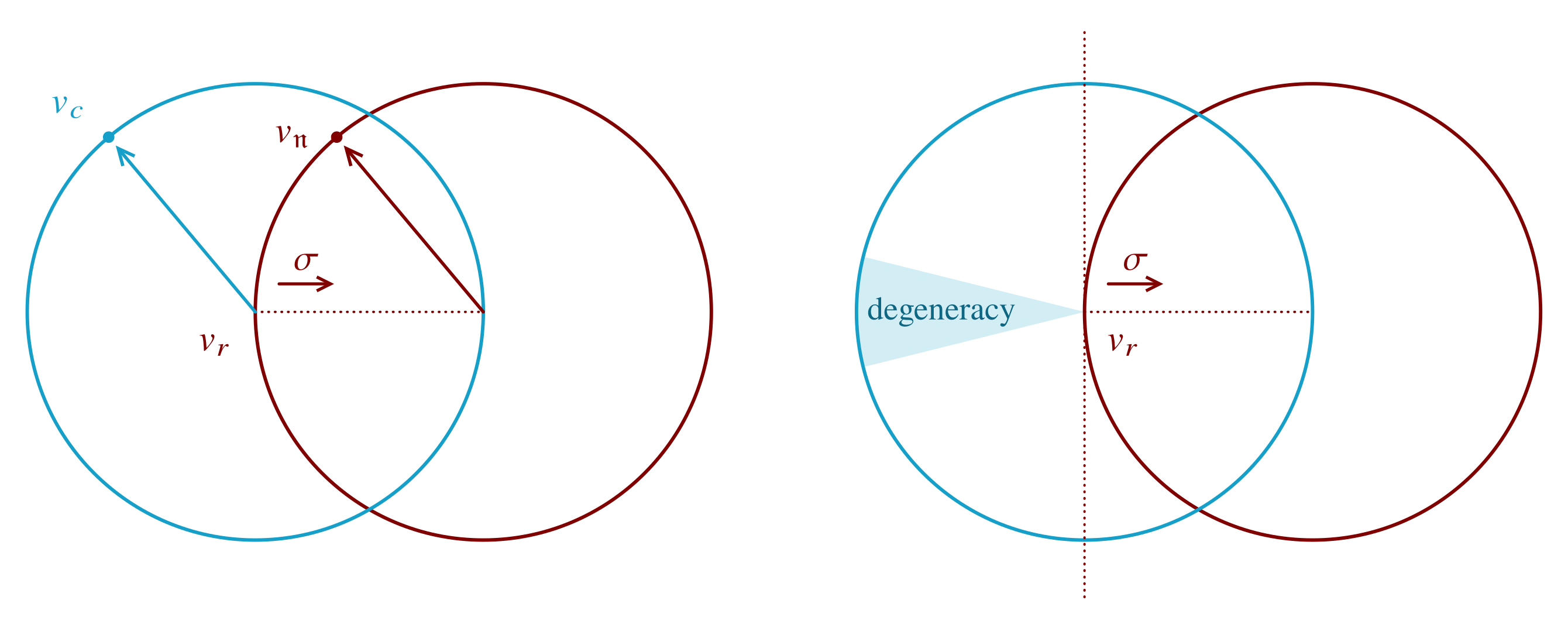}
    \caption{We use the notation $v_c=\frac{v_l+v_r}{2}$.}
    \label{fig:degeneracy}
\end{figure}

\textit{Step 4: Proof of \eqref{eq:double_overlap_left_side} and \eqref{eq:double_overlap_right_side}.}

By the symmetry between variables $v_l$ and $v_r$, we only need to prove \eqref{eq:double_overlap_left_side}. For any fixed $v_r$, we construct a set $B\subset \mathds{R}^3$ such that
\begin{equation}\label{eq:double_overlap_B_condition}
B\subset Q_{\pl}^{\mathtt{K}\ell},\quad \inf_{v_l\in B}\Phi_{\upsigma,\mathbf{c}}(v_l,v_r)\geq C_{\gamma,\delta}3^{-|\mathcal{O}[\l_l,\n]|}\sup_{v_l\in \plq}\Phi_{\upsigma,\mathbf{c}}(v_l,v_r),\quad |B|\geq C_{\gamma,\delta}\ell^3.
\end{equation}
We first analyze some geometric properties of $v_{\n}(\upsigma,\cdot,v_r)$ in \eqref{eq:v_n_asfunction_sigma} as a map to build an intuition. First for any $R\geq 0$
\begin{equation}
v_{\n}(\upsigma,\cdot,v_r):\Big\{v_l| |v_l-v_r|=R\Big\}\rightarrow \Big\{v_\n| |v_\n-(v_r+\upsigma R)|=R\Big\}.
\end{equation}
Then for any $v_l\in\mathds{R}^3$, we always have (see Figure \ref{fig:degeneracy})
\begin{equation}
\Big(v_\n(\upsigma,v_l,\upsigma)-v_r\Big)\cdot\upsigma\geq 0,
\end{equation}
which means the image of $v_{\n}(\upsigma,\cdot,v_l)$ lies at the half space divided by $\sigma^{\perp}+v_r$. Finally, when the angle between $v_l-v_r$ and $-\upsigma$ is small enough (see Figure \ref{fig:degeneracy}), the map is degenerate in the sense of $\nabla_{v_l}v_{\n}(\upsigma,\cdot,v_r)=\frac{1}{2}\textup{Id}+\upsigma \otimes \frac{v_l-v_r}{|v_l-v_r|}$ having eigenvalues and eigenvectors as
\begin{equation}\label{eq:v_n_eigen}
\lambda_1=1,\quad u_1=\frac{v_l-v_r}{|v_l-v_r|}\times \upsigma; \quad \lambda_2=1,\quad u_2=\upsigma -\Big(\upsigma\cdot\frac{v_l-v_r}{|v_l-v_r|}\Big)\frac{v_l-v_r}{|v_l-v_r|};\quad \lambda_3=1+\upsigma\cdot\frac{v_l-v_r}{|v_l-v_r|},\quad u_3=\upsigma.
\end{equation}
To satisfy \eqref{eq:double_overlap_B_condition}, we need to construct $B$ such that
\begin{equation}
\inf_{v_l\in B}|v_{\n}(\upsigma,v_l,v_r)-\mathbf{c}|\geq C_{\gamma}\sup_{v_l\in Q_{\mathtt{p}_l}^{\ell}}|v_{\n}(\upsigma,v_l,v_r)-\mathbf{c}|.
\end{equation}

Denote $\mathtt{PL}_{\upsigma}$ as the plane in $\mathds{R}^3$ containing $v_r$ and spanned by $\mathtt{p}_l-v_r$ and $\upsigma$. If $|\mathbf{c}-v_{\n}(\upsigma,\mathtt{p}_l,v_r)|\geq 10\ell$, then define $B$ as
\begin{equation}
B:=Q_{\mathtt{p}_l}^{\ell}+5\ell\frac{\mathtt{p}_l-v_r}{|\mathtt{p}_l-v_r|}.
\end{equation}
Since we only move $v_l$ at distance $5\ell$, the cross-section $|v_{\n}-\mathbf{c}|$ stay proportional before and after the perturbation of $v_l$.

If $|\mathbf{c}-v_{\n}(\upsigma,\mathtt{p}_l,v_r)|< 10\ell$, then we first move $Q_{\mathtt{p}_l}^{\ell}$ in the direction orthogonal to $\mathtt{PL}$ with distance $10\ell$ to $Q_{\mathtt{p}_l'}^{\ell}$, and then construct $B$ as (see Figure \ref{fig:double_overlap})
\begin{equation}
B:=Q_{\mathtt{p}_l'}^{\ell}+5\ell\frac{\mathtt{p}_l'-v_r}{|\mathtt{p}_l'-v_r|}.
\end{equation}
We can verify the set $B$ satisfies \eqref{eq:double_overlap_B_condition}. Then in the same way as \eqref{eq:absorb_by_gaussian_weight}, we prove \eqref{eq:double_overlap_left_side} and \eqref{eq:double_overlap_right_side}.
\end{proof}

\begin{figure}[h]
    \centering
    \includegraphics[width=1\linewidth]{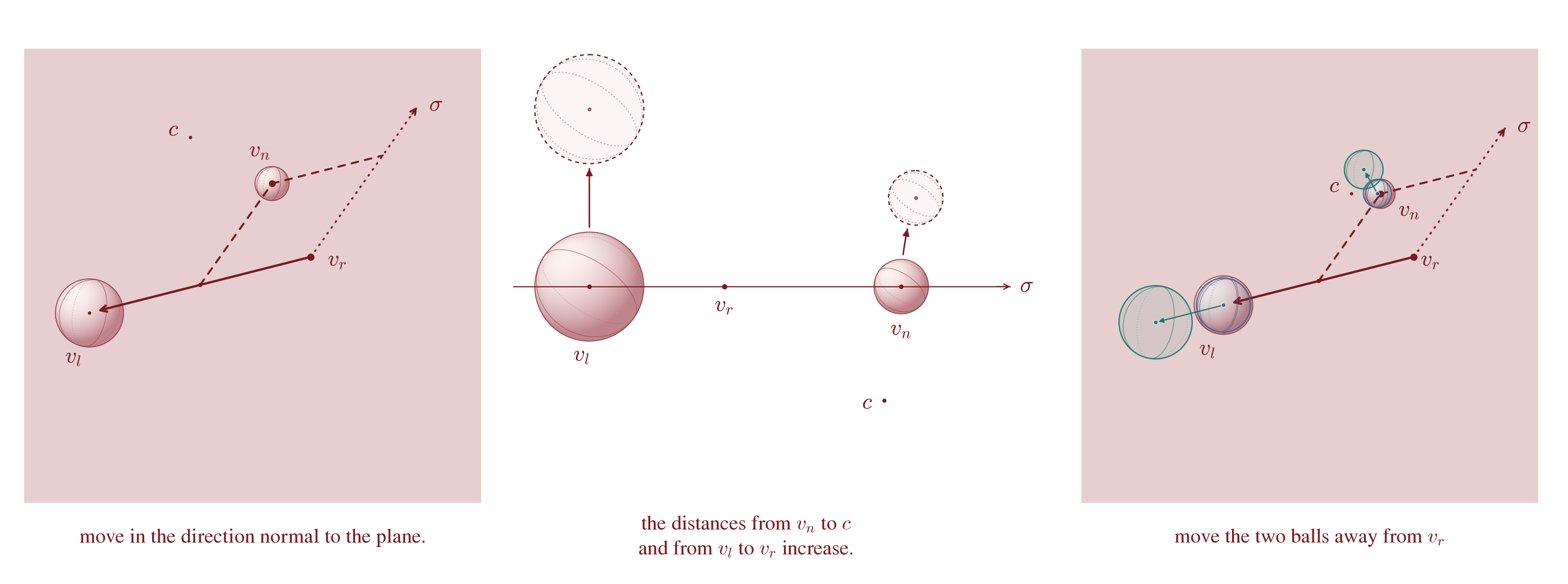}
    \caption{Construction of $B$ in the proof of Lemma \ref{lem:double_overlap}.}
    \label{fig:double_overlap_B_set}
\end{figure}

\begin{lemma}\label{lem:double_null}
Consider any constants $(\gamma>0,\delta>0,T>0)$, and any non-negative function $h(t,x,v)$ within the space $L^{\infty}\ldelta\big([0,T]\times \mathds{R}^3\times \mathds{R}^3\big)$. There exist constants $0<\gamma_*<\gamma(0)<\gamma$ and $\delta_*>0$ such that the following statement is true. Take any decorated tree $\T$, any set $\mathcal{L}_M\subset \mathcal{L}(\T)$ of Maxwellian leaves. Suppose a collision node $\n\in\mathcal{B}^+(\T)$ satisfies condition \ref{cond:DUO} and has no collision-parent node or overlap-parent node. Then 
\begin{equation}\label{eq:double_null_norm}
\begin{split}
&\Big\lVert \mathbf{C}_{h}\Big[\T,\mathcal{L}_M=M_{\gamma(0)}\Big](t,\cdot,\cdot)\Big\rVert_{\ldeltastar}\leq \underbrace{10^{|\mathcal{O}[\l_l,\l_r,\n]|}\Big(C_{\gamma,\delta}\lVert h\rVert_{\timeldelta}\Big)^2}_{\textup{\blue{error from overlaps and initial data}}}\Big\lVert \mathbf{C}_{M_{\gamma(0)}}[\T](t,\cdot,\cdot)\Big\rVert_{\ldeltastar}.
\end{split}
\end{equation}
\end{lemma}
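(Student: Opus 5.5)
Since $\n$ has no collision-parent and no overlap-parent node, $\n$ is the root of the primary sub-tree $\tprime$ and $\mathcal{O}[\n]$ consists of overlap nodes between $\n$ and the root. Under \ref{cond:DUO}, $\tprime$ is therefore the single collision fork $\{\n,\l_l,\l_r\}$, and every $\mathscr{T}_{\o}$ hanging off it has only Maxwellian leaves. (Any leaf outside $\tprime$ is then Maxwellian, so the right-hand side $\mathbf{C}_{M_{\gamma(0)}}[\T]$ is the correct fully-homogenized object.) I will apply Lemma \ref{lem:full_to_prime} and write $\mathbf{C}_h[\T,\mathcal{L}_M=M_{\gamma(0)}](t,x,v)$ with $v=v_\n$ as a free variable: it is an integral of $\mathbf{OV}(v_l,v_r,v_\n)\,h(s_\n,\x_{\mathcal{P}l},v_l)\,h(s_\n,\x_{\mathcal{P}r},v_r)$ against $|v_l-v_\n|^{-1}d\pi_{v_\n+(v_l-v_\n)^\perp}(v_r)dv_l$, times the time integrals. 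Here $\x_{\mathcal{P}l}=\x_{\mathcal{P}r}=x-(t-s_\n)v_\n$, and the time variables are frozen as in the previous lemmas. The goal is an $\ldeltastar$ bound, so I must control
\[
\sup_{x}\int\sup_{|x'-x|\le\delta_*}\mathbf{C}_h[\ldots](t,x',v)\,e^{\gamma_*|v|^2}dv .
\]
This is a weighted $v$-integral, not a pointwise-in-$v$ one. The plan is to mimic Lemma \ref{lem:double_overlap}, with the weight $|v_\n-\mathbf{c}|$ replaced by $e^{\gamma_*|v_\n|^2}$.

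First, I freeze the spatial sup. Both leaf positions are $x'-(t-s_\n)v_\n$. I decompose $v_\n$ into cubes $Q^{\ell}_{\mathtt{p}}$ with $\ell\sim(\delta-\delta_*)/T$ as in Lemma \ref{lem:transport_loss}. On each cube, the positions lie within $\delta$ of the single point $x-(t-s_\n)\mathtt{p}$, so $h$ is bounded by $[h]_\delta$ at that point. Because of this cube decomposition in $v_\n$, I do not use the Carleman coordinates first. Instead, within each $v_\n$-cube I apply Lemma \ref{lem:geometric_change_of_variable_2} to pass to free $(v_l,v_r)$ with $v_\n$ on the sphere $\mathtt{S}_{v_l,v_r}$, with measure $\pi|v_l-v_r|\overline{d\pi}_{\mathtt{S}_{v_l,v_r}}(v_\n)$, restricted to $v_\n\in Q^\ell_{\mathtt{p}}$. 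Energy conservation gives $|v_\n|^2\le|v_l|^2+|v_r|^2$, so the weight $e^{\gamma_*|v_\n|^2}$ is bounded by $e^{\gamma_*(|v_l|^2+|v_r|^2)}$ and is absorbed by the $e^{\gamma|v|^2}$ weights of $h$. The gap $\gamma-\gamma_*$ leaves Gaussian decay in $(v_l,v_r)$. This decay pays for the $\Delta^3$-type losses and for the sum over the cubes $\mathtt{p}$, exactly as in Lemma \ref{lem:transport_loss}.

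Next comes the homogenization step proper. For fixed $\upsigma$, I decompose $(v_l,v_r)$ into cube pairs $\plq\times\prq$. The factor $\Phi=|v_l-v_r|\,e^{\gamma_*|v_\n|^2}\mathbf{OV}(v_l,v_r,v_\n(\upsigma,v_l,v_r))$ must be compared to its value on a not-so-small set $B\subset Q^{\mathtt{K}\ell}_{\pl}$ of measure $\gtrsim\ell^3$. I will use Lemma \ref{lem:overlap_regularity} for $\mathbf{OV}$, which gives a factor $3^{|\mathcal{O}|}$ for each side. For the weight I use the Lipschitz bound $|\nabla_{v_l}v_\n|\le 2$ on bounded sets, together with the Gaussian gap to absorb $e^{C\ell\Delta}$. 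For $|v_l-v_r|$ I translate the cube away from $v_r$, as in the construction of $B$ in Lemma \ref{lem:double_overlap}. This yields bounds of the form \eqref{eq:double_overlap_left_side}--\eqref{eq:double_overlap_right_side}, and I then undo the change of variables. The result is the same expression with $h$ replaced by $M_{\gamma(0)}$ at both leaves and the cubes enlarged by $\mathtt{K}$, which is bounded by $\mathtt{K}^6$ times the full integral. Since all remaining data are Maxwellian and spatially homogeneous, this equals $\lVert\mathbf{C}_{M_{\gamma(0)}}[\T](t)\rVert_{\ldeltastar}$ up to the constants shown.

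I expect the main obstacle to be the interplay between the $\delta_*$-sup in $x$ and the sphere parametrization. The position perturbation $(t-s_\n)(v_\n-\mathtt{p})$ must stay below $\delta-\delta_*$ while $v_\n$ ranges over a sphere whose radius grows with $|v_l-v_r|$. Controlling it forces cubes of size $\ell\Delta^{-1}$ in high-velocity regions, and I would pay for this with the Gaussian gap, choosing $\gamma_*<\gamma(0)$ small enough.
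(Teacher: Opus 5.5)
Your skeleton matches the paper's: Lemma~\ref{lem:full_to_prime}, the change of variables of Lemma~\ref{lem:geometric_change_of_variable_2}, cube pairs in $(v_l,v_r)$ at fixed $\upsigma$, and a translated cube $B$. However, two steps fail as written.

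\emph{Leaf positions.} $\mathcal{P}\l_l$ is the parent of $\l_l$ in $\T$. Whenever $\mathcal{O}[\l_l]\neq\varnothing$, this parent is the lowest overlap node of $\mathcal{O}[\l_l]$, not $\n$. So $h$ is evaluated at time $s_{\mathcal{P}l}$ (not $s_\n$) and at the position $\x_{\mathcal{P}l}=x-(t-s_\n)v_\n-(s_\n-s_{\mathcal{P}l})v_l$, as in the proof of Lemma~\ref{lem:double_collision}; the same holds for $\l_r$. Cubes in $v_\n$ alone therefore do not freeze the positions, because $v_l$ and $v_r$ are still free. The fix is the paper's order: pass to $(\upsigma,v_l,v_r)$, cut $(v_l,v_r)$ into pairs $\plq\times\prq$, and freeze there. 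Since $v_\n(\upsigma,v_l,v_r)=\frac{v_l+v_r}{2}+\frac{|v_l-v_r|}{2}\upsigma$ satisfies $|\Delta v_\n|\le|\Delta v_l|+|\Delta v_r|$, the positions move by $O(T\ell)$ on each pair, whatever the radius of $\mathtt{S}_{v_l,v_r}$. So the obstacle you single out (cubes of size $\ell\Delta^{-1}$ to control positions) never arises, and the $v_\n$-cube layer is superfluous.

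\emph{The weight.} In the homogenization step you put $e^{\gamma_*|v_\n|^2}$ back into $\Phi$ and absorb its variation with the Gaussian gap, via fixed-$v_r$ bounds like \eqref{eq:double_overlap_left_side}. At fixed $v_r$, moving $v_l$ by $O(\mathtt{K}\ell)$ can change the weight by a factor $e^{C\gamma_*\mathtt{K}\ell(|\pl|+|v_r|+1)}$. When $|v_r|\gg|\pl|$, the gap $e^{-(\gamma-\gamma(0))|\pl|^2}$ in $v_l$ cannot pay for this, so no bound uniform in $v_r$ comes out. This is exactly the regime where the paper abandons translated cubes. It takes $B=A_{\upsigma,v_r}(B_{\textup{pre}})\cap Q^{\mathtt{K}\ell}_{\mathtt{p}_l^*}$, the preimage of the region where $|v_\n|$ does not decrease, so the weight only grows on $B$. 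Translated cubes of size $\ell\Delta^{-1}$ are used only when $|v_r|\lesssim|\pl|+10$.

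Your own energy remark gives a cleaner route that removes this difficulty, and you should commit to it. On $\mathtt{S}_{v_l,v_r}$ one has $|v_\n|^2\le|v_l|^2+|v_r|^2$, so the weight can be moved onto the frozen functions $\tilde h_l:=e^{\gamma_*|v_l|^2}[h(s_{\mathcal{P}l})]_{\delta}(\mathbf{x}^{\mathtt{p}}_l,v_l)$ and the analogous $\tilde h_r$. Their $L^1_{\gamma-\gamma_*}$ norms are at most $\lVert h\rVert_{\timeldelta}$. Then $\Phi$ reduces to $|v_l-v_r|\,\mathbf{OV}$, and the elementary translated cube suffices: this is Step 4 of Lemma~\ref{lem:double_overlap} without the factor $|v_\n-\mathbf{c}|$, with $\gamma(0)<\gamma-\gamma_*$. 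Undoing the change of variables, the right side becomes $\int\mathbf{C}_{M_{\gamma(0)}}[\T](t,v)\,dv$. This is bounded by, not equal to, $\lVert\mathbf{C}_{M_{\gamma(0)}}[\T](t)\rVert_{\ldeltastar}$, because $\mathbf{C}_{M_{\gamma(0)}}[\T]\ge0$ is spatially homogeneous.

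Compared with the paper, this route trades the case analysis and the inverse map $A_{\upsigma,v_r}$ for the extra constraint $\gamma_*+\gamma(0)<\gamma$. That constraint is harmless, since $\gamma_*$ is chosen last. It also yields a slightly stronger estimate, with the unweighted $L^1_v$ norm on the right. The paper instead keeps $e^{\gamma_*|v|^2}$ on both sides, at the price of the geometric construction of $B$.
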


\begin{proof}

For convenience, in the proof we use $\mathbf{C}_h^{\mathcal{T}}$ to denote $\mathbf{C}_h[\mathcal{T},\mathcal{L}_M=M_{\gamma(0)}]$. First use Lemma \ref{lem:full_to_prime} to expand $\mathbf{C}_h^{\mathcal{T}}$ and filter the effects of overlaps
\begin{equation}\label{eq:double_null_norm_pointwise}
\begin{split}
&\int \mathbf{OV}(v_l,v_r,v_{\n})h(s_{\mathcal{P}l},\x_{\mathcal{P}l},v_{l}) h(s_{\mathcal{P}r},\x_{\mathcal{P}r},v_{r})|v_l-v_{\n}|^{-1}d\pi_{v_\n+(v_{l}-v_\n)^{\perp}}(v_{r})dv_{l},
\end{split}
\end{equation}
with the position function $\x$ defined in Remark \ref{rmk:pseudo-trajectory_and_xfunction}. Therefore when we shift the initial position $x$ to $x'$ with $|x'-x|\leq \delta_*$, the only change in \eqref{eq:double_null_norm_pointwise} is the position function $\x$. Consequently
\begin{equation}
\begin{split}
&\Big[\mathbf{C}_h^{\mathcal{T}}(t)\Big]_{\delta_*}(x,v)\leq \int \mathbf{OV}(v_l,v_r,v_\n)[h(s_{\mathcal{P}l})]_{2\delta_*}(\x_{\mathcal{P}l},v_{l}) [h(s_{\mathcal{P}r})]_{2\delta_*}(\x_{\mathcal{P}r},v_{r})|v_l-v_{\n}|^{-1}d\pi_{v_\n+(v_{l}-v_\n)^{\perp}}(v_{r})dv_{l},
\end{split}
\end{equation}
Now we can calculate the norm $\lVert [\mathbf{C}_h^{\mathcal{T}}(t)]_{\delta_*}(x)\rVert_{L_{\gamma_*}^1}$
\begin{equation*}
\begin{split}
\lVert [\mathbf{C}_h^{\mathcal{T}}(t)]_{\delta_*}(x)\rVert_{L_{\gamma_*}^1}\leq &\int e^{\gamma_*|v_{\n}|^2}\mathbf{OV}(v_l,v_r,v_\n)[h(s_{\mathcal{P}l})]_{2\delta_*}(\x_{\mathcal{P}l},v_{l}) [h(s_{\mathcal{P}r})]_{2\delta_*}(\x_{\mathcal{P}r},v_{r})|v_l-v_{\n}|^{-1}d\pi_{v_\n+(v_{l}-v_\n)^{\perp}}(v_{r})dv_{l}dv_{\n},
\end{split}
\end{equation*}
First perform the change of variables to $\{v_{r},v_{l},v_{\n}\}$ using Lemma \ref{lem:geometric_change_of_variable_2}
\begin{equation*}
|v_l-v_{\n}|^{-1}d\pi_{v_\n+(v_{l}-v_\n)^{\perp}}(v_{r})dv_{l}dv_{\n}=\pi|v_l-v_r|\overline{d\pi}_{\mathtt{S}}(v_\n)dv_{l}dv_{r}
\end{equation*}
Now the goal is to homogenize the following integral
\begin{equation}\label{eq:second_expansion_sigma}
\begin{split}
\int &e^{\gamma_*|v_{\n}|^2}\mathbf{OV}(v_l,v_r,v_\n)[h(s_{\mathcal{P}l})]_{2\delta_*}(\x_{\mathcal{P}l},v_{l}) [h(s_{\mathcal{P}r})]_{2\delta_*}(\x_{\mathcal{P}r},v_{r})|v_{l}-v_{r}|\ \overline{d\pi}_{\mathtt{S}}(v_{\n})dv_{r}dv_{l}.
\end{split}
\end{equation}

When we fix $(v_l,v_r)$, the integrand in \eqref{eq:second_expansion_sigma} 
\begin{equation}\label{eq:integrand_function_of_vn_sigma}
e^{\gamma_*|v_{\n}|^2}|v_l-v_r|\mathbf{OV}(v_l,v_r,v_\n)[h(s_{\mathcal{P}l})]_{2\delta_*}(\x_{\mathcal{P}l},v_{l}) [h(s_{\mathcal{P}r})]_{2\delta_*}(\x_{\mathcal{P}r},v_{r})
\end{equation}
can be seen as a function of $v_{\n}$. Therefore we can apply \eqref{eq:v_n_asfunction_sigma} to \eqref{eq:integrand_function_of_vn_sigma} and parametrize the integrand in \eqref{eq:integrand_function_of_vn_sigma} by $\upsigma$. Define $\Phi_{\upsigma,\gamma_*}(v_l,v_r)$ as the following function 
\begin{equation}\label{eq:Phi_sigma_gamma}
\begin{split}
&\Phi_{\upsigma,\gamma_*}(v_{l},v_{r}):=\exp\Big(\gamma_*|v_{\n}(\upsigma,v_l,v_r)|^2\Big)|v_l-v_r| \mathbf{OV}\Big(v_l,v_r,v_{\n}(\upsigma,v_l,v_r)\Big)
\end{split}
\end{equation}
In this way, parametrize and  decompose the integral in \eqref{eq:second_expansion_sigma} into integrals over cubes $Q_{\mathtt{p}_l}\times Q_{\mathtt{p}_r}$
\begin{equation*}
\frac{1}{4\pi}\int_{\mathds{S}^2}\Bigg[\int_{\plq\times \prq} \Phi_{\upsigma,\gamma_*}(v_{l},v_{r})[h(s_{\mathcal{P}l})]_{2\delta_*}\Big(\x_{\mathcal{P}l}(\upsigma,v_l,v_r),v_l\Big)[h(s_{\mathcal{P}r})]_{2\delta_*}\Big(\x_{\mathcal{P}r}(\upsigma,v_l,v_r),v_r\Big)dv_{r}dv_{l}\Bigg]d\upsigma.
\end{equation*}
with the position functions $\x_{\mathcal{P}l}$ and $\mathbf{x}_{\mathcal{P}r}$ determined by $v_l$, $v_r$, and $v_{\n}(\upsigma,v_l,v_r)$. When the velocities $(v_l,v_r)$ are perturbed within the cube pair $\plq\times \prq$, the perturbations of position functions are also small enough. Therefore the definition of $\ldelta$ in \eqref{eq:Ldeltasup} and the assumption of $h\in \timeldelta$ imply
\begin{equation}
\begin{split}
&\int_{\plq\times \prq} \Phi_{\upsigma,\gamma_*}(v_{l},v_{r})[h(s_{\mathcal{P}l})]_{2\delta_*}\Big(\x_{\mathcal{P}l}(\upsigma,v_l,v_r),v_l\Big)[h(s_{\mathcal{P}r})]_{2\delta_*}\Big(\x_{\mathcal{P}r}(\upsigma,v_l,v_r),v_r\Big)dv_{r}dv_{l}\\
&\leq \int_{\plq\times \prq} \Phi_{\upsigma,\gamma_*}(v_{l},v_{r})h_l(v_l)h_r(v_r)dv_{r}dv_{l}
\end{split}
\end{equation}
with
\begin{equation}
h_l(v_l):=[h(s_{\mathcal{P}l})]_{\delta}(\xp_l,v_l),\quad h_r(v_r):=[h(s_{\mathcal{P}r})]_{\delta}(\xp_r,v_r),\quad \xp_l:=\mathbf{x}_{\mathcal{P}l}(\upsigma,\mathtt{p}_l,\mathtt{p}_r),\ \xp_r:=\mathbf{x}_{\mathcal{P}r}(\upsigma,\mathtt{p}_l,\mathtt{p}_r).
\end{equation}
For any fixed $v_r\in\mathds{R}^3$
\begin{equation}
\int_{\plq}\Phi_{\upsigma,\gamma_*}(v_l,v_r)h_l(v_l)dv_l\leq \sup_{v_l\in \plq}\Phi_{\upsigma,\gamma_*}\Big(\int_{\plq}h_l(v_l)dv_l\Big)\leq \Big(\sup_{v_l\in \plq}\Phi_{\upsigma,\gamma_*}(v_l,v_r)\Big)\lVert h\rVert_{\timeldelta}\sup_{v_l\in\plq}e^{-\gamma|v_l|^2}.
\end{equation}
Next we prove for any fixed $v_r\in\mathds{R}^3$ we have
\begin{equation}\label{eq:double_null_left_side}
\int_{\plq}\Phi_{\upsigma,\gamma_*}(v_l,v_r)h_l(v_l)dv_l\leq 3^{|\mathcal{O}[\l_l,\n]|}C_{\gamma,\delta}\lVert h\rVert_{\timeldelta}\int_{\kplq}\Phi_{\upsigma,\gamma_*}(v_l,v_r)M_{\gamma(0)}(v_l)dv_l,
\end{equation}
and for any fixed $v_l\in\mathds{R}^3$ we have
\begin{equation}\label{eq:double_null_right_side}
\int_{\prq}\Phi_{\upsigma,\gamma_*}(v_l,v_r)h_r(v_r)dv_r\leq C_{\gamma,\delta}3^{|\mathcal{O}[\l_r,\n]|}C_{\gamma,\delta}\lVert h\rVert_{\timeldelta}\int_{\kprq}\Phi_{\upsigma,\gamma_*}(v_l,v_r)M_{\gamma(0)}(v_r)dv_r.
\end{equation}
By symmetry it is sufficient to prove \eqref{eq:double_null_left_side}. We want to construct a set $B\subset Q_{\mathtt{p}_l}^{\mathtt{K}\ell}$ such that (notice $C_{\gamma}$ is a constant determined by $\gamma$, not $\gamma_*$)
\begin{equation}\label{eq:double_null_B_condition}
|B|\geq \ell^{3}\Delta^{-3},\quad \Delta:=\max\{1,|\mathtt{p}_l|,|v_r|\}; \inf_{v_l\in B}\Phi_{\upsigma,\gamma_*}(v_l,v_r)\geq 3^{-|\mathcal{O}[\l_l,\n]|}\exp\Big(-C_{\gamma}\gamma_*|v_l|^2\Big).
\end{equation}
Then in the same way as \eqref{eq:absorb_by_gaussian_weight}, we can prove \eqref{eq:double_null_left_side} and \eqref{eq:double_null_right_side}, and therefore conclude the proof of Lemma \ref{lem:double_null}.

Denote $\mathtt{p}_{l}^*$ as the point in $Q_{\mathtt{p}_l}^{\ell}$ such that 
\begin{equation}
\Phi_{\upsigma,\gamma_*}(v_l^*,v_r)=\max_{v_l\in Q_{\mathtt{p}_l}^{\ell}}\Phi_{\upsigma,\gamma_*}(v_l,v_r)
\end{equation}

It is convenient to assume $v_r\not=\mathtt{p}_l$. If $|\mathtt{p}_l|\geq \frac{1}{10}|v_r|$ or $|v_r-\mathtt{p}_l|\leq 10$, then we construct $B$ as the following. 
\begin{equation}
B:=Q_{\mathtt{p}_l^*}^{\ell \Delta^{-1}}+3\ell\frac{\mathtt{p}_*-v_r}{|\mathtt{p}_l^*-v_r|}.
\end{equation}

If $|\mathtt{p}_l|<\frac{1}{10}|v_r|$ and $|\mathtt{p}_l-v_r|>10$, we write $A_{\upsigma,v_r}$ as the inverse map of $v_{\n}(\upsigma,\cdot,v_r)$. Denote $B_{\textup{pre}}$ as 
\begin{equation}
B_{\textup{pre}}:=\Big\{v\in\mathds{R}^3\Big|\ (v-v_r)\cdot\upsigma>0\Big\}\cap \Big\{v\in\mathds{R}^3\Big|\ |v|\geq |v_{\n}(\upsigma,v_l^*,v_r)|\Big\}.
\end{equation}
Set $B$ is defined as $A_{\upsigma,v_r}(B_{\textup{pre}})\cap Q_{\mathtt{p}_l^*}^{\mathtt{K}\ell}$. This set $B$ can be verified as satisfying \eqref{eq:double_null_B_condition}.

\end{proof}

\subsection{Single Homogenization}

\begin{lemma}\label{lem:single_collsion}
Consider any constants $(\gamma>0,\delta>0,T>0)$, and any non-negative function $h(t,x,v)$ within the space $L^{\infty}\ldelta\big([0,T]\times \mathds{R}^3\times \mathds{R}^3\big)$. There exists a constant $0<\gamma(0)<\gamma$ such that the following statement is true. Consider any decorated tree $\T$, any set $\mathcal{L}_M$ of Maxwellian leaves. Assume a collision node $\n\in \mathcal{B}^+(\T)$ satisfies condition \ref{cond:SIN} and has a collision-parent node $\m$. Then for any $(t,x,v)\in [0,T]\times \mathds{R}^3\times \mathds{R}^3$
\begin{equation}\label{eq:single_collision}
\begin{split}
&\mathbf{C}_{h}\Big[\T,\mathcal{L}_M=M_{\gamma(0)}\Big](t,x,v)\leq 10^{|\mathcal{O}[\l,\n]|}\Big(C_{\gamma,\delta}\lVert h\rVert_{\timeldelta}\Big)\mathbf{C}_{h}\Big[\T,\mathcal{L}_M\cup\{\l\}=M_{\gamma(0)}\Big](t,x,v)
\end{split}
\end{equation}
\end{lemma}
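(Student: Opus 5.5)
We follow the scheme of Lemma \ref{lem:double_collision}. \emph{Step 1 (reduction).} By the recursive Definition \ref{def:C_expansion_replacement}, it suffices to prove the estimate for the sub-tree $\T_{\m}$ rooted at the collision-parent $\m$ of $\n$. Expanding $\mathbf{Q}^+$ at $\m$ in Carleman form, the sub-tree containing $\n$ is integrated over a plane $v+(v_1-v)^{\perp}$, while $|v_1-v|^{-1}$ does not involve that variable. So, exactly as in \eqref{eq:PL_integration}, it suffices to prove, for every $(t,x)$ and every plane $\mathtt{PL}$, that
\[
\int \mathbf{C}_{h}\big[\T_{\n}',\mathcal{L}_M=M_{\gamma(0)}\big](t,x,v)\,d\pi_{\mathtt{PL}}(v)
\]
is bounded by $10^{|\mathcal{O}[\l,\n]|}C_{\gamma,\delta}\lVert h\rVert_{\timeldelta}$ times the same integral with $\l$ made Maxwellian. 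Here $\T_{\n}'$ is the child sub-tree of $\m$ containing $\n$.

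\emph{Step 2 (expansion and change of variables).} Apply Lemma \ref{lem:full_to_prime}, which is legitimate by condition \ref{cond:SIN}, to absorb the overlaps in $\mathcal{O}[\l,\mathfrak{c},\n]$ into overlap functions $\mathbf{OV}(v_{\l},v_{\mathfrak{c}},v_{\n})$, and freeze all times. The factor attached to $\mathfrak{c}$ is now, since all leaves of $\T_{\mathfrak{c}}$ are Maxwellian, the explicit function $G(v_{\mathfrak{c}}):=\mathbf{C}_{M_{\gamma(0)}}[\T_{\mathfrak{c}}](s_{\mathfrak{c}},v_{\mathfrak{c}})$, which is space-independent. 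By Lemma \ref{lem:non_decreasing_form}, $G=c\,M_{\gamma(0)}\Lambda$ with $\Lambda$ radial and non-decreasing. Then use Lemma \ref{lem:geometric_change_of_variable} to rewrite $|v_{\l}-v_{\n}|^{-1}d\pi(v_{\mathfrak{c}})dv_{\l}d\pi_{\mathtt{PL}}(v_{\n})$ as $\pi\mathds{1}_{E_{\mathtt{PL}}}\overline{d\pi}_{\mathtt{S}_{v_{\l},v_{\mathfrak{c}}}\cap\mathtt{PL}}(v_{\n})\,dv_{\l}dv_{\mathfrak{c}}$. Parametrize the circle by $\uptheta$ as in \eqref{eq:parametrize_the_circle}, and decompose $v_{\l}\in\plq$ and $v_{\mathfrak{c}}\in Q_{\mathtt{p}_c}^{\ell}$ into cubes. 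On each cube pair, the positions $\x_{\mathcal{P}\l}(\uptheta,v_{\l},v_{\mathfrak{c}})$ move by at most $\delta\Delta$, by \eqref{eq:perturbation_position_double_collision}. Hence $h$ can be replaced by its local supremum $h_{\l}(v_{\l})=[h(s_{\mathcal{P}\l})]_{\delta\Delta}(\xp_{\l},v_{\l})$, which is independent of $v_{\mathfrak{c}}$. The position dependence through $\mathfrak{c}$ disappears, because $G$ does not depend on $x$.

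\emph{Step 3 (homogenizing $\l$ alone).} The remaining task is to bound
\[
\int_{(\plq\times Q_{\mathtt{p}_c}^{\ell})\cap E_{\mathtt{PL}}}\mathbf{OV}\,G(v_{\mathfrak{c}})\,h_{\l}(v_{\l})\,dv_{\l}dv_{\mathfrak{c}}
\]
by $C\lVert h\rVert$ times the analogous integral with $h_{\l}$ replaced by $M_{\gamma(0)}$ over $Q^{2\ell}$ cubes. The integrand is bounded by $\sup\mathbf{OV}\cdot\sup_{Q^{\ell}_{\mathtt{p}_c}}G\cdot\int_{\plq}h_{\l}$, and $\int_{\plq}h_{\l}\leq C\Delta^3\lVert h\rVert e^{-\gamma|\mathtt{p}_l|^2/2}$. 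For the lower bound we reuse the set $B=E_{\mathtt{PL}}\cap(Q_{u_l}^{\ell\Delta^{-1}}\times Q_{u_c}^{\ell\Delta^{-1}})$ from Step 5 of Lemma \ref{lem:double_collision}, which has $|B|\geq C(\ell/\Delta)^6$ by convexity of $E^c_{\mathtt{PL},v_{\mathfrak{c}}}$. On $B$, $\mathbf{OV}$ is comparable to its supremum up to $3^{-|\mathcal{O}|}$ by Lemma \ref{lem:overlap_regularity}. The main obstacle is that $G$ is not a pure Gaussian, so we also need $\inf_B G\gtrsim\sup_{Q_{\mathtt{p}_c}^{\ell}}G$. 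This follows from the factorized form $G=cM_{\gamma(0)}\Lambda$: $M_{\gamma(0)}$ varies by a bounded factor on $\ell\Delta^{-1}$-scale cubes once $\ell$ is small. For $\Lambda$, we would bias $B$ toward larger $|v_{\mathfrak{c}}|$, shifting $u_c$ outward by $O(\ell)$ as in Lemma \ref{lem:double_overlap}, so that monotonicity gives $\Lambda\geq\Lambda(\text{maximizer})$. Alternatively we would prove the doubling property $\Lambda(v+\eta)\leq3\Lambda(v)$ for small $\eta$, in the style of Lemma \ref{lem:overlap_regularity}. The leftover $|B|^{-1}\sim\Delta^{6}\ell^{-6}$ and $\Delta^3$ losses are absorbed by $e^{-(\gamma-\gamma(0))|\mathtt{p}_l|^2}$. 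The $\mathtt{p}_c$-dependence is harmless because $G$ appears on both sides, so $\gamma(0)<\gamma$ suffices. Summing over cube pairs and integrating in $\uptheta$ gives the claimed factor $10^{|\mathcal{O}[\l,\n]|}C_{\gamma,\delta}\lVert h\rVert_{\timeldelta}$.
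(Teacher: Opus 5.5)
\textbf{There is a genuine gap in Step 3.}

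Every estimate you import from Lemma \ref{lem:double_collision} carries losses polynomial in $\Delta=\max\{1,|\mathtt{p}_l|,|\mathtt{p}_c|\}$. Near tangency of $\mathtt{S}_{v_\l,v_{\mathfrak{c}}}$ with $\mathtt{PL}$, the circle point moves like $|\delta v_\n|\sim(\Delta|\delta v|)^{1/2}$. This has two consequences:
\begin{itemize}
\item On an $\ell$-cube pair the positions spread by about $\delta\Delta$, which is the source of your factor $\Delta^3$ in $\int_{\plq}h_\l$.
\item Keeping $\mathbf{OV}$ comparable forces $B$ to have side $\ell\Delta^{-1}$, so $|B|^{-1}\sim(\Delta/\ell)^6$.
\end{itemize}
In the double homogenization these losses are harmless because \emph{both} $v_l$ and $v_r$ carry $h$, so both cube centres come with a spare factor $e^{-(\gamma-\gamma(0))|\mathtt{p}|^2}$. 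Here only $v_\l$ does. The $\mathfrak{c}$-branch carries the same $G$ on both sides and has no Gaussian reserve.

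Your claim that the $\Delta$-losses are absorbed by $e^{-(\gamma-\gamma(0))|\mathtt{p}_l|^2}$ therefore fails when $|\mathtt{p}_c|\gg|\mathtt{p}_l|$. For $\mathtt{p}_l=0$ your bounds give a per-pair constant of order $\ell^{-3}\Delta^{9}\sim|\mathtt{p}_c|^{9}$. This regime is not negligible. $\mathtt{PL}$ is an arbitrary plane, and if it lies at distance $D$ from the origin, then $|v_\n|\le(|v_\l|^2+|v_{\mathfrak{c}}|^2)^{1/2}$ forces $|v_{\mathfrak{c}}|\gtrsim D$ wherever $v_\l$ is bounded. Shrinking the $v_\l$-cubes does not help, because $h$ may concentrate, so the loss $|Q|^{-1}$ in the $h$-variable cannot be avoided.

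The comparison of $G$ has the same defect. $B$ is a tiny box around the $\mathbf{OV}$-maximizer, not around the maximizer of $G$ on $Q^{\ell}_{\mathtt{p}_c}$, and $\sup_{Q^{\ell}_{\mathtt{p}_c}}M_{\gamma(0)}/\inf_B M_{\gamma(0)}$ can be as large as $e^{c\gamma(0)\ell|\mathtt{p}_c|}$. Biasing $B$ outward helps $\Lambda$ but makes $M_{\gamma(0)}$ worse.

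A smaller point: putting $\mathcal{O}[\mathfrak{c}]$ into $\mathbf{OV}$ produces a factor $3^{|\mathcal{O}[\mathfrak{c}]|}$ that the statement does not allow. Absorb those overlaps into the Maxwellian sub-tree instead; Lemma \ref{lem:non_decreasing_form} covers that case.

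\textbf{The missing idea} is to drop the circle parametrization and integrate the Maxwellian branch out exactly, keeping $v_\n\in\mathtt{PL}$ and $v_\l$ as the variables. For fixed $(v_\n,v_\l)$, the variable $v_{\mathfrak{c}}$ runs over the plane $v_\n+(v_\l-v_\n)^{\perp}$. Let $\mathcal{T}_{\mathfrak{c},*}$ be the child sub-tree of $\n$ containing $\mathfrak{c}$, with the overlaps of $\mathcal{O}[\mathfrak{c}]$ included. By Lemma \ref{lem:non_decreasing_form} and radial symmetry, the integral of $\mathbf{C}_{M_{\gamma(0)}}[\mathcal{T}_{\mathfrak{c},*}](s_\n,\cdot)$ over that plane equals $C\,c(s_\n)\,\mathcal{H}(\mathtt{d}(v_\n,v_\l))$. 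Here $\mathcal{H}(R)=\int_R^{\infty}e^{-\gamma(0)u^2}\Lambda(u)u\,du$ and $\mathtt{d}$ is the distance from $0$ to the plane.

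Monotonicity of $\Lambda$ gives $\mathcal{H}(R_*)\ge e^{-\gamma(0)(R_*^2-R^2)_+}\mathcal{H}(R)$. Since $\mathtt{d}^2(v_\n,v_\l+\eta)\le|v_\n|^2\le\mathtt{d}^2(v_\n,v_\l)+|v_\l|^2$, shifting $v_\l$ costs at most a factor $e^{-\gamma(0)|v_\l|^2}$. That loss depends on $v_\l$ alone and is absorbed by taking $\gamma(0)\ll\gamma$.

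One then cuts $\mathtt{PL}$ into $2$-dimensional $\ell$-squares and $v_\l$-space into $\ell$-cubes. The positions $\x_{\mathcal{P}\l}$ are Lipschitz in $(v_\n,v_\l)$ with constant at most $T$, so no $\Delta$ appears anywhere. The singular weight $|v_\l-v_\n|^{-1}$ is handled by integrating $v_\n$ over the square. This gives a lower regularity bound for $\widehat{\mathcal{H}}(v_\l)$ whose loss depends only on $|v_\l|$, and that bound is what should replace your set $B$.
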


\begin{figure}[h]
    \centering
    \includegraphics[width=0.85\linewidth]{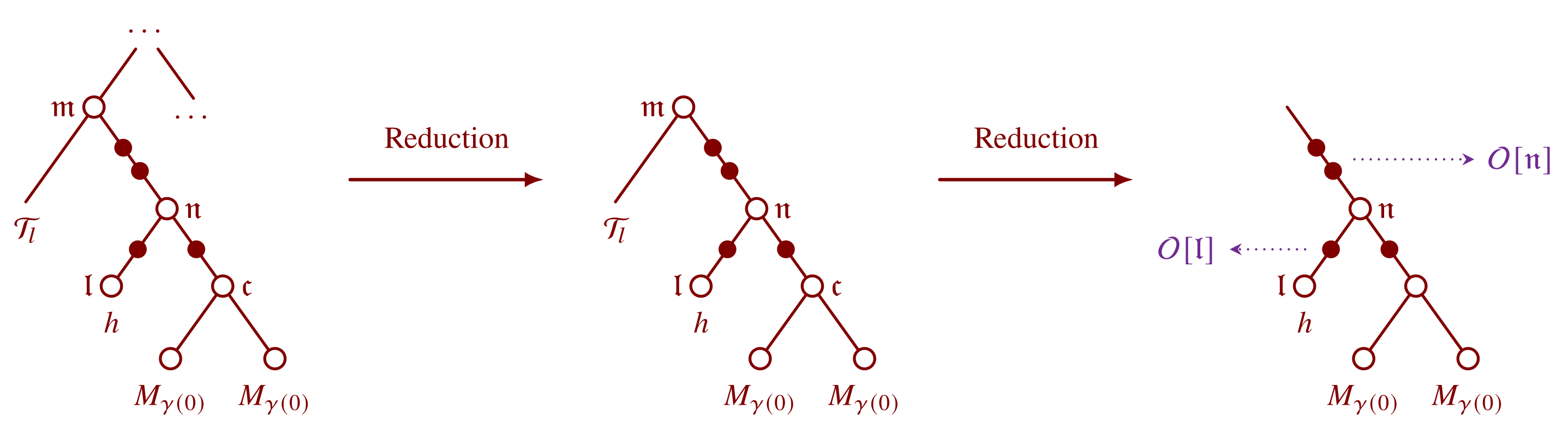}
    \caption{The estimate of the entire decorated tree can be reduced to the estimate of a sub-tree relevant to the node $\n$.}
    \label{fig:single_collision}
\end{figure}

\begin{proof}

\textit{Step 1: Integrate the Gaussian branch.}

In the proof of Lemma \ref{lem:double_collision} on the double homogenization, using the inductive definition of $\mathbf{C}_{h}[\mathcal{T},\mathcal{L}_M=M_{\gamma(0)}]$ in Definition \ref{def:C_expansion_replacement}, we have shown the homogenization of $\mathbf{C}_{h}[\mathcal{T},\mathcal{L}_M=M_{\gamma(0)}]$ can be reduced to the homogenization of $\mathbf{C}_{h}[\mathcal{T}_*,\mathcal{L}_M=M_{\gamma(0)}]$, where $\mathcal{T}_*$ is a sub-tree of $\mathcal{T}$. 

Precisely, denote $\mathfrak{m}$ as the collision-parent of $\mathfrak{n}$, and the left sub-tree of $\mathfrak{m}$ as $\mathcal{T}_l$ and the right sub-tree of $\mathfrak{m}$ as $\mathcal{T}_r$. We denote the sub-tree among $\{\mathcal{T}_l,\mathcal{T}_r\}$ containing $\mathfrak{n}$ as $\mathcal{T}_*$. To prove the estimate \eqref{eq:single_collision} of single homogenization, it suffices to prove for any $(t,x)\in [0,T]\times \mathds{R}^3$ and any plane $\mathtt{PL}\subset \mathds{R}^3$
\begin{equation}\label{eq:PL_integration_single_collision}
\int \mathbf{C}_{h}\Big[\T_*,\blue{\mathcal{L}_{M}=M_{\gamma(0)}}\Big](t,x,v)d\pi_{\mathtt{PL}}(v)\leq \int 10^{|\mathcal{O}[\mathfrak{l},\mathfrak{n}]|}\Big(C_{\gamma,\delta}\lVert h\rVert_{\timeldelta}\Big)\mathbf{C}_{h}\Big[\T_*,\red{\mathcal{L}_{M}\cup\{\mathfrak{l}\}=M_{\gamma(0)}}\Big](t,x,v)d\pi_{\mathtt{PL}}(v)
\end{equation}
Using Lemma \ref{lem:full_to_prime}, we expand the LHS of \eqref{eq:PL_integration_single_collision} as the following, with $\x_{\mathcal{P}\l}$ as the position function defined in Remark \ref{rmk:pseudo-trajectory_and_xfunction} and $\mathcal{T}_{\mathfrak{c},*}$ as the sub-tree of $\n$ containing $\mathfrak{c}$
\begin{equation}\label{eq:first_expansion_single_collision}
\begin{split}
&\int \Bigg(\prod_{\o\in\mathcal{O}[\l]}\mathbf{O}_{M_{\gamma(0)}}[\mathscr{T}_{\o}](s_{\o},v_{\mathfrak{l}})\prod_{\o\in\mathcal{O}[\n]}\mathbf{O}_{M_{\gamma(0)}}[\mathscr{T}_{\o}](s_{\o},v_{\n})\Bigg)\\
&\hspace{5cm}h(s_{\mathcal{P}\l},\x_{\mathcal{P}\l},v_{\mathfrak{l}})\underbrace{\Bigg(\int \mathbf{C}_{M_{\gamma(0)}}[\mathcal{T}_{\mathfrak{c},*}](s_{\n},v_{\mathfrak{c}})d\pi_{v_{\n}+(v_{\l}-v_{\n})^{\perp}}(v_{\mathfrak{c}})\Bigg)}_{\textup{\blue{we need to analyze its regularity}}}|v_{\mathfrak{l}}-v_{\n}|^{-1}dv_{\mathfrak{l}}d\pi_{\mathtt{PL}}(v_\n)
\end{split}
\end{equation}
Since the sub-tree $\mathcal{T}_{\mathfrak{c},*}$ has only Maxwellian leaves, its Boltzmann integral also only involves Maxwellians as in \eqref{eq:first_expansion_single_collision}. This allows us to integrate over $v_{\mathfrak{c}}$ and analyze the regularity of this integral with respect to $v_\n$ and $v_\l$.

To this end, using Lemma \ref{lem:non_decreasing_form}: there exist function $c(t)$, non-negative radial and non-decreasing-in-$|v|$ function $\Lambda(v)$ such that
\begin{equation}
\mathbf{C}_{M_{\gamma(0)}}[\mathcal{T}_{\mathfrak{c},*}](s_{\n},v)=c(s_{\n})e^{-\gamma(0)|v|^2}\Lambda(v)
\end{equation}
Consequently
\begin{equation}\label{eq:reduction_of_TC_integral}
\int \mathbf{C}_{M_{\gamma(0)}}[\mathcal{T}_{\mathfrak{c},*}](s_{\n},v)d\pi_{v_{\n}+(v_{\l}-v_{\n})^{\perp}}(v)=c(s_{\n})\int e^{-\gamma(0)|v|^2}\Lambda(v) d\pi_{v_{\n}+(v_{\l}-v_{\n})^{\perp}}(v)
\end{equation}
Given any $v_{\n}$ and $v_{\l}$, define $\mathtt{d}(v_{\n},v_{\l})$ as the distance from $0$ to the orthogonal plane $v_{\n}+(v_{\l}-v_{\n})^{\perp}$
\begin{equation}\label{eq:orthogonal_distance}
\mathtt{d}(v_{\n},v_{\l}):=\textup{dist}\Big(0,v_{\n}+(v_{\l}-v_{\n})^{\perp}\Big).
\end{equation}
We use the radial symmetry of $\Lambda(v)$ in the RHS of \eqref{eq:reduction_of_TC_integral} to simplify the expression of the RHS integral, together with some changes of variables 
\begin{equation}
\begin{split}
&\int \mathbf{C}_{M_{\gamma(0)}}[\mathcal{T}_{\mathfrak{c},*}](s_{\n},v)d\pi_{v_{\n}+(v_{\l}-v_{\n})^{\perp}}(v)=c(s_{\n})\exp\Big(-\gamma(0)|\mathtt{d}(v_{\n},v_{\l})|^2\Big)\int_0^{+\infty}e^{-\gamma(0)|r|^2}\Lambda\big(\sqrt{\mathtt{d}^2(v_\n,v_\l)+r^2}\big)2\pi rdr \\
&\textup{\blue{\small (change of variable from $r$ to $u=r^2$)}}\quad =C\ c(s_{\n})\exp\Big(-\gamma(0)|\mathtt{d}(v_{\n},v_{\l})|^2\Big)\int_0^{+\infty}e^{-\gamma(0)u}\Lambda\big(\sqrt{\mathtt{d}^2(v_\n,v_\l)+u}\big) du\\
&\textup{\blue{\small (change of variable $u\mapsto \sqrt{\mathtt{d}^2(v_{\n},v_{\l})+u}$ )}}=C\ c(s_{\n})\int_{\mathtt{d}(v_{\n},v_{\l})}^{+\infty}e^{-\gamma(0)u^2}\Lambda\big(u\big) udu.
\end{split}
\end{equation}
\begin{figure}[h]
    \centering
    \includegraphics[width=0.6\linewidth]{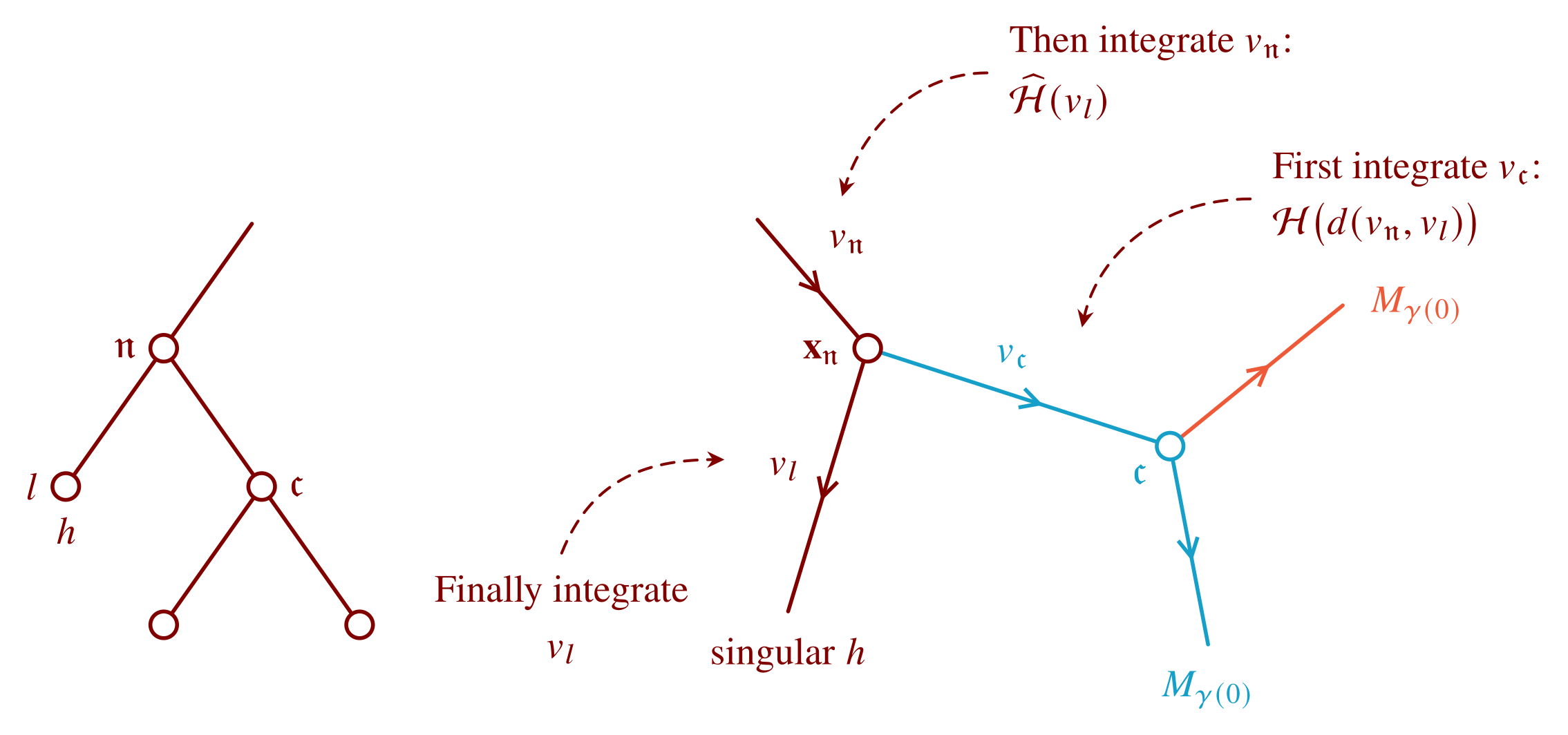}
    \caption{Caption}
    \label{fig:single_col_homo_integration}
\end{figure}
Denote $\mathcal{H}(R)$ as
\begin{equation}
\mathcal{H}(R):=\int_{R}^{+\infty}e^{-\gamma(0)u^2}\Lambda\big(u\big) udu.
\end{equation}
Since $\Lambda$ is non-decreasing as stated in Lemma \ref{lem:non_decreasing_form}, we can 
\begin{equation}\label{eq:H_regularity}
\mathcal{H}(R_*)\geq \exp\Big(-\gamma(0)\big(R_*^2-R^2\big)_+\Big)\mathcal{H}(R).
\end{equation}
To prove Lemma \ref{lem:single_collsion}, it is sufficient to prove
\begin{equation}\label{eq:single_collision_reduced_fullspace}
\begin{split}
&\int \mathbf{OV}(v_{\n},v_{\l})h(s_{\mathcal{P}l},\x_{\mathcal{P}l},v_{\mathfrak{l}})\mathcal{H}\Big(\mathtt{d}(v_{\n},v_{\l})\Big)|v_{\mathfrak{l}}-v_{\n}|^{-1}dv_\l d\pi_{\mathtt{PL}}(v_\n)\\
&\leq 10^{|\mathcal{O}[\l,\n]|}\Big(C_{\gamma,\delta}\lVert h\rVert_{\timeldelta}\Big)\int \mathbf{OV}(v_{\n},v_{\l})M_{\gamma(0)}(v_{\l})\mathcal{H}\Big(\mathtt{d}(v_{\n},v_{\l})\Big)|v_{\mathfrak{l}}-v_{\n}|^{-1}dv_\l d\pi_{\mathtt{PL}}(v_\n),
\end{split}
\end{equation}
with $\mathbf{OV}(v_{\n},v_{\l})$ defined as the abbreviation of the multiplication of overlap functions
\begin{equation}
\mathbf{OV}(v_{\n},v_{\l}):=\prod_{\o\in\mathcal{O}[\l]}\mathbf{O}_{M_{\gamma(0)}}[\mathscr{T}_{\o}](s_{\o},v_{\mathfrak{l}})\prod_{\o\in\mathcal{O}[\n]}\mathbf{O}_{M_{\gamma(0)}}[\mathscr{T}_{\o}](s_{\o},v_{\n}).
\end{equation}
According to Lemma \ref{lem:overlap_regularity}, for any $\boldsymbol{\eta}$ with $|\boldsymbol{\eta}|$ small enough determined by $(\gamma,\delta)$, we have
\begin{equation}\label{eq:overlap_bundle_regularity_single_collision}
\mathbf{OV}(v_{\n}+\boldsymbol{\eta},v_{\l})\leq 3^{|\mathcal{O}[\n]|}\mathbf{OV}(v_{\n},v_{\l}),\quad \mathbf{OV}(v_{\n},v_{\l}+\boldsymbol{\eta})\leq 3^{|\mathcal{O}[\l]|}\mathbf{OV}(v_{\n},v_{\l}).
\end{equation}

\textit{Step 2: Decompose the integration domain.}

Given plane $\mathtt{PL}$, denote $\mathtt{O}$ as the point on $\mathtt{PL}$ being the closest to $0$. Choose to orthogonal unit vectors $\mathtt{e}_1$ and $\mathtt{e}_2$ spanning $\mathtt{PL}$. Given any $\ell>0$, define $\ell\mathtt{Z^2}$ as the scaled lattice on $\mathtt{PL}$
\begin{equation}
\ell\mathtt{Z^2}:=\Big\{\mathtt{O}+k_1\ell \mathtt{e}_1+k_2\ell\mathtt{e}_2\Big|\ k_1,k_2\in\mathds{Z}\Big\}.
\end{equation}
Given any point $\mathtt{p}\in \mathtt{PL}$, define $\mathtt{Q}_{\mathtt{p}}^{\ell}$ as
\begin{equation}
\mathtt{Q}_{\mathtt{p}}^{\ell}:=\Big\{\mathtt{p}+a_1\ell \mathtt{e}_1+a_2\ell\mathtt{e}_2\Big|\ a_1,a_2\in\mathds{R},\quad  |a_1|\leq \frac{1}{2},\ |a_2|\leq\frac{1}{2}\Big\}.
\end{equation}
 
As we integrate over $v_\l$ and $v_\n$, the other functions in \eqref{eq:single_collision_reduced_fullspace} will be perturbed accordingly, therefore we decompose the domain $\mathtt{PL}\times \mathds{R}^3$ into the union of small pieces
\begin{equation}
\mathtt{PL}=\bigcup_{\mathtt{p}_{\n}\in \ell \mathtt{Z^2}}\mathtt{Q}_{\mathtt{p}_\n}^{\ell},\quad \mathds{R}^3=\bigcup_{\mathtt{p}_{\l}\in \ell\mathds{Z}^3}\plq.
\end{equation}
Therefore to prove Lemma \ref{lem:single_collsion}, it is sufficient to prove for any $\mathtt{p}_\n$ and $\mathtt{p}_\l$
\begin{equation}
\begin{split}
&\int_{\mathtt{Q}_{\mathtt{p}_\n}^{\ell}\times \plq} \mathbf{OV}(v_{\n},v_{\l})h(s_{\mathcal{P}\l},\x_{\mathcal{P}\l},v_{\mathfrak{l}})\mathcal{H}\Big(\mathtt{d}(v_{\n},v_{\l})\Big)|v_{\mathfrak{l}}-v_{\n}|^{-1}dv_{\l}d\pi_{\mathtt{PL}}(v_\n)\\
&\leq 10^{|\mathcal{O}[\l,\n]|}\Big(C_{\gamma,\delta}\lVert h\rVert_{\timeldelta}\Big)\int_{\mathtt{Q_{\mathtt{p}_\n}^\ell}\times Q_{\mathtt{p}_{\l}}^{\ell}} \mathbf{OV}(v_{\n},v_{\l})M_{\gamma(0)}(v_{\l})\mathcal{H}\Big(\mathtt{d}(v_{\n},v_{\l})\Big)|v_{\mathfrak{l}}-v_{\n}|^{-1}dv_{\l}d\pi_{\mathtt{PL}}(v_\n)
\end{split}
\end{equation}
Given any $\mathtt{Q_p^\ell}$, define $\widehat{\mathcal{H}}(v_{\l})$ as
\begin{equation}\label{eq:widehat_H}
\widehat{\mathcal{H}}(v_{\l}):=\int_{\mathtt{Q_p^\ell}}\mathbf{OV}(v_\n,v_\l)\mathcal{H}\Big(\mathtt{d}(v_\n,v_\l)\Big)|v_\l-v_\n|^{-1}d\pi_{\mathtt{PL}}(v_\n)
\end{equation}
Therefore to prove Lemma \ref{lem:single_collsion}, it is sufficient to prove for any $\mathtt{p}_\l$
\begin{equation}
\int_{Q_{\mathtt{p}_\l}^\ell}\widehat{\mathcal{H}}(v_\l)[h(s_{\mathcal{P}\l})]_{\delta}(\xp,v_\l)dv_\l\leq 10^{|\mathcal{O}[\l,\n]|}\Big(C_{\gamma,\delta}\lVert h\rVert_{\timeldelta}\Big)\int_{Q_{\mathtt{p}_\l}^\ell}\widehat{\mathcal{H}}(v_\l)M_{\gamma(0)}dv_\l,
\end{equation}
which can be reduced to the following regularity estimate.

\textit{Step 3: Estimate of regularity.}

Now we prove for small shift $\boldsymbol{\eta}\in\mathds{R}^3$ with $|\boldsymbol{\eta}|\leq \ell$, we have
\begin{equation}\label{eq:regularity_H_hat}
\widehat{\mathcal{H}}(v_{\l}+\boldsymbol{\eta})\geq C_{\gamma,\delta}3^{-|\mathcal{O}[\l,\n]|}\exp\Big(-4\gamma(0)\big(|v_\l|+1\big)^2\Big)\widehat{\mathcal{H}}(v_\l).
\end{equation}
Expand the function $\widehat{\mathcal{H}}(v_{\l}+\boldsymbol{\eta})$ according to the definition of $\widehat{\mathcal{H}}$ in \eqref{eq:widehat_H}
\begin{equation}
\begin{split}
\widehat{\mathcal{H}}(v_{\l}+\boldsymbol{\eta})&=\int_{\mathtt{Q_p^\ell}}\mathbf{OV}(v_\n,v_\l+\boldsymbol{\eta})\mathcal{H}\Big(\mathtt{d}(v_\n,v_\l+\boldsymbol{\eta})\Big)|v_\l+\boldsymbol{\eta}-v_\n|^{-1}d\pi_{\mathtt{PL}}(v_\n)
\end{split}
\end{equation}
Notice the regularity of $\mathbf{OV}$ in \eqref{eq:overlap_bundle_regularity_single_collision} implies
\begin{equation}
\inf_{v_\n\in \mathtt{Q_p}^{\ell}}\mathbf{OV}(v_{\n},v_{\l}+\boldsymbol{\eta})\geq 3^{-|\mathcal{O}[\l,\n]|}\sup_{v_\n\in \mathtt{Q_p}^{\ell}}\mathbf{OV}(v_{\n},v_{\l}).
\end{equation}
Therefore to prove \eqref{eq:regularity_H_hat}, it is enough to prove
\begin{equation}\label{eq:regularity_of_Hhat_reduced}
\int_{\mathtt{Q}_{\mathtt{p}}^{\ell}}\mathcal{H}\Big(\mathtt{d}(v_\n,v_\l+\boldsymbol{\eta})\Big)|v_\l+\boldsymbol{\eta}-v_\n|^{-1}d\pi_{\mathtt{PL}}(v_\n)\geq C_{\gamma,\delta}\exp\Big(-4\gamma(0)(|v_\l|+1)^2\Big)\int_{\mathtt{Q}_{\mathtt{p}}^{\ell}}\mathcal{H}\Big(\mathtt{d}(v_\n,v_\l)\Big)|v_\l-v_\n|^{-1}d\pi_{\mathtt{PL}}(v_\n)
\end{equation}
The definition of $\mathtt{d}(v_{\n},v_{\l})$ in \eqref{eq:orthogonal_distance} implies
\begin{equation}\label{eq:distance_geometric_inequality}
\mathtt{d}^2(v_\n,v_\l+\boldsymbol{\eta})\leq |v_{\n}|^2+|v_\l+\boldsymbol{\eta}|^2\leq \mathtt{d}^2(v_{\n},v_{\l})+|v_\l|^2+|v_\l+\boldsymbol{\eta}|^2\leq \mathtt{d}^2(v_{\n},v_{\l})+2(|v_\l|+1)^2.
\end{equation}
Consequently
\begin{equation}
\mathcal{H}\big(\mathtt{d}(v_\n,v_\l+\boldsymbol{\eta})\big)\geq \exp\Big(-2\gamma(0)(|v_\l|+1)^2\Big)\mathcal{H}\Big(\mathtt{d}(v_\n,v_\l)\Big).
\end{equation}
Thus
\begin{equation}\label{eq:regularity_of_Hhat_reduced_reduce}
\int_{\mathtt{Q}_{\mathtt{p}}^{\ell}}\mathcal{H}\Big(\mathtt{d}(v_\n,v_\l+\boldsymbol{\eta})\Big)|v_\l+\boldsymbol{\eta}-v_\n|^{-1}d\pi_{\mathtt{PL}}(v_\n)\geq C_{\gamma,\delta}\exp\Big(-2\gamma(0)(|v_\l|+1)^2\Big)\int_{\mathtt{Q}_{\mathtt{p}}^{\ell}}\mathcal{H}\Big(\mathtt{d}(v_\n,v_\l)\Big)|v_\l+\boldsymbol{\eta}-v_\n|^{-1}d\pi_{\mathtt{PL}}(v_\n)
\end{equation}
If $\textup{dist}(v_\l,\mathtt{Q}_{\mathtt{p}}^{\ell})\geq 10\ell$, then (recall $\boldsymbol{\eta}$ is chosen small enough determined by $\gamma$ and $\delta$)
\begin{equation}
|v_{\l}+\boldsymbol{\eta}-v_{\n}|\geq C_{\gamma,\delta}|v_{\l}-v_{\n}|,\quad \textup{for any $v_\n\in Q_{\mathtt{p}_\n}^{\ell}$},
\end{equation}
and we can conclude the proof of \eqref{eq:regularity_H_hat}.

If $\textup{dist}(v_\l,\mathtt{Q}_{\mathtt{p}}^{\ell})<10\ell$, then as the same way of proving \eqref{eq:distance_geometric_inequality}, we can prove
\begin{equation}
\inf_{v_\n\in \mathtt{Q}_{\mathtt{p}}^{\ell}}\mathcal{H}\big(\mathtt{d}(v_\n,v_\l)\big)\geq \exp\Big(-2\gamma(0)\big(|v_\l|+1\big)^2\Big)\sup_{v_\n\in \mathtt{Q}_{\mathtt{p}}^{\ell}}\mathcal{H}\big(\mathtt{d}(v_\n,v_\l)\big).
\end{equation}
Then the singularity of $|v_{\l}+\boldsymbol{\eta}-v_\n|^{-1}$ and $|v_\l-v_\n|^{-1}$ can be resolved using the integration of $v_{\n}$ on the 2-dimensional cube $\mathtt{Q}_{\mathtt{p}}^{\ell}$
\begin{equation}
\begin{split}
\int_{\mathtt{Q}_{\mathtt{p}}^{\ell}}\mathcal{H}\Big(\mathtt{d}(v_\n,v_\l)\Big)|v_\l+\boldsymbol{\eta}-v_\n|^{-1}d\pi_{\mathtt{PL}}(v_\n)&\geq \exp\Big(-2\gamma(0)\big(|v_\l|+1\big)^2\Big)\sup_{v_\n\in \mathtt{Q}_{\mathtt{p}}^{\ell}}\mathcal{H}\big(\mathtt{d}(v_{\n},v_\l)\big)\int_{\mathtt{Q}_{\mathtt{p}}^{\ell}}|v_{\l}+\boldsymbol{\eta}-v_{\n}|^{-1}dv_{\n}\\
&\geq C_{\gamma,\delta}\exp\Big(-2\gamma(0)\big(|v_\l|+1\big)^2\Big)\sup_{v_\n\in \mathtt{Q}_{\mathtt{p}}^{\ell}}\mathcal{H}\big(\mathtt{d}(v_{\n},v_\l)\big)\int_{\mathtt{Q}_{\mathtt{p}}^{\ell}}|v_{\l}-v_{\n}|^{-1}dv_{\n}.
\end{split}
\end{equation}
This concludes the proof of \eqref{eq:regularity_H_hat}.

\end{proof}

Now we list the rest of lemmas needed for homogenization operations. These can be proved by directly modifying the proof of Lemma \ref{lem:double_collision}-\ref{lem:single_collsion}, therefore here we omit the proof.

\begin{lemma}\label{lem:single_overlap}
Consider any constants $(\gamma>0,\delta>0,T>0)$, and any non-negative function $h(t,x,v)$ within the space $L^{\infty}\ldelta\big([0,T]\times \mathds{R}^3\times \mathds{R}^3\big)$. There exists a constant $\gamma(0)>0$ such that the following statement is true. For any decorated tree $\T$, any set $\mathcal{L}_M\subset \mathcal{L}(\T)$ of Maxwellian leaves, assume a collision node $\n\in \mathcal{B}^+(\T)$ satisfies condition \ref{cond:SIN} and has an overlap-parent node $\o_*\in \mathcal{O}(\T)$. Then for any $(t,x,v)\in [0,T]\times \mathds{R}^3\times \mathds{R}^3$
\begin{equation}\label{eq:single_high_overlap}
\begin{split}
&\mathbf{C}_{h}\Big[\T,\mathcal{L}_M=M_{\gamma(0)}\Big](t,x,v)\leq 10^{|\mathcal{O}[\l,\n]|}\Big(C_{\gamma,\delta}\lVert h\rVert_{\timeldelta}\Big)\mathbf{C}_{h}\Big[\T,\mathcal{L}_M\cup\{\l\}=M_{\gamma(0)}\Big](t,x,v)
\end{split}
\end{equation}
\end{lemma}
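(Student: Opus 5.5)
The plan is to combine the reduction of Lemma \ref{lem:double_overlap} with the one-leaf regularity argument of Lemma \ref{lem:single_collsion}.

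\emph{Step 1: reduction to the branched sub-tree.} Since $\n$ has the overlap-parent $\o_*$, the node $\n$ lies in the branched sub-tree $\mathscr{T}_{\o_*}=:\T_*$. As in Step 1 of the proof of Lemma \ref{lem:double_overlap}, the recursive Definition \ref{def:C_expansion_replacement} and the Carleman representation show that $\T_*$ enters $\mathbf{C}_h[\T,\mathcal{L}_M=M_{\gamma(0)}]$ only through the factor $\pi\int|\mathbf{c}-v|\mathbf{C}_h[\T_*,\cdot](s_{\o_*},x',v)\,dv$. Here $\mathbf{c}=v_{\o_*[\textup{old}]}$ and $x'$ is the frozen position. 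All other factors are unchanged by the replacement and are non-negative. It therefore suffices to prove, for every $\mathbf{c}\in\mathds{R}^3$ and every $(t,x)$, the analogue of \eqref{eq:R3c_integration} with one homogenized leaf:
\begin{equation*}
\int|\mathbf{c}-v|\mathbf{C}_h[\T_*,\mathcal{L}_M=M_{\gamma(0)}](t,x,v)dv\leq 10^{|\mathcal{O}[\l,\n]|}C_{\gamma,\delta}\lVert h\rVert_{\timeldelta}\int|\mathbf{c}-v|\mathbf{C}_h[\T_*,\mathcal{L}_M\cup\{\l\}=M_{\gamma(0)}](t,x,v)dv.
\end{equation*}
Because $\n$ has no collision-parent inside $\T_*$, $v_\n$ is the free root variable of $\T_*$ (modulo the overlaps in $\mathcal{O}[\n]$, which Lemma \ref{lem:full_to_prime} strips off). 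Expanding with Lemma \ref{lem:full_to_prime} and freezing all times, the LHS becomes
\begin{equation*}
\int|v_\n-\mathbf{c}|\,\mathbf{OV}(v_\n,v_\l)\,h(s_{\mathcal{P}\l},\x_{\mathcal{P}\l},v_\l)\,c(s_\n)\mathcal{H}\big(\mathtt{d}(v_\n,v_\l)\big)|v_\l-v_\n|^{-1}dv_\l dv_\n .
\end{equation*}
To obtain this, integrate out the Maxwellian branch $\mathfrak{c}$ over the orthogonal plane exactly as in \eqref{eq:reduction_of_TC_integral}, using Lemma \ref{lem:non_decreasing_form}.

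\emph{Step 2: cube decomposition and local suprema.} Integrate over $v_\n\in\mathds{R}^3$ freely, not over a plane. Then define
\begin{equation*}
\widehat{\mathcal{H}}_{\mathbf{c}}(v_\l):=\int|v_\n-\mathbf{c}|\,\mathbf{OV}(v_\n,v_\l)\,\mathcal{H}\big(\mathtt{d}(v_\n,v_\l)\big)|v_\l-v_\n|^{-1}\mathds{1}_{v_\n\in Q^{\ell}_{\mathtt{p}_\n}}dv_\n
\end{equation*}
for each velocity cube $Q^{\ell}_{\mathtt{p}_\n}$. Next decompose $v_\l$ into cubes $Q^{\ell}_{\mathtt{p}_\l}$. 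The position $\x_{\mathcal{P}\l}=x-(t-s_\n)v_\n-(s_\n-s_{\mathcal{P}\l})v_\l$ moves by at most $2T\ell\leq\delta$ on each cube pair, so $h$ may be replaced by the local supremum $[h(s_{\mathcal{P}\l})]_\delta(\xp,\cdot)$. Its $L^1_\gamma$ mass on a cube is at most $\lVert h\rVert_{\timeldelta}$. The claim then reduces, as in \eqref{eq:absorb_by_gaussian_weight}, to a lower regularity bound of the form \eqref{eq:regularity_H_hat}:
\begin{equation*}
\widehat{\mathcal{H}}_{\mathbf{c}}(v_\l+\boldsymbol{\eta})\geq C_{\gamma,\delta}3^{-|\mathcal{O}[\l,\n]|}e^{-4\gamma(0)(|v_\l|+1)^2}\widehat{\mathcal{H}}_{\mathbf{c}}(v_\l),\qquad |\boldsymbol{\eta}|\leq\ell .
\end{equation*}
The Gaussian loss is absorbed by choosing $\gamma(0)<\gamma/8$, say. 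Summing over all cube pairs then closes the argument.

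\emph{Step 3: the regularity estimate.} The factors $\mathbf{OV}$ and $\mathcal{H}\circ\mathtt{d}$ are handled exactly as in Step 3 of Lemma \ref{lem:single_collsion}, via Lemma \ref{lem:overlap_regularity}, \eqref{eq:H_regularity} and \eqref{eq:distance_geometric_inequality}. The factor $|v_\n-\mathbf{c}|$ does not depend on $v_\l$, so it passes through unchanged. The singular kernel $|v_\l-v_\n|^{-1}$ is now integrated against $dv_\n$ on a three-dimensional cube, which is easier than the two-dimensional cube in Lemma \ref{lem:single_collsion}. There are two cases:
\begin{itemize}
\item If $\dist(v_\l,Q^{\ell}_{\mathtt{p}_\n})\geq10\ell$, the kernels at $v_\l$ and $v_\l+\boldsymbol{\eta}$ are comparable.
\item Otherwise, first take the infimum of the regular factors over the cube, then compare $\int|v_\l+\boldsymbol{\eta}-v_\n|^{-1}|v_\n-\mathbf{c}|dv_\n$ with the same integral with $\boldsymbol{\eta}=0$.
\end{itemize}

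The main obstacle is this last comparison when $\mathbf{c}$ lies in or near the cube, because the weight $|v_\n-\mathbf{c}|$ vanishes at $\mathbf{c}$. I expect to handle it by bounding both integrals above and below by $\ell^{2}\sup_{Q}|v_\n-\mathbf{c}|$ up to constants. This works because the set where $|v_\n-\mathbf{c}|\leq\ell/10$ carries only a fixed fraction of the cube's measure.
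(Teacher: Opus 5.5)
Your proposal is correct and is essentially the modification the paper has in mind; the paper omits this proof, saying it follows by adapting Lemmas \ref{lem:double_collision}--\ref{lem:single_collsion}. You reduce to the weighted integral $\int|\mathbf{c}-v|\,\mathbf{C}_h[\mathscr{T}_{\o_*},\cdot]\,dv$ as in Lemma \ref{lem:double_overlap}, then integrate out the Maxwellian branch and run the $\widehat{\mathcal{H}}$-regularity argument of Lemma \ref{lem:single_collsion} with $v_\n$ now free in $\mathds{R}^3$, and your two-sided bound of order $\ell^2\sup_Q|v_\n-\mathbf{c}|$ correctly handles the vanishing weight near $\mathbf{c}$.
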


\begin{lemma}\label{lem:single_null}
Consider any constants $(\gamma>0,\delta>0,T>0)$, and any non-negative function $h(t,x,v)$ within the space $L^{\infty}\ldelta\big([0,T]\times \mathds{R}^3\times \mathds{R}^3\big)$. There exist constants $0<\gamma_*<\gamma(0)<\gamma$ and $\delta_*>0$ such that the following statement is true. For any decorated tree $\T$, any set $\mathcal{L}_M\subset \mathcal{L}(\T)$ of Maxwellian leaves, assume a collision node $\n\in \mathcal{B}^+(\T)$ satisfies condition \ref{cond:SIN} and has no collision-parent node or overlap-parent node. Then we have
\begin{equation}\label{eq:single_high_norm}
\begin{split}
&\Big\lVert \mathbf{C}_{h}\Big[\T,\mathcal{L}_M=M_{\gamma(0)}\Big](t,\cdot,\cdot)\Big\rVert_{\ldeltastar}\leq 10^{|\mathcal{O}[\l,\n]|}\Big(C_{\gamma}\lVert h\rVert_{\timeldelta}\Big)\Big\lVert \mathbf{C}_{h}[\T,\mathcal{L}_M\cup\{\l\}=M_{\gamma(0)}](t,\cdot,\cdot)\Big\rVert_{\ldeltastar}.
\end{split}
\end{equation}
\end{lemma}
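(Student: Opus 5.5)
The plan is to combine the single-homogenization mechanism of Lemma \ref{lem:single_collsion} with the norm-level treatment of a root node from Lemma \ref{lem:double_null}. Since $\n$ has neither a collision-parent nor an overlap-parent, it is the root of the primary sub-tree $\tprime$. Its Carleman velocity $v_\n$ is therefore the free variable carrying the weight $e^{\gamma_*|v_\n|^2}$ in the $\ldeltastar$-norm.

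\textbf{Step 1: expansion and spatial supremum.} Expand with Lemma \ref{lem:full_to_prime}, freezing all time variables, and write $\mathbf{OV}(v_\n,v_\l)$ for the product of overlap functions over $\mathcal{O}[\l,\n]$. Integrate out the Maxwellian sub-tree $\mathcal{T}_{\mathfrak{c},*}$ over the plane $v_\n+(v_\l-v_\n)^\perp$ exactly as in Step 1 of Lemma \ref{lem:single_collsion}, using Lemma \ref{lem:non_decreasing_form}. This produces the factor $c(s_\n)\mathcal{H}(\mathtt{d}(v_\n,v_\l))$. Shifting $x$ by at most $\delta_*$ only moves $\x_{\mathcal{P}\l}$, so as in Lemma \ref{lem:double_null}:
\begin{equation*}
\big\lVert[\mathbf{C}_h^{\T}(t)]_{\delta_*}(x)\big\rVert_{L^1_{\gamma_*}}\leq \int e^{\gamma_*|v_\n|^2}\mathbf{OV}(v_\n,v_\l)\,[h(s_{\mathcal{P}\l})]_{2\delta_*}(\x_{\mathcal{P}\l},v_\l)\,\mathcal{H}\big(\mathtt{d}(v_\n,v_\l)\big)|v_\l-v_\n|^{-1}dv_\l dv_\n .
\end{equation*}
The corresponding expression with $h$ replaced by $M_{\gamma(0)}$ is, up to the same constants, the norm of $\mathbf{C}_h[\T,\mathcal{L}_M\cup\{\l\}=M_{\gamma(0)}]$. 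The goal is therefore a comparison of these two integrals over $(v_\n,v_\l)\in\mathds{R}^3\times\mathds{R}^3$.

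\textbf{Step 2: cube decomposition and regularity of $\widehat{\mathcal H}$.} Decompose $v_\n$ and $v_\l$ into cubes $Q^\ell_{\mathtt{p}_\n}\times Q^\ell_{\mathtt{p}_\l}$, now three-dimensional in both variables. For $\ell$ small with respect to $(T,\delta-2\delta_*)$, the position $\x_{\mathcal{P}\l}=x-(t-s_\n)v_\n-(s_\n-s_{\mathcal{P}\l})v_\l$ moves by less than $\delta-2\delta_*$, so $[h]_{2\delta_*}$ may be replaced by $h_\l(v_\l):=[h(s_{\mathcal{P}\l})]_\delta(\xp,v_\l)$. Define
\begin{equation*}
\widehat{\mathcal H}(v_\l):=\int_{Q^\ell_{\mathtt{p}_\n}}e^{\gamma_*|v_\n|^2}\mathbf{OV}(v_\n,v_\l)\,\mathcal{H}\big(\mathtt{d}(v_\n,v_\l)\big)|v_\l-v_\n|^{-1}dv_\n .
\end{equation*}
Then prove the analogue of \eqref{eq:regularity_H_hat}: for $|\boldsymbol\eta|\le\ell$,
\begin{equation*}
\widehat{\mathcal H}(v_\l+\boldsymbol\eta)\ge C_{\gamma,\delta}3^{-|\mathcal{O}[\l,\n]|}e^{-4\gamma(0)(|v_\l|+1)^2}\widehat{\mathcal H}(v_\l).
\end{equation*}
The $\mathbf{OV}$ factor is handled by Lemma \ref{lem:overlap_regularity} and \eqref{eq:overlap_bundle_regularity_single_collision}. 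The factor $\mathcal H(\mathtt{d})$ is handled by \eqref{eq:H_regularity} and \eqref{eq:distance_geometric_inequality}. The factor $e^{\gamma_*|v_\n|^2}$ does not depend on $v_\l$, so it passes through unchanged. The singular kernel $|v_\l-v_\n|^{-1}$ is locally integrable in the three-dimensional variable $v_\n$, which makes the near-diagonal case easier than in Lemma \ref{lem:single_collsion}. If $\mathrm{dist}(v_\l,Q^\ell_{\mathtt{p}_\n})\ge10\ell$, the kernels at $v_\l$ and $v_\l+\boldsymbol\eta$ are comparable. Otherwise, compare the two integrals $\int_{Q^\ell}|v_\l+\boldsymbol\eta-v_\n|^{-1}dv_\n$ and $\int_{Q^\ell}|v_\l-v_\n|^{-1}dv_\n$, which are both of order $\ell^2$.

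\textbf{Step 3: absorption by the Gaussian weight.} With this regularity, average over a translate of $Q^\ell_{\mathtt{p}_\l}$ of volume $\ell^3$:
\begin{equation*}
\int_{Q^\ell_{\mathtt{p}_\l}}\widehat{\mathcal H}\,h_\l\le \sup_{Q^\ell_{\mathtt{p}_\l}}\widehat{\mathcal H}\,\lVert h\rVert_{\timeldelta}\sup_{Q^\ell_{\mathtt{p}_\l}}e^{-\gamma|v_\l|^2}\le C_{\gamma,\delta}3^{|\mathcal O|}\lVert h\rVert_{\timeldelta}\int_{Q^{2\ell}_{\mathtt{p}_\l}}\widehat{\mathcal H}\,M_{\gamma(0)} .
\end{equation*}
For the last inequality, choose $\gamma(0)$ small, say $\gamma(0)<\gamma/10$. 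Then the loss $e^{-4\gamma(0)(|v_\l|+1)^2}$ and the $\ell^{-3}$ factor are absorbed by $e^{-\gamma|v_\l|^2}\le C_{\gamma}e^{-(\gamma-5\gamma(0))|v_\l|^2}M_{\gamma(0)}(v_\l)$. Summing over all cubes $(\mathtt{p}_\n,\mathtt{p}_\l)$ and then taking the supremum in $x$ yields \eqref{eq:single_high_norm}.

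\textbf{Main obstacle.} The delicate step is the uniformity of the regularity estimate for $\widehat{\mathcal H}$ in the presence of $e^{\gamma_*|v_\n|^2}$. The spatial shift $\delta_*$ and the cube size $\ell$ must be chosen independently of $|v_\n|$, even though $\x_{\mathcal{P}\l}$ depends on $v_\n$ through $(t-s_\n)v_\n$. The remedy is to fix $\ell\le(\delta-2\delta_*)/(4T)$, which is uniform in velocity, following the template of Lemma \ref{lem:transport_loss}. With this choice, $\gamma_*<\gamma(0)$ is only needed to keep the final norm of the Maxwellian-homogenized tree finite.
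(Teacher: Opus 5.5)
Your proposal is correct and follows the route the paper intends; the paper omits this proof as a direct modification of Lemmas \ref{lem:double_collision}--\ref{lem:single_collsion}. You use Step 1 of Lemma \ref{lem:single_collsion} to produce $\mathcal{H}(\mathtt{d}(v_\n,v_\l))$, the $\delta_*$-supremum and the weighted free root variable $v_\n$ as in Lemma \ref{lem:double_null}, and then the regularity of $\widehat{\mathcal{H}}$ with Gaussian absorption; keeping $v_\n$ as a free three-dimensional variable, instead of parametrizing it on a collision sphere, is exactly what makes $e^{\gamma_*|v_\n|^2}$ harmless.

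The one step you should write out is the near-diagonal case. There you pull $e^{\gamma_*|v_\n|^2}\mathbf{OV}\,\mathcal{H}(\mathtt{d})$ out of the integral by its supremum and infimum over the $v_\n$-cube. Their ratio is at most $C\,3^{|\mathcal{O}[\n]|}e^{2\gamma(0)(|v_\l|+1)^2}$, because in that case $|v_\n|\le |v_\l|+12\ell$. Note also that your constant is $C_{\gamma,\delta}$, as in the companion lemmas, rather than $C_\gamma$.
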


\begin{lemma}\label{lem:indi_homo}
Consider any constants $(\gamma>0,\delta>0,T>0)$, and any non-negative function $h(t,x,v)$ within the space $L^{\infty}\ldelta\big([0,T]\times \mathds{R}^3\times \mathds{R}^3\big)$. There exist constants $0<\gamma_*<\gamma(0)<\gamma$ and $\delta_*>0$ such that the following statement is true. For any decorated tree $\T$, any set $\mathcal{L}_M\subset \mathcal{L}(\T)$ of Maxwellian leaves, if a non-Maxwellian leaf $\l\in \mathcal{L}(\T)\setminus\mathcal{L}_M$ satisfies condition \ref{cond:IND} and has an overlap-parent node $\o_*$, then
\begin{equation}\label{eq:indi_homo_overlap}
\mathbf{C}_{h}\Big[\T,\mathcal{L}_M=M_{\gamma(0)}\Big](t,x,v)\leq 10^{|\mathcal{O}[\l]|}\Big(C_{\gamma,\delta}\lVert h\rVert_{\timeldelta}\Big)\mathbf{C}_{h}\Big[\T,\mathcal{L}_M\cup\{\l\}=M_{\gamma(0)}\Big](t,x,v).
\end{equation}
If a non-Maxwellian leaf $\l\in \mathcal{L}(\T)\setminus \mathcal{L}_M$ has no collision-parent node or overlap-parent node, then
\begin{equation}\label{eq:indi_homo_norm}
\Big\lVert \mathbf{C}_{h}\Big[\T,\mathcal{L}_M=M_{\gamma(0)}\Big](t,\cdot,\cdot)\Big\rVert_{\ldeltastar}\leq 10^{|\mathcal{O}[\l]|}\Big(C_{\gamma,\delta}\lVert h\rVert_{\timeldelta}\Big)\Big\lVert \mathbf{C}_{h}[\T,\mathcal{L}_M\cup\{\l\}=M_{\gamma(0)}](t,\cdot,\cdot)\Big\rVert_{\ldeltastar}.
\end{equation}

\end{lemma}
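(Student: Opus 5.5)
We treat the two statements of Lemma \ref{lem:indi_homo} separately, modelling the overlap case on Lemma \ref{lem:double_overlap} and the root case on Lemma \ref{lem:double_null}, but with only one leaf and no collision geometry to exploit.

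\emph{Overlap-parent case.} Let $\o_*$ be the overlap-parent of $\l$. Then $\l$ is the root of $\mathscr{T}_{\o_*}$, and $\mathscr{T}_{\o_*}$ reduces to $\l$ together with the overlap nodes in $\mathcal{O}[\l]$. Arguing as in Step 1 of Lemma \ref{lem:double_overlap}, the recursive Definition \ref{def:C_expansion_replacement} reduces \eqref{eq:indi_homo_overlap} to the following weighted estimate, uniform in $(t,x)$ and $\mathbf{c}\in\mathds{R}^3$:
\begin{equation*}
\int|v-\mathbf{c}|\,\mathbf{C}_h\big[\mathscr{T}_{\o_*},\mathcal{L}_M=M_{\gamma(0)}\big](t,x,v)\,dv\leq 10^{|\mathcal{O}[\l]|}C_{\gamma,\delta}\lVert h\rVert_{\timeldelta}\int|v-\mathbf{c}|\,\mathbf{C}_h\big[\mathscr{T}_{\o_*},\mathcal{L}_M\cup\{\l\}=M_{\gamma(0)}\big](t,x,v)\,dv.
\end{equation*}
Condition \ref{cond:IND} lets us apply Lemma \ref{lem:full_to_prime} with times frozen. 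The left side then becomes
\begin{equation*}
\int|v_\l-\mathbf{c}|\,\mathbf{OV}(v_\l)\,h(s_{\mathcal{P}\l},\x_{\mathcal{P}\l},v_\l)\,dv_\l,\qquad \mathbf{OV}(v_\l):=\prod_{\o\in\mathcal{O}[\l]}\mathbf{O}_{M_{\gamma(0)}}[\mathscr{T}_\o](s_\o,v_\l),
\end{equation*}
with $\x_{\mathcal{P}\l}=x-(t-s_{\mathcal{P}\l})v_\l$ up to fixed shifts. Split $v_\l$ into cubes $Q^{\ell}_{\mathtt{p}}$. On each cube the position moves by at most $T\ell\leq\delta$, so $h$ may be replaced by $h_{\mathtt{p}}(v):=[h(s_{\mathcal{P}\l})]_\delta(\xp,v)$. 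This gives $\int_{Q^\ell_{\mathtt{p}}}h_{\mathtt{p}}\leq e^{-\gamma\,\mathrm{dist}(0,Q_{\mathtt{p}}^\ell)^2}\lVert h\rVert_{\timeldelta}$.

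For the weight $\Phi(v):=|v-\mathbf{c}|\,\mathbf{OV}(v)$ we build a set $B\subset Q^{\mathtt{K}\ell}_{\mathtt{p}}$ with $|B|\geq C\ell^3$ and $\inf_B\Phi\geq 3^{-|\mathcal{O}[\l]|}C\sup_{Q_{\mathtt{p}}^\ell}\Phi$. The construction has two cases:
\begin{itemize}
\item if $\mathrm{dist}(\mathbf{c},Q_{\mathtt{p}}^\ell)\geq 10\ell$, take $B=Q_{\mathtt{p}}^\ell$;
\item otherwise, translate the cube by $20\ell$ away from $\mathbf{c}$, so that $|v-\mathbf{c}|\gtrsim\ell$ on $B$ while $\sup_{Q_{\mathtt{p}}^\ell}|v-\mathbf{c}|\lesssim\ell$.
\end{itemize}
In both cases Lemma \ref{lem:overlap_regularity} controls $\mathbf{OV}$, since $\mathtt{K}\ell\leq\overline{\ell}$. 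Absorbing the factor $\ell^{-3}$ and the Gaussian mismatch $e^{-\gamma|\mathtt{p}|^2}\lesssim C_{\gamma,\delta}\inf_B M_{\gamma(0)}$, as in \eqref{eq:absorb_by_gaussian_weight}, with $\gamma(0)<\gamma$, and summing over $\mathtt{p}$ (each $B$ is used boundedly many times) gives the claim.

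\emph{Root case.} If $\l$ has neither a collision-parent nor an overlap-parent, then $\l$ is the root of the whole tree and $v=v_\l$. Fix $t$ and $x$ and shift $x$ by at most $\delta_*$. After Lemma \ref{lem:full_to_prime} the integrand is $\mathbf{OV}(v)\,h(0\text{ or }s,\,x-tv,\,v)$, where the $s$-variables of the overlaps only enter through $\mathbf{OV}$. Hence
\begin{equation*}
\big[\mathbf{C}_h\big](t,x,v)_{\delta_*}\leq\int\mathbf{OV}(v)\,[h]_{\delta_*}(x-tv,v).
\end{equation*}
Multiplying by $e^{\gamma_*|v|^2}$ and integrating in $v$, the same cube decomposition as in Lemma \ref{lem:transport_loss} applies, with $\ell=(\delta-\delta_*)/2T$. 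For the comparison quantity, the $\ldeltastar$-norm of the homogenized term, we use that $M_{\gamma(0)}$ has no $x$-dependence, so the norm equals $\int e^{\gamma_*|v|^2}\mathbf{OV}(v)M_{\gamma(0)}(v)\,dv$ times the time integral. On each cube, the regularity of $\mathbf{OV}$ from Lemma \ref{lem:overlap_regularity} (iterated over at most $\ell/\overline{\ell}$ steps, still a factor $C^{|\mathcal{O}[\l]|}$) lets us compare $\sup_{Q_{\mathtt{p}}}\mathbf{OV}$ with $\int_{Q_{\mathtt{p}}}\mathbf{OV}\,M_{\gamma(0)}$. The Gaussian gap $\gamma>\gamma(0)>\gamma_*$ again absorbs the constants.

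\emph{Main obstacle.} The delicate point is the overlap case near $v=\mathbf{c}$. There the weight $|v-\mathbf{c}|$ vanishes and $h$ may concentrate mass exactly where the Maxwellian contribution is small. This is why the shifted set $B$ is needed, and why the constant must be tracked as $10^{|\mathcal{O}[\l]|}$ rather than $3^{|\mathcal{O}[\l]|}$: the shift by $\mathtt{K}\ell$ requires several applications of Lemma \ref{lem:overlap_regularity}.
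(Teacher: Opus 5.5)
Your argument is correct and is the ``direct modification'' of Lemmas~\ref{lem:double_collision}--\ref{lem:single_collsion} that the paper invokes when it omits this proof. In the overlap-parent case you give the one-leaf version of Lemma~\ref{lem:double_overlap}: the weighted bound against $|v-\mathbf{c}|$, with a translated set $B$ when $\mathbf{c}$ lies near the cube. In the root case you run the cube decomposition of Lemmas~\ref{lem:transport_loss} and~\ref{lem:double_null} with the $x$-independent weight $\mathbf{OV}$.

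Two small fixes are needed in the root case:
\begin{itemize}
\item Take cubes of side $\ell\le\min\{(\delta-\delta_*)/(2T),\,\overline{\ell}/\sqrt{3}\}$. Then a single application of Lemma~\ref{lem:overlap_regularity} gives $3^{|\mathcal{O}[\mathfrak{l}]|}\le 10^{|\mathcal{O}[\mathfrak{l}]|}$, instead of the unspecified $C^{|\mathcal{O}[\mathfrak{l}]|}$ you get from iterating.
\item Your use of Lemma~\ref{lem:full_to_prime} requires condition~\ref{cond:IND}, i.e.\ every $\mathscr{T}_{\mathfrak{o}}$ with $\mathfrak{o}\in\mathcal{O}[\mathfrak{l}]$ is fully Maxwellian. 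The statement omits this hypothesis in the root case, so you should say it explicitly; the homogenization procedure always supplies it, because the root component of $\mathcal{G}_{\mathcal{T}}$ is processed last.
\end{itemize}
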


\section{Well-posedness and Weak Stability of Boltzmann Equation}\label{sec:proof_of_main}

\subsection{Uniform $L_t^{\infty}\ldeltastar$-Bound via Homogenization Procedure}

In this section, we establish the estimate of the $L_{t}^{\infty}\ldeltastar$-norm for the terms $\mathbf{C}_{f^0}[\T]$ in Definition \ref{def:C_expansion}. These terms are used in the definition of $f(t)$ in \eqref{eq:formal_summation}. Specifically, the estimate in this section shows the summation in \eqref{eq:formal_summation} absolutely converges, and $f(t)$ is well-defined in $L^{\infty}\ldeltastar$.

\begin{definition}\label{def:three_types}
Given a decorated tree $\T$
\begin{itemize}
    \item we call it a Type-I decorated tree if: in the primary sub-tree $\tprime$, the two child nodes of $\mathfrak{r}(\tprime)$ are two branching nodes,
    \item we call it a Type-II decorated tree if: in the primary sub-tree $\tprime$, the two child nodes of $\mathfrak{r}(\tprime)$ are one branching node and one leaf
    \item we call it a Type-III decorated tree if: in the primary sub-tree $\tprime$, the root $\mathfrak{r}(\tprime)$ is a leaf
\end{itemize}
\end{definition}

\begin{proposition}\label{prop:uniform_L1_estimate}
Consider any $(\gamma>0,\delta>0)$ and any $T>0$, there exist positive parameters $(\gamma(0),\gamma_*,\delta_*)$ with $0<\gamma_*<\gamma(0)<\gamma$ such that the following property is true. For any $h\in \timeldelta([0,T]\times \mathds{R}^3\times \mathds{R}^3)$ and any decorated tree $\T$
\begin{equation*}
\begin{split}
&\Big\lVert \mathbf{C}_{h}[\T]\Big\rVert_{L^{\infty}\ldeltastar([0,T]\times \mathds{R}^3\times \mathds{R}^3)}\leq \Big(C_{\gamma,\delta}\lVert h\rVert_{\timeldelta}\Big)^{|\mathcal{L}(\T)|}\Big\lVert \mathbf{C}_{M_{\gamma(0)}}[\T]\Big\rVert_{L^{\infty}\ldeltastar([0,T]\times \mathds{R}^3\times \mathds{R}^3)}.
\end{split}
\end{equation*}
\end{proposition}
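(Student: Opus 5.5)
The plan is to run the homogenization procedure of Definition \ref{def:homogenization_procedure} on $\T$ and chain the estimates of Section \ref{sec:homogenization_operation}, one homogenization operation at a time. The bookkeeping will be done through the sequence of Maxwellian sets $\varnothing=\mathcal{L}_M^{(0)}\subset\mathcal{L}_M^{(1)}\subset\dots\subset\mathcal{L}_M^{(K)}=\mathcal{L}(\T)$ generated by the procedure.

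First I will fix the parameters. I take $\gamma(0)$ to be the minimum of the values produced by Lemmas \ref{lem:double_collision}--\ref{lem:indi_homo}, and $(\gamma_*,\delta_*)$ likewise; these depend only on $(\gamma,\delta)$ and $T$. I also absorb the overlap factors into the constant: since $\sum_{\n}|\mathcal{O}[\n]|=|\mathcal{O}(\T)|\le|\mathcal{L}(\T)|-1$, the product of all factors $10^{|\mathcal{O}[\cdot]|}$ is at most $10^{|\mathcal{L}(\T)|}$, and this goes into $C_{\gamma,\delta}$.

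Next I will check that at every step the relevant condition \ref{cond:DUO}, \ref{cond:SIN} or \ref{cond:IND} holds.
\begin{itemize}
\item Because Step 1 treats the nodes of $\mathcal{G}_{\T}$ from the bottom up, every subtree $\mathscr{T}_{\o}$ branched from an overlap attached to the current $\T_i^+$ is already fully Maxwellian.
\item Because Step 2 always picks a deepest non-Maxwellian leaf, in Case 2 the sibling $\mathfrak{c}$ has only Maxwellian leaves. Otherwise a deeper non-Maxwellian leaf would exist.
\end{itemize}
The operations then fall into two groups.
\begin{itemize}
\item \emph{Pointwise operations.} Those in a non-primary tree $\T_i^+$, or in $\tprime$ below its root, have a collision-parent or overlap-parent. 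For these I apply the pointwise Lemmas \ref{lem:double_collision}, \ref{lem:double_overlap}, \ref{lem:single_collsion}, \ref{lem:single_overlap} and \eqref{eq:indi_homo_overlap}. Each gives $\mathbf{C}_h[\T,\mathcal{L}_M^{(k)}=M_{\gamma(0)}]\le (C\lVert h\rVert)^{j}\,\mathbf{C}_h[\T,\mathcal{L}_M^{(k+1)}=M_{\gamma(0)}]$ with $j\in\{1,2\}$ the number of leaves added. Pointwise bounds of this kind pass to the $\ldeltastar$-norm by monotonicity.
\item \emph{Norm operations.} The last operations, in $\tprime$ at the root, have no parent. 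Depending on the Type of $\T$ (Definition \ref{def:three_types}), I apply Lemma \ref{lem:double_null} (Type I reaching the root with two leaves), Lemma \ref{lem:single_null} (Type II), or \eqref{eq:indi_homo_norm} (Type III). These are norm estimates, which is why the conclusion is only in $L^{\infty}\ldeltastar$.
\end{itemize}
Taking the supremum over $t\in[0,T]$ and multiplying the factors gives $(C_{\gamma,\delta}\lVert h\rVert)^{|\mathcal{L}(\T)|}$, since each homogenized leaf contributes exactly one factor. Finally, $\mathbf{C}_h[\T,\mathcal{L}(\T)=M_{\gamma(0)}]=\mathbf{C}_{M_{\gamma(0)}}[\T]$.

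The main obstacle is keeping the ordering consistent. The norm-type lemmas must be applied last, because they change the quantity from pointwise to $\ldeltastar$. Only the root node of $\tprime$ lacks a parent, and the depth-first rule applied to $\tprime$ last guarantees that the root is treated last. I also need to confirm two compatibility points:
\begin{itemize}
\item that the lemma hypotheses, $h\in\timeldelta$ with $\gamma(0)$ uniform, do not degrade across steps, since $h$ itself is never changed and only leaves are replaced;
\item that the different reductions to subtrees (Step 1 of each lemma) compose, i.e.\ that a pointwise inequality for a subtree propagates monotonically through the positive operators $\mathbf{Q}^\pm$ and $\mathbf{S}_t$.
\end{itemize}
Because every integrand is nonnegative, both points should hold.
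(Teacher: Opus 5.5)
Your proposal is correct and is essentially the paper's proof: you run the homogenization procedure, chain the lemmas of Section \ref{sec:homogenization_operation}, and count $|\mathcal{L}(\T)|$ leaf factors and at most $|\mathcal{O}(\T)|$ overlap factors. Two small slips remain: Lemma \ref{lem:double_null} is the one needed when $\mathfrak{r}(\tprime)$ has two leaf children, a case Definition \ref{def:three_types} omits, whereas a genuine Type-I tree needs no norm step at all (cf.\ Proposition \ref{prop:point_homogenization}); and your overlap count tacitly uses that each collision node is the parent in at most one operation, so that each $\mathcal{O}[\n]$ is counted only once, which is exactly the point the paper checks.
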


\begin{proof}

\textit{Part 1: Absolute continuity.}

We apply the homogenization procedure in Definition \ref{def:homogenization_procedure} to the decorated tree $\T$ with $\mathcal{L}_M(\T)=\varnothing$. For an arbitrary homogenization operation used in the homogenization procedure, denote $\mathcal{L}_M(\T)$ as the set of Maxwellian leaves before the operation, and $\mathcal{L}_M'(\T)$ as the set of Maxwellian leaves after the operation. 

According to the lemmas proved in Section \ref{sec:homogenization_operation}, either $\mathbf{C}_{h}[\T,\mathcal{L}_M(\T)=M_{\gamma(0)}]$ is already absolutely continuous with respect to the Lebesgue measure (Lemma \ref{lem:double_null}, \ref{lem:single_null}, and \ref{lem:indi_homo}), or $\mathbf{C}_{h}[\T,\mathcal{L}_M(\T)=M_{\gamma(0)}]$ and $\mathbf{C}_{h}[\T,\mathcal{L}_M'(\T)=M_{\gamma(0)}]$ have the same relation with respect to the Lebesgue measure. After all the homogenization operations, the Boltzmann integral becomes $\mathbf{C}_{M_{\gamma(0)}}[\T]$, which is absolutely continuous. Thus $\mathbf{C}_{h}[\T]$ is also absolutely continuous.

\underline{\textit{Part 2: Uniform $L^{\infty}\ldeltastar([0,T]\times \mathds{R}^3\times \mathds{R}^3)$ estimate.}}

Apply the homogenization procedure to the decorated tree $\T$. According to the estimates in Section \ref{sec:homogenization_operation} on the multiplicative errors of homogenization operations, there exist positive integers $A_1,A_2$ such that
\begin{equation*}
\begin{split}
&\Big\lVert \mathbf{C}_{h}[\T](t,\cdot,\cdot)\Big\rVert_{\ldeltastar}\leq 10^{A_1}\Big(C_{\gamma,\delta}\lVert h\rVert_{\timeldelta}\Big)^{A_2}\Big\lVert \mathbf{C}_{M_{\gamma(0)}}[\T](t,\cdot,\cdot)\Big\rVert_{\ldeltastar}
\end{split}
\end{equation*}
A precise estimate of the constants $A_1,A_2$ is needed to prove Proposition \ref{prop:uniform_L1_estimate}. The counting of $A_2$ is direct, since for any homogenization operation, the increment of $A_2$ is the number of leaves homogenized. This means 
\begin{equation*}
A_2=|\mathcal{L}(\T)|.
\end{equation*}
For the estimate of $A_1$, notice that for any homogenization operation (with $\l$ denoted as one leaf homogenized by this operation), the increment of $A_1$ is 
\begin{equation}\label{eq:overlap_counting}
\Big|\mathcal{O}[\l]\Big|+\mathds{1}_{\l \textup{ has a collision-parent $\n$}}\Big|\mathcal{O}[\n]\Big|+\mathds{1}_{\n\textup{ has another non-Maxwellian collision-child $\l_*$}}\Big|\mathcal{O}[\l_*]\Big|.
\end{equation}
After each homogenization operation, the newly homogenized leaves will not be homogenized by any other operations, and the collision-parent of these leaves now has only Maxwellian leaves. Therefore the overlap nodes in \eqref{eq:overlap_counting} will not be counted in the increments of $A_1$ for other homogenization operations. Consequently
\begin{equation}
A_1\leq |\mathcal{O}(\T)|\leq |\mathcal{L}(\T)|.
\end{equation}
This concludes the proof of the proposition.

\end{proof}

\begin{proposition}\label{prop:point_homogenization}
Consider any $(\gamma>0,\delta>0)$ and any $T>0$, and any $h\in L^{\infty}\ldelta([0,T]\times \mathds{R}^3\times \mathds{R}^3)$. For any Type-I decorated tree $\T$, and any $(t,x,v)\in [0,T]\times \mathds{R}^3\times \mathds{R}^3$ we have
\begin{equation}
\begin{split}
&\Bigg|\mathbf{C}_{h}[\T](t,x,v)\Bigg|\leq \Big(C_{\gamma,\delta}\lVert h\rVert_{\timeldelta}\Big)^{|\mathcal{L}(\T)|}\Bigg|\mathbf{C}_{M_{\gamma(0)}}[\T](t,x,v)\Bigg|
\end{split}
\end{equation}
\end{proposition}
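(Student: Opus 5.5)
We will run the homogenization procedure of Definition \ref{def:homogenization_procedure} on $\T$, starting from $\mathcal{L}_M=\varnothing$. The difference with Proposition \ref{prop:uniform_L1_estimate} is that we must use only the \emph{pointwise} versions of the homogenization operations. These are Lemma \ref{lem:double_collision}, Lemma \ref{lem:double_overlap}, Lemma \ref{lem:single_collsion}, Lemma \ref{lem:single_overlap}, and the first part \eqref{eq:indi_homo_overlap} of Lemma \ref{lem:indi_homo}. We must avoid the norm versions (Lemmas \ref{lem:double_null}, \ref{lem:single_null}, and \eqref{eq:indi_homo_norm}), which only give $\ldeltastar$ bounds.

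The first step is to check that the Type-I hypothesis rules out every situation where a norm version would be needed. A norm version arises only when the node $\n$ being treated (or the leaf $\l$, in the individual case) has neither a collision-parent nor an overlap-parent. Such a node is necessarily in the primary sub-tree $\tprime$ and is its root $\mathfrak{r}(\tprime)$.
\begin{itemize}
\item For an individual homogenization, $\l=\mathfrak{r}(\tprime)$ would make $\T$ Type-III.
\item For a double or single homogenization, $\n=\mathfrak{r}(\tprime)$ requires at least one collision-child of $\mathfrak{r}(\tprime)$ to be a leaf, which makes $\T$ Type-II or Type-III.
\end{itemize}
Hence for Type-I trees every operation in the procedure has a CO-parent. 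Operations inside the non-primary trees $\T_i^+$ always have an overlap-parent or a collision-parent.

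The second step is to verify that the procedure maintains conditions \ref{cond:DUO}, \ref{cond:SIN} and \ref{cond:IND}, exactly as tacitly used in Proposition \ref{prop:uniform_L1_estimate}. The choice of a lowest node of $\mathcal{G}_\T$ ensures that all branched sub-trees $\mathscr{T}_{\o}$ are already Maxwellian. The choice of deepest leaf ensures that the sibling subtree $\T_{\mathfrak{c}}$ in the single case is fully Maxwellian.

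The procedure ends in one of two states. Either all leaves are Maxwellian, or the only remaining operation concerns the two children of $\mathfrak{r}(\tprime)$. In the Type-I case these children are branching nodes, so after their subtrees are homogenized nothing is left at the root. Chaining the pointwise inequalities then gives
\[
\mathbf{C}_h[\T](t,x,v)\le 10^{A_1}\big(C_{\gamma,\delta}\lVert h\rVert_{\timeldelta}\big)^{A_2}\mathbf{C}_{M_{\gamma(0)}}[\T](t,x,v),
\]
with $A_2=|\mathcal{L}(\T)|$ and $A_1\le|\mathcal{O}(\T)|\le|\mathcal{L}(\T)|$, using the same counting argument as in Part 2 of Proposition \ref{prop:uniform_L1_estimate}. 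The factor $10^{A_1}$ is then absorbed into $C_{\gamma,\delta}^{|\mathcal{L}(\T)|}$.

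We first prove the bound for nonnegative $h$. For general $h$ we apply it to $|h|$, using $|\mathbf{C}_h[\T]|\le\mathbf{C}_{|h|}[\T]$, which holds because the Carleman representation (Lemma \ref{lem:carleman_representation}) has a nonnegative measure. The $\mathbf{Q}^-$ terms carry no sign issue since we bound absolute values.

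The main obstacle is the bookkeeping around the root of $\tprime$. The root itself is never the collision-parent node $\n$ of a DUO/SIN operation in a Type-I tree, but the two root children are. For those we invoke Lemma \ref{lem:double_collision} or Lemma \ref{lem:single_collsion} with $\m=\mathfrak{r}(\tprime)$. We must check that the reduction step in those proofs, which integrates over a plane $\mathtt{PL}$ through the Carleman structure at $\m$, remains valid when $\m$ is the root and is evaluated pointwise at $(t,x,v)$. This holds because the reduction there uses only the definition of $\mathbf{Q}^+$ at $\m$. A second point to confirm is that the overlap nodes in $\mathcal{O}[\mathfrak{r}(\tprime)]$, which sit above the root, contribute only the overlap factor at $v_{\mathfrak{r}}=v$. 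This factor is fixed pointwise and cancels between the two sides.
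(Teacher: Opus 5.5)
Your proposal is correct and follows the paper's route. You run the homogenization procedure and observe that in a Type-I tree no operation is ever centered at $\mathfrak{r}(\tprime)$, so only the pointwise lemmas are needed (inside $\tprime$, only Lemmas \ref{lem:double_collision} and \ref{lem:single_collsion}); you then count the error factors exactly as in Proposition \ref{prop:uniform_L1_estimate}. One small slip that does not affect the argument: a leaf child of $\mathfrak{r}(\tprime)$ makes $\T$ non-Type-I, but not necessarily Type-II or Type-III, since Definition \ref{def:three_types} does not classify the case where both children of $\mathfrak{r}(\tprime)$ are leaves.
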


\begin{proof}
Recall the graph $\mathcal{G_T}$ constructed in Definition \ref{def:tree_decompose}, and the sequence $\{\T_i^+\}_{0\leq i\leq m}$ derived by deleting all the overlap nodes. Apply the homogenization procedure in Definition \ref{def:homogenization_procedure} to $\T$ with $\mathcal{L}_M(\T)=\varnothing$, until for any $1\leq i\leq m$ we have $\mathcal{L}_M(\T_i^+)=\mathcal{L}(\T_i^+)$. Then we notice in the primary sub-tree $\T_0^+$, for any leaf $\l\in \mathcal{L}(\T_0^+)$, it has a collision-parent $\n$ and $\n$ also has a collision-parent, according to the Type-I decorated tree in Definition \ref{def:three_types}. 

Then we continue the homogenization procedure, and we only need to use Lemmas \ref{lem:double_collision} and  \ref{lem:single_collsion} to estimate the error
\begin{equation*}
\begin{split}
\mathbf{C}_{h}[\T](t,x,v)\leq 10^{A_1}\Big(C_{\gamma,\delta}\lVert h\rVert_{\timeldelta}\Big)^{A_2} \mathbf{C}_{M_{\gamma(0)}}[\T](t,x,v),
\end{split}
\end{equation*}
with $A_1\leq |\mathcal{L}(\T)|$ and $A_2\leq |\mathcal{L}(\T)|$. This concludes the proof of the proposition.
\end{proof}

\subsection{Proof of Well-posedness and Energy Conservation}

\begin{proof}
For any time $0\leq t\leq T_*$, recall that $f(t)$ is defined in \eqref{eq:formal_summation} as the following summation
\begin{equation}\label{eq:formal_definition}
f(t):=\sum_{\T:\textup{decorated tree}}(-1)^{|\mathcal{O}(\T)|}\mathbf{C}_{f^0}[\T](t).
\end{equation}

\underline{\textit{Part 1: Convergence of summation in $L^{\infty}\ldeltastar([0,T_*]\times \mathds{R}^3\times \mathds{R}^3)$.}}

Classical results show that for any decorated tree $\T$, we have
\begin{equation}\label{eq:bound_for_maxwellian}
\Big\lVert \mathbf{C}_{M_{\gamma(0)}}[\mathcal{T}]\Big\rVert_{L^{\infty}\ldeltastar}\leq C_{\gamma,\delta}^{|\mathcal{L}(\T)|}T_*^{|\mathcal{L}(\T)|-1}
\end{equation}
Proposition \ref{prop:uniform_L1_estimate} shows the $L^{\infty}\ldeltastar$-norm of $\mathbf{C}_{f^0}[\T]$ can be bounded from above by the norm of $\mathbf{C}_{M_{\gamma(0)}}[\T]$ with multiplicative errors. Therefore we can apply Proposition \ref{prop:uniform_L1_estimate} to \eqref{eq:bound_for_maxwellian}, and choose $T_*>0$ small enough
\begin{equation}
\Big\lVert \mathbf{C}_{f^0}[\T]\Big\rVert_{L^{\infty}\ldeltastar}\leq \Big(C_{\gamma,\delta}\lVert f^0\rVert_{\ldelta}\Big)^{|\mathcal{L}(\T)|}T_*^{|\mathcal{L}(\T)|-1}\leq C_{\gamma,\delta}\lVert f^0\rVert_{\ldelta}\Big(\frac{1}{10}\Big)^{|\mathcal{L}(\T)|}.
\end{equation}
Since the number of decorated trees $\T$ with $|\mathcal{L}(\T)|=N$ is bounded by $O(8^N)$, the summation in \eqref{eq:formal_definition} converges in $L^{\infty}\ldeltastar$. Consequently the function $f(t)$ in \eqref{eq:formal_definition} lies in the space $L^{\infty}\ldeltastar$.

\underline{\textit{Part 2: $f(t)$ as an energy-conserving mild solution.}}

It is sufficient to prove the following decomposition for all time $0\leq t\leq T_*$ and smooth and compactly supported test functions $\Phi(x,v)$
\begin{equation}\label{eq:duhamel_decomposition}
\begin{split}
&\Big\langle \Phi,\mathbf{Q}\Big(\sum_{\T_l}(-1)^{|\mathcal{O}(\T_l)|}\mathbf{C}_{f^0}[\T_l](t),\sum_{\T_r}(-1)^{|\mathcal{O}(\T_r)|}\mathbf{C}_{f^0}[\T_r](t)\Big)\Big\rangle\\
&=\sum_{\T_l}\sum_{\T_r}\Big\langle \Phi, \mathbf{Q}\Big((-1)^{|\mathcal{O}(\T_l)|}\mathbf{C}_{f^0}[\T_l](t),(-1)^{|\mathcal{O}(\T_r)|}\mathbf{C}_{f^0}[\T_r](t)\Big)\Big\rangle.
\end{split}
\end{equation}
Equation \eqref{eq:duhamel_decomposition} yields for any times $0\leq s\leq t\leq T_*$ and test function $\varphi$
\begin{equation}
\begin{split}
&\Big\langle \varphi, \mathbf{S}_{t-s}\mathbf{Q}\Big(\sum_{\T_l}(-1)^{|\mathcal{O}(\T_l)|}\mathbf{C}_{\fl^0}[\T_l](s),\sum_{\T_r}(-1)^{|\mathcal{O}(\T_r)|}\mathbf{C}_{\fl^0}[\T_r](s)\Big) \Big\rangle\\
&=\Big\langle \mathbf{S}_{-(t-s)}\varphi, \mathbf{Q}\Big(\sum_{\T_l}(-1)^{|\mathcal{O}(\T_l)|}\mathbf{C}_{\fl^0}[\T_l](s),\sum_{\T_r}(-1)^{|\mathcal{O}(\T_r)|}\mathbf{C}_{\fl^0}[\T_r](s)\Big) \Big\rangle\\
&=\sum_{\T_l}\sum_{\T_r}\Big\langle \mathbf{S}_{-(t-s)}\varphi, \mathbf{Q}\Big((-1)^{|\mathcal{O}(\T_l)|}\mathbf{C}_{\fl^0}[\T_l](s),(-1)^{|\mathcal{O}(\T_r)|}\mathbf{C}_{\fl^0}[\T_r](s)\Big) \Big\rangle,
\end{split}
\end{equation}
where $\mathbf{S}_{-(t-s)}\varphi$ is seen as the test function $\Phi$ in \eqref{eq:duhamel_decomposition}. Consequently $f(t)$ in \eqref{eq:formal_definition} satisfies for any test function $\varphi(x,v)$
\begin{equation}
\begin{split}
&\Big\langle \varphi,\mathbf{S}_tf^0+\int_0^t\mathbf{S}_{t-s}\mathbf{Q}\Big(\sum_{\T_l}(-1)^{|\mathcal{O}(\T_l)|}\mathbf{C}_{f^0}[\T_l](s),\sum_{\T_r}(-1)^{|\mathcal{O}(\T_r)|}\mathbf{C}_{f^0}[\T_r](s)\Big)ds\Big\rangle\\
=&\Big\langle \varphi,\mathbf{S}_tf^0+\sum_{\T_l}\sum_{\T_r}\int_0^t\mathbf{S}_{t-s}\mathbf{Q}\Big((-1)^{|\mathcal{O}(\T_l)|}\mathbf{C}_{f^0}[\T_l](s),(-1)^{|\mathcal{O}(\T_r)|}\mathbf{C}_{f^0}[\T_r](s)\Big)ds\Big\rangle\\
&=\Big\langle \varphi,\sum_{\T}(-1)^{|\mathcal{O}(\T)|}\mathbf{C}_{f^0}[\T](t)\Big\rangle=\Big\langle \varphi,f(t)\Big\rangle.
\end{split}
\end{equation}
This shows the two sides of \eqref{eq:mild_formulation} are equal almost everywhere, thus $\fl(t)$ in \eqref{eq:formal_definition} is a mild solution of the Boltzmann equation \eqref{eq:Boltzmann}. Now we prove \eqref{eq:duhamel_decomposition}. The LHS of \eqref{eq:duhamel_decomposition} equals 
\begin{equation*}
\begin{split}
&\frac{1}{2}\int \big((v_*-v)\cdot\omega\big)_+\Big(\Phi(x,v')+\Phi(x,v_*')-\Phi(x,v_*)-\Phi(x,v)\Big)\Big(\sum_{\T_l}(-1)^{|\mathcal{O}(\T_l)|}\mathbf{C}_{f^0}[\T_l](t,x,v)\Big)\Big(\sum_{\T_r}(-1)^{|\mathcal{O}(\T_r)|}\mathbf{C}_{f^0}[\T_r](t,x,v_*)\Big)d\omega dv_*dvdx\\
&=\int \Phi_{\textup{integral}}(x,v,v_*)\Big(\sum_{\T_l}(-1)^{|\mathcal{O}(\T_l)|}\mathbf{C}_{f^0}[\T_l](t,x,v)\Big)\Big(\sum_{\T_r}(-1)^{|\mathcal{O}(\T_r)|}\mathbf{C}_{f^0}[\T_r](t,x,v_*)\Big)dv_*dvdx,
\end{split}
\end{equation*}
where $\Phi_{\textup{integral}}$ is defined as
\begin{equation}
\Phi_{\textup{integral}}:=\frac{1}{2}\int \big((v_*-v)\cdot\omega\big)_+\Big(\Phi(x,v')+\Phi(x,v_*')-\Phi(x,v)-\Phi(x,v_*)\Big)d\omega.
\end{equation}
There exists a constant $C_{\Phi}>0$ depending on $\Phi$ such that 
\begin{equation}
|\Phi_{\textup{integral}}(x,v,v_*)|\leq C_{\Phi}e^{|v|+|v_*|}.
\end{equation}
The uniform estimate of the $L_{t}^{\infty}\ldeltastar$-norm of $\mathbf{C}_{f^0}[\T]$ in Proposition \ref{prop:uniform_L1_estimate} implies
\begin{equation}
\begin{split}
&\int \Phi_{\textup{integral}}(x,v,v_*)\Big(\sum_{\T_l}(-1)^{|\mathcal{O}(\T_l)|}\mathbf{C}_{f^0}[\T_l](t,x,v)\Big)\Big(\sum_{\T_r}(-1)^{|\mathcal{O}(\T_r)|}\mathbf{C}_{f^0}[\T_r](t,x,v_*)\Big)dv_*dvdx\\
=&\sum_{\T_l}\sum_{\T_r}\int \Phi_{\textup{integral}}(x,v,v_*)(-1)^{|\mathcal{O}(\T_l)|}\mathbf{C}_{f^0}[\T_l](t,x,v)(-1)^{|\mathcal{O}(\T_r)|}\mathbf{C}_{f^0}[\T_r](t,x,v_*)dv_*dvdx\\
&=\sum_{\T_l}\sum_{\T_r}\Big\langle \Phi, \mathbf{Q}\Big((-1)^{|\mathcal{O}(\T_l)|}\mathbf{C}_{f^0}[\T_l](t),(-1)^{|\mathcal{O}(\T_r)|}\mathbf{C}_{f^0}[\T_r](t)\Big)\Big\rangle.
\end{split}
\end{equation}
The energy-conservation follows directly from the fact that $f\in L_{t}^{\infty}\ldeltastar$. For non-negativity, we can reorganize the sum of Boltzmann integrals \eqref{eq:formal_summation} into a summation index by primary sub-trees
\begin{equation}
f(t)=\sum_{\T_+:\textup{collisional tree}}\Big(\sum_{\T:\textup{ has primary sub-tree $\tprime=\T_+$}}(-1)^{|\mathcal{O}(\T)|}\mathbf{C}_{f^0}[\T](t)\Big).
\end{equation}
For each $\T_+$, the summation can be verified to be non-negative using Lemma \ref{lem:full_to_prime}.

\underline{\textit{Part 3: Uniqueness.}}

Suppose $g(t)$ is a mild solution of the Boltzmann equation in $L^{\infty}\ldeltastar([0,T_*]\times \mathds{R}^3\times \mathds{R}^3)$ different from $f(t)$. Define $t_*\geq 0$ as the time when $f(t)$ and $g(t)$ become different $t_*:=\inf \big\{t>0 \ \big|\ \lVert f(t)-g(t)\rVert_{\ldeltastar}\not=0\big\}$. Only in this proof, we will use notations $f(t)$ and $g(t)$ as the following for simplicity
\begin{equation}
f(t):=f(t_*+t),\quad g(t):=g(t_*+t).
\end{equation}
Since $f(t)$ and $g(t)$ are mild solutions of the Boltzmann equation \eqref{eq:Boltzmann}, the functions $f(t)$ and $g(t)$ are also mild solutions of the Boltzmann equation
\begin{equation}\label{eq:mild form f and g}
\left\{
\begin{split}
&f(t)=\mathbf{S}_tf(0)+\int_0^t\mathbf{S}_{t-s}\mathbf{Q}\Big(f(s),f(s)\Big)ds,\\
&g(t)=\mathbf{S}_tf(0)+\int_0^t\mathbf{S}_{t-s}\mathbf{Q}\Big(g(s),g(s)\Big)ds.\\
\end{split}
\right.
\end{equation}

\begin{lemma}\label{lem:iterated_duhamel}
Consider any positive integer $N\geq 1$, there exists a set $\mathcal{S}_{> N}$ of decorated trees with $|\mathcal{L}(\T)|\geq N+1$, and sets $\mathcal{L}^{(N)}(\T)\subset \mathcal{L}(\T)$ associated with each $\T\in \mathcal{S}_{> N}$ such that
\begin{equation}\label{eq:iterated_duhamel}
g(t)=\sum_{\T:|\mathcal{L}(\T)|\leq N}(-1)^{|\mathcal{O}(\T)|}\mathbf{C}_{f(0)}[\T](t)+\sum_{\T\in \mathcal{S}_{> N}}(-1)^{|\mathcal{O}(\T)|}\mathbf{C}_{f(0)}\big[\T,\mathcal{L}^{(N)}(\T)=g\big](t).
\end{equation}
\end{lemma}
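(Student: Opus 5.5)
We argue by induction on $N$, iterating the mild formulation \eqref{eq:mild form f and g} of $g$. The statement is purely algebraic: the same Duhamel expansion that leads to \eqref{eq:duhamel_itself_intro} and \eqref{eq:formal_summation}, except that at each stage we keep track of which leaves still carry $g$ rather than $\mathbf{S}_\cdot f(0)$. The replaced Boltzmann integrals of Definition \ref{def:C_expansion_replacement} are designed for exactly this. We also use the convention $\mathbf{C}_{f(0)}[\circ,\{\circ\}=g](t)=g(t)$.

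\emph{Base case $N=1$.} Split $\mathbf{Q}=\mathbf{Q}^+-\mathbf{Q}^-$ in \eqref{eq:mild form f and g}. This gives
\[
g(t)=\mathbf{C}_{f(0)}[\circ](t)+\mathbf{C}_{f(0)}\big[\CollisionFork,\mathcal{L}=g\big](t)-\mathbf{C}_{f(0)}\big[\OverlapFork,\mathcal{L}=g\big](t).
\]
So \eqref{eq:iterated_duhamel} holds with $\mathcal{S}_{>1}=\{\CollisionFork,\OverlapFork\}$, where both trees have two leaves and $\mathcal{L}^{(1)}$ is the full leaf set. The sign $(-1)^{|\mathcal{O}(\T)|}$ matches.

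\emph{Induction step $N\to N+1$.} Assume \eqref{eq:iterated_duhamel} holds at level $N$. In every remainder term $\mathbf{C}_{f(0)}[\T,\mathcal{L}^{(N)}(\T)=g]$, substitute \eqref{eq:mild form f and g} for the function $g$ sitting at each leaf $\l\in\mathcal{L}^{(N)}(\T)$. By multilinearity of the recursive definition, which is justified pointwise since all functions are nonnegative and lie in $L^\infty\ldeltastar$, each $g$-leaf splits into three pieces:
\begin{itemize}[label={}]
\item a leaf with $\mathbf{S}_\cdot f(0)$;
\item a collision fork with two $g$-leaves;
\item an overlap fork with two $g$-leaves, carrying an extra factor $-1$.
\end{itemize}
Each resulting term is a replaced Boltzmann integral for a grown tree $\T'$. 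Its sign is $(-1)^{|\mathcal{O}(\T')|}$, and its $g$-leaves are the new leaves created below the expanded $g$-leaves. Time ordering is preserved, because \eqref{eq:mild form f and g} is evaluated at the time $s_{\mathcal{P}\l}$ of the parent, exactly as in \eqref{eq:carleman_representation_replaced}.

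To keep the bookkeeping consistent, we expand in a fixed canonical order: a leaf of $\T$ is expanded only if doing so keeps every tree with at most $N+1$ leaves either fully $f(0)$-leaved or correctly collected. Concretely, we expand the $g$-leaves of a tree one at a time (say, leftmost first). We stop as soon as every produced tree either has no $g$-leaf, or has at least $N+2$ leaves. The terms with no $g$-leaf and at most $N+1$ leaves are precisely $\mathbf{C}_{f(0)}[\T]$; these supplement the first sum up to $|\mathcal{L}(\T)|\le N+1$. The remaining terms define $\mathcal{S}_{>N+1}$ together with their sets $\mathcal{L}^{(N+1)}$.

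\emph{Main obstacle.} The hardest point is verifying that each decorated tree $\T$ with $|\mathcal{L}(\T)|\le N+1$ appears exactly once in the fully expanded part, with coefficient $(-1)^{|\mathcal{O}(\T)|}$. This amounts to checking that the expansion generates every tree uniquely. We plan to show it by reading off a canonical generation history from $\T$ itself, as follows. The tree is uniquely decomposed by repeatedly deciding, at each leaf position, whether the mild formula's transport term or its collision/overlap term was chosen. That choice is recorded by whether the node is a leaf or a branching node, with its decoration giving collision versus overlap. Uniqueness of this decomposition follows by induction on the tree, in the same way as the correspondence in \eqref{eq:duhamel_itself_intro} between full Duhamel iteration and the sum \eqref{eq:formal_summation}. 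Finally, every tree placed in $\mathcal{S}_{>N+1}$ has at least $N+2$ leaves by the stopping rule, which closes the induction.
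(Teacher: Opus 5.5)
Your argument is correct in substance, but it expands at the leaves, whereas the paper expands at the root.

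The paper inserts the level-$N$ identity into \emph{both} arguments of $\mathbf{Q}$ in the mild formula for $g$. This gives four groups: main$\times$main, main$\times$remainder, remainder$\times$main and remainder$\times$remainder. Every decorated tree with at most $N+1$ leaves has both root sub-trees with at most $N$ leaves. Hence $\mathbf{S}_tf(0)$ plus the main$\times$main group already contains each such tree exactly once, with sign $(-1)^{|\mathcal{O}(\T)|}$, and every other term has at least $N+2$ leaves. The paper then reads off $\mathcal{S}_{>N+1}$ as the trees whose root sub-trees lie in $\mathcal{S}_{>N}$ or have at most $N$ leaves, with $\mathcal{L}^{(N+1)}(\T)=\mathcal{L}^{(N)}(\T_l)\cup\mathcal{L}^{(N)}(\T_r)$. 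So the exactly-once bookkeeping you flag as the main obstacle is automatic there. Nothing about $\mathcal{S}_{>N}$ is used beyond its leaf lower bound, so the lemma as stated is a sufficient induction hypothesis.

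Your route does not have this property. From the bare existential statement at level $N$ you cannot tell which $(N+1)$-leaf trees emerge when you resolve the $g$-leaves of $\mathcal{S}_{>N}$. You must therefore induct on a stronger invariant, and you should state it explicitly. A clean one uses the completions of a partial tree, meaning the full trees obtained by grafting arbitrary trees at its $g$-leaves. The invariant is that the main trees and the remainder's partial trees have pairwise disjoint sets of completions, whose union is all decorated trees.

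The three-way split of a $g$-leaf (leaf, collision fork, overlap fork) preserves this invariant. The unique partial tree whose completions contain a given $\T$ is a prefix of $\T$, so it has at most $|\mathcal{L}(\T)|$ leaves. When $|\mathcal{L}(\T)|\le N+1$, your stopping rule then forces that partial tree to equal $\T$. This is your ``canonical generation history'' made precise, and it also shows that each tree occurs at most once in $\mathcal{S}_{>N+1}$.

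What your route buys is a leaner remainder. Every tree in your $\mathcal{S}_{>N}$ has exactly $N+1$ leaves and at least two $g$-leaves. The paper's remainder also contains fully $f(0)$-leaved trees coming from the main$\times$main group, and trees whose leaf count grows like $2^N$.

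A minor point: $g$ is not assumed nonnegative in the uniqueness argument. What justifies the multilinear splitting is absolute integrability of the pieces, not nonnegativity.
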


\begin{proof}
This lemma can be proved by induction on $N\geq 1$. When $N=1$, the mild formulation of $g(t)$ in \eqref{eq:mild form f and g} implies 
\begin{equation}
g(t)=\mathbf{C}_{f(0)}[\circ](t)+\mathbf{C}_{g}[\CollisionFork](t)-\mathbf{C}_{g}[\OverlapFork](t)=\mathbf{C}_{f(0)}[\circ](t)+\mathbf{C}_{f(0)}\Big[\CollisionFork,\mathcal{L}(\CollisionFork)=g\Big](t)-\mathbf{C}_{f(0)}\Big[\OverlapFork,\mathcal{L}(\OverlapFork)=g\Big](t).
\end{equation}
In this case
\begin{equation}
\mathcal{S}_{>1}:=\Big\{\CollisionFork,\OverlapFork\Big\},\quad \mathcal{L}^{(1)}(\CollisionFork)=\mathcal{L}(\CollisionFork),\quad \mathcal{L}^{(1)}(\OverlapFork)=\mathcal{L}(\OverlapFork).
\end{equation}
Assume for an integer $N\geq 1$, it satisfies \eqref{eq:iterated_duhamel}. Now we prove the statement for $N_*=N+1$. Insert \eqref{eq:iterated_duhamel} into the mild formulation \eqref{eq:mild form f and g} of $g(t)$
\begin{equation}\label{eq:inserted_duhamel}
\begin{split}
g(t)=\mathbf{S}_{t}f(0)&+\sum_{\T_l:|\mathcal{L}(\T_l)|\leq N}\sum_{\T_r:|\mathcal{L}(\T_r)|\leq N}(-1)^{|\mathcal{O}(\T_l)|+|\mathcal{O}(\T_r)|}\int_0^t\mathbf{S}_{t-s}\mathbf{Q}\Big(\mathbf{C}_{f(0)}[\T_l](s),\mathbf{C}_{f(0)}[\T_r](s)\Big)ds\\
&+\sum_{\T_l:|\mathcal{L}(\T_l)|\leq N}\sum_{\T_r\in \mathcal{S}_{> N}}(-1)^{|\mathcal{O}(\T_l)|+|\mathcal{O}(\T_r)|}\int_0^t\mathbf{S}_{t-s}\mathbf{Q}\Big(\mathbf{C}_{f(0)}[\T_l](s),\mathbf{C}_{f(0)}[\T_r,\mathcal{L}^{(N)}(\T_r)=g](s)\Big)ds\\
&+\sum_{\T_l\in \mathcal{S}_{> N}}\sum_{\T_r:|\mathcal{L}(\T_r)|\leq N}(-1)^{|\mathcal{O}(\T_l)|+|\mathcal{O}(\T_r)|}\int_0^t\mathbf{S}_{t-s}\mathbf{Q}\Big(\mathbf{C}_{f(0)}[\T_l,\mathcal{L}^{(N)}(\T_l)=g](s),\mathbf{C}_{f(0)}[\T_r](s)\Big)ds\\
&+\sum_{\T_l\in \mathcal{S}_{>N}}\sum_{\T_r\in \mathcal{S}_{>N}}(-1)^{|\mathcal{O}(\T_l)|+|\mathcal{O}(\T_r)|}\int_0^t\mathbf{S}_{t-s}\mathbf{Q}\Big(\mathbf{C}_{f(0)}[\T_l,\mathcal{L}^{(N)}(\T_l)=g](s),\mathbf{C}_{f(0)}[\T_r,\mathcal{L}^{(N)}(\T_r)=g](s)\Big)ds
\end{split}
\end{equation}
The first line of the RHS of \eqref{eq:inserted_duhamel} contains the following summation
\begin{equation}
\sum_{\T:|\mathcal{L}(\T)|\leq N+1}(-1)^{|\mathcal{O}(\T)|}\mathbf{C}_{f(0)}[\T](t).
\end{equation}
The rest of the RHS of \eqref{eq:inserted_duhamel} can be organized as a summation over $\T\in \mathcal{S}_{>N+1}$, which is constructed as the following (here we temporarily write $\mathcal{L}^{(N)}(\T)=\varnothing$ if $|\mathcal{L}(\T)|\leq N$)
\begin{equation}
\begin{split}
&\mathcal{S}_{>N+1}:=\Big\{\T\Big| \mathcal{T}_i\in \mathcal{S}_{>N}\textup{ or }|\mathcal{L}(\T_i)|\leq N,\ \textup{for any }i\in\{l,r\} \Big\}\setminus\Big\{\mathcal{T}\Big|\ |\mathcal{L}(\T)|\leq N+1\Big\},\\
&\mathcal{L}^{(N+1)}(\T):=\mathcal{L}^{(N)}(\T_l)\cup\mathcal{L}^{(N)}(\T_r).
\end{split}
\end{equation}
\end{proof}

Lemma \ref{lem:iterated_duhamel} allows us to compare $f(t)$ and $g(t)$ with arbitrarily high accuracy. For arbitrary $N\geq 1$, we can compare $f(t)$ and $g(t)$
\begin{equation}
\begin{split}
&f(t)=\underbrace{\sum_{\T:|\mathcal{L}(\T)|\leq N}(-1)^{|\mathcal{O}(\T)|}\mathbf{C}_{f(0)}[\T](t)}_{\textup{\blue{common part}}}+\sum_{\T:|\mathcal{L}(\T)|\geq N+1}(-1)^{|\mathcal{O}(\T)|}\mathbf{C}_{f(0)}[\T](t)\\
&g(t)=\underbrace{\sum_{\T:|\mathcal{L}(\T)|\leq N}(-1)^{|\mathcal{O}(\T)|}\mathbf{C}_{f(0)}[\T](t)}_{\textup{\blue{common part}}}+\underbrace{\sum_{\T\in \mathcal{S}_{> N}}(-1)^{|\mathcal{O}(\T)|}\mathbf{C}_{f(0)}\big[\T,\mathcal{L}^{(N)}(\T)=g\big](t)}_{\textup{\blue{some leaves replaced by $g$}}}.
\end{split}
\end{equation}
Choose $(\gamma_{\dagger},\delta_{\dagger})$ small enough determined by $(\gamma_*,\delta_*)$
\begin{equation}
\lVert f(t)-g(t)\rVert_{\mathbf{L}_{\gamma_\dagger,\delta_\dagger}^1}\leq \sum_{\T:|\mathcal{L}(\T)|\geq N+1}\Big\lVert\mathbf{C}_{f(0)}[\T](t)\Big\rVert_{\mathbf{L}_{\gamma_\dagger,\delta_\dagger}^1}+\sum_{\T\in \mathcal{S}_{> N}}\Big\lVert\mathbf{C}_{f(0)}\big[\T,\mathcal{L}^{(N)}(\T)=g\big](t)\Big\rVert_{\mathbf{L}_{\gamma_\dagger,\delta_\dagger}^1}
\end{equation}
Defining function $G$ as
\begin{equation}
G(t,x,v):=\max\Big\{\big(\mathbf{S}_tf(0)\big)(x,v),g(t,x,v)\Big\},
\end{equation}
we further have
\begin{equation}
\lVert f(t)-g(t)\rVert_{\mathbf{L}_{\gamma_\dagger,\delta_\dagger}^1}\leq 2\sum_{\T:|\mathcal{L}(\T)|\geq N+1}\Big\lVert\mathbf{C}_{G}[\T](t)\Big\rVert_{\mathbf{L}_{\gamma_\dagger,\delta_\dagger}^1}.
\end{equation}
Taking $N\rightarrow +\infty$, we have $f(t)=g(t)$.

\end{proof}

\subsection{Proof of Weak Stability Estimate in Theorem \ref{th:well-posedness}}

In this subsection we prove the weak stability estimate \eqref{eq:mixing_PDE_statement} stated in Theorem \ref{th:well-posedness}. The proof is based on the uniform $\timeldeltastar$ estimate derived in Proposition \ref{prop:uniform_L1_estimate}, which allows us to safely truncate the series expansion of $f_n(t)$ and $f(t)$. Then we develop a new formulation of the Carleman representation in Definition \ref{def:carleman_measure}, which allows us to decouple the singularities of initial data.

\textit{Step 1: Reduction into term-wise convergence with arbitrarily slow rate.}

For arbitrary $M\geq 1$, we can truncate the series-expansion of $f_n(t)$ and $f(t)$ by $|\mathcal{L}(\T)|\leq M$
\begin{equation}
\begin{split}
&\int_{\mathds{R}^3} \varphi(v)f_n(t,x,v)dv=\sum_{\mathcal{T}:|\mathcal{L}(\T)|\leq M}(-1)^{|\mathcal{O}(\T)|}\int_{\mathds{R}^3} \varphi(v)\mathbf{C}_{f_n^0}[\mathcal{T}](t,x,v)dv+\int \textbf{Err}_{f_n^0,M+1}(t,x,v)\varphi(v)dv\\
&\int_{\mathds{R}^3} \varphi(v)f(t,x,v)dv=\sum_{\mathcal{T}:|\mathcal{L}(\T)|\leq M}(-1)^{|\mathcal{O}(\T)|}\int_{\mathds{R}^3} \varphi(v)\mathbf{C}_{f^0}[\mathcal{T}](t,x,v)dv+\int \textbf{Err}_{f^0,M+1}(t,x,v)\varphi(v)dv
\end{split}
\end{equation}
with the error terms $\textbf{Err}_{f_n^0,M+1}$ and $\textbf{Err}_{f^0,M+1}$ defined as
\begin{equation}
\begin{split}
& \textbf{Err}_{f_n^0,M+1}(t):=\sum_{\mathcal{T}:|\mathcal{L}(\T)|\geq M+1}(-1)^{|\mathcal{O}(\T)|}\mathbf{C}_{f_n^0}[\mathcal{T}](t),\quad \textbf{Err}_{f^0,M+1}(t):=\sum_{\mathcal{T}:|\mathcal{L}(\T)|\geq M+1}(-1)^{|\mathcal{O}(\T)|}\mathbf{C}_{f^0}[\mathcal{T}](t)
\end{split}
\end{equation}
As we take $M\rightarrow +\infty$, the norms $\lVert \textbf{Err}_{f^0,M+1}\rVert_{\ldeltastar}$ and $\lVert \textbf{Err}_{f_n^0,M+1}\rVert_{\ldeltastar}$ converge to $0$ uniformly in $n$. Consequently the following convergence is also uniform in $n$
\begin{equation}
\lim_{M\rightarrow +\infty}\int \textbf{Err}_{f_n^0,M+1}(t,x,v)\varphi(v)dv=0,\quad \lim_{M\rightarrow +\infty}\int \textbf{Err}_{f^0,M+1}(t,x,v)\varphi(v)dv=0.
\end{equation}
Now it is sufficient to prove for any decorated tree $\mathcal{T}$
\begin{equation}\label{eq:mixing_termwise}
\lim_{n\rightarrow +\infty}\int_{\mathds{R}^3} \varphi(v)\mathbf{C}_{f_n^0}[\mathcal{T}](t,x,v)dv=\int_{\mathds{R}^3} \varphi(v)\mathbf{C}_{f^0}[\mathcal{T}](t,x,v)dv.
\end{equation}

\textit{Step 2: Truncation of large velocity of each term}

For any $R\geq 0$, define $\mathbf{C}_{f_n^0}^{\leq R}[\T](t,x,v)$ and $\mathbf{C}_{f^0}^{\leq R}[\T](t,x,v)$ as the truncated collision integrals
\begin{equation}
\begin{split}
&\mathbf{C}_{f_n^0}^{\leq R}[\T](t,x,v):=\int \Bigg[\prod_{\l\in \mathcal{L}(\T)}\Big(f_n^0\big(\x_{\l},v_{\l}\big)\mathds{1}_{|v_{\l}|\leq R}\Big)\Bigg]\mathds{1}_{0\leq s_{\mathfrak{r}(\T)}\leq t}d[v]_{\nnew^+(\mathcal{T})}d[s]_{\mathcal{B}(\T)}\\
&\mathbf{C}_{f^0}^{\leq R}[\T](t,x,v):=\int \Bigg[\prod_{\l\in \mathcal{L}(\T)}\Big(f^0\big(\x_{\l},v_{\l}\big)\mathds{1}_{|v_{\l}|\leq R}\Big)\Bigg]\mathds{1}_{0\leq s_{\mathfrak{r}(\T)}\leq t}d[v]_{\nnew^+(\mathcal{T})}d[s]_{\mathcal{B}(\T)}
\end{split}
\end{equation}
Now we prove 
\begin{equation}\label{eq:uniform_velocity_truncation}
\lim_{R\rightarrow +\infty}\sup_{n<+\infty}\Bigg|\int \varphi(v)\Big(\mathbf{C}_{f_n^0}[\T](t,x,v)-\mathbf{C}_{f_n^0}^{\leq R}[\T](t,x,v)\Big)dv\Bigg|=0.
\end{equation}
Using the fact that
\begin{equation}
\begin{split}
&\prod_{\l\in \mathcal{L}(\T)}f_n^0\big(\x_{\l},v_{\l}\big)-\prod_{\l\in \mathcal{L}(\T)}\Big(f_n^0\big(\x_{\l},v_{\l}\big)\mathds{1}_{|v_{\l}|\leq R}\Big)\\
&=\sum_{\ldif:\ldif\subset \mathcal{L}(\T),\ldif\not=\varnothing} \underbrace{\Bigg[\prod_{\l\in \mathcal{L}(\T)\setminus\ldif}\Big(f_n^0\big(\x_{\l},v_{\l}\big)\mathds{1}_{|v_{\l}|\leq R}\Big)\Bigg]}_{\textup{\blue{velocity below truncation}}} \underbrace{\Bigg[\prod_{\l\in \ldif}\Big(f_n^0\big(\x_{\l},v_{\l}\big)\mathds{1}_{|v_{\l}|> R}\Big)\Bigg]}_{\textup{\blue{velocity above truncation}}},
\end{split}
\end{equation}
we can decompose the difference $\mathbf{C}_{f_n^0}[\T]-\mathbf{C}_{f_n^0}^{\leq R}[\T]$
\begin{equation}
\begin{split}
&\mathbf{C}_{f_n^0}[\T]-\mathbf{C}_{f_n^0}^{\leq R}[\T]\\
&=\sum_{\ldif:\ldif\subset \mathcal{L}(\T),\ldif\not=\varnothing}\int \underbrace{\Bigg[\prod_{\l\in \mathcal{L}(\T)\setminus\ldif}\Big(f_n^0\big(\x_{\l},v_{\l}\big)\mathds{1}_{|v_{\l}|\leq R}\Big)\Bigg]}_{\textup{\blue{velocity below truncation}}} \underbrace{\Bigg[\prod_{\l\in \ldif}\Big(f_n^0\big(\x_{\l},v_{\l}\big)\mathds{1}_{|v_{\l}|> R}\Big)\Bigg]}_{\textup{\blue{velocity above truncation}}}\mathds{1}_{0\leq s_{\mathfrak{r}(\T)}\leq t}d[v]_{\nnew^+(\mathcal{T})}d[s]_{\mathcal{B}(\T)}\\
&\leq e^{-\frac{\gamma}{2}R^2}\sum_{\ldif:\ldif\subset \mathcal{L}(\T),\ldif\not=\varnothing}\int \Bigg[\prod_{\l\in \mathcal{L}(\T)\setminus\ldif}\Big(f_n^0\big(\x_{\l},v_{\l}\big)\mathds{1}_{|v_{\l}|\leq R}e^{\frac{\gamma}{2}|v_{\l}|}\Big)\Bigg] \Bigg[\prod_{\l\in \ldif}\Big(f_n^0\big(\x_{\l},v_{\l}\big)\mathds{1}_{|v_{\l}|> R}e^{\frac{\gamma}{2}|v_{\l}|^2}\Big)\Bigg]\mathds{1}_{0\leq s_{\mathfrak{r}(\T)}\leq t}d[v]_{\nnew^+(\mathcal{T})}d[s]_{\mathcal{B}(\T)}
\end{split}
\end{equation}

\textit{Step 3: Reversed Carleman measure.}

\begin{definition}
For any collisional decorated tree $\T$, the \underline{reversed Carleman measure} $d^{\uparrow}[v]_{\mathcal{N}^+(\T)}$ is defined recursively. If $\mathcal{T}$ is a single node $\T=\circ$, then $\revcar{\circ}=dv$. If the root $\mathfrak{r}(\T)$ of $\T$ is a branching collision node, then denote $\T_l$ as the left sub-tree and $\T_r$ as the right sub-tree
\begin{equation}
\revcar{\T}:= \pi|v_{l}-v_r|\overline{d\pi}_{\mathtt{S}_{v_l,v_r}}(v_{\mathfrak{r}(\T)})\revcar{\T_l}\revcar{\T_r},\quad v_l:=v_{\mathfrak{r}(\T_l)},\quad v_r:=v_{\mathfrak{r}(\T_r)}.
\end{equation}
Take an arbitrary decorated tree $\T$, with the sequence of collisional decorated trees $\{\T_{i}^+\}_{0\leq i\leq m}$ in Definition \ref{def:tree_decompose}. Define $d^{\uparrow}[v]_{\mathcal{N}^+(\T)}$ as
\begin{equation}\label{eq:carleman_measure_of_velocities_reversed}
d^{\uparrow}[v]_{\mathcal{N}^+(\T)}:=\Big(\prod_{\o\in \mathcal{O}(\T)}\pi|v_{\oold}-v_{\onew}|\Big)d^{\uparrow}[v]_{\mathcal{N}^+(\T_{0}^+)}\prod_{1\leq i\leq m}d^{\uparrow}[v]_{\mathcal{N}^+(\T_i^+)}.
\end{equation}
\end{definition}

By induction we can prove the reversed Carleman measure is equivalent to the Carleman measure in Definition \ref{def:carleman_measure}
\begin{equation}\label{eq:reversed_carleman_equivalence}
\int H\Big([v]_{\mathcal{N}^+(\T)}\Big)d[v]_{\mathcal{N}^+(\T)}=\int H\Big([v]_{\mathcal{N}^+(\T)}\Big)d^{\uparrow}[v]_{\mathcal{N}^+(\T)}.
\end{equation}
Indeed, given a decorated tree $\T$ with the left sub-tree $\T_l$ and the right sub-tree $\T_r$, assume the root $\mathfrak{r}(\T)$ is a collision node. Suppose $\T_l$ and $\T_r$ separately satisfy \eqref{eq:reversed_carleman_equivalence}. Using $v_l$ and $v_r$ to denote $v_{\mathfrak{r}(\T_l)}$ and $v_{\mathfrak{r}(\T_r)}$, we have
\begin{equation}
\begin{split}
\int H\Big([v]_{\mathcal{N}^+(\T)}\Big)d[v]_{\mathcal{N}^+(\T)}&=\int H\Big([v]_{\mathcal{N}^+(\T)}\Big)d[v]_{\mathcal{N}_{\new}^+(\T_l)}d[v]_{\mathcal{N}_{\new}^+(\T_r)} |v_l-v_{\mathfrak{r}(\T)}|^{-1}d\pi_{v_{\mathfrak{r}(\T)}+(v_l-v_{\mathfrak{r}(\T)})^{\perp}}(v_r)dv_ldv_{\mathfrak{r}(\T)}\\
&=\pi\int H\Big([v]_{\mathcal{N}^+(\T)}\Big) |v_l-v_r|\overline{d\pi}_{\mathtt{S}_{v_l,v_r}}(v_{\mathfrak{r}(\T)})d[v]_{\mathcal{N}^+(\T_l)}d[v]_{\mathcal{N}^+(\T_r)}
\end{split}
\end{equation}
The reversed Carleman measure $\revcar{\T}$ can be further parametrized by (see Figure \ref{fig:collision_sphere-reverse})
\begin{equation}
\Big(\prod_{\n\in \mathcal{B}^+(\T)}d\upsigma_{\n}\Big)\prod_{\l\in \mathcal{L}(\T)}dv_{\l}.
\end{equation}

\begin{figure}[h]
    \centering
    \includegraphics[width=0.6\linewidth]{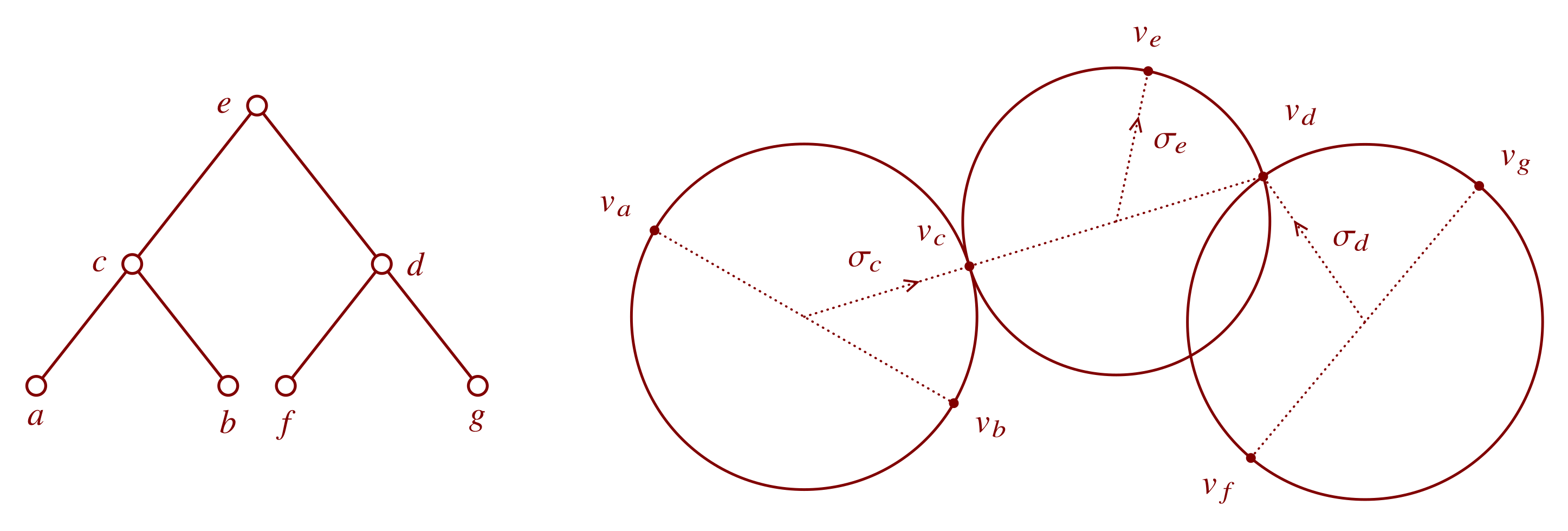}
    \caption{}
    \label{fig:collision_sphere-reverse}
\end{figure}
Therefore there exists a continuous function $J_{\mathcal{T}}$ of variables $[\upsigma]_{\mathcal{B}^+(\T)},[v]_{\mathcal{L}(\T)}$ such that
\begin{equation}
\int H\Big([v]_{\mathcal{N}^+(\T)}\Big)d[v]_{\mathcal{N}^+(\T)}=\int H\Big([v]_{\mathcal{N}^+(\T)}\Big)J_{\mathcal{T}}\Big([\upsigma]_{\mathcal{B}^+(\T)},[v]_{\mathcal{L}(\T)}\Big)\Big(\prod_{\n\in \mathcal{B}^+(\T)}d\upsigma_{\n}\Big)\prod_{\l\in \mathcal{L}(\T)}dv_{\l}
\end{equation}
\textit{Step 4: Convergence of the truncated Boltzmann integral.}

Notice that for each $\l\in \mathcal{L}(\T)$, the initial position $\x_\l$  is a function of $\{\upsigma_{\n}\}$, $\{v_\l\}$, and $\{s_{\n}\}$. We show for any $\{\upsigma_{\n}\}_{\n\in \mathcal{B}^+(\T)}$ and $\{s_{\n}\}_{s\in \mathcal{B}(\T)}$
\begin{equation}
\lim_{n\rightarrow +\infty}\int \varphi(v)J_{\mathcal{T}}\prod_{\l\in \mathcal{L}(\T)}\Big(f_n^0(\x_\l,v_\l)\mathds{1}_{|v_\l|\leq R}\Big)\prod_{\l\in \mathcal{L}(\T)}dv_\l=\int \varphi(v)J_{\mathcal{T}}\prod_{\l\in \mathcal{L}(\T)}\Big(f^0(\x_\l,v_\l)\mathds{1}_{|v_\l|\leq R}\Big)\prod_{\l\in \mathcal{L}(\T)}dv_\l.
\end{equation}
Decompose the truncated Boltzmann integral into integrations over small cubes
\begin{equation}
\begin{split}
&\sum_{[\mathtt{p}]_{\mathcal{L}(\T)}\in \big(\ell\mathds{Z}^3\big)^{|\mathcal{L}(\T)|}}\int \varphi(v)J_{\mathcal{T}}\Bigg[\prod_{\l\in \mathcal{L}(\T)}\Big(f_n^0\big(\x_{\l},v_{\l}\big)\mathds{1}_{|v_{\l}|\leq R}\mathds{1}_{v_{\l}\in Q_{\mathtt{p}_\l}}\Big)\Bigg]\prod_{\l\in \mathcal{L}(\T)}dv_{\l},
\end{split}
\end{equation}
We have
\begin{equation}
\begin{split}
&\lim_{\ell\rightarrow 0}\Bigg\{\sum_{[\mathtt{p}]_{\mathcal{L}(\T)}\in \big(\ell\mathds{Z}^3\big)^{|\mathcal{L}(\T)|}}\int \varphi(v)J_{\mathcal{T}}\Bigg[\prod_{\l\in \mathcal{L}(\T)}\Big(f_n^0\big(\x_{\l}^{\mathtt{p}},v_{\l}\big)\mathds{1}_{|v_{\l}|\leq R}\mathds{1}_{v_{\l}\in Q_{\mathtt{p}_\l}}\Big)\Bigg]\prod_{\l\in \mathcal{L}(\T)}dv_{\l}\\
&\hspace{5cm}-\int \varphi(v)J_{\mathcal{T}}\prod_{\l\in \mathcal{L}(\T)}\Big(f_n^0(\x_\l,v_\l)\mathds{1}_{|v_\l|\leq R}\Big)\prod_{\l\in \mathcal{L}(\T)}dv_\l\Bigg\}=0.
\end{split}
\end{equation}
with 
\begin{equation}
\x_l^{\mathtt{p}}\textup{defined as the initial position determined by the fixed $\{\upsigma_{\n}\}_{\n\in \mathcal{B}^+(\T)}$, $\{s_{\n}\}_{s\in \mathcal{B}(\T)}$, and }\{v_{\l}\}_{\l\in \mathcal{L}(\T)}=\{\mathtt{p}_{\l}\}_{\l\in \mathcal{L}(\T)}.
\end{equation}
So we only need to prove 
\begin{equation}
\lim_{n\rightarrow +\infty}\int \varphi(v)J_{\mathcal{T}}\Bigg[\prod_{\l\in \mathcal{L}(\T)}\Big(f_n^0\big(\xp_{\l},v_{\l}\big)\mathds{1}_{|v_{\l}|\leq R}\mathds{1}_{v_{\l}\in Q_{\mathtt{p}_\l}}\Big)\Bigg]\prod_{\l\in \mathcal{L}(\T)}dv_\l=\int \varphi(v)J_{\mathcal{T}}\Bigg[\prod_{\l\in \mathcal{L}(\T)}\Big(f^0\big(\xp_{\l},v_{\l}\big)\mathds{1}_{|v_{\l}|\leq R}\mathds{1}_{v_{\l}\in Q_{\mathtt{p}_\l}}\Big)\Bigg]\prod_{\l\in \mathcal{L}(\T)}dv_\l,
\end{equation}
which can be derived by the condition on the initial data.

\section{Kinetic Limit and Weak Stability of Hard Sphere Gas with Initial Data}

In this section, we prove the kinetic limit and weak stability of hard sphere gas with initial data $f^0\in \ldelta(\mathds{T}^3\times \mathds{R}^3)$, stated in Theorem \ref{th:kinetic_limit}. More precisely with $T_*$ determined by $\gamma$ and $\lVert f^0\rVert_{\mathbf{L}_{\gamma,+\infty}^1}$, the empirical measure $\pi_T^{\eps}$ in \eqref{eq:empirical_measure} converges in probability for any $T\in[0,T_*]$ to the unique mild solution $f(T)$ of Boltzmann equation with initial data $f^0$
\begin{equation}\label{eq:kinetic_limit_sec6}
\lim_{\eps\rightarrow 0}\mathds{P}_{\eps}\Big[\Big|\pi_T^{\eps}(\psi)-\int_{\mathds{T}^3\times \mathds{R}^3}f(T,x,v)\psi(x,v)dxdv\Big|>\delta\Big]=0,\quad \textup{for any test function $\psi$ and any $\delta>0$}.
\end{equation}
The weak stability follows directly combining \eqref{eq:kinetic_limit_sec6} the weak stability of Boltzmann equation in Theorem \ref{th:well-posedness} by a diagonal argument.

The existence of a unique solution $f(T)$ is given by Theorem \ref{th:well-posedness}, since $f^0\in L_{\gamma,+\infty}^1(\mathds{T}^3\times \mathds{R}^3)$ can be seen as a periodic-in-space function on $\mathds{R}^3\times \mathds{R}^3$. We choose to work with hard sphere gas on the torus $\mathds{T}^3$ because the mass of $f^0$ on $\mathds{T}^3$ is directly finite. In this section, we assume $\lVert f^0\rVert_{\mathbf{L}_{\gamma,+\infty}^1}\leq 1$ for convenience of exposition, while the proof for $\lVert f^0\rVert_{\mathbf{L}_{\gamma,+\infty}^1}> 1$ is exactly the same. 

\begin{definition}\label{def:graph_of_dynamical_interaction}
For arbitrary positive integer $m\geq 1$, and given $m$ hard sphere particles with initial configurations $\{z_{i,0}\}_{1\leq i\leq m}$, their dynamics follow the laws of motion \eqref{eq:hard_sphere_law_1}-\eqref{eq:hard_sphere_law_2} and we have a set of trajectories $\{\mathbf{z}_{i}([0,T])\}_{1\leq i\leq m}$. Given a set of trajectories, the \underline{graph of dynamical interactions} is built as the following: the set of vertices is $\{1,...,m\}$, and we add an edge $\{i,j\}$ if particles $i$ and $j$ collide during the time $[0,T]$. If there are multiple collisions between $i$ and $j$ then we add multiple egdes.

We write $(x_{i,t},v_{i,t})$ as the position and velocity of a particle at time $t$ with label $i$, and $z_{i,t}$ as the abbreviation of it. Given any positive integer $N$, we write $(X_{N,t},V_{N,t})$ as the positions and velocities of the $N$ particles at time $t$. We write $\mathbf{z}_i$ as the trajectory of the particle with label $i$ during a given time interval $[0,T]$, and $\mathbf{Z}_{N}$ as the trajectories of $N$ particles.
\end{definition}

\begin{figure}[h]
    \centering
    \includegraphics[width=0.8\linewidth]{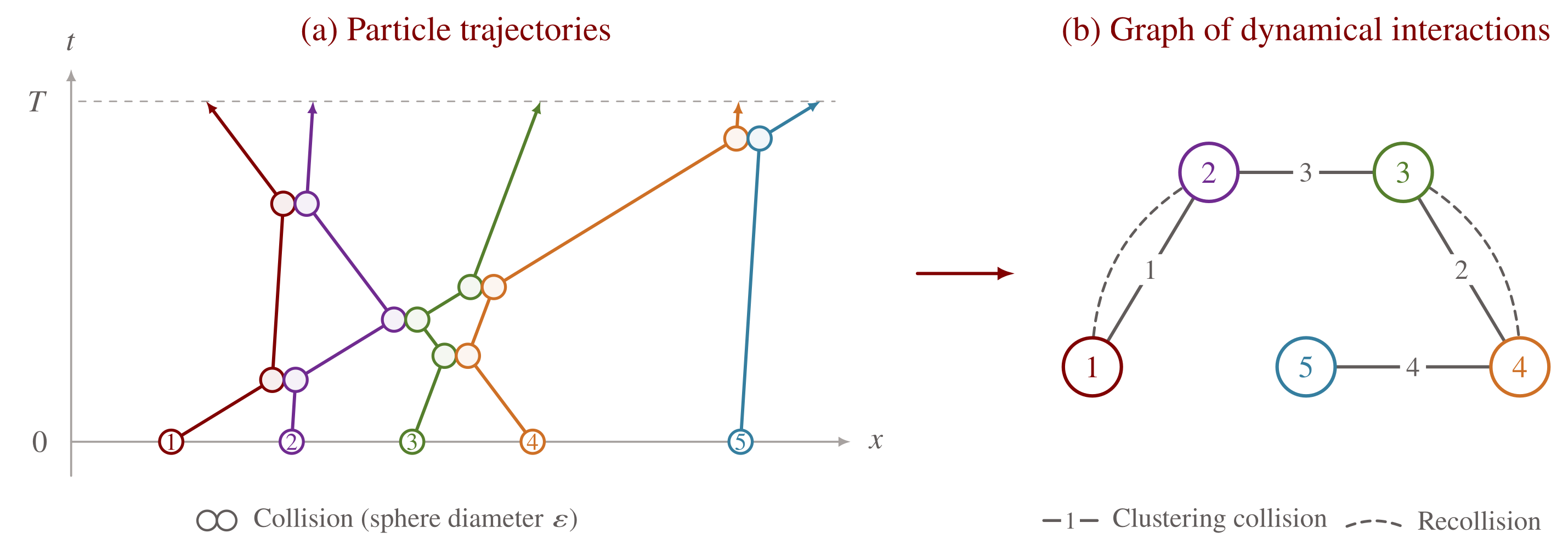}
    \caption{The left graph is for the physical trajectories of particles. The right graph is the corresponding graph of dynamical interactions.}
    \label{fig:dynamical_interactions}
\end{figure}

Given any parameters $(\eps>0,T>0)$ and any smooth and compactly supported test function $\psi(x,v)$, define the cumulant generating functional $\Lambda_{T}^{\eps}(e^{\psi})$ as
\begin{equation}\label{eq:cumulant_generating}
\Lambda_{T}^{\eps}(e^{\psi}):=\frac{1}{\mu_{\eps}}\log\mathds{E}_{\eps}\Bigg[\exp\Bigg(\sum_{1\leq i\leq \mathbf{N}}\psi(z_{i,T})\Bigg)\Bigg]
\end{equation}
The cumulant generating functional is specifically useful because the derivatives of it describe the behavior of the empirical measure $\pi_T^{\eps}$ in \eqref{eq:empirical_measure}, see \cite[Equation (51) and (54)]{bodineau2022cluster}
\begin{equation}\label{eq:derivative_of_CGF}
\p_u\Lambda_T^{\eps}(e^{u\psi})\big|_{u=0}=\mathds{E}_{\eps}\Big[\pi_T^{\eps}(\psi)\Big],\quad \frac{1}{\mu_{\eps}}\p_{u}^2\Lambda_T^{\eps}(e^{u\psi})\big|_{u=0}=\mathds{E}_{\eps}\Bigg[\Big(\pi_T^{\eps}(\psi)-\mathds{E}_{\eps}[\pi_T^{\eps}(\psi)]\Big)^2\Bigg].
\end{equation}
Furthermore, by taking higher order derivatives of $\Lambda_T^{\eps}(e^{u\psi})$ and their limiting behavior as $\eps\rightarrow 0$, we can recover all the cumulants of hard sphere gas, and this leads to the study of dynamical fluctuations and large deviations. See for example \cite{Bodineau_Laure_Gallagher_Simonella_2023,qi2024global}. For the derivation of Boltzmann equation \eqref{eq:Boltzmann}, it is enough to study the first and second order derivatives. Precisely we need to prove
\begin{equation}\label{eq:CGF_goal}
\underbrace{\lim_{\eps\rightarrow 0}\p_{u}\Lambda_T^{\eps}(e^{u\psi})\big|_{u=0}=\int f(T,x,v)\psi(x,v)dvdx}_{\textup{\blue{convergence of expection of $\pi_t^{\eps}$}}},\quad \underbrace{\lim_{\eps\rightarrow 0}\frac{1}{\mu_{\eps}}\p_{u}^2\Lambda_T^{\eps}(e^{u\psi})\big|_{u=0}=0}_{\textup{\blue{concentration of $\pi_t^{\eps}$}}}.
\end{equation}

Define the modified partition function $\mathcal{Z}_{T}^{\eps}(e^{\psi})$ as (recall $\mathcal{D}_N^{\eps}$ in \eqref{eq:exclusion_domain})
\begin{equation}\label{eq:modified_partition_function}
\mathcal{Z}_{T}^{\eps}(e^{\psi}):=1+\sum_{N\geq 1}\frac{\mu_{\eps}^N}{N!}\int_{\mathds{T}^{3N}\times \mathds{R}^{3N}}\mathds{1}_{\mathcal{D}_{N}^{\eps}}\prod_{1\leq i\leq N}\Bigg(e^{\psi}(z_{i,T})f^0(z_{i,0})\Bigg)dZ_{N,0},
\end{equation}
and the cumulant generating functional equals
\begin{equation}
\Lambda_T^{\eps}(e^{\psi})=\frac{1}{\mu_{\eps}}\log\Bigg(\frac{\mathcal{Z}_{T}^{\eps}(e^{\psi})}{\mathcal{Z}_T^{\eps}(1)}\Bigg)=\frac{1}{\mu_{\eps}}\log\mathcal{Z}_T^{\eps}(e^{\psi})-\frac{1}{\mu_{\eps}}\log\mathcal{Z}_T^{\eps}(1).
\end{equation}
Therefore to prove the kinetic limit in Theorem \ref{th:kinetic_limit}, it is sufficient to prove
\begin{equation}\label{eq:derivative_of_partition_function}
\lim_{\eps\rightarrow 0}\p_u\Big(\frac{1}{\mu_{\eps}}\log\mathcal{Z}_T^{\eps}(e^{\psi})\Big)\Big|_{u=0}=\int f(T,x,v)\psi(x,v)dvdx,\quad \lim_{\eps\rightarrow 0}\p_u^2\Big(\frac{1}{\mu_{\eps}}\mathcal{Z}_T^{\eps}(e^{\psi})\Big)\Big|_{u=0}=0.
\end{equation}
In Section \ref{sec:cgf_to_cluster} we prove a uniform-in-$\eps$ bound on $\mu_{\eps}^{-1}\log\mathcal{Z}_T^{\eps}(e^{\psi})$, expressing it as an integral over particle trajectories with collisions and overlaps. In Section \ref{sec:recollision} we show the uniform-in-$\eps$ convergence of $\mu_{\eps}^{-1}\log\mathcal{Z}_T^{\eps}(e^{\psi})$ to a limiting functional $\Lambda_T^{0}(e^{\psi})$. In Section \ref{sec:derivation} we show 
\begin{equation}
\underbrace{\lim_{\eps\rightarrow 0}\p_{u}\Lambda_T^{0}(e^{u\psi})\big|_{u=0}=\int f(T,x,v)\psi(x,v)dvdx}_{\textup{\blue{convergence of expection of $\pi_t^{\eps}$}}},\quad \underbrace{\lim_{\eps\rightarrow 0}\frac{1}{\mu_{\eps}}\p_{u}^2\Lambda_T^{0}(e^{u\psi})\big|_{u=0}=0}_{\textup{\blue{concentration of $\pi_t^{\eps}$}}},
\end{equation}
which concludes the proof of kinetic limit in Theorem \ref{th:kinetic_limit}. The weak stability follows by a diagonal argument applied to Theorem \ref{th:well-posedness}.

\subsection{Cumulant Generating Functional as an Integral over Clusters}\label{sec:cgf_to_cluster}

Define $\mathds{D}\big([0,T];\mathds{T}^{3}\times \mathds{R}^3\big)$ as the space of particle trajectories, which consists of trajectories $(x_{t},v_t)_{0\leq t\leq T}$ that are piece-wise linear continuous in position $x\in\mathds{T}^3$ and piece-wise constant in velocity $v\in\mathds{R}^3$. A cluster $\lambda$ is a set of particle trajectories , or more precisely a finite subset of $\mathds{D}\big([0,T];\mathds{T}^3\times \mathds{R}^3\big)$, where initially particles do not overlap, then follow the law of hard sphere gas in \eqref{eq:hard_sphere_law_1}-\eqref{eq:hard_sphere_law_2}, and do not collide with particles in other clusters. The elements in $\lambda$ are indexed by $\{1,...,|\lambda|\}$
\begin{equation}
\lambda=\{(x_{1,t},v_{1,t})_{0\leq t\leq T},...,(x_{|\lambda|,t},v_{|\lambda|,t})_{0\leq t\leq T}\},\quad \textup{with each $(x_{i,t},v_{i,t})_{0\leq t\leq T}\in \mathds{D}\big([0,T];\mathds{T}^3\times \mathds{R}^3\big)$}.
\end{equation}
Given arbitrary parameters $(\eps>0,T>0)$, define the indicator function $\indicator$ as 
\begin{equation}
\indicator:=
\left\{
\begin{array}{cc}
 1    & \textup{the graph of dynamical interactions in Definition \ref{def:graph_of_dynamical_interaction} of $\lambda$ is connected} \\
 0    & \textup{otherwise}
\end{array}
\right.
\end{equation}

Denote $\boldsymbol{\mathcal{G}}_k$ as the set of graphs on the vertices $\{1,...,k\}$, and $\mathcal{C}_k$ as the set of connected graphs on the vertices $\{1,...,k\}$. Define function $\varphi_{\eps}(\lambda_1,...,\lambda_k)$ as
\begin{equation}
\varphi_{\eps}(\lambda_1,...,\lambda_k):=
\left\{
\begin{array}{lr}
 1    & \textup{ if $k=1$} \\
 \sum_{\mathcal{G}\in \mathcal{C}_k}\prod_{\{j,j'\}\in E(\mathcal{G})}\big(-\mathds{1}_{\lambda_j\sim_{\eps}\lambda_{j'}}\big)    & \textup{ if $k>1$}
\end{array}
\right.
\end{equation}
with the indicator function $\mathds{1}_{\lambda_j\sim_{\eps}\lambda_{j'}}$ defined as
\begin{equation}
\left\{
\begin{array}{lr}
1     & \textup{at some time $[0,T]$, one particle in $\lambda_j$ has distance $\leq \eps$ to a particle in $\lambda_{j'}$} \\
0     & \textup{otherwise}
\end{array}
\right.
\end{equation}
Given parameters $\gamma>0$, we choose a small enough constant $\chard$, and when we consider smooth test function $\psi:\mathds{R}^3\times \mathds{R}^3\mapsto\mathds{R}$ we always assume
\begin{equation}\label{eq:test_function_bound}
\lVert \psi\rVert_{L_{x,v}^{\infty}}\leq \chard.
\end{equation}
Given any parameters $(\eps>0,T>0)$ and any test function $\psi(x,v)$, define cluster measure $\vepst^{(\psi)}(\lambda)$ as
\begin{equation}\label{eq:cluster_measure_def}
\vepst^{(\psi)}(\lambda):=\frac{\mu_{\eps}^{|\lambda|}}{|\lambda|!}\indicator\prod_{i\leq |\lambda|}\Big(\exp\big(\psi(z_{i,T})\big)f^0\big(z_{i,0}\big)\Big)dZ_{|\lambda|,0}
\end{equation}

\begin{lemma}\label{lem:modified_partition_function}
For any smooth function $\psi(x,v)$ satisfying \eqref{eq:test_function_bound}, the modified partition function $\mathcal{Z}_T^{\eps}(e^{\psi})$ in \eqref{eq:modified_partition_function} satisfies
\begin{equation}
\mathcal{Z}_T^{\eps}(e^{\psi})=\exp\Bigg(\sum_{m\geq 1}\frac{1}{m!}\int \varphi_{\eps}(\lambda_1,...,\lambda_m)d\vepst^{(\psi)}(\lambda_1)...d\vepst^{(\psi)}(\lambda_m)\Bigg).
\end{equation}
\end{lemma}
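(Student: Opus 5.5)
We prove Lemma \ref{lem:modified_partition_function} by the standard two-level cluster expansion, in the spirit of \cite{bodineau2022cluster}. First, we decompose each configuration of $N$ particles according to its dynamical interactions. Fix $N$ and a configuration $Z_{N,0}\in\mathcal{D}_N^\eps$ whose hard sphere dynamics on $[0,T]$ is well defined; this holds for a.e.\ configuration by \cite{alexander1975infinite}. The graph of dynamical interactions of Definition \ref{def:graph_of_dynamical_interaction} on $\{1,\dots,N\}$ splits the particles into connected components $\lambda_1,\dots,\lambda_k$. Each component evolves as an isolated hard sphere system: its particles never collide with particles outside it. Hence the trajectories inside $\lambda_j$ coincide with the $|\lambda_j|$-particle dynamics started from the restricted initial data, and each $\lambda_j$ is an $\eps$-cluster on $[0,T]$.

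Conversely, take any set partition $\{1,\dots,N\}=\sqcup_j\lambda_j$ and run each block separately with the hard sphere dynamics. The separately evolved system reproduces the true $N$-particle dynamics exactly when two conditions hold: each block is an $\eps$-cluster, and no two blocks ever come within distance $\eps$ during $[0,T]$, i.e.\ $\mathds{1}_{\lambda_j\sim_\eps\lambda_{j'}}=0$ for all $j\neq j'$. For the converse direction, a first approach on a pair of blocks would create a collision not present in the separate evolution. Initial non-overlap is built into the condition at $t=0$. This gives, for a.e.\ $Z_{N,0}$,
\begin{equation*}
\mathds{1}_{\mathcal{D}_N^\eps}\prod_{i\le N}e^{\psi(z_{i,T})}f^0(z_{i,0})=\sum_{\{\lambda_1,\dots,\lambda_k\}\text{ partition}}\ \prod_{j}\Big(\mathds{1}_{\lambda_j\textup{ is $\eps$-cluster on $[0,T]$}}\prod_{i\in\lambda_j}e^{\psi(z_{i,T})}f^0(z_{i,0})\Big)\prod_{j<j'}\big(1-\mathds{1}_{\lambda_j\sim_\eps\lambda_{j'}}\big).
\end{equation*}
Exactly one partition contributes, namely the true component decomposition. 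We insert this identity into \eqref{eq:modified_partition_function}. Summing over $N$ with the weight $\mu_\eps^N/N!$ and over unordered set partitions is a relabeling, which turns $\mathcal{Z}_T^\eps(e^\psi)$ into
\begin{equation*}
\mathcal{Z}_T^\eps(e^\psi)=\sum_{k\ge0}\frac1{k!}\int\prod_{j<j'}\big(1-\mathds{1}_{\lambda_j\sim_\eps\lambda_{j'}}\big)\,d\vepst^{(\psi)}(\lambda_1)\cdots d\vepst^{(\psi)}(\lambda_k),
\end{equation*}
where the factors $1/|\lambda|!$ in \eqref{eq:cluster_measure_def} account for the internal labels of each block.

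Second, this is the grand partition function of a polymer gas with polymers $\lambda$, activity $\vepst^{(\psi)}$ and hard-core incompatibility $\lambda\sim_\eps\lambda'$. Expanding $\prod_{j<j'}(1+(-\mathds{1}_{\lambda_j\sim_\eps\lambda_{j'}}))$ over all graphs on $\{1,\dots,k\}$ and grouping by connected components gives the Mayer expansion: the formal logarithm equals $\sum_m\frac1{m!}\int\varphi_\eps\,d\vepst^{(\psi)\otimes m}$ with the Ursell function $\varphi_\eps$ defined above. This step is pure combinatorics (the exponential formula).

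The main obstacle is convergence, which is needed to make the formal logarithm an identity. We would verify a Kotecký--Preiss criterion of the form $\int |d\vepst^{(\psi)}|(\lambda')\,\mathds{1}_{\lambda\sim_\eps\lambda'}\,e^{a|\lambda'|}\le a|\lambda|$, equivalently the tree-graph bound $|\varphi_\eps|\le\sum_{\text{trees}}\prod\mathds{1}_{\sim_\eps}$. This requires estimating the measure of clusters. Since we are on $\mathds{T}^3$, the mass $\int f^0\,dx\,dv\le\lVert f^0\rVert_{\mathbf{L}^1_{\gamma,+\infty}}$ is finite, and $e^{\psi}\le e^{\chard}$. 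We would bound the $\vepst$-measure of an $\eps$-cluster of size $n$ via its minimally connected collision tree, paying $C^n\eps^{2(n-1)}$ per tree, as in \cite{bodineau2022cluster}. The only nonstandard feature is the singular $f^0$. We would handle it with a crude bound: in the tree-graph estimate, each collision integral over $v_j$ against a fixed velocity is controlled by the weight $|v-v_j|\le e^{|v|+|v_j|}$, and the Gaussian weight $e^{\gamma|v|^2}$ in $\mathbf{L}^1_{\gamma,+\infty}$ absorbs it, so no pointwise bound on $f^0$ is needed. Smallness of $T$ then gives convergence uniformly in $\eps$. Absolute convergence on both sides justifies the rearrangements and proves the lemma.
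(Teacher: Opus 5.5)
Your derivation of the identity is the paper's argument. It has three steps: the decomposition of $\mathds{1}_{\mathcal{D}_N^\eps}$ into dynamically connected blocks that never come within $\eps$ of each other (\eqref{eq:decomposition_of_exclusion}); the relabeling by $N!/(k!\,n_1!\cdots n_k!)$ that produces the polymer partition function \eqref{eq:partition_formula}; and the expansion of $\prod_{j<j'}(1-\mathds{1}_{\lambda_j\sim_\eps\lambda_{j'}})$ over graphs, regrouped by connected components. Your check that only the true component decomposition contributes is more careful than the paper's. The paper's proof stays at this formal level and gets convergence only afterwards, from Lemma \ref{lem:cluster_bound} (for $T,\eps$ small). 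Your final paragraph therefore plays the role of that lemma, and one step there would fail as written.

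The bound $|v-v_j|\le e^{|v|+|v_j|}$ is too crude once it is multiplied over the $n-1$ edges of a tree.
\begin{itemize}
\item A vertex of degree $d$ collects a factor $e^{d|v|}$. For $f^0\in\mathbf{L}^1_{\gamma,+\infty}$ the best available bound is $\int[f^0](v)e^{d|v|}dv\le\lVert f^0\rVert_{\mathbf{L}^1_{\gamma,+\infty}}e^{d^2/(4\gamma)}$, and already for a Maxwellian this integral grows like $e^{cd^2}$.
\item Star-shaped trees therefore produce a factor $e^{(n-1)^2/(4\gamma)}$, which no factor $C^nT^{n-1}/n!$ can compensate for any $T>0$.
\item Velocities also change at collisions, so you would have to control post-collisional speeds through the conserved energy. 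Then $e^{Cn\sqrt{\mathcal{E}_\lambda}}$ against $e^{-\gamma\mathcal{E}_\lambda}$ gives the same $e^{Cn^2/\gamma}$.
\end{itemize}

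What works is to keep the relative velocities polynomial:
\begin{itemize}
\item Use $f^0(x,v)\le[f^0](v)$ to decouple the positions.
\item Integrate the positions at fixed velocities, which gives $(C/\mu_\eps)^{n-1}(\mathcal{E}_\lambda+n)^{n-1}n^{n-1}T^{n-1}/(n-1)!$.
\item Absorb $(\mathcal{E}_\lambda+n)^{n-1}$ into half of the Gaussian weight at cost $C_\gamma^n n^n$, which the factorials compensate.
\end{itemize}
This is \cite[(43)--(44)]{bodineau2022cluster}, exactly as used in the proof of Lemma \ref{lem:cluster_bound}.

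For the same reason, a Koteck\'y--Preiss condition with right-hand side $a|\lambda|$ cannot hold uniformly. The volume of admissible positions for $\lambda'$ grows with the relative speeds involving the velocities of $\lambda$. The weight must see the energy of $\lambda$, or you work with the tree-graph bound directly as in \cite{bodineau2022cluster}.
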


\begin{proof}

The indicator function $\mathds{1}_{\mathcal{D}_{N}^{\eps}}$ can be decomposed as the following, with $\mathcal{P}_{\mathbf{Z}_N}^k$ as the set of partitions of $\{\mathbf{z}_i\}_{1\leq i\leq N}$
\begin{equation}\label{eq:decomposition_of_exclusion}
\mathds{1}_{\mathcal{D}_N^{\eps}}=\sum_{k\leq N}\sum_{\{\lambda_1,...,\lambda_k\}\in\mathcal{P}_{\mathbf{Z}_N}^k}\Bigg(\prod_{1\leq j<j'\leq k}\mathds{1}_{\lambda_j\not\sim_{\eps}\lambda_{j'}}\Bigg)\prod_{1\leq j\leq k}\Bigg(\indicatork{j}\Bigg)
\end{equation}
Denote $F^{(\psi)}(\lambda)$ as
\begin{equation}
F^{(\psi)}(\lambda):=\prod_{1\leq i\leq |\lambda|}e^{\psi}(z_{i,T})f^0(z_{i,0})
\end{equation}
Apply the decomposition of $\mathcal{D}_N^{\eps}$ in \eqref{eq:decomposition_of_exclusion} to the expression of the modified partition functions in \eqref{eq:modified_partition_function}
\begin{equation}
\mathcal{Z}_{T}^{\eps}(e^{\psi})=1+\sum_{N\geq 1}\frac{\mu_{\eps}^N}{N!}\sum_{k\geq 1}\sum_{\{\lambda_1,...,\lambda_k\}\in \mathcal{P}_{\mathbf{Z}_N}^{k}}\int_{\mathds{T}^{3N}\times \mathds{R}^{3N}}\Bigg(\prod_{1\leq j<j'\leq k}\mathds{1}_{\lambda_j\not\sim_{\eps}\lambda_{j'}}\Bigg)\prod_{1\leq j\leq k}\Bigg(\indicatork{j}F^{(\psi)}(\lambda_j)\Bigg)dZ_{N,0}
\end{equation}
Given the total number $N$ of particles and the total number $k$ of clusters, we can sum over all possible sizes $\{n_1,...,n_k\}$ of clusters with $n_1+...+n_k=N$, and $\mathcal{Z}_{T}^{\eps}(e^{\psi})$ further equals
\begin{equation}
\begin{split}
&1+\sum_{N\geq 1}\frac{1}{N!}\sum_{k\geq 1}\frac{1}{k!}\sum_{\substack{n_1,...,n_k\\ n_1+...+n_k=N}}\frac{N!}{n_1!...n_k!}\mu_{\eps}^{n_1+...+n_k}\int_{\mathds{T}^{3N}\times \mathds{R}^{3N}}\Bigg(\prod_{1\leq j<j'\leq k}\mathds{1}_{\lambda_j\not\sim_{\eps}\lambda_{j'}}\Bigg)\prod_{1\leq j\leq k}\Bigg(\indicatork{j}F^{(\psi)}(\lambda_j)dZ_{|\lambda_j|,0}\Bigg)\\
&=1+\sum_{k\geq 1}\frac{1}{k!}\int\Bigg(\prod_{1\leq j<j'\leq k}\mathds{1}_{\lambda_j\not\sim_{\eps}\lambda_{j'}}\Bigg)\prod_{1\leq j\leq k}\Bigg(\sum_{n_j\geq 1}\frac{\mu_{\eps}^{n_j}}{n_j!}\indicatork{j}F^{(\psi)}(\lambda_j)dZ_{n_j,0}\Bigg)
\end{split}
\end{equation}
Using the notion of cluster measure in \eqref{eq:cluster_measure_def}, the modified partition function $\mathcal{Z}_{T}^{\eps}(e^{\psi})$ in \eqref{eq:modified_partition_function} can be expressed by integration over cluster measures
\begin{equation}\label{eq:partition_formula}
1+\sum_{k\geq 1}\frac{1}{k!}\int \Bigg(\prod_{1\leq j<j'\leq k}\mathds{1}_{\lambda_j\not\sim_{\eps}\lambda_{j'}}\Bigg)d\vepst^{(\psi)}(\lambda_1)...d\vepst^{(\psi)}(\lambda_k)
\end{equation}
Now we further decompose 
\begin{equation}\label{eq:overlap_decomposition}
\prod_{1\leq j<j'\leq k}\mathds{1}_{\lambda_{j}\not\sim_{\eps}\lambda_{j'}}=\prod_{1\leq j<j'\leq k}\Big(1-\mathds{1}_{\lambda_{j}\sim_{\eps}\lambda_{j'}}\Big)=\sum_{\mathcal{G}\in \boldsymbol{\mathcal{G}}_{k}}\prod_{\{j,j'\}\in E(\mathcal{G})}\Big(-\mathds{1}_{\lambda_{j}\sim_{\eps}\lambda_{j'}}\Big)
\end{equation}
Apply \eqref{eq:overlap_decomposition} to \eqref{eq:partition_formula}, the modified partition function $\mathcal{Z}_{T}^{\eps}(e^{\psi})$ equals
\begin{equation}
1+\sum_{k\geq 1}\frac{1}{k!}\int \sum_{\mathcal{G}\in \boldsymbol{\mathcal{G}}_{k}}\prod_{\{j,j'\}\in E(\mathcal{G})}\Big(-\mathds{1}_{\lambda_{j}\sim_{\eps}\lambda_{j'}}\Big)d\vepst^{(\psi)}(\lambda_1)...d\vepst^{(\psi)}(\lambda_k)
\end{equation}
Given the total number $k$ of clusters, we can sum over the total number $n$ of connected components of $\mathcal{G}$, and sum over all the possible sizes $\{m_1,...,m_n\}$ of the connected components
\begin{equation}
\begin{split}
&1+\sum_{k\geq 1}\frac{1}{k!}\sum_{1\leq n\leq k}\frac{1}{n!}\sum_{\substack{\{m_1,...,m_n\}\\ m_1+...+m_n=k}}\frac{k!}{m_1!...m_n!}\int\prod_{1\leq \ell\leq n}\Bigg(\sum_{\mathcal{C}\in \mathbf{\mathcal{C}}_{m_\ell}}\prod_{\{j,j'\}\in E(\mathcal{C})}\big(-\mathds{1}_{\lambda_{\ell,j}\sim_{\eps}\lambda_{\ell,j'}}\big)d\vepst^{(\psi)}(\lambda_{\ell,1})...d\vepst^{(\psi)}(\lambda_{\ell,m_{\ell}})\Bigg)\\
&=1+\sum_{n\geq 1}\frac{1}{n!}\Bigg(\sum_{m\geq 1}\frac{1}{m!}\int \varphi_{\eps}(\lambda_1,...,\lambda_m)d\vepst^{(\psi)}(\lambda_1)...d\vepst^{(\psi)}(\lambda_{m})\Bigg)^n\\
&=\exp\Bigg(\sum_{m\geq 1}\frac{1}{m!}\int \varphi_{\eps}(\lambda_1,...,\lambda_m)d\vepst^{(\psi)}(\lambda_1)...d\vepst^{(\psi)}(\lambda_{m})\Bigg).
\end{split}
\end{equation}
This concludes the proof of Lemma \ref{lem:modified_partition_function}.
\end{proof}

Given any function $f:\mathds{T}^3\times \mathds{R}^3\mapsto \mathds{R}$, we define $[f]:\mathds{R}^3\mapsto\mathds{R}_{\geq 0}$ as
\begin{equation}
[f](v):=\sup_{x\in\mathds{T}^3}|f(x,v)|.
\end{equation}
\begin{lemma}\label{lem:cluster_bound}
For any initial data $f^0\in \mathbf{L}_{\gamma,+\infty}^1(\mathds{T}^3\times \mathds{R}^3)$, any smooth function $\psi$ satisfying \eqref{eq:test_function_bound}, any small enough $T$ and $\eps$ 
\begin{equation}\label{eq:cluster_bound}
\int d\vepst^{(\psi)}(\lambda_1) ...d\vepst^{(\psi)}(\lambda_k)\ |\varphi_{\eps}(\lambda_1,...,\lambda_k)|\leq C_{\gamma}^k\mu_{\eps}k!(T+\eps)^{k-1}.
\end{equation}
\end{lemma}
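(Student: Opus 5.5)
The plan is to follow the standard tree-inequality strategy of cluster expansions (as in Bodineau--Gallagher--Saint-Raymond--Simonella), adapted to the singular data $f^0\in\mathbf{L}_{\gamma,+\infty}^1$.

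\textbf{Step 1: tree inequality.} First we bound the Ursell function by the Penrose tree inequality,
\begin{equation*}
|\varphi_{\eps}(\lambda_1,\dots,\lambda_k)|\leq \sum_{\mathcal{G}\in\mathcal{T}_k}\prod_{\{j,j'\}\in E(\mathcal{G})}\mathds{1}_{\lambda_j\sim_{\eps}\lambda_{j'}},
\end{equation*}
where $\mathcal{T}_k$ is the set of minimally connected graphs (trees) on $\{1,\dots,k\}$. There are $k^{k-2}$ such trees. After summing over tree shapes and cluster sizes, the factor $k^{k-2}/k!$ together with the $1/k!$ symmetry will produce the $k!$ of \eqref{eq:cluster_bound} up to $C^k$.

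\textbf{Step 2: integrate the tree leaf by leaf.} For a fixed tree $\mathcal{G}$, we integrate the clusters starting from the leaves of $\mathcal{G}$ and moving toward a root cluster $\lambda_1$. Each edge $\lambda_j\sim_\eps\lambda_{j'}$, with $\lambda_{j'}$ already fixed, forces some particle of $\lambda_j$ to come within distance $\eps$ of some trajectory of $\lambda_{j'}$ during $[0,T]$. We translate $\lambda_j$ rigidly in space, which preserves the dynamics inside the cluster. The set of admissible translation vectors is then a tube of cross section $\eps^2$ around the relative trajectories, with volume at most
\begin{equation*}
C\eps^2\Big(\sum_{i\in\lambda_j}\sum_{i'\in\lambda_{j'}}\int_0^T|v_{i,t}-v_{i',t}|\,dt+\eps\Big).
\end{equation*}
The $+\eps$ accounts for clusters that already overlap at the initial time. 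Each such step carries a weight $\mu_\eps\eps^2=1$ and a factor $(T+\eps)$ times relative velocities, and this is the source of $(T+\eps)^{k-1}$. The remaining root cluster is integrated over the translation in $\mathds{T}^3$, which gives the single factor $\mu_\eps$.

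\textbf{Step 3: single-cluster bound.} It remains to bound the integral of a single cluster, weighted by the polynomial velocity factors coming from the edges, with the translation variable removed. The key point is that $f^0$ enters only through $f^0(z_{i,0})\leq [f^0](v_{i,0})$, and $[f^0]\in L^1_\gamma$ by assumption. Energy conservation inside the cluster bounds the velocity weights by $C_\gamma^{|\lambda|}e^{\frac{\gamma}{2}\sum_i|v_{i,0}|^2}$. The factor $e^{\psi}\leq e^{\chard}$ is harmless. Inside a cluster, the requirement $\indicator$ is again a connectedness condition on the collision graph. Applying the same tree-inequality and tube argument to a spanning tree of the collision graph gives a bound of the form $\sum_n C_\gamma^n\mu_\eps^n(\eps^2T)^{n-1}n^{n-2}/n!$. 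For $T$ small this series is summable, with constant $C_\gamma$.

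\textbf{Main obstacle.} The difficulty is in Step 3. The tube argument integrates positions and leaves velocity integrals of the form $\int |v_i-v_{i'}|\,[f^0](v_i)\,dv_i$. These are fine in $L^1_\gamma$, but after collisions the velocities $v_{i,t}$ are not the initial velocities. Using the sup in $x$ as a uniform envelope would need $L^\infty_v$ bounds, which we do not have. The first remedy is to integrate the initial velocities as free variables: the dynamics is measure-preserving in $Z_{n,0}$, so each relative-velocity weight can be dominated by $\sum_i|v_{i,0}|$ via energy conservation, $|v_{i,t}|\leq(\sum|v_{i,0}|^2)^{1/2}$. The resulting product is then absorbed by the Gaussian weight $e^{\gamma|v|^2}$, at the cost of $C_\gamma^n\sqrt{n!}$. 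This loss is compensated by the extra factor $T^{n-1}$, provided $T$ is small. If the overlap case of Step 2 (clusters initially within distance $\eps$) causes trouble, it is handled separately. That case produces only the $\eps$ part of $(T+\eps)$ and requires no velocity factor.
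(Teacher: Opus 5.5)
Your overall route matches the paper's. You bound $f^0(z_{i,0})\le[f^0](v_{i,0})$ with $[f^0]\in L^1_\gamma$, freeze the initial velocities, and integrate positions through rigid translations and tube volumes. You then use energy conservation and the Gaussian weight, and for $k>1$ combine the tree inequality with the single-cluster bound. The gap is the factorial bookkeeping in Step 3, which does not close.

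You spend the weight $1/n!$ of the cluster measure on the $n^{n-2}$ spanning trees. After that, the $n-1$ relative-velocity factors are controlled only through the total energy, and they cost $\sup_{\mathcal{E}\ge 0}\mathcal{E}^{\frac{n-1}{2}}e^{-\frac{\gamma}{2}\mathcal{E}}$. This is of order $C_\gamma^n\sqrt{(n-1)!}$, and of order $C_\gamma^n n!$ if you dominate each factor by $\sum_i|v_{i,0}|$ as written.

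So the claim that the velocity weights are bounded by $C_\gamma^{|\lambda|}e^{\frac{\gamma}{2}\sum_i|v_{i,0}|^2}$ is false. The remedy in your last paragraph also fails: a factor $T^{n-1}$ cannot beat $\sqrt{n!}$, because $\sum_n (C_\gamma T)^{n-1}\sqrt{n!}=+\infty$ for every $T>0$. Hence your single-cluster bound is not established, and it is also the input for $k>1$.

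The paper's count contains a second factorial, which comes from the BGSS position estimate it quotes:
\[
\int_{\mathds{T}^{3n}}\indicator\, dX_{n,0}\leq \Big(\frac{C}{\mu_{\eps}}\Big)^{n-1}\big(\mathcal{E}_{\lambda}+n\big)^{n-1}n^{n-1}\frac{T^{n-1}}{(n-1)!}.
\]
Here $n^{n-1}/(n-1)!\le e^n$ pays for the combinatorics. The full $1/n!$ of the cluster measure then remains available to pay for $\int\prod_i[f^0](v_{i,0})(\mathcal{E}_n+n)^{n-1}dV_{n,0}\le (C_\gamma\lVert f^0\rVert_{\mathbf{L}^1_{\gamma,+\infty}})^n n^n$.

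Your derivation of the intra-cluster estimate also needs repair. Bounding the connectedness indicator by a sum over spanning trees of the collision graph is harmless. However, the edge indicators record collisions of the \emph{interacting} dynamics. So integrating the position of a leaf particle with the others fixed is not a tube condition: moving that particle changes the trajectories of every particle it hits, and therefore the other edge indicators. The Penrose inequality plays no role here, since there is no alternating sum.

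Rigid translation is legitimate only between groups of particles that evolve independently until their first mutual contact. The cluster therefore has to be decomposed through its successive clustering collisions, which is the structure behind the estimate above; only then does your Step 2 tube bound apply.

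Finally, summing over particle pairs in Step 2 produces factors $|\lambda_j|^{d_j}$. These must be paid for by an exponential moment in $|\lambda|$ of the single-cluster bound, which is why the paper proves the case $k=1$ with the extra weight $e^{\frac{10}{\gamma}|\lambda|}$.
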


\begin{proof}

We first prove the case of $k=1$ for Equation \eqref{eq:cluster_bound} (we add factor $e^{\frac{10}{\gamma}|\lambda|}$ because it will be useful in the case of $k>1$)
\begin{equation}\label{eq:k=1_cluster_bound}
\int e^{\frac{10}{\gamma}|\lambda|}d\vepst(\lambda)\leq C_{\gamma}\mu_{\eps}
\end{equation}
Apply the definition of the cluster measure in \eqref{eq:cluster_measure_def} to \eqref{eq:k=1_cluster_bound}, and use the fact that $\lVert \psi\rVert_{L_{x,v}^{\infty}}$ is bounded by a constant determined by $\gamma$
\begin{equation}
\begin{split}
\int e^{\frac{10}{\gamma}|\lambda|}d\vepst(\lambda)&\leq \sum_{n\geq 1}\frac{\mu_{\eps}^n}{n!}e^{\frac{10}{\gamma}n}\int_{\mathds{T}^{3n}\times \mathds{R}^{3n}}\indicator \prod_{1\leq i\leq n}\Big(e^{\psi}(z_{i,T})f^0(z_{i,0})\Big)dZ_{n,0}\\
&\leq \sum_{n\geq 1}\frac{\mu_{\eps}^n}{n!}e^{\frac{10}{\gamma}n}\int_{\mathds{T}^{3n}\times \mathds{R}^{3n}}\indicator \prod_{1\leq i\leq n}\Big(e^{[\psi]}(v_{i,T})[f^0](v_{i,0})\Big)dZ_{n,0}\\
&\leq \sum_{n\geq 1}\frac{\mu_{\eps}^n}{n!}e^{\frac{10}{\gamma}n}\int_{\mathds{T}^{3n}\times \mathds{R}^{3n}}\indicator C_{\gamma}^n\prod_{1\leq i\leq n}[f^0](v_{i,0})dZ_{n,0}
\end{split}
\end{equation}

We can fix all the initial velocities  $V_{n,0}$ and integrate over the initial positions $X_{n,0}$
\begin{equation}\label{eq:cluster_reduction_1}
\int e^{\frac{10}{\gamma}|\lambda|}d\vepst(\lambda)\leq \sum_{n\geq 1}\frac{\mu_{\eps}^n}{n!}C_{\gamma}^n\int \prod_{1\leq i\leq n}[f^0](v_{i,0})\Big(\int_{\mathds{T}^{3n}}\indicator dX_{n,0} \Big) dV_{n,0}
\end{equation}
\cite[Equation (43)]{bodineau2022cluster} provides the following inequality. It estimates, given fixed initial velocities $V_{n,0}$, the size of initial positions $X_{n,0}$ making the graph of dynamical interactions of $n$ particles becomes connected (with $\mathcal{E}_{\lambda}$ as the total kinetic energy of cluster $\lambda$)
\begin{equation}\label{eq:integrate_relative_positions}
\int_{\mathds{T}^{3n}}\indicator dX_{n,0}\leq \Big(\frac{C}{\mu_{\eps}}\Big)^{n-1}\Big(\mathcal{E}_{\lambda}+n\Big)^{n-1}n^{n-1}\frac{T^{n-1}}{(n-1)!}
\end{equation}
Consequently we apply \eqref{eq:integrate_relative_positions} to \eqref{eq:cluster_reduction_1}
\begin{equation}\label{eq:cluster_reduction_2}
\begin{split}
&\sum_{n\geq 1}\frac{\mu_{\eps}^{n}}{n!}C_{\gamma}^n\int \prod_{1\leq i\leq n}[f^0](v_{i,0})\Big(\frac{C}{\mu_{\eps}}\Big)^{n-1}\Big(\mathcal{E}_{\lambda}+n\Big)^{n-1}n^{n-1}\frac{T^{n-1}}{(n-1)!}dV_{n,0}\\
&\leq \sum_{n\geq 1}\frac{\mu_{\eps}}{n!}C_{\gamma}\Big(C_{\gamma}T\Big)^{n-1}\int \prod_{1\leq i\leq n}[f^0](v_{i,0})\Big(\mathcal{E}_{n}+n\Big)^{n-1}dV_{n,0}
\end{split}
\end{equation}
According to \cite[Equation (44)]{bodineau2022cluster}
\begin{equation}\label{eq:cluster_reduction_3}
\begin{split}
\int \prod_{1\leq i\leq n}[f^0](v_{i,0})\Big(\mathcal{E}_{n}+n\Big)^{n-1}dV_{n,0}&\leq \int \prod_{1\leq i\leq n}\Big([f^0](v_{i,0})e^{\gamma|v_{i,0}|^2}\Big)\Bigg(e^{-\frac{\gamma}{2}\mathcal{E}_{n}}\Big(\mathcal{E}_{n}+n\Big)^{n-1}\Bigg)e^{-\frac{\gamma}{2}\mathcal{E}_{n}}dV_{n,0}\\
&\leq C_{\gamma}^nn^n\int \prod_{1\leq i\leq n}\Big([f^0](v_{i,0})e^{\gamma|v_{i,0}|^2}\Big)e^{-\frac{\gamma}{2}\mathcal{E}_{n}}dV_{n,0}\leq \Big(C_{\gamma}\lVert f^0\rVert_{\mathbf{L}_{\gamma,+\infty}^1}\Big)^nn^n
\end{split}
\end{equation}
Apply \eqref{eq:cluster_reduction_3} to \eqref{eq:cluster_reduction_2}
\begin{equation}
\int e^{\frac{10}{\gamma}|\lambda|}d\vepst^{(\psi)}(\lambda)\leq \sum_{n\geq 1}\frac{\mu_{\eps}}{n!}C_{\gamma}\Big(C_{\gamma}T\Big)^{n-1}\Big(C_{\gamma}\lVert f^0\rVert_{\mathbf{L}_{\gamma,+\infty}^1}\Big)^nn^n.
\end{equation}
Choosing $T$ small enough, we can prove the upper bound for a single cluster with $k=1$. The proof for $k>1$ can be performed in the same idea: we first fix all the initial velocities, and then integrate the relative positions of clusters. For details, see the 'case $k>1$' in the proof of \cite[Proposition II.6]{bodineau2022cluster}.
\end{proof}

Applying Lemma \ref{lem:cluster_bound} to Lemma \ref{lem:modified_partition_function}, we have the uniform-in-$\eps$ bound
\begin{equation}\label{CGF_expanded}
\frac{1}{\mu_{\eps}}\log\mathcal{Z}_T^{\eps}(e^{\psi})=\frac{1}{\mu_{\eps}}\sum_{k\geq 1}\frac{1}{k!}\int \varphi_{\eps}(\lambda_1,...,\lambda_k)d\vepst^{(\psi)}(\lambda_1)...d\vepst^{(\psi)}(\lambda_k),\quad |\frac{1}{\mu_{\eps}}\log\mathcal{Z}_T^{\eps}(e^{\psi})|\leq C_{\gamma}.
\end{equation}

\subsection{Elimination of Recollision}\label{sec:recollision}
Given arbitrary parameters $(\eps>0,T>0)$, define the indicator function $\indicatormink{}$ as 
\begin{equation}\label{eq:minimal_cluster}
\indicatormink{}:=
\left\{
\begin{array}{cc}
 1   & \textup{the graph of dynamical interactions of $\lambda$ in Definition \ref{def:graph_of_dynamical_interaction} is connected and no cycle} \\
 0    & \textup{otherwise}
\end{array}
\right.
\end{equation}
We denote the measure of minimal clusters $d\vepst^{(\psi),\min}$
\begin{equation}
\vepst^{(\psi),\min}(\lambda):=\frac{\mu_{\eps}^{|\lambda|}}{|\lambda|!}\indicatormink{}\prod_{i\leq |\lambda|}\Big(\exp\big(\psi(z_{i,T})\big)f^0\big(z_{i,0}\big)\Big)dZ_{|\lambda|,0} 
\end{equation}
Given a sequence of clusters $\{\lambda_{i}\}_{1\leq i\leq k}$, we can build a graph of dynamical interactions for clusters: the set of vertices is $\{1,...,k\}$. We add an edge $\{i,j\}$ if during the time $[0,T]$, there exists a particle $q$ in $\lambda_i$ and a particle $\Bar{q}$ in $\lambda_j$ such that
\begin{equation}
\min_{t\in[0,T]}|x_{q,t}-x_{\Bar{q},t}|\leq \eps.
\end{equation}
If there are multiple times that particles in $\lambda_i$ and $\lambda_j$ overlap with each other, then we add multiple edges.

Define function $\varphi_{\eps}^{\min}$ as
\begin{equation}\label{eq:minimal_overlap}
\varphi_{\eps}^{\min}(\lambda_1,...,\lambda_k):=
\left\{
\begin{array}{lr}
(-1)^{k-1}     & \textup{the graph of dynamical interactions for $\{\lambda_i\}_{1\leq i\leq k}$ is a tree} \\
 0    & \textup{otherwise}
\end{array}
\right.
\end{equation}

\begin{lemma}\label{lem:eliminate_recollision}
For any smooth test function $\psi$ satisfying \eqref{eq:test_function_bound}, we have
\begin{equation}\label{eq:eliminate_recollision}
\lim_{\eps\rightarrow 0}\frac{1}{\mu_{\eps}}\Bigg|\int d\vepst^{(\psi)}(\lambda_1)...d\vepst^{(\psi)}(\lambda_k)\varphi_{\eps}(\lambda_1,...,\lambda_k)-\int d\vepst^{(\psi),\min}(\lambda_1)...d\vepst^{(\psi),\min}(\lambda_k)\varphi_{\eps}^{\min}(\lambda_1,...,\lambda_k)\Bigg|=0
\end{equation}
\end{lemma}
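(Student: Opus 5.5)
The strategy is to split the difference in \eqref{eq:eliminate_recollision} into contributions carrying at least one extra cycle, and to show that each extra cycle costs a factor tending to $0$ with $\eps$. The key point is that, after dividing by $\mu_{\eps}$, this factor must be obtained uniformly despite the singular initial data.

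\textbf{Decomposition.} For fixed $k$, I would write
\[
\varphi_{\eps}\prod_j \indicatork{j}-\varphi_{\eps}^{\min}\prod_j\indicatormink{j}
\]
as a sum of terms. Each term belongs to one of two classes:
\begin{itemize}
\item the dynamical graph of some single cluster $\lambda_j$ contains a cycle (a recollision inside the cluster);
\item the graph of overlaps between clusters is connected but not a tree.
\end{itemize}
For the second class I would use the classical tree-graph (Penrose) identity: the sum over connected graphs $\mathcal{C}_k$ weighted by $\prod(-\mathds{1}_{\lambda_j\sim_\eps\lambda_{j'}})$ equals the signed sum over minimal trees, plus terms in which at least one additional overlap edge is present. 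In both classes, the integrand is dominated by $|\varphi_\eps|$ times an indicator that the total interaction graph of the $\sum_j|\lambda_j|$ trajectories contains one cycle. This reduces the claim to
\[
\frac{1}{\mu_\eps}\int \mathds{1}_{\textup{cycle}}\,d\vepst^{(\psi)}(\lambda_1)\cdots d\vepst^{(\psi)}(\lambda_k)\,|\varphi_\eps|\rightarrow 0.
\]

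\textbf{Geometric estimate.} I would redo the proof of Lemma \ref{lem:cluster_bound} with one extra constraint. Fix all initial velocities $V_{n,0}$ and integrate the relative positions along a spanning tree of the interaction graph; this reproduces the bound \eqref{eq:integrate_relative_positions}. The additional edge closing the cycle imposes one more codimension-type condition on the last free relative position, or on a collision parameter. Following \cite{bodineau2022cluster}, I would use the standard recollision estimates, which give a small factor $\eps^{\alpha}$ (possibly with logarithmic losses) times polynomial factors in $|v|$ and $T$. The essential point is that this step is performed at fixed velocities, so the initial data enters only through $\prod[f^0](v_{i,0})$. The velocity polynomials are then absorbed exactly as in \eqref{eq:cluster_reduction_3} using the weight $e^{\gamma|v|^2}$. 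Since $f^0\in\mathbf{L}^1_{\gamma,+\infty}$, the function $[f^0]$ lies in $L^1_\gamma$, and this absorption works without any $L^\infty$ bound.

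\textbf{Main obstacle.} The hard step is the recollision estimate with singular velocity data. The classical bounds exclude bad sets of velocities, such as small relative velocities or near-parallel configurations, and those sets have small Lebesgue measure. Here, however, $[f^0]$ may concentrate on exactly such sets, for instance in the two-cloud data. My plan is to take the small factor only from position integrals and deflection angles, never from velocity measures. Concretely, I would parametrize each collision by impact parameter and time, and place the constraint of the closing edge on the impact parameter of a collision on the tree. Measuring that parameter against the Lebesgue measure in $\mathds{T}^3$ positions yields $\eps^{\alpha}$ uniformly in velocities, at the cost of the inverse relative-speed factor. I would control that factor by the homogenization idea of Lemma \ref{lem:double_overlap}, which integrates the $|v_l-v_r|$ weight against the $L^1_\gamma$ data. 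If this fails, the fallback is to truncate $|v|\leq R$ and to cut out relative velocities below $\eta$. For the removed set I would use the weak-stability framework: approximate $f^0$ in the equi-continuous class and pass to the limit diagonally.

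Summing over $k$ with the $k!(T+\eps)^{k-1}$ bound from Lemma \ref{lem:cluster_bound} then gives dominated convergence and hence \eqref{eq:eliminate_recollision}.
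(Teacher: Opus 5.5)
Your reduction to configurations with one extra cycle is the same skeleton as the paper's proof. It splits into intra-cluster recollisions and non-tree overlap graphs, dominates both by $|\varphi_\eps|$ times a cycle indicator, and then counts positions at fixed initial velocities. The gap is in the step you call the main obstacle. There you try to extract the extra smallness uniformly in the velocities, and that cannot be done.

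Take $k=3$ single-particle clusters whose overlap graph is a triangle, so $\varphi_\eps=3-1=2$ and $\varphi_\eps^{\min}=0$. There are no collisions, hence no deflection angles to exploit. At fixed velocities, once $x_2$ lies in the $(1,2)$-tube, $x_3$ must lie in the intersection of two $\eps$-tubes with directions $v_3-v_1$ and $v_3-v_2$. The normalized contribution is therefore of order $T|v_2-v_1|\min\{\eps/\sin\theta,\,T|v_3-v_1|\}$, where $\theta$ is the angle at $v_3$ of the velocity triangle. This degeneracy is angular, not an inverse relative speed, and it depends on the initial velocities alone. For data of bounded $\mathbf{L}^1_{\gamma,+\infty}$-norm concentrating near a segment in velocity space, this term stays of order $T^2$ at every fixed $\eps$. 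So no rate in terms of $\|f^0\|_{\mathbf{L}^1_{\gamma,+\infty}}$ exists.

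Lemma \ref{lem:double_overlap} cannot rescue this. It homogenizes against positive weights inside Boltzmann integrals, whereas inverse weights need not even be integrable against $L^1$ data: $\iint g(v)g(w)|v-w|^{-1}dv\,dw=\infty$ for $g=|v|^{-a}\mathds{1}_{|v|\le 1}$ with $5/2\le a<3$. Your fallback also falls short as written. It removes only small relative velocities, which misses the near-collinear configurations above. It then appeals to weak stability, which is circular at the particle level, since that statement is derived from the kinetic limit, which uses this lemma. Weak convergence of data also says nothing about how much mass $[f^0]$ puts on a shrinking velocity set.

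The missing observation is that no uniformity is needed. The lemma is a limit for one fixed $f^0$, and $\prod_i[f^0](v_{i,0})\,dV_{n,0}$ is absolutely continuous with respect to Lebesgue measure. (Even the two-cloud datum \eqref{eq:two_cloud} is bounded for each fixed $\zeta$, and uniformity in $\zeta$ is never used.)

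The paper proceeds in three steps:
\begin{enumerate}
\item It reduces to fixed cluster sizes $n_1,\dots,n_k$, using the summable bounds from the proof of Lemma \ref{lem:cluster_bound}.
\item For recollisions inside clusters, it fixes the initial velocities and applies the fixed-velocity elimination of recollisions from \cite[Proposition 5.3 and 5.4]{bodineau2023long}. It concludes by dominated convergence, with the $\eps$-uniform dominating function given by \eqref{eq:integrate_relative_positions} integrated as in \eqref{eq:cluster_reduction_3}.
\item For cycles between clusters, it moves clusters rigidly through their centers of mass along a spanning tree of overlaps. It restricts to the set $\mathcal{B}$ where velocity and relative-velocity directions are $|\log\eps|^{-1}$-separated. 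Inside $\mathcal{B}$ the intersection of two tubes gives a vanishing factor. Since $|\mathcal{B}^c|\to 0$, absolute continuity of the data together with the same $\eps$-uniform domination kills the contribution of $\mathcal{B}^c$.
\end{enumerate}

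Your fallback becomes essentially this proof once you make two changes. First, remove near-collinear direction configurations, not only small relative velocities. Second, control the removed set by absolute continuity of the integral, uniformly in $\eps$, instead of by weak stability.
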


\begin{proof}
For any $n\geq 1$, define $\vepst^{(\psi),(n)}$ as 
\begin{equation}
\vepst^{(\psi),(n)}(\lambda):=\mathds{1}_{|\lambda|=n}\frac{\mu_{\eps}^{n}}{n!}\indicator\prod_{i\leq n}\Big(\exp\big(\psi(z_{i,T})\big)f^0\big(z_{i,0}\big)\Big)dZ_{n,0}.
\end{equation}
We also define $d\vepst^{(\psi),\min,(n)}$
\begin{equation}
\vepst^{(\psi),\min,(n)}(\lambda):=\mathds{1}_{|\lambda|=n}\frac{\mu_{\eps}^{n}}{n!}\indicatormink{}\prod_{i\leq n}\Big(\exp\big(\psi(z_{i,T})\big)f^0\big(z_{i,0}\big)\Big)dZ_{n,0}.
\end{equation}
To prove \eqref{eq:eliminate_recollision}, it is enough to show for any $k\geq 1$ and any $\{n_{i}\}_{1\leq i\leq k}$ 
\begin{equation}
\lim_{\eps\rightarrow 0}\frac{1}{\mu_{\eps}}\Bigg|\int d\vepst^{(\psi),(n_1)}(\lambda_1)...d\vepst^{(\psi),(n_k)}(\lambda_k)\varphi_{\eps}(\lambda_1,...,\lambda_k)-\int d\vepst^{(\psi),\min,(n_1)}(\lambda_1)...d\vepst^{(\psi),\min,(n_k)}(\lambda_k)\varphi_{\eps}^{\min}(\lambda_1,...,\lambda_k)\Bigg|=0
\end{equation}

The proof of Lemma \ref{lem:cluster_bound} shows the following summation absolutely converges
\begin{equation}
\vepst^{(\psi)}(\lambda)=\sum_{n\geq 1}\vepst^{(\psi),(n)}(\lambda).
\end{equation}
The geometric estimate in \cite[Proposition 5.3 and 5.4]{bodineau2023long} shows for arbitrary initial velocities $V_{n,0}^*$, the recollisions can be eliminated
\begin{equation}
\lim_{\eps\rightarrow 0}\frac{1}{\mu_{\eps}}\int \Big|d\vepst^{(\psi),(n)}(\lambda)-d\vepst^{(\psi),\min,n}(\lambda)\Big|\delta_{V_{n,0}^*}(V_{N,0})dX_{n,0}=0.
\end{equation}
Dominated convergence theorem imply
\begin{equation}
\lim_{\eps\rightarrow 0}\frac{1}{\mu_{\eps}}\int \Big|d\vepst^{(\psi),(n)}(\lambda)-d\vepst^{(\psi),\min,(n)}(\lambda)\Big|=0,
\end{equation}
and proves 
\begin{equation}
\lim_{\eps\rightarrow 0}\frac{1}{\mu_{\eps}}\Bigg|\int d\vepst^{(\psi),(n_1)}(\lambda_1)...d\vepst^{(\psi),(n_k)}(\lambda_k)\varphi_{\eps}(\lambda_1,...,\lambda_k)-\int d\vepst^{(\psi),\min,(n_1)}(\lambda_1)...d\vepst^{(\psi),\min,(n_k)}(\lambda_k)\varphi_{\eps}(\lambda_1,...,\lambda_k)\Bigg|=0.
\end{equation}
So now we only need to prove
\begin{equation}
\lim_{\eps\rightarrow 0}\frac{1}{\mu_{\eps}}\Bigg|\int d\vepst^{(\psi),(n_1),\min}(\lambda_1)...d\vepst^{(\psi),(n_k),\min}(\lambda_k)\varphi_{\eps}(\lambda_1,...,\lambda_k)-\int d\vepst^{(\psi),\min,(n_1)}(\lambda_1)...d\vepst^{(\psi),\min,(n_k)}(\lambda_k)\varphi_{\eps}^{\min}(\lambda_1,...,\lambda_k)\Bigg|=0
\end{equation}
Fix all the positions and velocities of $\{\lambda_1,...,\lambda_k\}$, while leaving the centers of masses for clusters as free. This allow us to move clusters rigidly, because overlaps do not change the geometry of a cluster. Consider a graph $\mathcal{G}\in\boldsymbol{\mathcal{G}}_k$ with a cycle. We pick a sub-graph $\mathcal{C}$ of $\mathcal{G}$ as a tree, and denote $\{\lambda_{p_i},\lambda_{q_i}\}_{1\leq i\leq k-1}$ as the clusters involved in each overlap contained by $\mathcal{C}$. 

Starting from $i=1$, we move the two clusters $(\lambda_{p_i},\lambda_{q_i})$ rigidly to create an overlap between them. \cite[Equation (46)-(47)]{bodineau2022cluster}  estimates the bound for such an integral. There exists an index $j\leq k$ such that, we need to move a cluster $\lambda^*$ rigidly such that it has overlaps with two different clusters. In this case, the admissible value of the center of mass of $\lambda^*$ must be within the intersection of two thin tubes with radius $\eps$. Each tube's direction is given by the difference between two velocities of two particles, see \cite[Equation (41)]{bodineau2022cluster}.

To gain smallness from here, we can define a truncated domain: $\mathds{1}_{\mathcal{B}}(\lambda_1,...,\lambda_m)=1$  if and only if at any time $t\in[0,T]$, all the directions of velocities and relative velocities are separated from each other by $|\log\eps|^{-1}$.
As $\eps\rightarrow 0$, we have $|\mathcal{B}^c|\rightarrow 0$. And within $\mathcal{B}$, we can gain smallness from the overlap forming a loop, because all the tubes containing the admissible relative positions have directions separated by $|\log \eps|^{-1}$, therefore the intersection of two tubes contribute a small factor. This concludes the proof.
\end{proof}

Apply Lemma \ref{lem:eliminate_recollision} to \eqref{CGF_expanded}
\begin{equation}\label{eq:CGF_minimal}
\frac{1}{\mu_{\eps}}\log\mathcal{Z}_T^{\eps}(e^{\psi})=\frac{1}{\mu_{\eps}}\sum_{k\geq 1}\frac{1}{k!}\int \varphi_{\eps}^{\min}(\lambda_1,...,\lambda_k)d\vepst^{(\psi),\min}(\lambda_1)...d\vepst^{(\psi),\min}(\lambda_k)
\end{equation}

\subsection{Derivation of Boltzmann Equation by Reducing Clusters into Boltzmann Integrals}\label{sec:derivation}

Given a sequence of minimal clusters $\{\lambda_1,...,\lambda_k\}$ satisfying $\varphi_{\eps}^{\min}(\lambda_1,...,\lambda_k)\not=0$, we label the particles by $\{1,...,n\}$ with $n=|\lambda_1|+...+|\lambda_k|$. The collisions within clusters and the overlaps between clusters can be recorded by a \textup{decorated ordered tree} $\mathcal{T}^{\prec,\pm}$ (different from decorated binary tree in Definition \ref{def:dec_binary_tree}!) with
\begin{itemize}
    \item the set of vertices $V(\mathcal{T}^{\prec,\pm})=\{1,...,n\}$.
    \item the set of edge $E(\mathcal{T}^{\prec,\pm})$ contains $\{i,j\}$ if and only if the particles labeled by $i$ and $j$ have a collision or overlap with each other. Each edge $e\in E(T_{\prec}^{\pm})$ is decorated as either collision or overlap, and we associate $d_e\in\{+1,-1\}$ with edge $e$. If $e$ is a collision, then $d_e=1$. If $e$ is an overlap, then $d_{e}=-1$.
    \item the edges of $\mathcal{T}^{\prec,\pm}$ is ordered as $\{e_1,...,e_{n-1}\}$.
\end{itemize}
Apply \cite[Remark III.2]{bodineau2022cluster} with $\boldsymbol{\mathcal{T}_n^{\prec,\pm}}$ as the set of all decorated ordered trees with $n$ vertices
\begin{equation}
\begin{split}
&\frac{1}{\mu_{\eps}}\sum_{k\geq 1}\frac{1}{k!}\int d\vepst^{(\psi),\min}(\lambda_1)...d\vepst^{(\psi),\min}(\lambda_k)\varphi_{\eps}^{\min}(\lambda_1,...,\lambda_k)\\
&=\sum_{n\geq 1}\sum_{\mathcal{T}^{\prec,\pm}\in\boldsymbol{\mathcal{T}_n^{\prec,\pm}}}\Big(\prod_{e\in E(\mathcal{T}^{\prec,\pm})}d_{e}\Big)\underbrace{\frac{\mu_{\eps}^{n-1}}{n!}\int \mathds{1}_{\mathbf{Z}_n\textup{ compatible with $\mathcal{T}^{\prec,\pm}$ as $\eps$-trajectories}}\prod_{1\leq i\leq n}\Big(e^{\psi}(z_{i,T})f^0(z_{i,0})\Big)dZ_{N,0}}_{\textup{\blue{we will transform it into a form more similar to Boltzmann integrals}}}
\end{split}
\end{equation}
Given any decorated ordered tree $\mathcal{T}^{\prec,\pm}$, as $\eps\rightarrow 0$ the initial configurations $Z_{N,0}$ of trajectories compatible with $\mathcal{T}^{\prec,\pm}$ concentrate within a more and more singular set. However by re-parametrizing trajectories, we can identify a stable limit. Given any decorated ordered tree $\mathcal{T}^{\prec,\pm}$, the trajectories of particles with diameter $\eps$ compatible with $\mathcal{T}^{\prec,\pm}$ can be parametrized by
\begin{itemize}
    \item The center of mass $y$ of $n$ particles at time $t=0$
    \item Velocities $V_{n,0}$ of $n$ particles at time $n=0$
    \item The collision/overlap angles $\Omega_{n-1}:=\{\omega_{e}\}_{e\in E(\mathcal{T}^{\prec,\pm})}$ of each collision or overlap
    \item The collision/overlap times $S_{n-1}:=\{s_{e}\}_{e\in E(\mathcal{T}^{\prec,\pm})}$ of each collision or overlap
\end{itemize}
Given the parameters and going forward in time, we can add a new particle to an already existing particle at time $s_{e}$ with collision/overlap angle $\omega_{e}$. If $d_e=1$ then the velocities change, otherwise not, see Figure \ref{fig:construct_trajectory}.
\begin{figure}[h]
    \centering
    \includegraphics[width=0.9\linewidth]{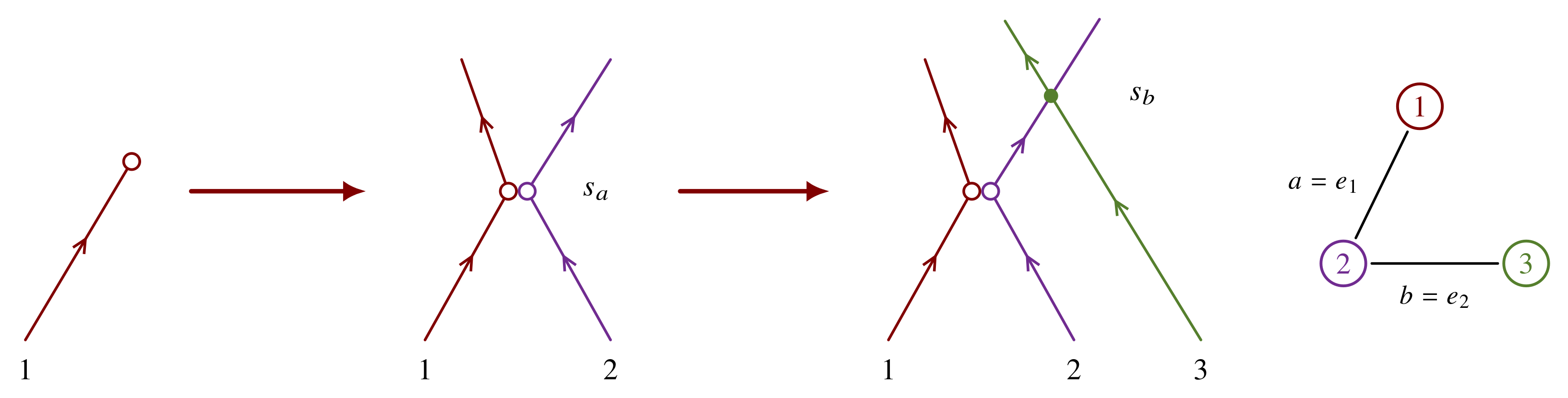}
    \caption{Construct a trajectory given $\mathcal{T}^{\prec,\pm}$ and the parameters.}
    \label{fig:construct_trajectory}
\end{figure}

Apply \cite[Equation (65)]{bodineau2022cluster}
\begin{equation}\label{eq:Z_to_omega}
\begin{split}
&\frac{\mu_{\eps}^{n-1}}{n!}\int \mathds{1}_{\mathbf{Z}_n\textup{ compatible with $\mathcal{T}^{\prec,\pm}$ as $\eps$-trajectories}}\prod_{1\leq i\leq n}\Big(e^{\psi}(z_{i,T})f^0(z_{i,0})\Big)dZ_{N,0}\\
&=\frac{1}{n!}\int \mathds{1}_{\mathbf{Z}_n\textup{ has no extra collision/overlap}}\prod_{e=\{i,j\}\in E(\mathcal{T}^{\prec,\pm})}\big((v_{i,s_{e}^-}-v_{j,s_{e}^-})\cdot\omega_e\big)_+\prod_{1\leq i\leq n}\Big(e^{\psi}(z_{i,T})f^0(z_{i,0})\Big) dS_{n-1}d\Omega_{n-1}dV_ndy
\end{split}
\end{equation}
The re-parametrization in \eqref{eq:Z_to_omega} allows us to express the partition function in \eqref{eq:CGF_minimal} as
\begin{equation}
\begin{split}
&\frac{1}{\mu_{\eps}}\log \mathcal{Z}_T^{\eps}(e^{\psi})=\sum_{n\geq 1}\sum_{\mathcal{T}^{\prec,\pm}\in\boldsymbol{\mathcal{T}_n^{\prec,\pm}}}\Big(\prod_{e\in E(\mathcal{T}^{\prec,\pm})}d_{e}\Big)\frac{1}{n!}\int \mathds{1}_{\mathbf{Z}_n\textup{ has no extra collision/overlap}}\\
&\hspace{3cm}\times \prod_{e=\{i,j\}\in E(\mathcal{T}^{\prec,\pm})}\big((v_{i,s_{e}^-}-v_{j,s_{e}^-})\cdot\omega_e\big)_+\prod_{1\leq i\leq n}\Big(e^{\psi}(z_{i,T})f^0(z_{i,0})\Big) dS_{n-1}d\Omega_{n-1}dV_ndy,
\end{split}
\end{equation}
As we take $\eps\rightarrow 0$, in the trajectories constructed by $\mathcal{T}^{\prec,\pm}$ and the relevant parameters, at each new collision or overlap, the new particle we add will asymptotically converge to the same position as the particle it collides or overlaps. This allows us to identify a stable limit trajectory $\mathbf{Z}_n$ as $\eps\rightarrow 0$
\begin{equation}
\begin{split}
&\lim_{\eps\rightarrow 0}\frac{1}{\mu_{\eps}}\log \mathcal{Z}_T^{\eps}(e^{\psi})=\sum_{n\geq 1}\sum_{\mathcal{T}^{\prec,\pm}\in\boldsymbol{\mathcal{T}_n^{\prec,\pm}}}\Big(\prod_{e\in E(\mathcal{T}^{\prec,\pm})}d_{e}\Big)\frac{1}{n!}\int \mathds{1}_{\mathbf{Z}_n\textup{ has no extra collision/overlap}}\\
&\hspace{3cm}\times \prod_{e=\{i,j\}\in E(\mathcal{T}^{\prec,\pm})}\big((v_{i,s_{e}^-}-v_{j,s_{e}^-})\cdot\omega_e\big)_+\prod_{1\leq i\leq n}\Big(e^{\psi}(z_{i,T})f^0(z_{i,0})\Big) dS_{n-1}d\Omega_{n-1}dV_ndy
\end{split}
\end{equation}
and this limit functional is denoted as $\Lambda_T^{0}(e^{\psi})$. The convergence is uniform for all $\psi$ satisfying \eqref{eq:test_function_bound}, thus 
\begin{equation}
\lim_{\eps\rightarrow 0}\frac{1}{\mu_{\eps}}\p_u^2\Lambda_T^{\eps}(e^{u\psi})=\lim_{\eps\rightarrow 0}\frac{1}{\mu_{\eps}}\p_u^2\Lambda_T^{0}(e^{u\psi})=0,
\end{equation}
which proves the second equation in \eqref{eq:CGF_goal}.

For the first equation in \eqref{eq:CGF_goal}, take derivatives of the limiting functional
\begin{equation}
\begin{split}
&\p_u\Lambda_T^{0}(e^{u\psi})\big|_{u=0}=\sum_{n\geq 1}\sum_{\mathcal{T}^{\prec,\pm}\in\boldsymbol{\mathcal{T}_n^{\prec,\pm}}}\Big(\prod_{e\in E(\mathcal{T}^{\prec,\pm})}d_{e}\Big)\frac{1}{n!}\int \mathds{1}_{\mathbf{Z}_n\textup{ has no extra collision/overlap}}\\
&\hspace{3cm}\times \prod_{e=\{i,j\}\in E(\mathcal{T}^{\prec,\pm})}\big((v_{i,s_{e}^-}-v_{j,s_{e}^-})\cdot\omega_e\big)_+\Big(\prod_{1\leq i\leq n}f^0(z_{i,0})\Big)\Big(\sum_{1\leq i\leq n}\psi(z_{i,T})\Big) dS_{n-1}d\Omega_{n-1}dV_ndy
\end{split}
\end{equation}
Use permutation symmetry between particles
\begin{equation}\label{eq:CGF_derivatives_final}
\begin{split}
&\p_u\Lambda_T^{0}(e^{u\psi})\big|_{u=0}=\sum_{n\geq 1}\sum_{\mathcal{T}^{\prec,\pm}\in\boldsymbol{\mathcal{T}_n^{\prec,\pm}}}\Big(\prod_{e\in E(\mathcal{T}^{\prec,\pm})}d_{e}\Big)\frac{1}{(n-1)!}\int \mathds{1}_{\mathbf{Z}_n\textup{ has no extra collision/overlap}}\\
&\hspace{3cm}\times \prod_{e=\{i,j\}\in E(\mathcal{T}^{\prec,\pm})}\big((v_{i,s_{e}^-}-v_{j,s_{e}^-})\cdot\omega_e\big)_+\Big(\prod_{1\leq i\leq n}f^0(z_{i,0})\Big)\psi(z_{1,T})dS_{n-1}d\Omega_{n-1}dV_ndy
\end{split}
\end{equation}
At this stage, the RHS of \eqref{eq:CGF_derivatives_final} is very close to the Boltzmann integral defined in Definition \ref{def:C_expansion} used to define the unique solution $f(T)$ of Boltzmann equation with initial data $f^0$.
\begin{equation}
f(t)=\sum_{\T:\textup{decorated trees}}\mathbf{C}_{f^0}[\T].
\end{equation}
The methods in \cite[Proposition IV.1]{bodineau2022cluster} allow us to reduce the summation over $\mathcal{T}^{\prec,\pm}$ into a summation over decorated (binary trees), see also Figure \ref{fig:clusters_to_pseudo_trajectory.png}. In this way, we can identify \eqref{eq:CGF_derivatives_final} as a summation over Boltzmann integrals. This concludes the proof of \eqref{eq:CGF_goal}, and thus Theorem \ref{th:kinetic_limit}.

\begin{figure}[h]
    \centering
    \includegraphics[width=1\linewidth]{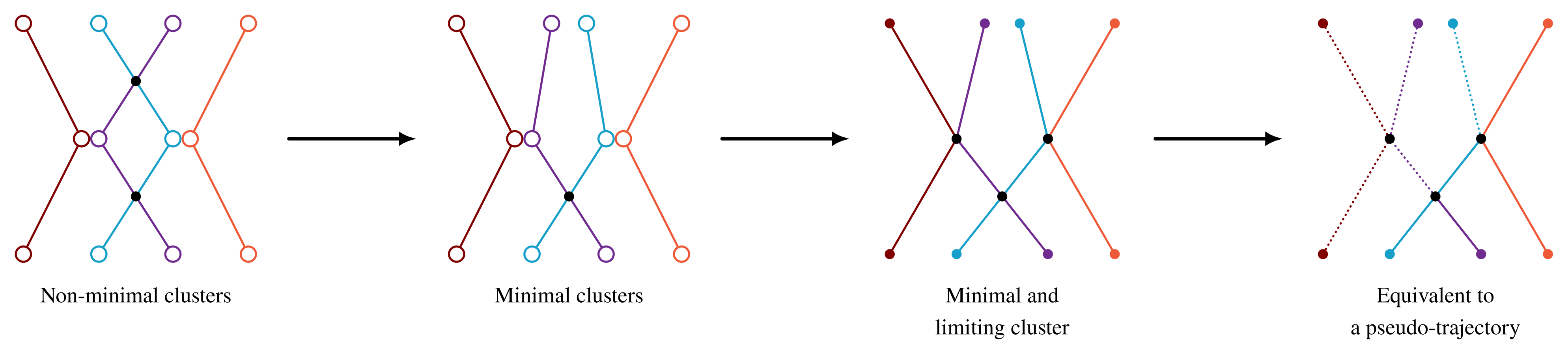}
    \caption{}
    \label{fig:clusters_to_pseudo_trajectory.png}
\end{figure}

\appendix
\section{Geometric Change of Variables}

In this appendix, we prove or recall the proof of several changes of variables useful in this paper. We use the following equality frequently
\begin{equation}\label{eq:dirac-to-hausdorf}
\int \delta\big(H(v)\big)dv=\int \frac{1}{|\nabla H(v)|}d\mathcal{H}_{H=0}(v).
\end{equation}

\begin{lemma}\label{lem:Carleman_change_of_variables}
\textbf{\textup{[Carleman Change of Variables]}} The integration over $d\omega$ and $dv_*$ can be changed into the integration over $dv_*'$ and $v'$ as
\begin{equation}
\big((v_*-v)\cdot\omega\big)_+d\omega dv_*=|v'-v|^{-1}d\pi_{v+(v'-v)^{\perp}}(v_*')dv'.
\end{equation}
and
\begin{equation}
d\pi_{v+(v'-v)^{\perp}}(v_*')dv'=\delta\Big((v'-v)\cdot(v_*'-v)\Big)|v'-v|dv'dv_*'=\frac{|v'-v|}{|v_*'-v|}d\pi_{v+(v_*'-v)^{\perp}}(v')dv_*'.
\end{equation}
\end{lemma}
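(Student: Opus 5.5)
The plan is to fix $v$ and prove the identity by an explicit decomposition of $v_*-v$ along $\omega$ and $\omega^{\perp}$, followed by polar coordinates for $v'-v$. Both sides are measures in the remaining variables, so it suffices to check them against an arbitrary non-negative test function $\Psi(v',v_*')$.

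\emph{Step 1 (orthogonal decomposition).} For fixed $\omega\in\mathds{S}^2$ I write $v_*-v=a\,\omega+w$ with $a:=(v_*-v)\cdot\omega\in\mathds{R}$ and $w\perp\omega$. This is an orthonormal change of coordinates, so $dv_*=da\,d\pi_{\omega^{\perp}}(w)$, and the weight becomes $(a)_+$. By \eqref{eq:pre_collision},
\begin{equation*}
v'-v=\big((v_*-v)\cdot\omega\big)\omega=a\,\omega,\qquad v_*'-v=(v_*-v)-a\,\omega=w .
\end{equation*}
Hence $v'-v$ is determined by $(a,\omega)$ alone, and $v_*'-v=w$ ranges over the plane $\omega^{\perp}=(v'-v)^{\perp}$.

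\emph{Step 2 (polar coordinates).} On the support $\{a>0\}$, the map $(a,\omega)\mapsto v'=v+a\omega$ is exactly polar coordinates centred at $v$. It is a bijection onto $\mathds{R}^3\setminus\{v\}$ with $dv'=a^2\,da\,d\omega$. The half $a<0$ contributes nothing because of the factor $(a)_+$, so no double counting occurs. Therefore
\begin{equation*}
(a)_+\,d\omega\,da=a^{-1}\,dv'=|v'-v|^{-1}dv'.
\end{equation*}
Combining with $d\pi_{\omega^{\perp}}(w)=d\pi_{v+(v'-v)^{\perp}}(v_*')$ yields the first identity. The only point needing care is that the plane measure depends on $\omega$ only through $v'$, which is consistent since $\omega=(v'-v)/|v'-v|$.

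\emph{Step 3 (the $\delta$-formulations).} For the second identity I fix $v'$ and apply \eqref{eq:dirac-to-hausdorf} to $H(v_*'):=(v'-v)\cdot(v_*'-v)$. Its gradient in $v_*'$ is $v'-v$, so
\begin{equation*}
\delta(H)\,dv_*'=|v'-v|^{-1}\,d\pi_{v+(v'-v)^{\perp}}(v_*').
\end{equation*}
Multiplying by $|v'-v|\,dv'$ gives the middle expression. The expression $\delta\big((v'-v)\cdot(v_*'-v)\big)\,dv'\,dv_*'$ is symmetric in $v'\leftrightarrow v_*'$. I therefore apply the same formula with the roles exchanged: fixing $v_*'$, the gradient in $v'$ is $v_*'-v$, which gives $\delta(H)\,dv'=|v_*'-v|^{-1}d\pi_{v+(v_*'-v)^{\perp}}(v')$. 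Multiplying by the prefactor $|v'-v|$ yields the last expression.

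I do not expect a real obstacle. The only delicate points are the bookkeeping of the half-line $a>0$ against the full sphere of $\omega$, and the null sets $v'=v$ and $v_*'=v$, where the densities are singular; these have measure zero and are ignored.
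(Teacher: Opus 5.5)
Your proposal is correct and follows the paper's own argument. You split $v_*-v$ into its components along $\omega$ and in $\omega^{\perp}$, then read $d\omega\,dv^{\parallel}$ as polar coordinates centred at $v$, giving $d\omega\,dv^{\parallel}=|v'-v|^{-2}dv'$. You are more careful than the paper in restricting to $a>0$ so the map to $v'$ is one-to-one, and you also write out the $\delta$-formulations via \eqref{eq:dirac-to-hausdorf}, which the paper's proof leaves implicit but uses in exactly this way in the subsequent change-of-variables lemmas.
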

\begin{proof}
First decompose $v_*-v$ into a component $v^{\parallel}$ parallel to $\omega$ and a component $v^{\perp}$ orthogonal to $\omega$
\begin{equation}\label{eq:polar_decomposition_app}
d\omega dv_*=d\omega dv^{\parallel}dv^{\perp},\quad  dv^{\parallel}:=\underbrace{d\Big((v_*-v)\cdot\omega\omega\Big)}_{\textup{\blue{parallel to $\omega$}}},\quad dv^{\perp}:=\underbrace{d\Big((v_*-v)-(v_*-v)\cdot\omega\omega\Big)}_{\textup{\blue{orthogonal to $\omega$}}}.
\end{equation}
In this case $d\omega dv^{\parallel}$ can be seen as a polar coordinate of $\mathds{R}^3$. Using the fact that $|v^{\parallel}|=|v-v'|$, we obtain
\begin{equation}\label{eq:polar_appendix}
d\omega dv^{\parallel}=|v'-v|^{-2}dv'=\big((v_*-v)\cdot\omega\big)_+^{-2}dv'.
\end{equation}
Apply \eqref{eq:polar_appendix} to \eqref{eq:polar_decomposition_app}
\begin{equation}
\big((v_*-v)\cdot\omega\big)_+d\omega dv_*=|v'-v|^{-1}dv'dv^{\perp}=|v'-v|^{-1}d\pi_{v+(v'-v)^{\perp}}(v_*')dv',
\end{equation}
which concludes the proof of the lemma.
\end{proof}

\begin{lemma}\label{lem:geometric_change_of_variable}
For any plane $\mathtt{PL}\subset \mathds{R}^3$, the following change of variables is true,
\begin{equation}
|v'-v|^{-1}d\pi_{v+(v'-v)^{\perp}}(v_*')dv'd\pi_{\mathtt{PL}}(v)=\pi\overline{d\pi}_{\mathtt{S}_{v_*',v'}\cap \mathtt{PL}}(v)dv_*'dv'.
\end{equation}
\end{lemma}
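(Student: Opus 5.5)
The plan is to reduce both sides to a single delta-function representation and then apply the coarea identity \eqref{eq:dirac-to-hausdorf} inside the plane $\mathtt{PL}$. We test both sides against an arbitrary nonnegative measurable $H(v,v',v_*')$; all reorderings of integration are then justified by Tonelli.

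\emph{Step 1: rewrite the plane measure as a delta.} By the second identity of Lemma \ref{lem:Carleman_change_of_variables},
\[
d\pi_{v+(v'-v)^{\perp}}(v_*')\,dv'=\delta\big((v'-v)\cdot(v_*'-v)\big)|v'-v|\,dv'dv_*'.
\]
Hence the left-hand side equals $\delta\big((v'-v)\cdot(v_*'-v)\big)\,dv_*'dv'\,d\pi_{\mathtt{PL}}(v)$. The singular factor $|v'-v|^{-1}$ cancels exactly. Now $v'$ and $v_*'$ are free variables in $\mathds{R}^3$, so we may integrate $v$ over $\mathtt{PL}$ first, with $(v',v_*')$ fixed.

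\emph{Step 2: identify the constraint geometrically.} Set $\mathtt{c}=\frac{v'+v_*'}{2}$ and $R=\frac{|v'-v_*'|}{2}$. The polarization identity gives
\[
G(v):=(v'-v)\cdot(v_*'-v)=|v-\mathtt{c}|^2-R^2 .
\]
So the zero set of $G$ is the collision sphere $\mathtt{S}_{v',v_*'}$ (Thales).

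Let $\mathtt{c}_{\mathtt{PL}}$ be the orthogonal projection of $\mathtt{c}$ onto $\mathtt{PL}$ and $d=\dist(\mathtt{c},\mathtt{PL})$. For $v\in\mathtt{PL}$ we have $G(v)=|v-\mathtt{c}_{\mathtt{PL}}|^2+d^2-R^2$. Therefore:
\begin{itemize}
\item the zero set of $G|_{\mathtt{PL}}$ is empty when $d>R$;
\item it is the circle $\mathtt{S}_{v',v_*'}\cap\mathtt{PL}$, of radius $\uprho=\sqrt{R^2-d^2}$ and center $\mathtt{c}_{\mathtt{PL}}$, when $d<R$;
\item it is a single point in the tangent case $d=R$.
\end{itemize}

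\emph{Step 3: apply the coarea identity in the plane.} Apply \eqref{eq:dirac-to-hausdorf} on the two-dimensional plane $\mathtt{PL}$ to $G|_{\mathtt{PL}}$. Its tangential gradient is $2(v-\mathtt{c}_{\mathtt{PL}})$, whose norm equals $2\uprho$ on the circle. This gives
\[
\int_{\mathtt{PL}} H\,\delta(G)\,d\pi_{\mathtt{PL}}(v)=\frac{1}{2\uprho}\int H\,d\mathcal{H}^1_{\mathtt{S}\cap\mathtt{PL}}=\frac{2\pi\uprho}{2\uprho}\int H\,\overline{d\pi}_{\mathtt{S}_{v',v_*'}\cap\mathtt{PL}}(v)=\pi\int H\,\overline{d\pi}_{\mathtt{S}_{v',v_*'}\cap\mathtt{PL}}(v).
\]
The radius cancels, so the constant $\pi$ is independent of $(v',v_*')$. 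Integrating in $dv_*'dv'$ then yields the claim, with the indicator $\mathds{1}_{\mathtt{S}_{v',v_*'}\cap\mathtt{PL}\neq\varnothing}$ implicit in the right-hand side.

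\emph{Main obstacle.} The delicate point is making the delta/coarea manipulation rigorous near degeneracies. The first degeneracy is the tangent set $\{d=R\}$, where $\uprho\to0$ and the gradient vanishes. The second is $v=v'$, where the Carleman plane degenerates. I would handle both by one argument: each is a Lebesgue-null set of $(v',v_*')\in\mathds{R}^6$, since $\{d=R\}$ is a smooth hypersurface. I would then regularize $\delta$ by $\eta^{-1}\mathds{1}_{|G|<\eta/2}$ and pass to the limit $\eta\to0$ by monotone/dominated convergence for nonnegative $H$. On this null set the right-hand side is defined as the point mass, which does not affect the identity.
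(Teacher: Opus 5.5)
Your proposal is correct and follows the same route as the paper: you rewrite the Carleman plane measure as $\delta\big((v'-v)\cdot(v_*'-v)\big)dv_*'$, integrate $v$ over $\mathtt{PL}$ first, and apply the coarea identity, so that the factor $1/(2\uprho)$ (the paper's $1/\textup{diam}(\mathtt{S}_{v',v_*'}\cap\mathtt{PL})$) cancels against the circumference $2\pi\uprho$. Your in-plane gradient $2(v-\mathtt{c}_{\mathtt{PL}})$ is the precise form of the paper's gradient step (the full gradient $2v-v'-v_*'$ has norm $2R$ on the sphere and would not give a constant), and your regularization argument supplies the justification for exchanging the order of integration that the paper leaves implicit.
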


\begin{proof}
Apply \eqref{eq:dirac-to-hausdorf} 
\begin{equation}
|v'-v|^{-1}d\pi_{v+(v'-v)^{\perp}}(v_*')=\delta\Big((v'-v)\cdot (v_*'-v)\Big)dv_*',
\end{equation}
and thus
\begin{equation}
\begin{split}
|v'-v|^{-1}d\pi_{v+(v'-v)^{\perp}}(v_*')dv'd\pi_{\mathtt{PL}}(v)&=\delta\Big((v'-v)\cdot(v_*'-v)\Big)dv_*'dv'd\pi_{\mathtt{PL}}(v)\\
&=\delta\Big((v'-v)\cdot(v_*'-v)\Big)d\pi_{\mathtt{PL}}(v)dv_*'dv'
\end{split}
\end{equation}
Noticing
\begin{equation}
\nabla_{v}\Big((v'-v)\cdot (v_*'-v)\Big)=\Big(\nabla_v(v'-v)\Big)\cdot (v_*'-v)+(v'-v)\cdot\Big(\nabla_v(v_*'-v)\Big)=2v-v'-v_*'.
\end{equation}
Therefore
\begin{equation}
\begin{split}
|v'-v|^{-1}d\pi_{v+(v'-v)^{\perp}}(v_*')dv'd\pi_{\mathtt{PL}}(v)&=\frac{1}{\textup{diam}(\mathtt{S}_{v',v_*'}\cap \mathtt{PL})}d\pi_{\mathtt{S}_{v_*',v'}\cap L}(v)dv_*'dv'=\pi\overline{d\pi}_{\mathtt{S}_{v_*',v'}\cap \mathtt{PL}}(v)dv_*'dv'
\end{split}
\end{equation}
\end{proof}

\begin{lemma}\label{lem:geometric_change_of_variable_2}
\textbf{\textup{[Change of Variables]}} We have the following change of variables
\begin{equation}
|v'-v|^{-1}d\pi_{v+(v'-v)^{\perp}}(v_*')dv'dv=\pi|v'-v_*'|\overline{d\pi}_{\mathtt{S}_{v_*',v'}}(v)dv_*'dv'
\end{equation}
\end{lemma}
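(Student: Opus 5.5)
The approach mirrors the proof of Lemma \ref{lem:geometric_change_of_variable}, with the plane $\mathtt{PL}$ replaced by all of $\mathds{R}^3$.

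\textbf{Step 1: write the constraint as a delta function.} By \eqref{eq:dirac-to-hausdorf}, exactly as in the proof of Lemma \ref{lem:geometric_change_of_variable}, we have
\[
|v'-v|^{-1}d\pi_{v+(v'-v)^{\perp}}(v_*')=\delta\big((v'-v)\cdot(v_*'-v)\big)dv_*'.
\]
The left-hand side is then $\delta\big((v'-v)\cdot(v_*'-v)\big)\,dv\,dv_*'\,dv'$, a measure on $\mathds{R}^9$ in which all three variables are free. By Fubini we can reorder the integrations: fix $(v',v_*')$ and integrate $v$ against the delta.

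\textbf{Step 2: identify the zero set and its gradient.} For fixed $(v',v_*')$ set $H(v):=(v'-v)\cdot(v_*'-v)$. Completing the square gives
\[
H(v)=\Big|v-\frac{v'+v_*'}{2}\Big|^2-\Big|\frac{v'-v_*'}{2}\Big|^2,
\]
so $\{H=0\}$ is exactly the collision sphere $\mathtt{S}_{v',v_*'}$ of \eqref{eq:collision_sphere}. Its gradient is
\[
\nabla H=2v-v'-v_*',
\]
whose norm on the sphere is $2\cdot\frac{|v'-v_*'|}{2}=|v'-v_*'|$, a constant.

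\textbf{Step 3: apply the coarea formula.} Using \eqref{eq:dirac-to-hausdorf} again,
\[
\int\delta(H(v))\,dv=|v'-v_*'|^{-1}\,d\mathcal{H}^2_{\mathtt{S}_{v',v_*'}}.
\]
The sphere has area $4\pi\big(\frac{|v'-v_*'|}{2}\big)^2=\pi|v'-v_*'|^2$. Converting to the normalized measure,
\[
d\mathcal{H}^2_{\mathtt{S}}=\pi|v'-v_*'|^2\,\overline{d\pi}_{\mathtt{S}},
\]
so the total factor is $\pi|v'-v_*'|\,\overline{d\pi}_{\mathtt{S}_{v_*',v'}}(v)$. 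This is the claimed identity.

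\textbf{Main obstacle.} The only delicate point is justifying Step 1 rigorously. The identity must be read as an equality of measures tested against nonnegative measurable functions. The degenerate sets $v'=v$ and $v'=v_*'$ have measure zero, and on their complement the map $v\mapsto H(v)$ is a submersion near the zero set. Hence the delta-function and coarea manipulations are legitimate, for instance by smoothing $\delta$ by $\eta^{-1}\mathds{1}_{|H|<\eta/2}$ and letting $\eta\to0$ with monotone convergence.
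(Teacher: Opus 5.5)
Your proposal is correct and follows essentially the same route as the paper. You rewrite the plane measure as $\delta((v'-v)\cdot(v_*'-v))\,dv_*'$, then apply \eqref{eq:dirac-to-hausdorf} in $v$, using $|\nabla_v H|=|2v-v'-v_*'|=|v'-v_*'|$ on the collision sphere and its area $\pi|v'-v_*'|^2$ to get the factor $\pi|v'-v_*'|\,\overline{d\pi}_{\mathtt{S}_{v',v_*'}}(v)$; your added justification of the delta-function manipulation by approximation is a harmless refinement that the paper leaves implicit.
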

\begin{proof}
As already shown in the proof of Lemma \ref{lem:geometric_change_of_variable}
\begin{equation}
|v'-v|^{-1}d\pi_{v+(v'-v)^{\perp}}(v_*')dv'dv=\delta \Big((v'-v)\cdot(v_*'-v)\Big)dvdv_*'dv'.
\end{equation}
The Dirac mass above can be transformed into the measure of $v$ on the sphere $\mathtt{S}_{v_*',v'}$ formed by $v_*'$ and $v'$
\begin{equation}
\begin{split}
\delta \Big((v'-v)\cdot(v_*'-v)\Big)dvdv_*'dv'&=\frac{1}{|2v-v'-v_*'|}d\pi_{\mathtt{S}_{v',v_*'}}(v)dv_*'dv'=\pi|v'-v_*'|\overline{d\pi}_{\mathtt{S}_{v',v_*'}}(v)dv_*'dv'
\end{split}
\end{equation}
\end{proof}

\paragraph{AI Statement} The initial formulation of this project and the mathematical contents are entirely produced by the author, starting from early 2025, without AI assistance. The writing of this paper is also entirely produced by the author except for: AI tools were used for proofreading, and transforming the figures hand-written by the author into latex-drawn figures.

\bibliography{bibIT.bib}
\bibliographystyle{plain}
\end{document}